\numberwithin{equation}{section}
\theoremstyle{plain}
\newtheorem{theorem}[equation]{Theorem}
\newtheorem{lemma}[equation]{Lemma}
\newtheorem{corollary}[equation]{Corollary}
\theoremstyle{definition}
\newtheorem{definition}[equation]{Definition}
\theoremstyle{remark}
\newtheorem{remark}[equation]{Remark}
\def\R{\mathbb{R}}
\newcommand{\oLo}{\ensuremath{\omega_{L_0}}}
\newcommand{\ZZ}{{\mathbb{Z}}}
\newcommand{\dist}{\operatorname{dist}}
\newcommand{\re}{\mathbb{R}}
\newcommand{\ree}{\mathbb{R}^{n+1}}
\newcommand{\dd}{\mathbb{D}}
\newcommand{\F}{\mathcal{F}}
\newcommand{\M}{\mathcal{M}}
\newcommand{\W}{\mathcal{W}}
\newcommand{\G}{\mathcal{G}}
\newcommand{\pom}{\partial\Omega}
\newcommand{\loc}{\mathrm{loc}}
\renewcommand{\emptyset}{\mbox{\textup{\O}}}
\newcommand{\tinyemptyset}{\mbox{\tiny \textup{\O}}}
\DeclareMathOperator{\supp}{supp}
\DeclareMathOperator{\diam}{diam}
\DeclareMathOperator{\interiorior}{int}
\DeclareMathOperator*{\Lip}{Lip}
\def\div{\mathop{\operatorname{div}}\nolimits}
\def\Cap{\mathop{\operatorname{Cap}_2}\nolimits}
\newcommand{\normmm}[1]{{\left\vert\kern-0.25ex\left\vert\kern-0.25ex\left\vert #1
		\right\vert\kern-0.25ex\right\vert\kern-0.25ex\right\vert}}
\def\w{\omega}
\def\D{\mathbb{D}}
\def\Z{\mathbb{Z}}
\def\interior{\operatorname{int}}
\begin{document}
	\allowdisplaybreaks

\title[Elliptic operators in 1-sided NTA domains satisfying the capacity density condition]{On the $A_\infty$ condition for elliptic operators in 1-sided NTA domains satisfying the capacity density condition}
	
\author{Mingming Cao}

\address{Mingming Cao
	\\
	Instituto de Ciencias Matemáticas CSIC-UAM-UC3M-UCM
	\\
	Consejo Superior de Investigaciones Cien\-tí\-ficas
	\\
	C/ Nicolás Cabrera, 13-15
	\\
	E-28049 Madrid, Spain}
\email{mingming.cao@icmat.es}

\author{Óscar Domínguez}

\address{Óscar Domínguez\\
	Departamento de Análisis Matemático y Matemática Aplicada
	\\
	Facultad de Matemáticas, Universidad Complutense de Madrid
	\\
	Plaza de Ciencias 3, E-28040 Madrid, Spain.}
\email{oscar.dominguez@ucm.es}
	
\author{José María Martell}
	
\address{José María Martell
\\
Instituto de Ciencias Matemáticas CSIC-UAM-UC3M-UCM
\\
Consejo Superior de Investigaciones Científicas
\\
C/ Nicolás Cabrera, 13-15
\\
E-28049 Madrid, Spain}
\email{chema.martell@icmat.es}
	
\author{Pedro Tradacete}

\address{Pedro Tradacete
	\\
	Instituto de Ciencias Matemáticas CSIC-UAM-UC3M-UCM
	\\
	Consejo Superior de Investigaciones Científicas
	\\
	C/ Nicolás Cabrera, 13-15
	\\
	E-28049 Madrid, Spain}
\email{pedro.tradacete@icmat.es}

\thanks{The first author is supported by Spanish Ministry of Science and Innovation, through the Juan de la Cierva-Formación 2018, FJC2018-038526-I. The first, third, and last authors acknowledge financial support from the Spanish Ministry of Science and Innovation, through the ``Severo Ochoa Programme for Centres of Excellence in R\&D'' (CEX2019-000904-S) and  from the Spanish National Research Council, through the ``Ayuda extraordinaria a Centros de Excelencia Severo Ochoa'' (20205CEX001). The second  author was partially supported by Spanish Ministry of Science and Innovation through grant MTM2017-84058-P (AEI/FEDER,UE). The third author also acknowledges that the research leading to these results has received funding from the European Research Council under the European Union's Seventh Framework Programme (FP7/2007-2013)/ ERC agreement no. 615112 HAPDEGMT. The third author was partially supported by the Spanish Ministry of Science and Innovation,  MTM PID2019-107914GB-I00. The last author was partially supported by Spanish Ministry of Science and Innovation through grant MTM2016-75196-P (AEI/FEDER,UE)}

\date{\today}

\subjclass[2020]{31B05, 35J08, 35J25, 42B37, 42B25, 42B99}

\keywords{Uniformly elliptic operators, elliptic measure, Green function, 1-sided non-tangentially accessible domains, 1-sided chord-arc domains,  capacity density condition, $A_\infty$ Muckenhoupt weights, Reverse Hölder, Carleson measures, square function estimates, non-tangential maximal function estimates, dyadic analysis, sawtooth domains, perturbation}

\begin{abstract}
Let $\Omega \subset \mathbb{R}^{n+1}$, $n\ge 2$, be a 1-sided non-tangentially accessible domain, that is, a set which is quantitatively open and path-connected. Assume also that $\Omega$ satisfies the capacity density condition. Let $L_0 u=-\div(A_0 \nabla u)$,  $Lu=-\div(A\nabla u)$ be two real (not necessarily symmetric) uniformly elliptic operators in $\Omega$, and write $\omega_{L_0}, \omega_L$ for the respective associated elliptic measures. We establish the equivalence between  the following properties: (i) $\omega_L \in A_{\infty}(\omega_{L_0})$, (ii) $L$ is $L^p(\omega_{L_0})$-solvable for some $p\in (1,\infty)$, (iii) bounded null solutions of $L$ satisfy Carleson measure estimates with respect to $\omega_{L_0}$, (iv) $\mathcal{S}<\mathcal{N}$ (i.e., the conical square function is controlled by the non-tangential maximal function) in $L^q(\omega_{L_0})$ for some (or for all) $q\in (0,\infty)$ for any null solution of $L$, and (v) $L$ is $\mathrm{BMO}(\omega_{L_0})$-solvable. Moreover, in each of the properties (ii)-(v) it is enough to consider the class of solutions given by characteristic functions of Borel sets (i.e, $u(X)=\omega_L^X(S)$ for an arbitrary Borel set $S\subset\partial\Omega$).

Also, we obtain a qualitative analog of the previous equivalences. Namely, we characterize the absolute continuity of $\omega_{L_0}$ with respect to $\omega_L$ in terms of some qualitative local $L^2(\omega_{L_0})$ estimates for the truncated conical square function for any bounded null solution of $L$. This is also equivalent to the finiteness $\omega_{L_0}$-almost everywhere of the truncated conical square function for any bounded null solution of $L$.  As applications, we show that $\omega_{L_0}$ is absolutely continuous with respect to $\omega_L$ if the disagreement of the coefficients satisfies some qualitative quadratic estimate in truncated cones for $\omega_{L_0}$-almost everywhere vertex. Finally, when $L_0$ is either the transpose of $L$ or its symmetric part, we obtain the corresponding absolute continuity upon assuming that the antisymmetric part of the coefficients has some controlled oscillation in truncated cones for $\omega_{L_0}$-almost every vertex.
\end{abstract}

\maketitle

\setcounter{tocdepth}{1}
\tableofcontents


\section{Introduction}

The solvability of the Dirichlet problem \eqref{D} on rough domains has been of great interest in the last fifty years. Given a domain $\Omega \subset \R^{n+1}$ and a uniformly elliptic operator $L$ on $\Omega$, it consists on finding a solution $u$ (satisfying natural conditions in accordance to what is known for the boundary data $f$) to the boundary value problem
\begin{equation}\label{D}
	\begin{cases}
		L u =0 & \quad\text{in}\quad\Omega,\\
		u =f & \quad\text{on}\quad\partial \Omega.
	\end{cases}
\end{equation}
To address this question, one typically investigates the properties of the corresponding elliptic measure, since it is the fundamental tool that enables us to construct solutions of \eqref{D}. The techniques from harmonic analysis and geometric measure theory have allowed us to study the regularity of elliptic measures and hence understand this subject well. Conversely, the good properties of elliptic measures allow us to effectively use the machinery from these fields to obtain information about the topology and the regularity of the domains. These ideas have led to a quite active research at the intersection of harmonic analysis, partial differential equations, and geometric measure theory.

The connection between the geometry of a domain and the absolute continuity properties of its harmonic measure goes back to the classical result of F. and M. Riesz \cite{RR}, which showed that for a simply connected domain in the plane, the rectifiability of its boundary implies that harmonic measure is mutually absolutely continuous with respect to the surface measure. After that, considerable attention has focused on establishing higher dimensional analogues and the converse of the F. and M. Riesz theorem. For a planar domain, Bishop and Jones \cite{BJ} proved that if only a portion of the boundary is rectifiable, harmonic measure is absolutely continuous with respect to arclength on that portion. A counter-example was also constructed to show that the result of \cite{RR} may fail in the absence of some strong connectivity property (like simple connectivity). In dimensions greater than $2$, Dahlberg \cite{D77} established a  quantitative version of the absolute continuity of harmonic measures with respect to surface measure on the boundary of a Lipschitz domain. This result was extended to $\mathrm{BMO}_1$ domains by Jerison and Kenig \cite{JK82}, and to chord-arc domains by David and Jerison \cite{DJ} (see also \cite{AHMNT, HM1, HMU} for the case of  1-sided chord-arc domains). In this direction, this was culminated in the recent results of \cite{AHMMT} under some optimal background hypothesis (an open set in $\re^{n+1}$ satisfying an interior corkscrew condition with an $n$-dimensional Ahlfors-David regular boundary). Indeed, \cite{AHMMT} gives a complete picture of the relationship between the quantitative absolute continuity of harmonic measure
with respect to surface measure (or, equivalently, the solvability of \eqref{D} for singular data, see \cite{HLe}) and the rectifiability of the boundary plus some weak local John condition (that is, local accessibility by non-tangential paths to some pieces of the boundary). Another significant extension of the F. and M. Riesz theorem was obtained in \cite{AHMMMTV}, where it was proved that, in any dimension and in the absence of any connectivity condition, every piece of the boundary with finite surface measure is rectifiable, provided surface measure is absolutely continuous with respect to harmonic measure on that piece. It is worth pointing out that all the aforementioned results are restricted to the $n$-dimensional boundaries of domains in $\R^{n+1}$. Some analogues have been obtained in \cite{DEM, DFM, DM, MZ} on lower-dimensional sets.

On the other hand, the solvability of the Dirichlet problem \eqref{D} is closely linked with the absolute continuity properties of elliptic measures. The importance of the quantitative absolute continuity of the elliptic measure with respect to the surface measure comes from the fact that $\omega_L \in RH_q(\sigma)$ (short for the Reverse H\"older class with respect to $\sigma$, being $\sigma$ the surface measure) is equivalent to the $L^{q'}(\sigma)$-solvability of the Dirichlet problem (see, e.g. \cite{HLe}). In 1984, Dahlberg formulated a conjecture concerning the optimal conditions on a matrix of coefficients guaranteeing that the Dirichlet problem \eqref{D} with $L^p$ data for some $p \in (1, \infty)$ is solvable. Kenig and Pipher \cite{KP} made the first attempt on bounded Lipschitz domains and gave an affirmative answer to Dahlberg's conjecture. More precisely, they showed that elliptic measure is quantitatively absolutely continuous with respect to surface measure whenever the gradient of the coefficients satisfies a Carleson measure condition. This was done in Lipschitz domains but can be naturally extended to chord-arc domains. In some sense, some recent results have shown that this class of domains is optimal. First, \cite{HM1, HMU, AHMNT} show that in the case of the Laplacian and for 1-sided chord-arc domains,  the fact that the harmonic measure is quantitatively absolutely continuous with respect to surface measure (equivalently, the $L^{p}(\sigma)$-Dirichlet problem is solvable for some finite $p$) implies that the domains must have exterior corkscrews, hence they are chord-arc domains. Indeed, in a first attempt to generalize this to the class of Kenig-Pipher operators, Hofmann, the third author of the present paper, and Toro \cite{HMT1} were able to consider variable coefficients whose gradient satisfies some $L^1$-Carleson condition (in turn, stronger than the one in \cite{KP}). The general case, on which the operators are in the optimal Kenig-Pipher--class (that is, the gradient of the coefficients satisfies an $L^2$-Carleson condition) has been recently solved by Hofmann et al.~\cite{HMMTZ}.

In another direction, one can consider perturbations of elliptic operators in rough domains. That is, one seeks for conditions on the disagreement of two coefficient matrices so that the solvability of the Dirichlet problem or the quantitative absolute continuity with respect to the surface measure of the elliptic measure for one elliptic operator could be transferred to the other operator. This problem was initiated by Fabes, Jerison and Kenig \cite{FJK} in the case of continuous and symmetric coefficients, and extended by Dahlberg \cite{D86} to a more general setting under a vanishing Carleson measure condition. Soon after, working again in the domain $\Omega=B(0, 1)$ and with symmetric operators, Fefferman \cite{F} improved Dahlberg's result by formulating the boundedness of a conical square function, which allows one to preserve the $A_{\infty}$ property of elliptic measures, but without preserving the reverse H\"{o}lder exponent (see \cite[Theorem~2.24]{FKP}). A major step forward was made by Fefferman, Kenig and Pipher \cite{FKP} by giving an optimal Carleson measure perturbation on Lipschitz domains. Additionally, they established another kind of perturbation to study the quantitative absolute continuity between two elliptic measures. Beyond the Lipschitz setting, these results were extended to chord-arc domains \cite{MPT1, MPT2}, 1-sided chord-arc domains \cite{CHM, CHMT}, and 1-sided non-tangentially accessible domains satisfying the capacity density condition \cite{AHMT}. It is worth mentioning that the so-called extrapolation of Carleson measure was utilized in \cite{AHMT, CHM}. Nevertheless, a simpler and novel argument was presented in \cite{CHMT} to get the large constant perturbation. More specifically, $A_{\infty}$ property of elliptic measures can be characterized by the fact that every bounded weak solution of $L$ satisfies Carleson measure estimates. Also, it is worth mentioning that \cite{AHMT} considers for the first time perturbation results on sets with bad surface measures.

The goal of this paper is to continue with the line of research initiated in \cite{AHMT}. We work with $\Omega \subset \R^{n+1}$, $n\ge 2$, a 1-sided non-tangentially accessible domain  satisfying the capacity density condition. We consider two real (not necessarily symmetric) uniformly elliptic operators $L_0 u=-\div(A_0 \nabla u)$ and $Lu=-\div(A\nabla u)$ in $\Omega$, and denote by $\omega_{L_0}, \omega_L$ the respective associated elliptic measures. The paper \cite{AHMT} considered the perturbation theory in this context providing natural conditions on the disagreement of the coefficients so that $\omega_L$ is quantitatively absolutely continuous with respect to $\omega_{L_0}$ (see also \cite{FKP}). In our first main result we single out the latter property and characterize it in terms of the solvability of the Dirichlet problem or some other properties that certain solutions satisfy. In a nutshell, we show that such condition is equivalent to the fact that null solutions of $L$ have a good behavior with respect to $\omega_{L_0}$. The precise statement is as follows:

\begin{theorem}\label{thm:main}
Let $\Omega\subset\mathbb{R}^{n+1}$, $n\ge 2$, be a 1-sided NTA domain (cf. Definition \ref{def1.1nta}) satisfying the capacity density condition  (cf. Definition \ref{def-CDC}), and let $Lu=-\div(A\nabla u)$ and $L_0u=-\div(A_0\nabla u)$ be real (non-necessarily symmetric) elliptic operators. Bearing in mind the notions introduced in Definition~\ref{defs:several}, the following statements are equivalent:

\begin{list}{}{\usecounter{enumi}\leftmargin=1.2cm \labelwidth=1.2cm \itemsep=0.2cm \topsep=.2cm \renewcommand{\theenumi}{\alph{enumi}}}
 	
	\item[\textup{(a)\phantom{$'$}}] $\omega_L \in A_\infty(\partial \Omega, \omega_{L_0})$ (cf.~Definition~\ref{def:Ainfty}).
	
	\item[\textup{(b)\phantom{$'$}}] $L$ is  $L^{p}(\omega_{L_0})$-solvable for some $p\in (1,\infty)$.
	
	\item[\textup{(b)$'$}] $L$ is $L^{p}(\omega_{L_0})$-solvable for characteristic functions for some $p\in (1,\infty)$.

	\item[\textup{(c)\phantom{'}}] $L$ satisfies $\mathrm{CME}(\omega_{L_0})$.
	
	\item[\textup{(c)$'$}] $L$ satisfies $\mathrm{CME}(\omega_{L_0})$ for characteristic functions.

	\item[\textup{(d)\phantom{$'$}}] $L$ satisfies $\mathcal{S}<\mathcal{N}$ in $L^q(\omega_{L_0})$ for some (or all) $q\in (0,\infty)$. 
	
	\item[\textup{(d)$'$}] $L$ satisfies $\mathcal{S}<\mathcal{N}$ in $L^q(\omega_{L_0})$ for characteristic functions for some (or all) $q\in (0,\infty)$.

	\item[\textup{(e)\phantom{$'$}}] $L$ is $\mathrm{BMO}(\omega_{L_0})$-solvable.

	\item[\textup{(e)$'$}] $L$ is $\mathrm{BMO}(\omega_{L_0})$-solvable for characteristic functions.

	\item[\textup{(f)\phantom{$'$}}]  $L$ is $\mathrm{BMO}(\omega_{L_0})$-solvable in the sense of \cite{HLe}.

	\item[\textup{(f)$'$}] $L$ is $\mathrm{BMO}(\omega_{L_0})$-solvable in the sense of \cite{HLe} for characteristic functions.

\end{list}
Furthermore, for any $p\in (1,\infty)$ there hold
\begin{equation*}
\textup{(a)${}_{p'}$ }	\omega_L \in RH_{p'}(\partial \Omega, \omega_{L_0}) \iff \textup{(b)${}_p$ }  L  \text{ is $L^{p}(\omega_{L_0})$-solvable},
\end{equation*}
\begin{equation*}
 \textup{(b)${}_p$ }  L  \text{ is $L^{p}(\omega_{L_0})$-solvable}\ \Longrightarrow\
	\textup{(b)${}_p'$ }  L  \text{ is $L^{p}(\omega_{L_0})$-solvable for characteristic functions},
\end{equation*}
and
\begin{equation*}
	\textup{(b)${}_p$ }  L  \text{ is $L^{p}(\omega_{L_0})$-solvable}\ \Longrightarrow\ \textup{(b)${}_q$ }  L  \text{ is $L^{q}(\omega_{L_0})$-solvable for all $q\ge p$.}
\end{equation*}
\end{theorem}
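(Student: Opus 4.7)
The plan is to prove the theorem as a web of implications, using three observations: first, every unprimed property trivially implies its primed counterpart by specialization to Borel characteristic functions; second, (a) implies every other property by running the standard elliptic-measure/$A_\infty$ machinery with base measure $\omega_{L_0}$ instead of the surface measure $\sigma$; and third, it suffices to show that one (and the weakest) of the primed properties, namely (c)$'$, implies (a). The remaining primed statements can be routed back to (a) either by direct analogues of the (c)$'\Rightarrow$(a) argument, or by chaining (d)$'$, (e)$'$, (f)$'$ into (c)$'$.

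For the direction (a) $\Rightarrow$ everything else, I would first upgrade $\omega_L\in A_\infty(\omega_{L_0})$ to $\omega_L\in RH_{p'}(\omega_{L_0})$ for some $p'$, and then use the classical duality between reverse-H\"older estimates for the Poisson kernel and $L^p$ non-tangential bounds on $u(X)=\int f\,d\omega_L^X$ to obtain (b)$_{p}$; this will also give the stated equivalence (a)$_{p'}\iff$(b)$_p$. Once $A_\infty$ is in hand, dyadic good-$\lambda$ inequalities (adapted to 1-sided NTA + CDC domains via the dyadic grid and Whitney/sawtooth machinery of \cite{AHMT}) yield $\mathcal{S}\lesssim \mathcal{N}$ in every $L^q(\omega_{L_0})$, and specializing to bounded solutions integrates to give CME. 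Finally, a John-Nirenberg argument applied to the Carleson measure $|\nabla u|^2\,\delta\,dX$, together with the CME for the oscillation $u-u_{\Delta}$, produces BMO-solvability in both senses (e) and (f); the quantitative $L^{p}\Rightarrow L^{q}$ monotonicity for $q\ge p$ is recovered from the $RH_{p'}$ open-ended property and H\"older's inequality.

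The core of the work is the implication (c)$'\Rightarrow$(a). Given a surface ball $\Delta_0\subset\partial\Omega$ and a Borel set $F\subset\Delta_0$, I set $u(X)=\omega_L^X(F)$, which is a bounded weak solution of $L$, and apply the hypothesis to get the Carleson-measure bound $\iint_{T(\Delta_0)}|\nabla u|^2\,\delta\,dX\lesssim \omega_{L_0}(\Delta_0)$. The goal is to translate this into a dyadic $A_\infty$ statement relating $\omega_L$ and $\omega_{L_0}$ on $\Delta_0$. I would run a stopping-time/corona decomposition on the dyadic grid of $\partial\Omega$, where in each coherent region one compares the elliptic measures $\omega_L$ and $\omega_{L_0}$ against the Green function of $L_0$ on the associated sawtooth subdomain (which inherits the 1-sided NTA + CDC structure by \cite{AHMT}); the CME gates the Carleson packing of the stopping cubes, and the resulting packing estimate is exactly the dyadic $A_\infty$ condition, hence (a). The main obstacle is precisely this last step: the primed hypotheses only give information along the specific family $u=\omega_L^{\cdot}(S)$, so density arguments are unavailable and one must extract $A_\infty$ directly from the geometric/probabilistic meaning of these solutions, through a careful bookkeeping of constants on nested sawtooth regions. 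Once (c)$'\Rightarrow$(a) is established, the implications (d)$'$, (e)$'$, (f)$'\Rightarrow$(a) follow either by showing each of them implies (c)$'$ (testing the relevant norm against $u=\omega_L^{\cdot}(S)$ and using that $\|u\|_{L^\infty}\le 1$) or by a parallel stopping-time argument, which completes the cycle.
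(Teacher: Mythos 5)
Your overall architecture (unprimed $\Rightarrow$ primed trivially; (a) $\Rightarrow$ everything via $RH_{p'}$, good-$\lambda$, and localization; reduce the converse to one primed condition) matches the paper's scheme, and the (a)-direction sketch is essentially what is done. The problem is the step you yourself flag as ``the main obstacle'': your argument for $\mathrm{(c)}'\Rightarrow\mathrm{(a)}$ does not close. Testing the Carleson measure estimate with $u(X)=\omega_L^X(F)$, where $F$ is the very Borel set whose $\omega_{L_0}$-measure you want to control, cannot work: if $\omega_L^{X_{Q_0}}(F)$ is small then this $u$ is small throughout most of the Carleson region, and neither its square function nor the Carleson bound \eqref{CME:Linfty} forces any lower bound that would detect $\omega_{L_0}(F)$. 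The subsequent corona/stopping-time sketch (``the CME gates the Carleson packing of the stopping cubes, and the resulting packing estimate is exactly the dyadic $A_\infty$ condition'') is an assertion, not an argument; no mechanism is given that converts a packing condition for stopping cubes defined through $\omega_L$ into a statement about $\omega_{L_0}$-measure of $F$, and this is precisely where the specific-solutions restriction bites.

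What the paper does instead is invoke Lemma~\ref{sq-function->M} (the adaptation of \cite[Lemma 3.10]{CHMT}, going back to \cite{KKiPT, KKoPT}): given $F\subset Q_0$ with $\omega_L^{X_{Q_0}}(F)\le\beta\,\omega_L^{X_{Q_0}}(Q_0)$, one \emph{constructs} a different Borel set $S\subset Q_0$ (built by an alternating stopping-time construction so that $u=\omega_L^{\cdot}(S)$ oscillates between values close to $0$ and close to $1$ roughly $\log(1/\beta)$ times along the approach regions to points of $F$), which forces the pointwise lower bound $\mathcal{S}^{\vartheta}_{Q_0,\eta}u\gtrsim(\log\beta^{-1})^{1/2}$ on $F$. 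Pairing this with the upper bound \eqref{S<CME}, which converts the hypothesis $\mathrm{(c)}'$ into $\int_{Q_0}\mathcal{S}^{\vartheta}_{Q_0,\eta}u^2\,d\omega_{L_0}^{X_{Q_0}}\lesssim\omega_{L_0}^{X_{Q_0}}(Q_0)$ via Fubini and the CFMS-type comparison of Lemma~\ref{lemma:proppde}, yields $\omega_{L_0}^{X_{Q_0}}(F)/\omega_{L_0}^{X_{Q_0}}(Q_0)\lesssim(\log\beta^{-1})^{-1}$, i.e.\ the contrapositive $A_\infty$ implication \eqref{Ainfty-CF}; the rest is standard weight theory. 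Your proposal contains no substitute for this construction, so the cycle is not closed. A secondary omission: you never say how $\mathrm{(b)}'$ returns to (a); it is not a ``direct analogue'' of the square-function argument, but rather follows from the measure-theoretic estimate $\omega_L^{X_{\Delta}}(S)\lesssim\omega_{L_0}^{X_{\Delta}}(S)^{1/p}$ at all scales and locations (cf.\ \eqref{AWfeve:char}) combined with Lemma~\ref{lemma:proppde} and the classical characterization of $A_\infty$, and this needs to be stated as a separate implication in your web.
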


\medskip

\begin{remark}
Note that in Definition~\ref{defs:several}  the $L^{p}(\omega_{L_0})$-solvability depends on some fixed $\alpha$ and $N$. However, in the previous result what we prove is that if $\mathrm{(a)}$ holds then $\mathrm{(b)}$ is valid for all $\alpha$ and $N$. For the converse we see that if $\mathrm{(b)}$ holds for some $\alpha$ and $N$ then we get $\mathrm{(a)}$. This eventually says that if  $\mathrm{(b)}$ holds for some $\alpha$ and $N$, then it also holds for every $\alpha$ and $N$. The same occurs with  $\mathrm{(d)}$ where now there is only $\alpha$.
\end{remark}

As an immediate consequence of Theorem~\ref{thm:main}, if we take $L_0=L$, in which case we clearly have $\omega_L \in A_\infty(\partial \Omega, \omega_{L_0})$ (indeed, $\omega_L \in RH_p(\partial \Omega, \omega_{L_0})$ for any $1<p<\infty$), then we obtain the following estimates for the null solutions of $L$ (note that (ii) and (iii) coincide with \cite[Theorems~5.1 and 5.3]{AHMT} respectively):

\begin{corollary}\label{corol:main}
	Let $\Omega\subset\mathbb{R}^{n+1}$, $n\ge 2$, be a 1-sided NTA domain (cf. Definition \ref{def1.1nta}) satisfying the capacity density condition  (cf. Definition \ref{def-CDC}), and let $Lu=-\div(A\nabla u)$ be a real (non-necessarily symmetric) elliptic operator. Bearing in mind the notions introduced in Definition~\ref{defs:several}, the following statements hold:
	
	\begin{list}{\textup{(\theenumi)}}{\usecounter{enumi}\leftmargin=1.2cm \labelwidth=1.2cm \itemsep=0.2cm \topsep=.2cm \renewcommand{\theenumi}{\roman{enumi}}}
				
		\item $L$ is  $L^{p}(\omega_{L})$-solvable, and also $L^{p}(\omega_{L})$-solvable for characteristic functions, for all $p\in (1,\infty)$. 
		
		\item  $L$ satisfies $\mathrm{CME}(\omega_{L})$.  		
		
		\item $L$ satisfies $\mathcal{S}<\mathcal{N}$ in $L^q(\omega_{L})$ for all $q\in (0,\infty)$.
		
		\item $L$ is $\mathrm{BMO}(\omega_{L})$-solvable, and also $\mathrm{BMO}(\omega_{L})$-solvable for characteristic functions.

		\item $L$ is $\mathrm{BMO}(\omega_{L})$-solvable, and also $\mathrm{BMO}(\omega_{L})$-solvable for characteristic functions, in the sense of \cite{HLe}.
	\end{list}
\end{corollary}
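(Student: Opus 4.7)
The plan is to reduce this corollary to a direct application of Theorem~\ref{thm:main} in the special case $L_0=L$. My strategy is first to verify that condition (a) of the theorem is automatic in this setting, and then to read off each of (i)--(v) from the corresponding equivalent condition.

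The key observation I would start with is that $\omega_L$ is absolutely continuous with respect to itself, with Radon--Nikodym derivative identically $1$. Consequently, for every $p\in(1,\infty)$ the reverse Hölder inequality $\omega_L\in RH_{p}(\partial\Omega,\omega_L)$ holds trivially, and \emph{a fortiori} $\omega_L\in A_\infty(\partial\Omega,\omega_L)$. In the notation of Theorem~\ref{thm:main}, this means not only that (a) holds with the choice $L_0=L$, but that the sharper quantitative statement (a)${}_{p'}$ is satisfied for every $p'\in(1,\infty)$.

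From here I would extract the five conclusions by invoking the equivalences in Theorem~\ref{thm:main}. The biconditional $\text{(a)}_{p'}\iff\text{(b)}_{p}$ yields the $L^p(\omega_L)$-solvability of $L$ for every $p\in(1,\infty)$, and the implication $\text{(b)}_p\Rightarrow\text{(b)}_p'$ then provides the analogous solvability for characteristic functions, proving~(i). The equivalence (a)$\iff$(c) yields the Carleson measure estimate in~(ii); the equivalence (a)$\iff$(d), combined with the ``some or all'' clause in Theorem~\ref{thm:main}(d), gives $\mathcal{S}<\mathcal{N}$ in $L^q(\omega_L)$ for every $q\in(0,\infty)$, which is~(iii); and the pairs (e)/(e') and (f)/(f') deliver (iv) and (v) respectively.

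I expect no real obstacle here: the geometric hypotheses on $\Omega$ are precisely those of Theorem~\ref{thm:main}, and the only substantive observation is the triviality that $d\omega_L/d\omega_L\equiv 1$. Thus this corollary is essentially a bookkeeping consequence of the main theorem, with all the analytic work packaged inside Theorem~\ref{thm:main}.
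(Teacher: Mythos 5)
Your proposal is correct and coincides with the paper's own argument: the paper likewise observes that taking $L_0=L$ gives $\omega_L\in RH_p(\partial\Omega,\omega_L)$ for every $1<p<\infty$ (hence (a), indeed (a)${}_{p'}$ for all $p'$), and then reads off (i)--(v) from the equivalences in Theorem~\ref{thm:main}. Nothing is missing.
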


\begin{remark}
We would like to emphasize that in $\mathrm{(i)}$ the $L^{p}(\omega_{L_0})$-solvability holds for all  $\alpha$ and $N$, the same occurs with  $\mathrm{(iii)}$ which holds for all $\alpha$, see Definition~\ref{defs:several}.
\end{remark}

Our second application is a direct consequence of \cite[Theorems~1.5, 1.10]{AHMT} and Theorem \ref{thm:main}:

\begin{corollary}\label{cor:Ainfty}
	Let $\Omega\subset\mathbb{R}^{n+1}$, $n\ge 2$, be a 1-sided NTA domain (cf. Definition \ref{def1.1nta}) satisfying the capacity density condition  (cf. Definition \ref{def-CDC}), and let $Lu=-\div(A\nabla u)$ and $L_0u=-\div(A_0\nabla u)$ be real (non-necessarily symmetric) elliptic operators.  Define
	\begin{equation}\label{eqn:discrepancy}
		\varrho(A, A_0)(X) :=\sup_{Y \in B(X, \delta(X)/2)} |A(Y) - A_0(Y)|, \qquad X \in \Omega,
	\end{equation}
and	
	\begin{equation*}
		\normmm{\varrho(A, A_0)} := \sup_{B} \sup_{B'} \frac{1}{\omega^{X_{\Delta}}_{L_0}(\Delta')}
		\iint_{B' \cap \Omega} \varrho(A, A_0)(X)^2 \frac{G_{L_0}(X_{\Delta}, X)}{\delta(X)^2} dX,
	\end{equation*}
	where $\Delta=B \cap \Omega$, $\Delta'=B' \cap \Omega$, and the sup is taken respectively over all balls
	$B=B(x, r)$ with $x \in \partial \Omega$ and $0<r<\diam(\partial \Omega)$, and $B'=B(x', r)$ with $x' \in 2 \Delta$
	and $0<r'<c_0r/4$, and $c_0$ is the Corkscrew constant.  We also define
	\begin{align*}
		\mathscr{A}_{\alpha}(\varrho(A, A_0))(x) := \left(\iint_{\Gamma^{\alpha}(x)}
		\frac{\varrho(A, A_0)(X)^2}{\delta(X)^{n+1}} dX \right)^{\frac12}, \qquad x \in \partial \Omega,
	\end{align*}
	where $\Gamma^{\alpha}(x) := \{X \in \Omega: |X-x|\leq (1+\alpha) \delta(X) \}$.
	
	If
	\begin{equation}\label{eq:AAAA}
		\normmm{\varrho(A, A_0)} < \infty \qquad\text{or}\qquad
		\mathscr{A}_{\alpha}(\varrho(A, A_0)) \in L^{\infty}(\partial \Omega, \omega_{L_0}),
	\end{equation}
	then all the properties \textup{(a)}--\textup{(f)$'$} in Theorem \ref{thm:main} are satisfied.
	
	Moreover, if given $1<p<\infty$, there exists $\varepsilon_p>0$ (depending only on dimension,  the 1-sided NTA and CDC constants, the ellipticity constants of $L_0$ and $L$, and $p$) such that if
	\[
	 \normmm{\varrho(A, A_0)} \leq\varepsilon_p \qquad\text{or}\qquad  \|\mathscr{A}_{\alpha}(\varrho(A, A_0))\|_{L^\infty(\omega_{L_0})}\le \varepsilon_p,
	\]
	then $\omega_L\in RH_{p'}(\pom,\omega_{L_0})$ and hence $L$ is $L^{q}(\omega_{L_0})$-solvable for $q\ge p$.
\end{corollary}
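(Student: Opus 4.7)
The plan is to concatenate the perturbation theorems [AHMT, Theorems 1.5 and 1.10] with the equivalences packaged in Theorem \ref{thm:main}. The role of [AHMT] is to produce an $A_\infty$ (or quantitative $RH_{p'}$) conclusion for $\omega_L$ with respect to $\omega_{L_0}$ starting from hypotheses on the disagreement $\varrho(A,A_0)$; the role of Theorem \ref{thm:main} is to transfer that conclusion to every one of the equivalent properties (b)--(f'). No new estimate has to be produced: everything is already in place, and the argument is essentially a matter of concatenating two black boxes.

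For the first (large-constant) assertion, I would split according to which alternative in \eqref{eq:AAAA} is being assumed. If $\normmm{\varrho(A,A_0)}<\infty$, I would invoke [AHMT, Theorem 1.5] to deduce $\omega_L\in A_\infty(\partial\Omega,\omega_{L_0})$. If instead $\mathscr{A}_\alpha(\varrho(A,A_0))\in L^\infty(\partial\Omega,\omega_{L_0})$, [AHMT, Theorem 1.10] yields the same conclusion. Either way hypothesis (a) of Theorem \ref{thm:main} holds, and the equivalences therein automatically deliver (b)--(f').

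For the quantitative (small-constant) assertion, I would use the fact that [AHMT, Theorems 1.5 and 1.10] are sharp in the following sense: for each fixed $1<p<\infty$ there is a threshold $\varepsilon_p>0$, depending only on dimension, the 1-sided NTA and CDC constants, the ellipticity constants of $L_0$ and $L$, and $p$, such that smallness of $\normmm{\varrho(A,A_0)}$ or of $\|\mathscr{A}_\alpha(\varrho(A,A_0))\|_{L^\infty(\omega_{L_0})}$ below $\varepsilon_p$ upgrades the conclusion to $\omega_L\in RH_{p'}(\partial\Omega,\omega_{L_0})$. The equivalence (a)${}_{p'}\iff$(b)${}_p$ from Theorem \ref{thm:main} then gives $L^p(\omega_{L_0})$-solvability of $L$, and the final implication in the ``Furthermore'' part of that theorem promotes this to $L^q(\omega_{L_0})$-solvability for every $q\geq p$. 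The main (and essentially only) obstacle I anticipate is a bookkeeping one: verifying that the formulations of $\normmm{\cdot}$ and $\mathscr{A}_\alpha$ used in [AHMT] agree with those introduced above (including the placement of the supremum over the inner ball $B'$ and the aperture of the cones $\Gamma^\alpha(x)$); since the present paper takes [AHMT] as its starting point, this matching is immediate.
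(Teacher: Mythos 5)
Your proposal is correct and coincides with the paper's own argument: the corollary is stated there precisely as a direct consequence of the perturbation results in \cite[Theorems~1.5, 1.10]{AHMT} (giving $\omega_L\in A_\infty(\partial\Omega,\omega_{L_0})$, resp.\ $\omega_L\in RH_{p'}(\partial\Omega,\omega_{L_0})$ in the small-constant case) combined with the equivalences and the quantitative statements $\textup{(a)}_{p'}\Longleftrightarrow\textup{(b)}_p$ and $\textup{(b)}_p\Longrightarrow\textup{(b)}_q$ for $q\ge p$ in Theorem~\ref{thm:main}. No further verification is needed beyond the bookkeeping you already note.
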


Our next goal is to state a qualitative version of Theorem~\ref{thm:main} in line with \cite{CMO}. The $A_\infty$ condition will turn into absolute continuity. The qualitative analog of $\mathcal{S}<\mathcal{N}$ is going to be that the conical square function satisfies $L^q$ estimates in some pieces of the boundary. On the other hand, as seen from the proof of Theorem~\ref{thm:main} (see Lemma~\ref{lemma:CME-S} and \eqref{S<CME}), the CME condition, more precisely, the left-hand side term of \eqref{CME:Linfty}  is connected with the local $L^2$-norm of the conical square function. Thus, the $L^2$-estimates for the conical square function are the qualitative version of CME. In turn, all these are equivalent to the simple fact that the truncated conical square function is finite almost everywhere with respect to the elliptic measure $\omega_{L_0}$.

\begin{theorem}\label{thm:abs}
	Let $\Omega\subset\mathbb{R}^{n+1}$, $n\ge 2$, be a 1-sided NTA domain (cf. Definition \ref{def1.1nta}) satisfying the capacity density condition  (cf. Definition \ref{def-CDC}).  There exists $\alpha_0>0$ (depending only on the 1-sided NTA and CDC constants) such that for each fixed $\alpha \geq \alpha_0$ and for every real (not necessarily symmetric) elliptic operators $L_0 u = -\div(A_0 \nabla u)$ and $Lu=-\div(A \nabla u)$ the following statements are equivalent:
	\begin{list}{}{\usecounter{enumi}\leftmargin=1cm \labelwidth=1cm \itemsep=0.2cm
			\topsep=.2cm \renewcommand{\theenumi}{\alph{enumi}}}
		
		\item[\textup{(a)\phantom{$'$}}] \label{list:abs-1} $\omega_{L_0} \ll \omega_L$ on $\partial \Omega$.
		
		\item[\textup{(b)\phantom{$'$}}]\label{list:abs-2} $\partial \Omega=\bigcup_{N \geq 0} F_N$, where $\omega_{L_0}(F_0)=0$, for each $N \geq 1$,
		$F_N=\partial \Omega \cap \partial \Omega_N$ for some bounded 1-sided NTA domain $\Omega_N \subset \Omega$ satisfying the capacity density condition, and $\mathcal{S}^{\alpha}_r u \in L^q(F_N, \omega_{L_0})$ for every weak solution $u \in W_{\loc}^{1,2}(\Omega) \cap L^{\infty}(\Omega)$ of $Lu=0$ in $\Omega$, for all (or for some) $r>0$, and for all (or for some) $q\in(0,\infty)$.
		
		\item[\textup{(b)$'$}]\label{list:abs-2prime} $\partial \Omega=\bigcup_{N \geq 0} F_N$, where $\omega_{L_0}(F_0)=0$, for each $N \geq 1$,
		$F_N=\partial \Omega \cap \partial \Omega_N$ for some bounded 1-sided NTA domain $\Omega_N \subset \Omega$ satisfying the capacity density condition, and $\mathcal{S}^{\alpha}_r u \in L^q(F_N, \omega_{L_0})$ where $u(X)=\omega_L^X(S)$, $X\in\Omega$,
		for any arbitrary Borel set $S\subset\pom$, for all (or for some) $r>0$, and for all (or for some) $q\in(0,\infty)$.

		\item[\textup{(c)\phantom{$'$}}]\label{list:abs-3} $\mathcal{S}^{\alpha}_r u(x)<\infty$ for $\omega_{L_0}$-a.e.~$x \in \partial \Omega$, for every weak solution $u \in W_{\loc}^{1,2}(\Omega)\cap L^{\infty}(\Omega)$ of  $Lu=0$ in $\Omega$ and for all (or for some) $r>0$.
		
		\item[\textup{(c)$'$}]\label{list:abs-3prime} $\mathcal{S}^{\alpha}_r u(x)<\infty$ for $\omega_{L_0}$-a.e.~$x \in \partial \Omega$ where $u(X)=\omega_L^X(S)$, $X\in\Omega$, 		  for any arbitrary Borel set $S\subset\pom$, and for all (or for some) $r>0$.
		
		\item[\textup{(d)\phantom{$'$}}]\label{list:abs-4} For every weak solution $u \in W_{\loc}^{1,2}(\Omega)\cap L^{\infty}(\Omega)$ of
		$Lu=0$ in $\Omega$ and for $\omega_{L_0}$-a.e.~$x \in \partial \Omega$ there exists $r_x>0$ such that
		$\mathcal{S}^{\alpha}_{r_x} u(x)<\infty$.

		\item[\textup{(d)$'$}]\label{list:abs-4prime} For every Borel set $S\subset\pom$ and for $\omega_{L_0}$-a.e.~$x \in \partial \Omega$ there exists $r_x>0$ such that
		  $\mathcal{S}^{\alpha}_{r_x} u(x)<\infty$, where $u(X)=\omega_L^X(S)$, $X\in\Omega$.

	\end{list}
\end{theorem}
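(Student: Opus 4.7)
The plan is to close the loop of equivalences via the two nontrivial implications (a)$\Rightarrow$(b) and (d$'$)$\Rightarrow$(a). All other arrows are automatic: characteristic-function solutions $u(X)=\oL^X(S)$ are bounded weak solutions of $Lu=0$, giving (b)$\Rightarrow$(b$'$), (c)$\Rightarrow$(c$'$), (d)$\Rightarrow$(d$'$); $L^q$ integrability of $\mathcal{S}^\alpha_r u$ on each $F_N$ forces $\oLo$-a.e.\ finiteness on $\pom$ (since $\pom=\bigcup_N F_N$ modulo an $\oLo$-null set), giving (b)$\Rightarrow$(c) and (b$'$)$\Rightarrow$(c$'$); and the monotonicity of $\mathcal{S}^\alpha_r u$ in $r$ permits the choice $r_x\equiv r$, giving (c)$\Rightarrow$(d) and (c$'$)$\Rightarrow$(d$'$). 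The substantive work is therefore the two nontrivial implications.

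For (a)$\Rightarrow$(b), I would write $d\oLo=h\,d\oL$ by Radon-Nikodym with $h\ge 0$ Borel, set $F_0:=\{h=0\}\cup\{h=+\infty\}$ (which is $\oLo$-null), and partition $\pom\setminus F_0$ into level sets $E_N:=\{2^{N-1}\le h<2^N\}$, so that $\oLo\asymp 2^N\oL$ on $E_N$. Over each $E_N$ I would build a bounded 1-sided NTA subdomain $\Omega_N\subset\Omega$ satisfying the CDC by the dyadic sawtooth construction available in this setting, with $F_N:=\pom\cap\partial\Omega_N\supset E_N$ modulo an $\oLo$-null set. Corollary~\ref{corol:main} applied to $L$ on $\Omega_N$ then yields $L^q$-control of $\mathcal{S}^\alpha u$ by $\mathcal{N} u$ with respect to the $L$-elliptic measure on $\Omega_N$ for every bounded null solution of $L$ in $\Omega_N$; the standard comparability of that measure with $\oL$ on $F_N$ (interior sawtooth principle), combined with $h\asymp 2^N$ on $E_N$, transfers this into $L^q(F_N,\oLo)$ bounds. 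Choosing $\alpha\ge\alpha_0$ adapted to the NTA and CDC constants ensures $\Gamma^\alpha(x)\cap B(x,r)\subset\Omega_N$ for $x\in F_N$ at small scales, so the $\Omega$-square function agrees with its $\Omega_N$-analogue up to harmless localization, yielding (b).

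For (d$'$)$\Rightarrow$(a), I would argue by contrapositive. Assuming $\oLo\not\ll\oL$, Lebesgue decomposition produces a Borel set $F\subset\pom$ with $\oL(F)=0$ and $\oLo(F)>0$, and the task is to exhibit a Borel $S\subset\pom$ for which $\mathcal{S}^\alpha_{r_x}u(x)=+\infty$ for every $r_x>0$ on a subset of $F$ of positive $\oLo$-measure, where $u(X)=\oL^X(S)$. The naïve choice $S=F$ fails because mutual absolute continuity of the family $\{\oL^X\}_{X\in\Omega}$ (from change of pole) forces $u\equiv 0$ and hence $\mathcal{S}u\equiv 0$. Instead, I would exhaust $F$ by a decreasing sequence of relatively open neighborhoods $U_k\supset F$ in $\pom$ with $\oL(U_k)\to 0$ (by regularity of $\oL$) and select $S$ as a Borel combination of the shells $U_k\setminus U_{k+1}$ by a stopping-time / randomization scheme, arranged so that $u(X)=\oL^X(S)$ oscillates at infinitely many Whitney scales along nontangential approaches to $\oLo$-a.e.\ $x\in F$, forcing $\iint_{\Gamma^\alpha(x)\cap B(x,r)}|\nabla u(X)|^2\delta(X)^{1-n}\,dX=+\infty$ for every $r>0$.

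The principal obstacle is precisely this construction of $S$: it must be simultaneously $\oL$-nontrivial (so $u\not\equiv 0$), $\oLo$-concentrated on a divergent oscillation pattern over $F$, and a \emph{single} Borel set realizing these divergences uniformly. A cleaner alternative, in the spirit of the CMO framework cited in the introduction, avoids the explicit $S$ entirely: combine (d$'$) with Chebyshev to extract, for each Borel $S$, big-piece sets $\{\mathcal{S}^\alpha_r u_S\le M\}$ of $\oLo$-measure tending to full as $M,1/r\to\infty$; run a stopping-time exhaustion to produce sawtooth subdomains on which $\mathrm{CME}(\oLo)$ holds uniformly for all characteristic-function solutions; and invoke Theorem~\ref{thm:main} (the direction (c$'$)$\Rightarrow$(a)) to deduce $\oL\in A_\infty(\oLo)$ locally, whence $\oLo\ll\oL$ piecewise and globally by summation.
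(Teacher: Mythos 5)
Your reduction of the easy arrows is fine, but both of the substantive implications have genuine gaps. In (a)$\Rightarrow$(b), the step ``Choosing $\alpha\ge\alpha_0$ ensures $\Gamma^{\alpha}(x)\cap B(x,r)\subset\Omega_N$ for $x\in F_N$ at small scales'' is false: for $x\in\pom\cap\partial\Omega_N$ lying on the boundary of a stopping cube $Q_j$, the cone $\Gamma^{\alpha}(x)$ (defined with $\delta=\dist(\cdot,\pom)$) contains Whitney regions $U_{Q'}$ with $Q'\subset Q_j$, which are precisely the pieces removed from the sawtooth; so the $\Omega$-square function of item (b) cannot be identified with its $\Omega_N$-analogue, and applying Corollary~\ref{corol:main} \emph{on $\Omega_N$} (which would anyway produce cones and distances relative to $\partial\Omega_N$, and the elliptic measure $\omega_{L,\Omega_N}$, whose relation to $\omega_L$ on $F_N$ is a nontrivial projection/sawtooth lemma, not a ``standard comparability'') does not give what (b) asks for. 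The paper never leaves $\Omega$: it runs a stopping time on the cube ratios $\omega(Q)/\omega_0(Q)$, uses the hypothesis $\omega_{L_0}\ll\omega_L$ only to show the residual set has $\omega_0$-measure zero, and then, on the good set $E_N$ where $d\omega_0/d\omega\approx_N1$, transfers the estimate $\mathcal{S}<\mathcal{N}$ in $L^q(\omega_L)$ (valid on $\Omega$ since trivially $\omega_L\in A_\infty(\omega_L)$, cf.\ \eqref{S<N:dyadic}) to $L^q(\omega_0)$; the sawtooth $\Omega^{\vartheta}_{\F_N,Q_0}$ enters only through Lemma~\ref{lemma:CDC-inherit} to furnish the structural statement $F_N=\pom\cap\partial\Omega_N$. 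Your level-set decomposition of $h=d\omega_{L_0}/d\omega_L$ could be salvaged along these lines (use $h\le 2^N$ on $E_N$ and the $L^q(\omega_L)$ bound on $\Omega$ directly), but as written the route through $\Omega_N$'s own elliptic measure and the cone-containment claim does not work; also the assertion that each level set is contained, modulo $\omega_{L_0}$-null sets, in $\pom\cap\partial\Omega_N$ for a \emph{bounded} sawtooth is itself a stopping-time construction you have not supplied.

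The second gap is in (d$'$)$\Rightarrow$(a), which is the heart of the theorem. You correctly observe that $S=F$ fails, but the ``randomization over shells'' is not carried out, and the proposed alternative has a quantifier problem: Chebyshev gives, \emph{for each fixed} Borel $S$, big pieces where $\mathcal{S}^{\alpha}_r u_S\le M$, with exceptional sets depending on $S$; since you must handle uncountably many $S$ simultaneously (or rather produce one $S$ adapted to the given $\omega_L$-null set $F$), no exhaustion over these pieces yields the uniform $\mathrm{CME}(\omega_{L_0})$ for characteristic functions needed to invoke Theorem~\ref{thm:main}, and invoking it ``locally'' on sawtooths again runs into the unproved transfer between sawtooth and ambient elliptic measures. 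The missing ingredient is exactly Lemma~\ref{sq-function->M} in the case $\beta=0$ (the KKoPT/KKiPT-type construction from \cite{CHMT,CMO}): given $Q_0$ and $F\subset Q_0$ with $\omega_L^{X_{Q_0}}(F)=0$, it produces a \emph{single} Borel set $S\subset Q_0$ such that $u=\omega_L^{\cdot}(S)$ satisfies $\mathcal{S}^{\vartheta_0}_{Q_0,\eta}u\equiv\infty$ on $F$; combined with the containment of dyadic cones in truncated regular cones \eqref{daydic-cone:regular-cone} and with \eqref{est:two-trunc}, hypothesis (c$'$) (hence (d$'$)) forces $\omega_{L_0}^{X_{Q_0}}(F)=0$, which is (a). Without this lemma, or a substitute construction of comparable strength, your argument for this implication does not close.
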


Our first application of the previous result is a qualitative version of \cite[Theorem~1.10]{AHMT}:

\begin{theorem}\label{thm:wL}
Let $\Omega\subset\mathbb{R}^{n+1}$, $n\ge 2$, be a 1-sided NTA domain (cf. Definition \ref{def1.1nta}) satisfying the capacity density condition  (cf. Definition \ref{def-CDC}).  There exists $\alpha_0>0$ (depending only on the 1-sided NTA and CDC constants) such that, if the real (not necessarily symmetric) elliptic operators $L_0 u = -\div(A_0 \nabla u)$ and $L u = -\div(A \nabla u)$ satisfy for some $\alpha \ge\alpha_0$ and for some $r>0$
	\begin{align}\label{eq:rhoAA}
		\iint_{\Gamma^{\alpha}_{r}(x)} \frac{\varrho(A, A_0)(X)^2}{\delta(X)^{n+1}} dX < \infty,
		\qquad \text{for $\omega_{L_0}$-a.e.~} x \in \partial \Omega,
	\end{align}
	where $\varrho(A, A_0)$ is as in \eqref{eqn:discrepancy}, then $\w_{L_0} \ll \w_L$.
\end{theorem}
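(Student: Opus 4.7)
The plan is to apply the qualitative criterion furnished by Theorem~\ref{thm:abs}, and in particular to reduce the proof to verifying condition~\textup{(d)} of that theorem: it suffices to establish that for every bounded weak solution $u\in W^{1,2}_{\loc}(\Omega)\cap L^{\infty}(\Omega)$ of $Lu=0$ in $\Omega$, and for $\omega_{L_0}$-a.e.~$x\in\pom$, there exists $r_x>0$ with $\mathcal{S}^{\alpha}_{r_x}u(x)<\infty$. Once this is achieved, Theorem~\ref{thm:abs} immediately yields $\omega_{L_0}\ll\omega_L$.

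The natural mechanism to produce such a localized square-function bound is a sawtooth stratification of the boundary. For each $N\in\mathbb N$, set
\[
E_N:=\Bigl\{x\in\pom:\iint_{\Gamma^{\alpha}_{r}(x)}\frac{\varrho(A,A_0)(X)^2}{\delta(X)^{n+1}}\,dX\le N\Bigr\};
\]
the hypothesis \eqref{eq:rhoAA} guarantees that $\omega_{L_0}\bigl(\pom\setminus\bigcup_N E_N\bigr)=0$. For each fixed $N$, I would invoke a Whitney/stopping-time construction of the type systematized in \cite{HM1,HMU,AHMNT,AHMT} to produce a bounded 1-sided NTA subdomain $\Omega_N\subset\Omega$ satisfying the CDC, whose boundary meets $\pom$ in a Borel set $F_N\subset E_N$ with $\omega_{L_0}(E_N\setminus F_N)=0$, and such that truncated cones $\Gamma^{\alpha}_{r_N}(x)$ with $x\in F_N$ (for a scale $r_N$ depending on $N$) lie inside $\Omega_N$.

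On each such $\Omega_N$, the uniform cone-integral bound enjoyed by the points of $F_N$ should upgrade, after Whitney bookkeeping over the sawtooth, into the quantitative hypothesis $\mathscr{A}_{\alpha}(\varrho(A,A_0))\in L^{\infty}(\partial\Omega_N,\omega_{L_0})$, now interpreted for the elliptic measure of $L_0$ inside $\Omega_N$. Invoking Corollary~\ref{cor:Ainfty} applied in~$\Omega_N$ then yields that the elliptic measure of $L$ on $\partial\Omega_N$ lies in $A_{\infty}$ with respect to that of $L_0$; in particular the two are mutually absolutely continuous on~$\partial\Omega_N$. The standard sawtooth projection lemma for 1-sided NTA CDC subdomains from \cite{AHMT}, applied to both $L_0$ and $L$, then transfers this absolute continuity back to the portion $F_N$ of the original boundary, i.e.~$\omega_{L_0}|_{F_N}\ll\omega_L|_{F_N}$. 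Summing over $N$ and using that $\bigcup_N F_N$ covers $\pom$ modulo $\omega_{L_0}$-null sets completes the argument.

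The main obstacle lies in the sawtooth step: one has to guarantee that the subdomains $\Omega_N$ are genuinely 1-sided NTA and satisfy the CDC, that the truncated cones at points of $F_N$ sit inside them, and, most importantly, that the \emph{pointwise} cone integrability \eqref{eq:rhoAA} at $\omega_{L_0}$-a.e.~vertex upgrades to a \emph{uniform} $L^{\infty}$ cone bound of $\mathscr{A}_{\alpha}(\varrho(A,A_0))$ with vertices now ranging over all of~$\partial\Omega_N$. Equally delicate is the projection lemma that moves absolute continuity from $\partial\Omega_N$ back to $F_N\subset\pom$. Both of these technical ingredients are, however, developed in \cite{AHMT} in exactly this 1-sided NTA/CDC framework and can be adapted to the current setting without essential new difficulties.
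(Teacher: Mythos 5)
Your overall skeleton (stratify $\pom$ into sets where the cone integral of $\varrho(A,A_0)$ is bounded, prove absolute continuity piece by piece, and sum) is logically sound, but the crux of your argument—the ``upgrade'' step—has a genuine gap, and it is not a routine adaptation of \cite{AHMT}. If you apply Corollary~\ref{cor:Ainfty} \emph{in} $\Omega_N$, every object in its hypothesis must be taken relative to $\Omega_N$: the discrepancy $\varrho$, the cones, and above all the distance $\delta_{\Omega_N}$ to $\partial\Omega_N$. The boundary $\partial\Omega_N$ of a sawtooth consists of $F_N\subset\pom$ together with a large ``interior'' portion made of Whitney faces sitting well inside $\Omega$, and near that portion $\delta_{\Omega_N}(X)\to 0$ while $\varrho(A,A_0)(X)$ has no smallness whatsoever (the hypothesis \eqref{eq:rhoAA} says nothing about the coefficients away from $\pom$ at the sawtooth's scale). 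Since $\iint_{\Gamma^{\alpha}_{s}(x)}\delta_{\Omega_N}(X)^{-n-1}\,dX=\infty$ for any fixed aperture and truncation, the functional $\mathscr{A}_{\alpha}$ computed in $\Omega_N$ is generically infinite at vertices on $\partial\Omega_N\setminus\pom$ (and the Carleson-type norm $\normmm{\cdot}$ relative to $\Omega_N$ diverges logarithmically for the same reason). So the pointwise bound $N$ enjoyed at vertices in $F_N$, for cones of $\Omega$, does not yield the $L^\infty$ hypothesis of Corollary~\ref{cor:Ainfty} for $\Omega_N$; this is exactly the obstruction that prevents one from ``restricting'' a perturbation problem to a sawtooth. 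A secondary point: your sublevel sets $E_N$ of the cone functional do not by themselves produce the discrete Carleson structure one needs on the sawtooth; a further stopping time on dyadic sums is required, and you never carry out (or reduce to) condition (d) of Theorem~\ref{thm:abs}, which you announced as the plan.

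The paper proceeds quite differently and avoids this issue. Theorem~\ref{thm:wL} is a two-line specialization ($\widetilde A=A_0-A$, $D=0$) of a more general perturbation result, Theorem~\ref{thm:AAAD}, which is proved by verifying criterion (c) of Theorem~\ref{thm:abs}: for each bounded null solution $u$ of $L$ one shows $\mathcal{S}^{\alpha}_r u<\infty$ $\omega_{L_0}$-a.e. To do so, one runs a stopping time on the \emph{dyadic Carleson sums} $\sum_{Q_j\subset Q}\gamma_Q^{\vartheta}$ built from the discrepancy over Whitney regions, and then proves the key bound \eqref{eq:KAB*} by integrating by parts on the sawtooth $\Omega^{\vartheta}_{\F_N,Q_0}$ against the Green function $G_{L_0}(X_0,\cdot)$ of the \emph{original} domain, using the cutoffs of Lemma~\ref{lemma:approx-sawtooth}; the error terms supported near the interior boundary of the sawtooth are handled by Caccioppoli and bounded overlap, with no smallness of $\varrho$ there and no elliptic measure or perturbation theory for the subdomain. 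If you want to salvage your route, you would essentially have to reproduce the extrapolation-of-Carleson-measures machinery of \cite{AHMT, CHM}, where the passage to sawtooths is done at the level of discrete measures and projection operators precisely to sidestep the interior-boundary problem—this is far from ``without essential new difficulties,'' and it is heavier than the direct argument the paper gives.
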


To present another application of Theorem \ref{thm:abs}, we introduce some notation. For any  real (not necessarily symmetric) elliptic operator $Lu=-\div(A \nabla u)$, we let $L^{\top}$ denote the transpose of $L$, and let $L^{\rm sym}=\frac{L+L^{\top}}{2}$ be the symmetric part of $L$. These are respectively the divergence form elliptic operators with associated matrices $A^\top$ (the transpose of $A$)  and $A^{\rm sym}=\frac{A+A^{\top}}{2}$.

\begin{theorem}\label{thm:wLT}
Let $\Omega\subset\mathbb{R}^{n+1}$, $n\ge 2$, be a 1-sided NTA domain (cf. Definition \ref{def1.1nta}) satisfying the capacity density condition  (cf. Definition \ref{def-CDC}).  There exists $\alpha_0>0$ (depending only on the 1-sided NTA and CDC constants) such that, if $Lu=-\div(A \nabla u)$ is a real (not necessarily symmetric) elliptic operator, and we assume that $(A-A^{\top}) \in \Lip_{\loc}(\Omega)$ and that for some $\alpha\ge \alpha_0$ and for some $r>0$ one has
	\begin{equation}\label{eq:divCAA}
		\mathscr{F}^{\alpha}_r(x; A) :=
		\iint_{\Gamma^{\alpha}_{r}(x)}
		\left|\div_C A-A^{\top})(X)\right|^2 \delta(X)^{1-n} dX<\infty,
		\qquad \text{for $\w_L$-a.e.~} x \in \partial \Omega,
	\end{equation}
	where
	\begin{align*}
		\div_C(A-A^{\top})(X):=\bigg(\sum_{i=1}^{n+1} \partial_i(a_{i,j}-a_{j,i})(X) \bigg)_{1 \leq j \leq n+1},
		\qquad X \in \Omega,
	\end{align*}
	then $\w_L \ll \w_{L^{\top}}$ and $\w_L \ll \w_{L^{\rm sym}}$.
	
	Moreover, if
	\begin{equation}\label{eq:www}
		\mathscr{F}^{\alpha}_r(x; A)<\infty,
		\qquad \text{for $\w_L $-a.e.~and \, $\w_{L^{\top}}$-a.e.~} x \in \partial \Omega,
	\end{equation}
	then $\w_L \ll \w_{L^{\top}} \ll \w_L \ll \w_{L^{\rm sym}}$.
	
\end{theorem}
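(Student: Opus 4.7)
The plan is to reduce both absolute continuity statements to the qualitative square-function criterion in Theorem \ref{thm:abs}, using an antisymmetric-divergence identity as the bridge to the hypothesis \eqref{eq:divCAA}. The key algebraic input is that if $B:=A-A^{\top}\in\Lip_{\loc}(\Omega)$ is antisymmetric, then for any $u\in W^{1,2}_{\loc}(\Omega)$ one has the distributional identity
\[
\div(B\nabla u)=\div_C(A-A^{\top})\cdot\nabla u,
\]
the would-be second-order piece $\sum_{i,j}b_{ij}\,\partial_i\partial_j u$ vanishing by antisymmetry after moving one derivative onto a test function. Consequently, a bounded null solution $u$ of $L^{\top}$ satisfies the inhomogeneous equation $Lu=-\div_C(A-A^{\top})\cdot\nabla u$ in $\Omega$, and a bounded null solution of $L^{\rm sym}$ satisfies $Lu=-\tfrac12\div_C(A-A^{\top})\cdot\nabla u$. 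This identity converts the antisymmetric oscillation appearing in \eqref{eq:divCAA} into a genuine source term for an $L$-equation.

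To prove $\w_L\ll\w_{L^{\top}}$ I apply Theorem \ref{thm:abs} with $L_0=L$ and the ``$L$-slot'' occupied by $L^{\top}$; by the equivalence (a)$\Leftrightarrow$(d)$'$ it then suffices to verify, for every Borel set $S\subset\pom$, that $\mathcal{S}^{\alpha}_{r_x}u(x)<\infty$ for some $r_x>0$ at $\w_L$-a.e.~$x\in\pom$, where $u(X):=\w_{L^{\top}}^X(S)$. Fix such an $S$, let $v(X):=\w_L^X(S)$ be the $L$-solution with the same boundary trace, and set $w:=u-v$. By Corollary \ref{corol:main}(iii), $\mathcal{S}^{\alpha}v\in L^q(\w_L)$ for every $q\in(0,\infty)$, hence $\mathcal{S}^{\alpha}v(x)<\infty$ for $\w_L$-a.e.~$x$, so it remains to bound $\mathcal{S}^{\alpha}_r w$. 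The function $w$ solves $Lw=-\div_C(A-A^{\top})\cdot\nabla u$ and has vanishing nontangential limits on a set of full $\w_L+\w_{L^{\top}}$-measure. A Caccioppoli/energy argument on Whitney cubes, combined with a Hardy-type absorption that exploits the vanishing boundary trace of $w$, controls $\mathcal{S}^{\alpha}_r w(x)^2$ (after a standard cone widening $\alpha\to\alpha'>\alpha$) by a constant multiple of
\[
\iint_{\Gamma^{\alpha'}_{r'}(x)}|\div_C(A-A^{\top})|^2\,\delta^{1-n}\,dX=\mathscr{F}^{\alpha'}_{r'}(x;A),
\]
which is finite at $\w_L$-a.e.~$x$ by \eqref{eq:divCAA}. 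Together with the finiteness of $\mathcal{S}^{\alpha}v$ this yields (d)$'$ and hence $\w_L\ll\w_{L^{\top}}$. The identical argument, with $L^{\top}$ replaced by $L^{\rm sym}$ and $B$ by $\tfrac12 B$, produces $\w_L\ll\w_{L^{\rm sym}}$.

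For the ``moreover'' statement, the hypothesis \eqref{eq:www} is symmetric in the pair $(A,A^{\top})$ since $\div_C(A^{\top}-A)=-\div_C(A-A^{\top})$; applying the first part with the roles of $L$ and $L^{\top}$ interchanged produces $\w_{L^{\top}}\ll\w_L$, and concatenating the three inclusions gives the chain $\w_L\ll\w_{L^{\top}}\ll\w_L\ll\w_{L^{\rm sym}}$.

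The main technical obstacle I anticipate is the passage from the equation $Lw=-\div_C(A-A^{\top})\cdot\nabla u$ to the pointwise finiteness of $\mathcal{S}^{\alpha}_r w$: a naive Caccioppoli leaves a term of size $\ell(I)^{-2}\int_{I^*}w^2$ per Whitney cube $I$ whose cone-sum diverges, so the proof must genuinely exploit the vanishing boundary trace of $w$ through a Hardy-type inequality adapted to 1-sided NTA domains satisfying the CDC. It is precisely this vanishing trace—extracted by comparing $u$ with the true $L$-solution $v$—together with the antisymmetry of $B$ (eliminating the second-order contribution to $Lw$) that isolates the Carleson-like quantity $\mathscr{F}^{\alpha'}_{r'}(x;A)$ as the only surviving term to be controlled by the hypothesis.
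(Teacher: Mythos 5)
Your overall reduction (feeding the antisymmetric part $D=A-A^{\top}$ into the qualitative square-function criterion of Theorem \ref{thm:abs}, after the identity $\div\big((A-A^{\top})\nabla u\big)=\div_C(A-A^{\top})\cdot\nabla u$) is in the same spirit as the paper, which deduces Theorem \ref{thm:wLT} from the general perturbation result, Theorem \ref{thm:AAAD}. However, the analytic core of your argument has a genuine gap, concentrated in the treatment of $w=u-v$ with $u(X)=\w_{L^{\top}}^X(S)$ and $v(X)=\w_L^X(S)$. First, the boundary-trace claim is circular: $u$ converges nontangentially to $\mathbf{1}_S$ only $\w_{L^{\top}}$-a.e.\ and $v$ only $\w_L$-a.e., so before the mutual absolute continuity is proved the exceptional set for $u$ may have positive $\w_L$-measure; asserting that $w$ has vanishing nontangential limits on a set of full $(\w_L+\w_{L^{\top}})$-measure presupposes essentially what you are trying to prove. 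Second, even granting an a.e.\ vanishing nontangential trace, that is purely qualitative: the mechanism that makes the cone sums $\sum_I\ell(I)^{-n-1}\iint_{I^*}w^2\,dX$ converge in Lemma \ref{lemma:CME-uOmega} is the boundary H\"older estimate of Lemma \ref{lemma:proppde}, part (f), which needs $w$ to vanish continuously on a whole surface ball, not merely to have nontangential limit $0$ at a.e.\ vertex, and provides a rate $(\delta(X)/r)^{\rho}$ that an a.e.\ statement does not give. Third, the claimed pointwise, cone-by-cone bound $\mathcal{S}^{\alpha}_r w(x)^2\lesssim\mathscr{F}^{\alpha'}_{r'}(x;A)$ is not reachable by Caccioppoli plus a ``Hardy-type absorption'': absorption would require the unknown quantity $\iint_{\Gamma}|\nabla w|^2\delta^{1-n}dX$ to be finite a priori, to occur with a small constant, and to sit on the same cone on both sides; none of these hold, and the square function of $w$ at $x$ is a global object that is not controlled by the data in a slightly enlarged cone at $x$.

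This is precisely why the paper never attempts a pointwise estimate. In Theorem \ref{thm:AAAD} one runs a stopping time on each $Q_0$ at the cubes where the cone functional of $|\div_C D|^2\delta^{1-n}$ (the quantities $\gamma_Q^{\vartheta}$) exceeds $N^2$, and on the associated sawtooth proves the integrated bound \eqref{eq:KAB*}, namely $\iint_{\Omega^{\vartheta}_{\F_N,Q_0}}|\nabla u|^2\,\G_0\,dX\lesssim\w_0(Q_0)+\sum_Q\gamma_Q^{\vartheta}\,\w_0(Q)$, by pairing against the Green function of $L_0=L$, using both equations ($L_1u=0$ and $L_0^{\top}\G_0=0$), the cut-offs of Lemma \ref{lemma:approx-sawtooth}, and \cite[Lemma 4.1]{CHMT} to convert the antisymmetric term into $\div_C D$; a.e.\ finiteness of $\mathcal{S}^{\vartheta}_{Q_0}u$ then follows from $\w_0(E_0)=0$ and letting $N\to\infty$, with no comparison solution $v$ and no boundary-trace input. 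To repair your outline, replace the pointwise step by this integrated sawtooth estimate (or simply invoke Theorem \ref{thm:AAAD} with $A_0=A$, $A_1=A^{\top}$ resp.\ $A_1=A^{\rm sym}$, $\widetilde{A}=0$, which is how the paper concludes in a few lines); your symmetry argument for the ``moreover'' chain under \eqref{eq:www} is then fine.
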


 The structure of this paper is as follows. Section~\ref{section:prelim} contains some preliminaries, definitions, and tools that will be used throughout. Also, for convenience of the reader, we gather in Section~\ref{section:PDE} several facts concerning elliptic measures and Green functions which can be found in the upcoming \cite{HMT:book}. The proof of Theorem~\ref{thm:main} is in Section~\ref{section:proof-main}.  Section \ref{sec:abs} is devoted to proving Theorem \ref{thm:abs}. In Section \ref{sec:perturbation}, we will present the proofs of Theorems \ref{thm:wL} and \ref{thm:wLT} which follow easily from a more general perturbation result which is interesting in its own right.

We note that some interesting related work has  been carried out while this manuscript was in preparation due to Feneuil and Poggi \cite{FP}. This work can be particularized to our setting and contains some results which overlap with ours. First, \cite[Theorem~1.22]{FP} corresponds to $\mathrm{(c)'}\ \Longrightarrow\ \mathrm{(a)}$ in Theorem~\ref{thm:main}. It should be mentioned that both arguments use the ideas originated in \cite{KKiPT} (see also \cite{KKoPT}) which present some problems when extended to the 1-sided NTA setting. Namely, elliptic measure may not always be a probability and also it could happen that for a uniformly bounded number of generations the dyadic children of a given cube may agree with that cube. These two issues have been carefully addressed in \cite[Lemma 3.10]{CHMT} (see Lemma~\ref{sq-function->M} with $\beta>0$) and although such a result is stated in the setting of 1-sided CAD it is straightforward to see that it readily adapts to our case. Our proof of $\mathrm{(c)'}\ \Longrightarrow\ \mathrm{(a)}$ in Theorem~\ref{thm:main} follows easily from that lemma. Second,  \cite[Theorem~1.27]{FP} (see also \cite[Corollary~1.33]{FP}) shows $\mathrm{(d)}$ in Theorem~\ref{thm:main}  with $q=2$ for a class of perturbations of $L$. In our setting, we are showing that $\mathrm{(d)}$ follows if  $\mathrm{(a)}$ holds for any given operator $L$ (whether or not it is a generalized perturbation of $L_0$.)

\section{Preliminaries}\label{section:prelim}

\subsection{Notation and conventions}\label{section:notation}

\begin{list}{$\bullet$}{\leftmargin=0.4cm  \itemsep=0.2cm}
	
	\item We use the letters $c,C$ to denote harmless positive constants, not necessarily the same at each occurrence, which depend only on dimension and the
	constants appearing in the hypotheses of the theorems (which we refer to as the ``allowable parameters'').  We shall also sometimes write $a\lesssim b$ and $a \approx b$ to mean, respectively, that $a \leq C b$ and $0< c \leq a/b\leq C$, where the constants $c$ and $C$ are as above, unless
	explicitly noted to the contrary.   Unless otherwise specified upper case constants are greater than $1$  and lower case constants are smaller than $1$. In some occasions it is important to keep track of the dependence on a given parameter $\gamma$, in that case we write $a\lesssim_\gamma b$ or $a\approx_\gamma b$ to emphasize  that the implicit constants in the inequalities depend on $\gamma$.
	
	\item  Our ambient space is $\ree$, $n\ge 2$.
	
	\item Given $E\subset\ree$ we write $\diam(E)=\sup_{x,y\in E}|x-y|$ to denote its diameter.
	
	\item Given an open set $\Omega \subset \ree$, we shall
	use lower case letters $x,y,z$, etc., to denote points on $\partial \Omega$, and capital letters
	$X,Y,Z$, etc., to denote generic points in $\ree$ (especially those in $\ree\setminus \partial\Omega$).
	
	\item The open $(n+1)$-dimensional Euclidean ball of radius $r$ will be denoted
	$B(x,r)$ when the center $x$ lies on $\partial \Omega$, or $B(X,r)$ when the center
	$X \in \ree\setminus \partial\Omega$.  A \textit{surface ball} is denoted
	$\Delta(x,r):= B(x,r) \cap\partial\Omega$, and unless otherwise specified it is implicitly assumed that $x\in\pom$.
	
	\item If $\pom$ is bounded, it is always understood (unless otherwise specified) that all surface balls have radii controlled by the diameter of $\pom$, that is, if $\Delta=\Delta(x,r)$ then $r\lesssim \diam(\pom)$. Note that in this way $\Delta=\pom$ if $\diam(\pom)<r\lesssim \diam(\pom)$.
	
	

	\item For $X \in \ree$, we set $\delta(X):= \dist(X,\partial\Omega)$.
	
	\item We let $\mathcal{H}^n$ denote the $n$-dimensional Hausdorff measure.
	
	\item For a Borel set $A\subset \ree$, we let $\mathbf{1}_A$ denote the usual
	indicator function of $A$, i.e. $\mathbf{1}_A(X) = 1$ if $X\in A$, and $\mathbf{1}_A(X)= 0$ if $X\notin A$.

	
	
	%
	
	\item We shall use the letter $I$ (and sometimes $J$)
	to denote a closed $(n+1)$-dimensional Euclidean cube with sides
	parallel to the coordinate axes, and we let $\ell(I)$ denote the side length of $I$.
	We use $Q$ to denote  dyadic ``cubes''
	on $E$ or $\partial \Omega$.  The
	latter exist as a consequence of Lemma \ref{lemma:dyadiccubes} below.
	
\end{list}

\subsection{Some definitions}\label{section:defs}
\begin{definition}[\bf Corkscrew condition]\label{def1.cork}
	Following \cite{JK82}, we say that a domain $\Omega\subset \ree$
	satisfies the \textit{Corkscrew condition} if for some uniform constant $0<c_0<1$ and
	for every $x\in \partial\Omega$ and $0<r<\diam(\partial\Omega)$, if we write $\Delta:=\Delta(x,r)$, there is a ball
	$B(X_\Delta,c_0r)\subset B(x,r)\cap\Omega$.  The point $X_\Delta\subset \Omega$ is called
	a \textit{Corkscrew point relative to} $\Delta$ (or, relative to $B$). We note that  we may allow
	$r<C\diam(\pom)$ for any fixed $C$, simply by adjusting the constant $c_0$.
\end{definition}


\begin{definition}[\bf Harnack Chain condition]\label{def1.hc}
	Again following \cite{JK82}, we say
	that $\Omega$ satisfies the \textit{Harnack Chain condition} if there are uniform constants $C_1,C_2>1$ such that for every pair of points $X, X'\in \Omega$
	there is a chain of balls $B_1, B_2, \dots, B_N\subset \Omega$ with $N \leq  C_1(2+\log_2^+ \Pi)$
	where
	\begin{equation}\label{cond:Lambda}
		\Pi:=\frac{|X-X'|}{\min\{\delta(X), \delta(X')\}}
	\end{equation}
	such that $X\in B_1$, $X'\in B_N$, $B_k\cap B_{k+1}\neq\emptyset$ and for every $1\le k\le N$
	\begin{equation}\label{preHarnackball}
		C_2^{-1} \diam(B_k) \leq \dist(B_k,\partial\Omega) \leq C_2 \diam(B_k).
	\end{equation}
	The chain of balls is called a \textit{Harnack Chain}.
\end{definition}

We note that in the context of the previous definition if $\Pi\le 1$ we can trivially form the Harnack chain $B_1=B(X,3\delta(X)/5)$ and $B_2=B(X', 3\delta(X')/5)$ where \eqref{preHarnackball} holds with $C_2=3$. Hence the Harnack chain condition is non-trivial only when $\Pi> 1$.

\begin{definition}[\bf 1-sided NTA and NTA]\label{def1.1nta}
	We say that a domain $\Omega$ is a \textit{1-sided non-tangentially accessible domain} (1-sided NTA)  if it satisfies both the Corkscrew and Harnack Chain conditions.
	Furthermore, we say that $\Omega$ is a \textit{non-tangentially accessible domain} 	(NTA  domain)	if it is a 1-sided NTA domain and if, in addition, $\Omega_{\rm ext}:= \ree\setminus \overline{\Omega}$ also satisfies the Corkscrew condition.
\end{definition}
\begin{remark}
	In the literature, 1-sided NTA domains are also called \textit{uniform domains}. We remark that the 1-sided NTA condition is a quantitative form of openness and path connectedness.
\end{remark}

\begin{definition}[\bf Ahlfors  regular]\label{def1.ADR}
	We say that a closed set $E \subset \ree$ is \textit{$n$-dimensional Ahlfors regular} (AR for short) if
	there is some uniform constant $C_1>1$ such that
	\begin{equation} \label{eq1.ADR}
		C_1^{-1}\, r^n \leq \mathcal{H}^n(E\cap B(x,r)) \leq C_1\, r^n,\qquad x\in E, \quad 0<r<\diam(E).
	\end{equation}
\end{definition}

\begin{definition}[\bf 1-sided CAD and CAD]\label{defi:CAD}
	A \textit{1-sided chord-arc domain} (1-sided CAD) is a 1-sided NTA domain with AR boundary.
	A \textit{chord-arc domain} (CAD) is an NTA domain with AR boundary.
\end{definition}

We next recall the definition of the capacity of a set.  Given an open set $D\subset \ree$ (where we recall that we always assume that $n\ge 2$) and a compact set $K\subset D$  we define the capacity of $K$ relative to $D$ as
\[
\Cap(K, D)=\inf\left\{\iint_{D} |\nabla v(X)|^2 dX:\, \, v\in \mathscr{C}^{\infty}_{c}(D),\, v(x)\geq  1 \mbox{ in }K\right\}.
\]

\begin{definition}[\textbf{Capacity density condition}]\label{def-CDC}
	An open set $\Omega$ is said to satisfy the \textit{capacity density condition} (CDC for short) if there exists a uniform constant $c_1>0$ such that
	\begin{equation}\label{eqn:CDC}
		\frac{\Cap(\overline{B(x,r)}\setminus \Omega, B(x,2r))}{\Cap(\overline{B(x,r)}, B(x,2r))} \geq c_1
	\end{equation}
	for all $x\in \partial\Omega$ and $0<r<\diam(\pom)$.
\end{definition}

The CDC is also known as the uniform 2-fatness as studied by Lewis in \cite{Lew}. Using \cite[Example 2.12]{HKM} one has that
\begin{equation}\label{cap-Ball}
	\Cap(\overline{B(x,r)}, B(x,2r))\approx r^{n-1}, \qquad \mbox{for all $x\in\ree$ and $r>0$},
\end{equation}
and hence the CDC is a quantitative version of the Wiener regularity, in particular every $x\in\pom$ is Wiener regular. It is easy to see that the exterior Corkscrew condition implies CDC. Also, it was proved in \cite[Section 3]{Zhao} and \cite[Lemma 3.27]{HLMN} that a set with Ahlfors regular boundary satisfies the capacity density condition with constant $c_1$ depending only on $n$ and the Ahlfors regular constant.

\subsection{Dyadic grids and sawtooths}\label{section:dyadic}

In this section we introduce a dyadic grid from \cite[Lemma~2.33]{AHMT} along the lines of that obtained in \cite{C} but using the dyadic structure from \cite{HK1, HK2, HMMM}:

\begin{lemma}[\textbf{Existence and properties of the ``dyadic grid''}, {\cite[Lemma~2.33]{AHMT}}]\label{lemma:dyadiccubes}
	Let $E\subset\re^{n+1}$ be a closed set. Then there exists a constant $C\ge 1$ depending just on $n$ such that for each $k\in\mathbb{Z}$ there is a collection of Borel sets  (called ``cubes'')
	$$
	\dd_k:=\big\{Q_j^k\subset E:\ j\in\mathfrak{J}_k\big\},
	$$
	where $\mathfrak{J}_k$ denotes some (possibly finite) index set depending on $k$ satisfying:
	\begin{list}{$(\theenumi)$}{\usecounter{enumi}\leftmargin=1cm \labelwidth=1cm \itemsep=0.2cm \topsep=.2cm \renewcommand{\theenumi}{\alph{enumi}}}
		\item $E=\bigcup_{j\in \mathfrak{J}_k}Q_j^k$ for each $k\in\mathbb{Z}$.
		\item If $m\le k$ then either $Q_j^k \subset Q_i^m$ or $Q_i^m\cap Q_j^k=\emptyset$.
		\item For each $k\in\mathbb{Z}$, $j\in\mathfrak{J}_k$, and $m<k$, there is a unique $i\in\mathfrak{J}_m $ such that $Q_j^k\subset Q_i^m$.
		\item For each  $k\in\mathbb{Z}$, $j\in\mathfrak{J}_k$ there is $x_j^k\in E$ such that
		\[B(x_j^k, C^{-1}2^{-k})\cap E\subset Q_j^k \subset B(x_j^k, C 2^{-k})\cap E.\]

	\end{list}
\end{lemma}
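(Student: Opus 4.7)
The plan is to follow the metric-space version of M.~Christ's dyadic cube construction, as refined by Hyt\"onen--Kairema and Hyt\"onen--Martikainen, which dispenses with any underlying measure and works for an arbitrary closed set in a separable metric space. The core idea is, at each dyadic scale $2^{-k}$, to fix a maximal separated net in $E$ and then to upgrade these nets into a nested partition by means of a ``closest center'' assignment that is modified to force parent-child consistency across scales.

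First, for every $k\in\mathbb{Z}$ I would extract a maximal $2^{-k}$-separated subset $\{x_j^k\}_{j\in\mathfrak{J}_k}\subset E$ via a countable greedy procedure (which suffices since $E\subset\re^{n+1}$ is separable): one has $|x_i^k-x_j^k|\ge 2^{-k}$ for $i\ne j$, while by maximality $E\subset\bigcup_{j}B(x_j^k,2^{-k})$, and the balls $\{B(x_j^k,2^{-k-1})\}_j$ are pairwise disjoint. Next, I would impose a tree structure on the union of centers: for $k\in\mathbb{Z}$ and $j\in\mathfrak{J}_k$, pick a parent index $p(j,k)\in\mathfrak{J}_{k-1}$ realizing $\dist(x_j^k,\{x_i^{k-1}:i\in\mathfrak{J}_{k-1}\})$, breaking ties by a fixed well-ordering of indices. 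The covering property at scale $2^{-(k-1)}$ forces $|x_j^k-x_{p(j,k)}^{k-1}|<2^{-(k-1)}$, and iterating yields, for $m<k$, a unique ancestor $p_{k,m}(j)\in\mathfrak{J}_m$ above $x_j^k$.

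To define the cubes I would first consider the ``Voronoi-type'' assignment $a_k\colon E\to\mathfrak{J}_k$ sending $y\in E$ to the index of its closest center at level $k$ (with the same tie-breaking), and then redefine cubes by forcing consistency with the tree. Concretely, fix a coarse reference level $k_0$; set $Q_j^{k_0}:=\{y\in E:a_{k_0}(y)=j\}$; and recursively define
\[
Q_j^k := \bigl\{\,y\in Q_{p(j,k)}^{k-1}\, :\, a_k(y)=j\,\bigr\},\qquad k>k_0.
\]
Letting $k_0\to-\infty$ and exploiting that the nets become sparser at coarse scales (so that, for any two fixed scales $m<k$, the ancestor relation ``$j\in\mathfrak{J}_k$ lies above $i\in\mathfrak{J}_m$'' eventually stabilizes), one obtains a family $\{Q_j^k\}$ indexed by all $k\in\mathbb{Z}$. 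Properties (a)--(c) are then immediate: (a) because $\{a_k=j\}_j$ partitions $E$; (b) and (c) because the construction is recursive along the tree, so each $Q_j^k$ sits inside exactly one $Q_i^{k-1}$, namely $i=p(j,k)$, and iterating gives the unique ancestor at any coarser scale.

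The crux is verifying the sandwich property (d). The inner inclusion $B(x_j^k,C^{-1}2^{-k})\cap E\subset Q_j^k$ follows if $C$ is chosen large enough that any $y$ with $|y-x_j^k|<C^{-1}2^{-k}$ is strictly the nearest center at every level $\ell\le k$ along the ancestor path of $j$; this is ensured by the $2^{-\ell}$-separation of the level-$\ell$ net together with the triangle inequality. The outer inclusion $Q_j^k\subset B(x_j^k,C\,2^{-k})\cap E$ follows from a telescoping geometric-series estimate: every $y\in Q_j^k$ has, for each $\ell\ge k$, a unique descendant $i_\ell\in\mathfrak{J}_\ell$ of $j$ with $|y-x_{i_\ell}^\ell|<2^{-\ell}$ (by the covering property of the level-$\ell$ net restricted to $Q_j^k$), while $|x_{i_\ell}^\ell-x_{i_{\ell-1}}^{\ell-1}|<2^{-(\ell-1)}$ by construction of the parent, so
\[
|y-x_j^k|\;\le\; |y-x_{i_\ell}^\ell| + \sum_{s=k+1}^{\ell}|x_{i_s}^s - x_{i_{s-1}}^{s-1}| \;\lesssim\; \sum_{s\ge k}2^{-s}\;\lesssim\;2^{-k}
\]
uniformly in $\ell$. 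I expect the main obstacle to be the simultaneous enforcement of the nesting (b)--(c) and of the two-sided sandwich (d): the pure closest-center rule does not refine across scales, while any ad hoc reassignment risks inflating cube diameters beyond $C\,2^{-k}$. The key device is precisely the parent assignment via the nearest coarse center, which keeps the geometric-series telescoping summable and makes both conditions compatible.
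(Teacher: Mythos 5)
The paper does not actually prove this lemma: it imports it verbatim from \cite[Lemma~2.33]{AHMT}, which in turn runs the Christ-type construction through the Hyt\"onen--Kairema dyadic machinery, so your plan is in the right family. As written, however, your implementation has a genuine gap at the two places where these constructions are delicate. The recursive definition $Q_j^k=\{y\in Q_{p(j,k)}^{k-1}:a_k(y)=j\}$, with $a_k$ the \emph{global} nearest-center map, is not coherent across scales: the nearest level-$(k-1)$ center to a point $y$ with $a_k(y)=j$ need not be the nearest level-$(k-1)$ center to $x_j^k$. Consequently points are orphaned, so (a) already fails (the level-$k$ sets no longer cover $E$), and for the same reason the inner inclusion in (d) fails --- even $y=x_j^k$ itself can drop out of $Q_j^k$, since the greedy ancestor chain of $x_j^k$ (nearest center one level at a time) need not agree with the level-$m$ Voronoi assignment of $x_j^k$ for $m\le k-2$. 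Your triangle-inequality claim does not repair this: the level-$m$ ancestor of $x_j^k$ is only within $\sum_{s=m+1}^{k}2^{-(s-1)}\approx 2^{-m+1}$ of $x_j^k$, which is \emph{larger} than both the separation and the covering radius $2^{-m}$ of the level-$m$ net, so a competing level-$m$ center can be strictly closer to $y$ than the ancestor is.

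Even if you abandon the Voronoi intersections and define cubes from the tree of net points (closure of descendants, as in Christ and Hyt\"onen--Kairema --- which is what your outer-inclusion telescoping is implicitly doing), the base $\delta=1/2$ is too large: to force every fine center near $x_j^k$ to have $x_j^k$ as its level-$k$ ancestor one needs (fine covering radius) $+\sum_{s>k}$(parent--child distances) to be smaller than half the level-$k$ separation, and with scales $2^{-s}$ the series alone already equals $2^{-k}$. This is precisely why the cited constructions work with a small parameter $\delta$ together with the rule that a child lying within half the separation of a coarse center \emph{must} take it as parent, and then reindex the $\delta$-generations to the scales $2^{-k}$; that reindexing is the reason the paper explicitly allows the same set to reappear as a cube in consecutive generations. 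Finally, the limit $k_0\to-\infty$ is not innocuous in your scheme: lowering $k_0$ only adds constraints, and coarse-scale Voronoi boundaries can separate $y$ from $x_j^k$ no matter how close they are, so the claimed stabilization needs proof. To close the gap you should replace the level-wise Voronoi assignment by the tree-based definition with small $\delta$ and the half-separation parent rule --- i.e., essentially the Hyt\"onen--Kairema construction that \cite{AHMT} invokes.
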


In what follows given $B=B(x,r)$ with $x\in E$ we will denote $\Delta=\Delta(x,r)=B\cap E$. A few remarks are in order concerning this lemma.  Note that  within the same generation (that is, within each $\dd_k$) the cubes are pairwise disjoint (hence, there are no repetitions). On the other hand, we allow repetitions in the different generations, that is, we could have that $Q\in\dd_k$ and $Q'\in\dd_{k-1}$ agree. Then, although $Q$ and $Q'$ are the same set,  as cubes we understand that they are different. In short, it is then understood that $\dd$ is an indexed collection of sets where repetitions of sets are allowed in the different generations but not within the same generation. With this in mind, we can give a proper definition of the ``length'' of a cube (this concept has no geometric meaning in this context). For every $Q\in\dd_k$, we set $\ell(Q)=2^{-k}$, which is called the ``length'' of $Q$. Note that the ``length'' is well defined when considered on $\dd$, but it is not well-defined on the family of sets induced by $\dd$. It is important to observe that the ``length'' refers to the way the cubes are organized in the dyadic grid.  It is clear from $(d)$ that $\diam(Q)\lesssim \ell(Q)$. When $E=\pom$, with $\Omega$ being a 1-sided NTA domain satisfying the CDC condition, the converse holds, hence $\diam(Q)\approx \ell(Q)$, see \cite[Remark~2.73]{AHMT}. This means that the ``length'' is related to the diameter of the cube.

Let us observe that if $E$ is bounded and $k\in\ZZ$ is such that $\diam(E)<C^{-1}2^{-k}$, then there cannot be two distinct cubes in $\dd_k$. Thus, $\dd_k=\{Q^k\}$ with $Q^k=E$. Therefore,  we are going to ignore those $k\in\mathbb{Z}$ such that $2^{-k}\gtrsim\diam(E)$. Hence, we shall denote by $\dd(E)$ the collection of all relevant $Q_j^k$, i.e.,
$$
\dd(E):=\bigcup_k\dd_k,
$$
where, if $\diam(E)$ is finite, the union runs over those $k\in\mathbb{Z}$ such that $2^{-k}\lesssim\diam(E)$.   We write $\Xi=2C^2$, with $C$ being the constant in Lemma \ref{lemma:dyadiccubes}, which is purely dimensional. For $Q\in\dd(E)$ we will set $k(Q)=k$ if $Q\in\dd_k$. Property $(d)$ implies that for each cube $Q\in\dd(E)$, there exist $x_Q\in E$ and $r_Q$, with $\Xi^{-1}\ell(Q)\leq r_Q\leq\ell(Q)$ (indeed $r_Q= (2C)^{-1}\ell(Q)$), such that
\begin{equation}\label{deltaQ}
	\Delta(x_Q,2r_Q)\subset Q\subset\Delta(x_Q,\Xi r_Q).
\end{equation}
We shall denote these balls and surface balls by
\begin{equation}\label{deltaQ2}
	B_Q:=B(x_Q,r_Q),\qquad\Delta_Q:=\Delta(x_Q,r_Q),
\end{equation}
\begin{equation}\label{deltaQ3}
	\widetilde{B}_Q:=B(x_Q,\Xi r_Q),\qquad\widetilde{\Delta}_Q:=\Delta(x_Q,\Xi r_Q),
\end{equation}
and we shall refer to the point $x_Q$ as the ``center'' of $Q$.

Let $Q\in\dd_k$ and consider  the family of its dyadic children $\{Q'\in \dd_{k+1}: Q'\subset Q\}$. Note that for any two distinct children $Q', Q''$, one has $|x_{Q'}-x_{Q''}|\ge r_{Q'}=r_{Q''}=r_Q/2$, otherwise $x_{Q''}\in Q''\cap \Delta_{Q'}\subset Q''\cap Q'$, contradicting the fact that $Q'$ and $Q''$ are disjoint. Also $x_{Q'}, x_{Q''}\in Q\subset \Delta(x_Q,\Xi r_Q)$, hence by the geometric doubling property we have a purely dimensional bound for the number of such $x_{Q'}$ and hence the number of dyadic children of a given dyadic cube is uniformly bounded.

We next introduce the ``discretized Carleson region'' relative to $Q\in \dd(E)$, $\dd_{Q}=\{Q'\in\dd:Q'\subset Q\}$. Let $\mathcal{F}=\{Q_i\}\subset\dd(E)$ be a family of pairwise disjoint cubes. The ``global discretized sawtooth'' relative to $\mathcal{F}$ is the collection of cubes $Q\in\dd(E)$ that are not contained in any $Q_i\in\mathcal{F}$, that is,
\[
\dd_\mathcal{F}:=\dd\setminus\bigcup_{Q_i\in\mathcal{F}}\dd_{Q_i}.
\]
For a given $Q\in\dd(E)$, the ``local discretized sawtooth'' relative to $\mathcal{F}$ is the collection of cubes in $\dd_Q$ that are not contained in any $Q_i\in\mathcal{F}$ or, equivalently,
\[
\dd_{\mathcal{F},Q}:=\dd_{Q}\setminus\bigcup_{Q_i\in\mathcal{F}}\dd_{Q_i}=\dd_\mathcal{F}\cap\dd_Q.
\]
We also allow $\F$ to be the null set in which case $\dd_{\tinyemptyset}=\dd(E)$ and $\dd_{\tinyemptyset,Q}=\dd_Q$.

In the sequel, $\Omega\subset\re^{n+1}$, $n\geq 2$, will be a 1-sided NTA domain satisfying the CDC. Write $\dd=\dd(\pom)$ for the dyadic grid obtained from Lemma \ref{lemma:dyadiccubes} with $E=\pom$. In \cite[Remark~2.73]{AHMT} it is shown that under the present assumptions one has that $\diam(\Delta)\approx r_{\Delta}$ for every surface ball $\Delta$ and $\diam(Q)\approx\ell(Q)$ for every $Q\in\dd$. Given $Q\in\dd$ we define the ``Corkscrew point relative to $Q$'' as $X_Q:=X_{\Delta_Q}$. We note that
$$
\delta(X_Q)\approx\dist(X_Q,Q)\approx\diam(Q).
$$

We also introduce the ``geometric'' Carleson regions and sawtooths.  Given $Q\in\dd$ we want to define some associated regions which inherit the good properties of $\Omega$. Let $\mathcal{W}=\mathcal{W}(\Omega)$ denote a collection of (closed) dyadic Whitney cubes of $\Omega\subset\re^{n+1}$, so that the cubes in $\mathcal{W}$
form a covering of $\Omega$ with non-overlapping interiors, and satisfy
\begin{equation}\label{constwhitney}
	4\diam(I)\leq\dist(4I,\partial\Omega)\leq\dist(I,\partial\Omega)\leq 40\diam(I),\qquad\forall I\in\mathcal{W},
\end{equation}
and
$$
\diam(I_1)\approx\diam(I_2),\,\text{ whenever }I_1\text{ and }I_2\text{ touch}.
$$
Let $X(I)$ denote the center of $I$, let $\ell(I)$ denote the side length of $I$, and write $k=k_I$ if $\ell(I)=2^{-k}$.

Given $0<\lambda<1$ and $I\in\mathcal{W}$ we write $I^*=(1+\lambda)I$ for the ``fattening'' of $I$. By taking $\lambda$ small enough, we can arrange matters, so that, first, $\dist(I^*,J^*)\approx\dist(I,J)$ for every $I,J\in\mathcal{W}$. Secondly, $I^*$ meets $J^*$ if and only if $\partial I$ meets $\partial J$ (the fattening thus ensures overlap of $I^*$ and $J^*$ for any pair $I,J\in\mathcal{W}$ whose boundaries touch, so that the Harnack Chain property then holds locally in $I^*\cup J^*$, with constants depending upon $\lambda$). By picking $\lambda$ sufficiently small, say $0<\lambda<\lambda_0$, we may also suppose that there is $\tau\in(\frac12,1)$ such that for distinct $I,J\in\mathcal{W}$, we have that $\tau J\cap I^*=\emptyset$. In what follows we will need to work with dilations $I^{**}=(1+2\lambda)I$ or $I^{***}=(1+4\lambda)I$, and in order to ensure that the same properties hold we further assume that $0<\lambda<\lambda_0/4$.

Given $\vartheta\in\mathbb{N}$,  for every cube $Q \in \dd$ we set
\begin{equation}\label{eq:WQ}
	\W_Q^\vartheta :=\left\{I \in \W: 2^{-\vartheta}\ell(Q) \leq \ell(I) \leq 2^\vartheta\ell(Q), \text { and } \dist(I, Q) \leq 2^\vartheta \ell(Q) \right\}.
\end{equation}
We will choose $\vartheta\ge \vartheta_0$,  with $\vartheta_0$ large enough depending on the constants of the Corkscrew condition (cf. Definition  \ref{def1.cork}) and in the dyadic cube construction
(cf. Lemma \ref{lemma:dyadiccubes}), so that $X_Q \in I$ for some $I \in \W_Q^\vartheta$, and for each dyadic child $Q^j$ of $Q$, the respective corkscrew points
$X_{Q^j} \in I^j$ for some $I^j  \in \W_Q^\vartheta$. Moreover, we may always find an $I \in \W_Q^\vartheta$ with the slightly more precise
property that $\ell(Q)/2 \leq \ell(I) \leq \ell(Q)$ and
\begin{equation*}
	\W_{Q_1}^\vartheta \cap \W_{Q_2}^\vartheta \neq \emptyset, \quad \text { whenever }
	1 \leq \frac{\ell(Q_2)}{\ell(Q_1)} \leq 2, \text { and } \dist(Q_1, Q_2) \leq 1000 \ell(Q_2).
\end{equation*}

For each $I \in \W_Q^\vartheta$, we form a Harnack chain from the center $X(I)$ to the Corkscrew point $X_Q$ and call it $H(I)$.
We now let $\W_{Q}^{\vartheta, *}$ denote the collection of all Whitney cubes which meet at least one ball in the Harnack chain $H(I)$
with $I \in \W_Q^\vartheta$, that is,
\begin{equation*}
	\W_{Q}^{\vartheta, *}:=\{J \in \W: \text{ there exists } I \in \W_Q^\vartheta \text{ such that } H(I) \cap J \neq \emptyset\}.
\end{equation*}
We also define
\begin{equation*}
	U_{Q}^\vartheta :=\bigcup_{I \in \W_{Q}^{\vartheta, *}}(1+\lambda) I=: \bigcup_{I \in \W_{Q}^{\vartheta, *}} I^{*}.
\end{equation*}
By construction, we then have that
\begin{equation*}
	\W_{Q}^\vartheta  \subset \W_{Q}^{\vartheta, *} \subset \W \quad \text{and}\quad X_Q \in U_Q^{\vartheta}, \quad X_{Q^{j}} \in U_{Q}^{\vartheta},
\end{equation*}
for each child $Q^j$ of $Q$. It is also clear that there is a uniform constant $k^*$ (depending only on the
1-sided CAD constants and $\vartheta$) such that
\begin{align*}
	2^{-k^*} \ell(Q) \leq \ell(I) \leq 2^{k^*}\ell(Q), &\quad \forall\,I \in \W_{Q}^{\vartheta, *},
	\\
	X(I) \rightarrow_{U_Q^\vartheta} X_Q, &\quad \forall\,I \in \W_{Q}^{\vartheta, *},
	\\
	\dist(I, Q) \leq 2^{k^*} \ell(Q), &\quad \forall\,I \in \W_{Q}^{\vartheta, *}.
\end{align*}
Here, $X(I) \to_{U_Q^\vartheta} X_Q$ means that the interior of $U_Q^\vartheta$ contains all balls in Harnack Chain (in $\Omega$) connecting $X(I)$ to $X_Q$, and moreover, for any point $Z$ contained in any ball in the Harnack Chain, we have
$\dist(Z, \partial \Omega) \approx \dist(Z, \Omega \setminus U_Q^\vartheta)$ with uniform control of implicit constants. The
constant $k^*$ and the implicit constants in the condition $X(I) \to_{U_Q^\vartheta} X_Q$, depend at most on the allowable
parameters, on $\lambda$, and on $\vartheta$. Moreover, given $I \in \W$ we have that $I \in \W^{\vartheta,*}_{Q_I}$, where $Q_I \in \dd$ satisfies $\ell(Q_I)=\ell(I)$, and contains any fixed $\widehat{y} \in \partial \Omega$ such that
$\dist(I, \partial \Omega)=\dist(I, \widehat{y})$. The reader is referred to \cite{HM1, HMT:book} for full details. We note however that in \cite{HM1} the parameter $\vartheta$ is fixed. Here we need to allow $\vartheta$ to depend on the aperture of the cones and hence it is convenient to include the superindex $\vartheta$.

For a given $Q\in\dd$, the ``Carleson box'' relative to $Q$ is defined by
$$
T_Q^\vartheta:=\interiorior\bigg(\bigcup_{Q'\in\dd_Q}U_{Q'}^\vartheta\bigg).
$$
For a given family $\mathcal{F}=\{Q_i\}\subset\dd$ of pairwise disjoint cubes and a given $Q\in\dd$, we define the ``local sawtooth region'' relative to $\mathcal{F}$ by
\begin{equation}
	\label{defomegafq}
	\Omega_{\mathcal{F},Q}^\vartheta:=\interiorior\bigg(\bigcup_{Q'\in\dd_{\mathcal{F},Q}}U_{Q'}^\vartheta\bigg)=\interiorior\bigg(\bigcup_{I\in\mathcal{W}_{\mathcal{F},Q}^\vartheta} I^*\bigg),
	\end{equation}
where $\mathcal{W}_{\mathcal{F},Q}^\vartheta:=\bigcup_{Q'\in\dd_{\mathcal{F},Q}}\mathcal{W}_{Q'}^{\vartheta,*}$. Note that in the previous definition we may allow $\F$ to be empty in which case clearly $\Omega^\vartheta_{\tinyemptyset ,Q}=T_Q^\vartheta$. Similarly, the ``global sawtooth region'' relative to $\mathcal{F}$ is defined as
\begin{equation}
	\label{defomegafq-global}
	\Omega_{\mathcal{F}}^\vartheta:=\interiorior\bigg(\bigcup_{Q'\in\dd_{\mathcal{F}}}U_{Q'}^\vartheta\bigg)=\interiorior\bigg(\bigcup_{I\in\mathcal{W}_{\mathcal{F}}^\vartheta}I^*\bigg),
	\end{equation}
where $\mathcal{W}_{\mathcal{F}}^\vartheta:=\bigcup_{Q'\in\dd_{\mathcal{F}}}\mathcal{W}_{Q'}^{\vartheta,*}$. If $\F$ is the empty set clearly $\Omega_{\tinyemptyset}^\vartheta=\Omega$.
For a given $Q\in\dd$  and $x\in \pom$ let us introduce the ``truncated dyadic cone''
\[
\Gamma_{Q}^\vartheta(x) := \bigcup_{x\in Q'\in\dd_{Q}}  U_{Q'}^\vartheta,
\]
where it is understood that $\Gamma_{Q}^\vartheta(x)=\emptyset$ if $x\notin Q$.
Analogously, we can slightly fatten the Whitney boxes and use $I^{**}$ to define new fattened Whitney regions and sawtooth domains. More precisely, for every $Q\in\dd$,
\[
T_Q^{\vartheta,*}:=\interiorior\bigg(\bigcup_{Q'\in\dd_Q}U_{Q'}^{\vartheta,*}\bigg),\quad\Omega^{\vartheta,*}_{\mathcal{F},Q}:=\interiorior\bigg(\bigcup_{Q'\in\dd_{\F,Q}}U_{Q'}^{\vartheta,*}\bigg), \quad
\Gamma^{\vartheta,*}_{Q}(x) := \bigcup_{x\in Q'\in\dd_{Q_0}}  U_{Q'}^{\vartheta,*},
\]
where $U_{Q}^{\vartheta,*}:=\bigcup_{I\in\mathcal{W}_Q^{\vartheta,*}}I^{**}$.
Similarly, we can define $T_Q^{\vartheta,**}$, $\Omega^{\vartheta,**}_{\mathcal{F},Q}$, $\Gamma_Q^{\vartheta,**}(x)$, and $U^{\vartheta,**}_{Q}$ by using $I^{***}$ in place of $I^{**}$.

To define  the ``Carleson box'' $T_\Delta^\vartheta$ associated with a surface ball $\Delta=\Delta(x,r)$, let $k(\Delta)$ denote the unique $k\in\mathbb{Z}$ such that $2^{-k-1}<200r\leq 2^{-k}$, and set
\begin{equation}\label{D-delta}
	\dd^{\Delta}:=\big\{Q\in\dd_{k(\Delta)}:\:Q\cap 2\Delta\neq\emptyset\big\}.
\end{equation}
We then define
\begin{equation}
	\label{def:T-Delta}
	T_{\Delta}^\vartheta:=\interiorior\bigg(\bigcup_{Q\in\dd^\Delta}\overline{T_Q^\vartheta}\bigg).
\end{equation}
We can also consider fattened versions of $T_\Delta^\vartheta$ given by
$$
T_{\Delta}^{\vartheta,*}:=\interiorior\bigg(\bigcup_{Q\in\dd^\Delta}\overline{T_Q^{\vartheta,*}}\bigg),\qquad T_{\Delta}^{\vartheta,**}:=\interiorior\bigg(\bigcup_{Q\in\dd^\Delta}\overline{T_Q^{\vartheta,**}}\bigg).
$$

Following \cite{HM1, HMT:book}, one can easily see that there exist constants $0<\kappa_1<1$ and $\kappa_0\geq 16\Xi$ (with $\Xi$ the constant in \eqref{deltaQ}), depending only on the allowable parameters and on $\vartheta$, so that
\begin{gather}\label{definicionkappa12}
	\kappa_1B_Q\cap\Omega\subset T_Q^\vartheta\subset T_Q^{\vartheta,*}\subset T_Q^{\vartheta,**}\subset \overline{T_Q^{\vartheta,**}}\subset\kappa_0B_Q\cap\overline{\Omega}=:\tfrac{1}{2}B_Q^*\cap\overline{\Omega},
	\\[6pt]
	\label{definicionkappa0}
	\tfrac{5}{4}B_\Delta\cap\Omega\subset T_\Delta^{\vartheta}\subset T_\Delta^{\vartheta,*}\subset T_\Delta^{\vartheta,**}\subset\overline{T_\Delta^{\vartheta,**}}\subset\kappa_0B_\Delta\cap\overline{\Omega}=:\tfrac{1}{2}B_\Delta^*\cap\overline{\Omega},
\end{gather}
and also
\begin{equation}\label{propQ0}
	Q\subset\kappa_0B_\Delta\cap\partial\Omega=\tfrac{1}{2}B_\Delta^*\cap\partial\Omega=:\tfrac{1}{2}\Delta^*,\qquad\forall\,Q\in\dd^{\Delta},
\end{equation}
where $B_Q$ is defined as in \eqref{deltaQ2}, $\Delta=\Delta(x,r)$ with $x\in\partial\Omega$, $0<r<\diam(\partial \Omega)$, and $B_{\Delta}=B(x,r)$ is so that $\Delta=B_\Delta\cap\partial\Omega$. From our choice of the parameters one also has that $B_Q^*\subset B_{Q'}^*$ whenever $Q\subset Q'$.

\begin{lemma}[{\cite[Lemma~2.54]{AHMT} and \cite[Appendices A.1-A.2]{HM1}}]\label{lemma:CDC-inherit}
	Let $\Omega\subset\mathbb{R}^{n+1}$, $n\ge 2$, be a 1-sided NTA domain satisfying the CDC. For every $\vartheta\ge \vartheta_0$ all of its Carleson boxes $T_Q^{\vartheta}, T_Q^{\vartheta,*}, T_Q^{\vartheta,**}$ and $T_\Delta^{\vartheta}, T_\Delta^{\vartheta,*}, T_\Delta^{\vartheta,**}$, and sawtooth regions $\Omega_\F^{\vartheta},\Omega_\F^{\vartheta,*}, \Omega_\F^{\vartheta,**}$, and $\Omega_{\F,Q}^{\vartheta}, \Omega_{\F,Q}^{\vartheta, *}, \Omega_{\F,Q}^{\vartheta, **}$ are 1-sided NTA domains and satisfy the CDC with uniform implicit constants depending only on dimension, the corresponding	constants for $\Omega$, and $\vartheta$.
\end{lemma}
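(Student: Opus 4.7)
The plan is to verify each of the three defining conditions, Corkscrew, Harnack Chain, and CDC, for a representative sawtooth region, say $\Omega^{\vartheta}_{\F,Q}$; the argument for the Carleson boxes $T_Q^{\vartheta}$ and $T_\Delta^{\vartheta}$, their fattened analogues, and the global sawtooth $\Omega_\F^\vartheta$ will be entirely analogous since all these sets have the same basic Whitney-cube architecture. The proof essentially follows the scheme of Hofmann--Martell \cite{HM1}, Appendices A.1 and A.2, so the main task is to identify the places where the CDC enters in place of the AR hypothesis.

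For the Corkscrew condition, I would split according to the location of a boundary point $y \in \partial \Omega^{\vartheta}_{\F,Q}$ and a radius $0<r<\diam(\partial\Omega^{\vartheta}_{\F,Q})$. If $y \in \partial \Omega$, then $y$ lies in some cube $Q' \in \dd_{\F,Q}$ (or a nearby ancestor), and the Corkscrew point in $\Omega$ relative to an appropriately scaled surface ball $\Delta(y,r')$, together with the structure of $\W^{\vartheta,*}_{Q'}$, gives a point of $\Omega^{\vartheta}_{\F,Q}$ far from its boundary; the key observation is that the fattening parameter $\lambda$ and the constant $k^*$ guarantee that the Whitney regions $U_{Q'}^{\vartheta}$ contain balls of controlled size around the Corkscrew points $X_{Q'}$. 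If instead $y$ lies on the ``interior'' part of $\partial \Omega^{\vartheta}_{\F,Q}$, i.e.~on the boundary of some fattened Whitney cube $I^{*}$, then $B(y,c\ell(I))$ already lies inside $\Omega$ and one can find the Corkscrew point using just Euclidean geometry of the Whitney decomposition and the overlap properties built into the construction.

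For the Harnack Chain condition, the strategy is to connect two points $X,X' \in \Omega^{\vartheta}_{\F,Q}$ by first moving each to the center $X(I)$, $X(I')$ of the Whitney cubes $I,I' \in \W_\F^{\vartheta}$ that contain them, then using the fact that any $I \in \W_{Q'}^{\vartheta,*}$ can be joined to $X_{Q'}$ by a Harnack chain $H(I)$ lying inside $U_{Q'}^{\vartheta}$, and finally walking along the dyadic tree in $\dd_{\F,Q}$ to go from $X_{Q_1}$ to $X_{Q_2}$ through the Harnack chains of common ancestors. The chain length is controlled logarithmically in $|X-X'|/\min\{\delta_\star(X),\delta_\star(X')\}$, where $\delta_\star$ denotes distance to $\partial\Omega^{\vartheta}_{\F,Q}$, because the dyadic distance between $Q_1$ and $Q_2$ is governed by this ratio and each ``dyadic step'' contributes a uniformly bounded number of Harnack balls. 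The main bookkeeping issue, and the step I expect to be the most delicate, is to certify that each ball in the constructed chain stays inside $\Omega^{\vartheta}_{\F,Q}$ and satisfies \eqref{preHarnackball} with respect to the sawtooth's boundary, rather than only with respect to $\partial \Omega$; this is precisely where the choices $I^{*} = (1+\lambda)I$ with $0<\lambda<\lambda_0/4$ and the relation $\tau J \cap I^{*} = \emptyset$ for distinct $I,J$ were designed to pay off.

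For the CDC, any point $y \in \partial \Omega^{\vartheta}_{\F,Q} \cap \partial \Omega$ inherits \eqref{eqn:CDC} at small scales from the CDC of $\Omega$ (using that $\overline{B(y,r)}\setminus \Omega \subset \overline{B(y,r)}\setminus \Omega^{\vartheta}_{\F,Q}$ and the monotonicity of capacity), while any $y$ on the interior portion of $\partial \Omega^{\vartheta}_{\F,Q}$ admits an exterior corkscrew at all scales up to some $c\ell(I)$ from the Whitney-cube geometry, which by \eqref{cap-Ball} forces the CDC quantitatively. The regime of intermediate scales, where one crosses from one situation to the other, is handled by picking the better of the two lower bounds on capacity. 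Putting these three verifications together and noting that all the implicit constants depend only on the 1-sided NTA and CDC constants of $\Omega$ and on $\vartheta$ (through $k^*$ and $\lambda$) yields the lemma, and the same argument verbatim with obvious modifications of the index set applies to $T_Q^\vartheta$, $T_\Delta^\vartheta$, $\Omega_\F^\vartheta$, and their $*$ and $**$ fattenings.
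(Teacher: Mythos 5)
The paper does not actually prove Lemma~\ref{lemma:CDC-inherit}: it imports it from \cite[Lemma~2.54]{AHMT} and \cite[Appendices A.1--A.2]{HM1}, and your outline follows precisely the strategy of those references --- the Corkscrew and Harnack Chain properties of the sawtooths/Carleson boxes from the Whitney--dyadic construction as in \cite{HM1}, and the CDC by capacity monotonicity ($\overline{B(y,r)}\setminus\Omega\subset \overline{B(y,r)}\setminus\Omega_{\F,Q}^{\vartheta}$) when $y$ is close to $\partial\Omega$ at scale $r$, versus Whitney-box geometry (touching boxes have fattenings overlapping at depth $\approx\lambda\ell$, non-touching ones are separated by $\gtrsim\ell$, so the complement is quantitatively fat) when $\delta(y)\gtrsim r$. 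So your proposal is consistent with, and essentially identical in approach to, the argument the paper relies on; it is of course a plan rather than a complete proof, and the steps you flag as delicate (chains staying inside the sawtooth, the capacity lower bound at interior boundary points at scales $r\ll\delta(y)$) are exactly the ones worked out in detail in the cited appendices.
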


Given $Q$ we define the ``localized  dyadic conical square function''
\begin{equation}\label{def:SF}
	\mathcal{S}_{Q}^\vartheta u(x):=\bigg(\iint_{\Gamma_{Q}^\vartheta (x)}|\nabla u(Y)|^2\delta(Y)^{1-n}\,dY\bigg)^{\frac12}, \qquad x\in \pom,
\end{equation}
for every $u\in W^{1,2}_{\rm loc}  (T_{Q}^\vartheta)$. Note that $\mathcal{S}_{Q}^\vartheta u(x)=0$ for every $x\in\pom\setminus Q$ since $\Gamma_Q^\vartheta (x)=\emptyset$ in such a case.  The ``localized dyadic non-tangential maximal function'' is given by
\begin{equation}\label{def:NT}
	\mathcal{N}_{Q}^\vartheta u(x)
	: =
	\sup_{Y\in \Gamma^{\vartheta,*}_{Q}(x)} |u(Y)|,
	\qquad x\in \pom,
\end{equation}
for every $u\in \mathscr{C}(T_{Q}^{\vartheta,*})$, where it is understood that $\mathcal{N}_{Q}^\vartheta u(x)= 0$ for every $x\in\pom\setminus Q$.

Given $\alpha>0$ and $x\in\pom$ we introduce the ``cone with vertex at $x$ and aperture $\alpha$'' defined as $\Gamma^\alpha(x) = \{X \in \Omega: |X - x| \leq (1+\alpha) \delta(X)\}$. One can also introduce the ``truncated cone'', for every $x\in\pom$ and $0<r<\infty$ we set  $\Gamma_{r}^\alpha(x) = B(x,r)\cap \Gamma^{\alpha}(x)$.


The ``conical square function'' and  the ``non-tangential maximal function'' are defined respectively as
\begin{equation}\label{def:SF-NT:global}
	\mathcal{S}^\alpha u(x):=\bigg(\iint_{\Gamma^\alpha(x)}|\nabla u(Y)|^2\delta(Y)^{1-n}\,dY\bigg)^{\frac12},
	\qquad
	\mathcal{N}^\alpha u (x) := \sup_{X \in \Gamma^\alpha (x)} |u(X)|,\qquad x\in\pom,
\end{equation}
for every $u\in W^{1,2}_{\rm loc}(\Omega)$ and $u\in \mathscr{C}(\Omega)$ respectively. Analogously,  the ``truncated conical square function'' and  the ``truncated non-tangential maximal function'' are defined respectively as
\begin{equation}\label{def:SF-NT:truncated}
	\mathcal{S}_{r}^\alpha u(x):=\bigg(\iint_{\Gamma_{r}^\alpha(x)}|\nabla u(Y)|^2\delta(Y)^{1-n}\,dY\bigg)^{\frac12},
	\quad
	\mathcal{N}^\alpha_{r} u (x) := \sup_{X \in \Gamma^\alpha_{r} (x)} |u(X)|, \quad
	x\in\pom,\ 0<r<\infty,	
\end{equation}
for every $u\in W^{1,2}_{\rm loc}(\Omega\cap B(x,r))$ and  $u\in \mathscr{C}(\Omega\cap B(x,r))$ respectively.

We would like to note that truncated dyadic cones are never empty. Indeed, in our construction we have made sure that $X_Q\in U_Q^\vartheta$ for every $Q\in\dd$, hence for any $Q\in\dd$ and $x\in Q$ one has $X_Q\in\Gamma_Q^\vartheta(x)$. Moreover, $X_{Q'}\in\Gamma_Q^\vartheta(x)$ for every $Q'\in\dd_Q$ with $Q'\ni x$. For the regular truncated cones it could happen that $\Gamma^\alpha_r(x)=\emptyset$ unless $\alpha$ is sufficiently large. Suppose for instance that $\Omega=\{X=(x_1,\dots,x_{n+1})\in \ree: x_{1},\dots, x_{n+1}>0\}$ is the first orthant, then $\Gamma_{r}^\alpha(0)=\emptyset$ for any $0<r<\infty$ if $\alpha<\sqrt{n+1}-1$. On the other hand, if $\alpha$ is sufficiently large, more precisely, if $\alpha\ge c_0^{-1}-1$, where $c_0$ is the corkscrew constant (cf.~Definition~\ref{def1.cork}), then
\begin{equation}\label{cone-CKS}
X_{\Delta(x,r)}\in \Gamma_{r}^\alpha(x),\qquad
\forall\,x\in\pom,\ 0<r<\diam(\pom).
\end{equation}

\section[Elliptic operators, elliptic measure and the Green function]{Uniformly elliptic operators, elliptic measure and the Green function}\label{section:PDE}
Next, we recall several facts concerning elliptic measures and Green functions. To set the stage let $\Omega\subset\re^{n+1}$ be an open set. Throughout we consider
elliptic operators $L$ of the form $Lu=-\div(A\nabla u)$ with $A(X)=(a_{i,j}(X))_{i,j=1}^{n+1}$ being a real (non-necessarily symmetric) matrix such that $a_{i,j}\in L^{\infty}(\Omega)$ and there exists $\Lambda\geq 1$ such that the following uniform ellipticity condition holds
\begin{align}
	\label{e:elliptic}
	\Lambda^{-1} |\xi|^{2} \leq A(X) \xi \cdot \xi,
	\qquad\qquad
	|A(X) \xi \cdot\eta|\leq \Lambda |\xi|\,|\eta|
\end{align}
for all $\xi,\eta \in\mathbb{R}^{n+1}$ and for almost every $X\in\Omega$. We write $L^\top$ to denote the transpose of $L$, or, in other words, $L^\top u = -\div(A^\top
\nabla u)$ with $A^\top$ being the transpose matrix of $A$.

We say that $u$ is a weak solution to $Lu=0$ in $\Omega$ provided that $u\in W_{\rm loc}^{1,2}(\Omega)$ satisfies
\[
\iint_\Omega A(X)\nabla u(X)\cdot \nabla\phi(X) dX=0  \quad\mbox{whenever}\,\, \phi\in \mathscr{C}^{\infty}_{c}(\Omega).
\]
Associated with $L$ one can construct an elliptic measure $\{\omega_L^X\}_{X\in\Omega}$ and a Green function $G_L$ (see \cite{HMT:book} for full details). 
If $\Omega$ satisfies the CDC then it follows that all boundary points are Wiener regular and hence for a given $f\in \mathscr{C}_c(\partial\Omega)$ we can define
\[
u(X):=\int_{\partial\Omega} f(z)d\omega^{X}_{L}(z), \quad \mbox{whenever}\, \, X\in\Omega,
\]
and $u:=f$ on $\pom$ and obtain that $u\in W^{1,2}_{\rm loc}(\Omega)\cap \mathscr{C}(\overline{\Omega})$ and  $Lu=0$ in the weak sense in $\Omega$. Moreover, if $f\in \Lip(\partial \Omega)$ then $u\in W^{1,2}(\Omega)$.

We first define the reverse H\"older class and the $A_\infty$ classes with respect to a fixed elliptic measure in $\Omega$.  One reason we take this approach is that we do not know whether $\mathcal{H}^{n}|_{\partial\Omega}$ is well-defined since we do not assume any Ahlfors regularity in Theorem \ref{thm:main}. Hence we have to develop these notions in terms of elliptic measures.  To this end, let $\Omega$ satisfy the CDC and let $L_0$ and $L$ be two real (non-necessarily symmetric) elliptic operators associated with $L_0u=-\div(A_0\nabla u)$ and $L u=-\div(A\nabla u)$ where $A$ and $A_0$ satisfy \eqref{e:elliptic}. Let $\omega^{X}_{L_0}$ and $\omega_{L}^{X}$ be the elliptic measures of $\Omega$ associated with the operators $L_0$ and $L$ respectively with pole at $X\in\Omega$. Note that if we further assume that $\Omega$ is connected, then Harnack's inequality readily implies that $\omega_{L}^{X}\ll\omega_L^{Y}$ on $\pom$ for every $X,Y\in\Omega$. Hence if $\omega_L^{X_0}\ll\omega_{L_0}^{Y_0}$ on $\pom$  for some $X_0,Y_0\in\Omega$ then $\omega_L^{X}\ll\omega_{L_0}^{Y}$ on $\pom$ for every $X,Y\in\Omega$ and thus  we can simply write $\omega_{L}\ll \omega_{L_0}$ on $\pom$. In the latter case we will use the notation
\begin{equation}\label{def-RN}
	h(\cdot\,;L, L_0, X)=\frac{d\omega_L^{X}}{d\omega_{L_0}^{X}}
\end{equation}
to denote the Radon-Nikodym derivative of $\omega_{L}^{X}$ with respect to $\omega_{L_0}^{X}$,
which is a well-defined function $\omega_{L_0}^{X}$-almost everywhere on $\pom$.

\begin{definition}[Reverse H\"older and $A_\infty$ classes]\label{def:Ainfty}
	Fix $\Delta_0=B_0\cap \pom$ where $B_0=B(x_0,r_0)$ with $x_0\in\pom$ and $0<r_0<\diam(\pom)$. Given $1<p<\infty$, we say that $\omega_L\in RH_p(\Delta_0,\omega_{L_0})$, provided that $\omega_L\ll \omega_{L_0}$ on $\Delta_0$, and there exists $C\geq 1$ such that
	\begin{align}\label{eqn:def:RHp}
		\left(\fint_{\Delta}h(y;L,L_0,X_{\Delta_0} )^p d \omega_{L_0}^{X_{\Delta_0}}(y)\right)^{\frac1p}
		\leq
		C
		\fint_{\Delta} h(y;L,L_0,X_{\Delta_0} ) d \omega_{L_0}^{X_{\Delta_0}}(y)
		=
		C\frac{\omega_L^{X_{\Delta_0}}(\Delta)}{\omega_{L_0}^{X_{\Delta_0}}(\Delta)},
	\end{align}
	for every $\Delta=B\cap \partial\Omega$ where $B\subset B(x_0,r_0)$, $B=B(x,r)$ with  $x\in \partial\Omega$, $0<r<\diam(\partial\Omega)$. The infimum of the constants $C$ as above is denoted by $[\omega_{L}]_{RH_p(\Delta_0,\omega_{L_0})}$.
	
	Similarly, we say that $\omega_L\in RH_p(\pom,\omega_{L_0})$ provided that for every $\Delta_0=\Delta(x_0,r_0)$ with $x_0\in\pom$ and $0<r_0<\diam(\pom)$ one has $\omega_L\in RH_p(\Delta_0,\omega_{L_0})$ uniformly on $\Delta_0$, that is,
	\[
	[\omega_{L}]_{RH_p(\pom,\omega_{L_0})}
	:=\sup_{\Delta_0} [\omega_{L}]_{RH_p(\Delta_0,\omega_{L_0})}<\infty.
	\]

	Finally,
	\[
	A_\infty(\Delta_0,\omega_{L_0}):=\bigcup_{p>1} RH_p(\Delta_0,\omega_{L_0})
	\quad\mbox{and}\quad
	A_\infty(\partial\Omega,\omega_{L_0}):=\bigcup_{p>1} RH_p(\partial\Omega,\omega_{L_0})
	.\]
\end{definition}

\begin{definition}[$\mathrm{BMO}$]
	Fix $\Delta_0=B_0\cap \pom$ where $B_0=B(x_0,r_0)$ with $x_0\in\pom$ and $0<r_0<\diam(\pom)$. We say that $f\in \mathrm{BMO}(\Delta_0, \omega_L)$ provided $f \in L^1_{\loc}(\Delta_0, \omega_L^{X_{\Delta_0}})$ and
	\begin{equation*}
		\|f\|_{\mathrm{BMO}(\Delta_0, \omega_L)} := \sup_{\Delta} \inf_{c \in \R} \fint_{\Delta} |f(x)-c| \, d \omega_L^{X_{\Delta_0}}(x) < \infty,
	\end{equation*}
	where the sup is taken over all surface balls $\Delta=B\cap \partial\Omega$ where $B\subset B(x_0,r_0)$, $B=B(x,r)$ with  $x\in \partial\Omega$, $0<r<\diam(\partial\Omega)$.
	
	Similarly, we say that $f\in \mathrm{BMO}(\partial \Omega, \omega_L)$ provided that for every $\Delta_0=\Delta(x_0,r_0)$ with $x_0\in\pom$ and $0<r_0<\diam(\pom)$ one has $f\in \mathrm{BMO}(\Delta_0, \omega_L)$ uniformly on $\Delta_0$, that is,
	$f \in L^1_{\loc}(\partial \Omega, \omega_L)$ (that is,  $\|f\,\mathbf{1}_\Delta\|_{L^1(\pom,\omega_L^X)}<\infty$ for every surface ball $\Delta\subset\pom$ and for every $X\in\Omega$ ---albeit with a constant that may depend on $\Delta$ and $X$) and satisfies
	\begin{equation*}
		\|f\|_{\mathrm{BMO}(\partial \Omega, \omega_L)} = \sup_{\Delta_0} \sup_{\Delta} \inf_{c \in \R} \fint_{\Delta} |f(x)-c| \, d \omega_L^{X_{\Delta_0}}(x) < \infty,
	\end{equation*}
	where the sups are taken respectively over all surface balls $\Delta_0=B(x_0,r_0)\cap\pom$ with $x_0\in \pom$ and $0<r_0<\diam(\pom)$,
	and $\Delta=B\cap\pom$, $B=B(x,r)\subset B_0$ with $x\in \pom$ and $0<r<\diam(\pom)$.
	\end{definition}

\begin{definition}[Solvability, CME, $\mathcal{S}<\mathcal{N}$] \label{defs:several}
Let $\Omega\subset\mathbb{R}^{n+1}$, $n\ge 2$, be a 1-sided NTA domain (cf. Definition \ref{def1.1nta}) satisfying the capacity density condition  (cf. Definition \ref{def-CDC}), and let $Lu=-\div(A\nabla u)$ and $L_0u=-\div(A_0\nabla u)$ be real (non-necessarily symmetric) elliptic operators.

\begin{list}{$\bullet$}{\leftmargin=0.4cm  \itemsep=0.2cm}

\item Given $1<p<\infty$, we say that \textit{$L$ is  $L^{p}(\omega_{L_0})$-solvable} if for a given $\alpha>0$ and  $N\ge 1$ there exists $C_{\alpha, N}\ge 1$ (depending only on $n$, the 1-sided NTA constants, the CDC constant, the ellipticity of $L_0$ and $L$, $\alpha$, $N$, and $p$) such that for every $\Delta_0 = \Delta(x_0, r_0)$ with $x_0 \in \partial \Omega, 0 < r_0 < \diam (\partial \Omega)$, and for every $f \in \mathscr{C}(\partial \Omega)$ with $\supp f \subset N\Delta_0$ if one sets
\begin{equation}\label{u-elliptic:L-sol}
	u(X):=\int_{\pom} f(y)\,d\omega_{L}^X(y),\qquad X\in\Omega,
\end{equation}
then
\begin{equation}\label{solv-Lp}
	\|\mathcal{N}^\alpha_{r_0} u\|_{L^p(\Delta_0, \omega_{L_0}^{X_{\Delta_0}})} \leq C_{\alpha,N} \|f\|_{L^p(N\Delta_0, \omega_{L_0}^{X_{\Delta_0}})}.
\end{equation}

\item We say that \textit{$L$ is $\mathrm{BMO}(\omega_{L_0})$-solvable}, if there exists $C\ge 1$ (depending only on $n$, the 1-sided NTA constants, the CDC constant, and the ellipticity of $L_0$ and $L$) such that for every $f \in \mathscr{C}(\partial \Omega)\cap L^\infty(\pom, \omega_{L_0})$ if one takes  $u$ as in \eqref{u-elliptic:L-sol} and we set $u_{L,\Omega}(X):=\omega_{L}^X(\pom)$, $X\in\Omega$, then
\begin{equation}\label{CME:BMO}
	\sup_B \sup_{B'} \frac{1}{\omega_{L_0}^{X_{\Delta}} (\Delta')} \iint_{B'\cap \Omega} |\nabla (u-f_{\Delta, L_0}u_{L,\Omega})(X)|^2 G_{L_0}(X_\Delta,X) \, d X
	\le C \|f\|^2_{\mathrm{BMO}(\partial \Omega, \omega_{L_0})},
\end{equation}
where $\Delta=B\cap\pom$, $\Delta'=B'\cap\pom$,  $f_{\Delta, L_0}=	\displaystyle\fint_{\Delta} f\,d\omega_{L_0}^{X_\Delta}$, 	
and the sups are taken respectively over all balls $B=B(x,r)$ with $x\in \pom$ and $0<r<\diam(\pom)$,
and $B'=B(x',r')$ with $x'\in2\Delta$ and $0<r'<r c_0/4$, and $c_0$ is the Corkscrew constant.

\item We say that \textit{$L$ is $\mathrm{BMO}(\omega_{L_0})$-solvable in the sense of \cite{HLe}}, that is, there exists $C\ge 1$ (depending only on $n$, the 1-sided NTA constants, the CDC constant, and the ellipticity of $L_0$ and $L$) such that for every $\varepsilon\in (0,1]$ there exists $\varrho(\varepsilon)\ge 0$ such that $\varrho(\varepsilon)\longrightarrow 0$ as $\varepsilon\to 0^+$ in such a way that for every $f \in \mathscr{C}(\partial \Omega)\cap L^\infty(\pom, \omega_{L_0})$ if one takes  $u$ as in \eqref{u-elliptic:L-sol}, then
\begin{equation}\label{CME:BMO-Linfty}
	\sup_{B_\varepsilon} \sup_{B'} \frac{1}{\omega_{L_0}^{X_{\Delta_\varepsilon}} (\Delta')} \iint_{B'\cap \Omega} |\nabla u(X)|^2 G_{L_0}(X_{\Delta_\varepsilon},X) \, d X
	\le C \big(\|f\|^2_{\mathrm{BMO}(\partial \Omega, \omega_{L_0})} + \varrho(\varepsilon)\|f\|_{L^\infty(\pom, \omega_{L_0})}^2\big),
\end{equation}
where $\Delta_\varepsilon=B_\varepsilon\cap\pom$, $\Delta'=B'\cap\pom$,  and the sups are taken respectively over all balls $B_\varepsilon=B(x,\varepsilon r)$ with $x\in \pom$ and $0<r<\diam(\pom)$,
and $B'=B(x',r')$ with $x'\in2\Delta_\varepsilon$ and $0<r'<\varepsilon r c_0/4$, and $c_0$ is the Corkscrew constant.

\item We say that \textit{$L$ satisfies $\mathrm{CME}(\omega_{L_0})$}, if there exists $C\ge 1$ (depending only on $n$, the 1-sided NTA constants, the CDC constant, and the ellipticity of $L_0$ and $L$) such that for every $u\in W^{1,2}_{\loc}(\Omega)\cap L^\infty(\Omega)$ satisfying $Lu=0$ in the weak sense in $\Omega$ the following estimate holds
\begin{equation}\label{CME:Linfty}
	\sup_B \sup_{B'} \frac{1}{\omega_{L_0}^{X_{\Delta}} (\Delta')} \iint_{B'\cap \Omega} |\nabla u(X)|^2 G_{L_0}(X_\Delta,X) \, d X
	\leq C \|u\|_{L^\infty(\Omega)}^2,
\end{equation}
where $\Delta=B\cap\pom$, $\Delta'=B'\cap\pom$,  and the sups are taken respectively over all balls $B=B(x,r)$ with $x\in \pom$ and $0<r<\diam(\pom)$,
and $B'=B(x',r')$ with $x'\in2\Delta$ and $0<r'<r c_0/4$, and $c_0$ is the Corkscrew constant.

\item  Given $0<q<\infty$, we say that \textit{$L$ satisfies $\mathcal{S}<\mathcal{N}$ in $L^q(\omega_{L_0})$} if for some given $\alpha>0$,
there exists $C_{\alpha}\ge 1$ (depending only on $n$, the 1-sided NTA constants, the CDC constant, the ellipticity of $L_0$ and $L$, $\alpha$, and $q$) such that for every $\Delta_0 = \Delta(x_0, r_0)$ with $x_0 \in \partial \Omega, 0 < r_0 < \diam (\partial \Omega)$, and for every
$u\in W^{1,2}_{\loc}(\Omega)$ satisfying $Lu=0$ in the weak sense in $\Omega$ the following estimate holds
\begin{equation}\label{S<N}
	\|\mathcal{S}^\alpha_{r_0} u\|_{L^q(\Delta_0, \omega_{L_0}^{X_{\Delta_0}})}
	\leq
	C_\alpha \|\mathcal{N}^\alpha_{r_0} u\|_{L^q(5\Delta_0, \omega_{L_0}^{X_{\Delta_0}})}.
\end{equation}	

\item We say that any of the previous properties holds \textit{for characteristic functions} if the corresponding estimate is valid for all
solutions of the form $u(X)=\omega_L^X(S)$, $X\in\Omega$, with $S\subset\pom$ being an arbitrary Borel set (with $S\subset N\Delta_0$ in the case of $L^{p}(\omega_{L_0})$-solvability).

\end{list}

\end{definition}

\begin{remark}
	We would like to observe that when either $\Omega$ and $\pom$ are both bounded or  $\pom$ is unbounded, the elliptic measure is a probability (that is, $u_{L,\Omega}(X)=\omega_L^X(\pom)\equiv 1$ for every $X\in\Omega$). Hence, it has vanishing gradient and  one can then  remove the term $f_{\Delta, L_0}u_{L,\Omega}$ in \eqref{CME:BMO}. This means that the only case on which subtracting $f_{\Delta, L_0}u_{L,\Omega}$ is relevant is that where $\Omega$ is unbounded and $\pom$ is bounded (e.g., the complementary of a ball).  As a matter of fact, one must subtract that term or a similar one  for \eqref{CME:BMO} to hold. To see this,  take $f\equiv 1\in \mathrm{BMO}(\partial \Omega, \omega_{L_0})$ so that
	$\|f\|_{\mathrm{BMO}(\partial \Omega, \omega_{L_0})}=0$ and let $u=u_{L,\Omega}$ be the associated elliptic measure solution. One can see (cf.~\cite{HMT:book}) that the function $u_{L,\Omega}$ is non-constant (it decays at infinity), hence $0<u_{L,\Omega}(X)< 1$ for every $X\in\Omega$ and $|\nabla u_{L,\Omega}|\not\equiv 0$. This means that the version of \eqref{CME:BMO} without the term $f_{\Delta, L_0}u_{L,\Omega}$ cannot hold. Moreover, note that in this case \eqref{CME:BMO}  is trivial: $f_{\Delta, L_0}u_{L,\Omega}=u_{L,\Omega}$ and the left-hand side of  \eqref{CME:BMO} vanishes.
\end{remark}

\begin{remark}
	As just explained in the previous remark when either $\Omega$ and $\pom$ are both bounded or  $\pom$ is unbounded, the left-hand sides of \eqref{CME:BMO} and \eqref{CME:BMO-Linfty} are the same, as a result (e) clearly implies (f) ---and (e)$'$ implies (f)$'$--- upon taking $\varrho(\varepsilon)\equiv 0$ (we will see in the course of the proof that these two implications always hold). Much as before, when $\Omega$ is unbounded and $\pom$ is bounded, \eqref{CME:BMO-Linfty} needs to incorporate the term $\varrho(\varepsilon)\|f\|_{L^\infty(\pom, \omega_{L_0})}^2$, otherwise it would fail for $u=u_{L,\Omega}$.
\end{remark}

\begin{remark}\label{remark:BMO-other-cons}
	In  \eqref{CME:BMO} one can replace $f_{\Delta, L_0}$ by  $f_{\Delta', L_0}$ (see  Remark~\ref{remark:BMO-other-cons:proof} below). Also, when $\Omega$ is unbounded and $\pom$ bounded one can subtract  a constant that does not depend on $\Delta$ nor $\Delta'$. Namely, let $X_\Omega\in\Omega$ satisfy $\delta(X_\Omega)\approx \diam(\pom)$ (say, $X_\Omega=X_{\Delta(x_0, r_0)}$ with $x_0\in \pom$ and $r_0\approx\diam(\pom)$). Then in \eqref{CME:BMO} one can replace $f_{\Delta, L_0}$  by $f_{\pom, L_0}=\displaystyle\fint_{\pom} f\,d\omega_{L_0}^{X_\Omega}$, see Remark~\ref{remark:BMO-other-cons:proof}.
\end{remark}

The following lemmas state some properties of Green functions and elliptic measures, proofs may be found in \cite{HMT:book}.

\begin{lemma}\label{lemma:Greensf}
	Suppose that $\Omega\subset\re^{n+1}$, $n\ge 2$, is an open set satisfying the CDC. Given a real (non-necessarily symmetric) elliptic operator $L=-\div(A\nabla)$, there exist $C>1$ (depending only on dimension and on the ellipticity constant of $L$) and $c_\theta>0$ (depending on the above parameters and on $\theta\in (0,1)$) such that $G_L$,  the Green function associated with $L$, satisfies
	\begin{gather}\label{sizestimate}
		G_L(X,Y)\leq C|X-Y|^{1-n};
		\\[0.15cm]
		c_\theta|X-Y|^{1-n}\leq G_L(X,Y),\quad\text{if }\,|X-Y|\leq\theta\delta(X),\quad\theta\in(0,1);
		\\[0.15cm]
		G_L(\cdot,Y)\in \mathscr{C}\big(\overline{\Omega}\setminus\{Y\}\big)\quad\text{and}\quad G_L(\cdot,Y)|_{\partial\Omega}\equiv 0\quad\forall Y\in\Omega;
		\\[0.15cm]
		G_L(X,Y)\geq 0,\quad\forall X,Y\in\Omega,\quad X\neq Y;
		\\[0.15cm]\label{G-G-top}
		G_L(X,Y)=G_{L^\top}(Y,X),\quad\forall X,Y\in\Omega,\quad X\neq Y.
	\end{gather}
	
	Moreover, $G_L(\cdot,Y)\in W_{\rm loc}^{1,2}(\Omega\setminus\{Y\})$ for any $Y\in\Omega$ and satisfies $L G_L(\cdot,Y)=\delta_Y$ in the sense of distributions, that is,
	\begin{equation}\label{eq:G-delta}
		\iint_{\Omega}A(X)\nabla_X G_L(X,Y)\cdot\nabla\varphi(X)\,dX=\varphi(Y),\qquad\forall\, \varphi\in  \mathscr{C}_c^{\infty}(\Omega).
	\end{equation}
	In particular, $G_L(\cdot,Y)$ is a weak solution to $L G_L(\cdot,Y)=0$ in the open set $\Omega\setminus\{Y\}$.
	
	Finally, the following Riesz formula holds:
	\[
	\iint_{\Omega}A^\top(X)\nabla_XG_{L^\top}(X,Y)\cdot\nabla\varphi(X)\,dX
	=
	\varphi(Y)-\int_{\partial\Omega}\varphi\,d\omega_L^Y
	,\quad\text{for a.e. }Y\in\Omega,
	\]
	for every $\varphi\in \mathscr{C}_c^{\infty}(\re^{n+1})$.
\end{lemma}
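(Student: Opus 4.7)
The plan is to construct $G_L$ by the classical exhaustion/approximation method of Grüter–Widman (in the form adapted by Hofmann–Kim and by \cite{HMT:book} to rough domains). Fix $Y\in\Omega$ and let $\{\Omega_j\}$ be a smooth exhaustion of $\Omega$ with $Y\in\Omega_j\uparrow\Omega$. On each $\Omega_j$ the Green function $G_L^j(\cdot,Y)$ exists classically; the standard Moser/De Giorgi/Nash machinery applied to an annular region away from $Y$ gives the pointwise bound $G_L^j(X,Y)\lesssim |X-Y|^{1-n}$ uniformly in $j$, while the Caccioppoli inequality then provides uniform $W^{1,2}$ bounds on $G_L^j(\cdot,Y)$ on compacta of $\Omega\setminus\{Y\}$. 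A diagonal extraction produces $G_L(\cdot,Y)$ as a weak-$*$/weak limit; the convergence is strong enough in $W^{1,2}_{\loc}(\Omega\setminus\{Y\})$ to pass to the limit in the variational identity against $\varphi\in\mathscr{C}_c^\infty(\Omega)$, yielding \eqref{eq:G-delta} and, by restriction, showing $G_L(\cdot,Y)$ is an $L$-solution on $\Omega\setminus\{Y\}$. Nonnegativity is inherited from the approximants via the maximum principle.

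For the lower bound $G_L(X,Y)\gtrsim |X-Y|^{1-n}$ when $|X-Y|\le\theta\delta(X)$, I would follow the usual two-step scheme: first pick a smooth cutoff supported near $Y$ whose test against \eqref{eq:G-delta} (together with the Caccioppoli inequality on the annulus) forces $G_L(\cdot,Y)$ to pick up a positive mass on some ball $B(Y,c|X-Y|)$ with $c<1$; then apply Harnack's inequality along a chain from any interior point of that ball to $X$ (which exists because $|X-Y|\le\theta\delta(X)$ keeps us in a Whitney-like region). The symmetry identity $G_L(X,Y)=G_{L^\top}(Y,X)$ is proved by the classical Stampacchia trick: formally test the equation for $G_L(\cdot,Y)$ with $G_{L^\top}(\cdot,X)$ and vice versa, using a truncation argument near the poles to make the computation rigorous; both integrations give the same quantity, yielding the identity.

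The Riesz formula is obtained by testing $L^\top G_{L^\top}(\cdot,Y)=\delta_Y$ against $\varphi\in\mathscr{C}_c^\infty(\re^{n+1})$, which is legitimate after an approximation because $\varphi$ need not vanish on $\pom$. The difference between the bulk integral and $\varphi(Y)$ must therefore be captured by a boundary term; identifying this boundary term with $\int_{\pom}\varphi\,d\omega_L^Y$ is done by taking $\varphi$ equal to the solution of $Lu=0$ with boundary data $\varphi|_{\pom}$ for a dense class of boundary data, using the solvability of the continuous Dirichlet problem under the CDC (which ensures every boundary point is Wiener-regular and $u\in W^{1,2}(\Omega)\cap\mathscr{C}(\overline{\Omega})$ for Lipschitz data), and then invoking the representation $u(Y)=\int_{\pom}\varphi\,d\omega_L^Y$ together with a density argument.

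The main obstacle is the global continuity statement $G_L(\cdot,Y)\in\mathscr{C}(\overline{\Omega}\setminus\{Y\})$ with $G_L(\cdot,Y)|_{\pom}\equiv 0$. Interior continuity is routine (De Giorgi–Nash), but continuity up to the boundary requires the CDC in an essential way: at each $x\in\pom$ one combines the uniform pointwise bound $G_L\le C|\cdot-Y|^{1-n}$ with the Wiener/CDC criterion for boundary regularity, applied to the solution $G_L(\cdot,Y)$ on $\Omega\cap B(x,r)\setminus\overline{B(Y,r/4)}$ for $r\ll\dist(x,Y)$, to produce a modulus of continuity. Here one uses the quantitative 2-fatness \eqref{eqn:CDC} together with \eqref{cap-Ball} to invoke a standard oscillation-decay lemma (as in Kilpeläinen–Malý or the treatment in \cite{HMT:book}), which delivers the vanishing of $G_L(\cdot,Y)$ at $\pom$. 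This is the step that genuinely requires the CDC and is the crux of porting the classical NTA-domain construction to the present setting.
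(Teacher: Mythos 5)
The paper itself contains no proof of this lemma: it is quoted as background, with the proof deferred to the forthcoming monograph \cite{HMT:book} (and, for bounded domains, to the classical construction of Gr\"uter--Widman \cite{gruterwidman}, cf.\ Remark~\ref{rem:GF}). So there is no in-paper argument to compare against; what can be said is that your outline reproduces the standard construction that the cited source follows: exhaustion by smooth subdomains, uniform bound $G^j_L(X,Y)\lesssim|X-Y|^{1-n}$ plus Caccioppoli to extract a limit in $W^{1,2}_{\loc}(\Omega\setminus\{Y\})$ satisfying \eqref{eq:G-delta}, the lower bound via testing \eqref{eq:G-delta} with a cutoff on an annulus around $Y$ followed by Harnack, symmetry by the Stampacchia truncation trick, and boundary continuity/vanishing from the CDC through a Wiener-type oscillation-decay estimate. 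You are also right that this last step is where the CDC enters essentially, and you correctly avoid invoking any Harnack Chain condition of the domain (which is not assumed here), using only that $|X-Y|\le\theta\delta(X)$ keeps the relevant chain inside a Whitney-like ball.

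The one place where your sketch is loose is the Riesz formula. As written (``taking $\varphi$ equal to the solution of $Lu=0$ with boundary data $\varphi|_{\pom}$'') the argument sounds circular: $\varphi\in\mathscr{C}_c^\infty(\re^{n+1})$ is given and is not a solution. The correct implementation of your idea is to set $u(Y)=\int_{\pom}\varphi\,d\omega_L^Y$, observe that $w=\varphi-u$ has vanishing boundary values (by Wiener regularity under the CDC) and lies in the right energy class, and then represent $w(Y)$ by testing against $G_{L^\top}(\cdot,Y)$, using \eqref{G-G-top} and a truncation near the pole; this also explains why the identity is only asserted for a.e.\ $Y\in\Omega$ and why the $W_0^{1,1}$-type membership of the Green function (cf.\ \eqref{obsgreen1}--\eqref{obsgreen2}) is needed. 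With that step made precise, your plan is the same route as the cited proof, but as it stands it is an outline rather than a complete argument.
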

\medskip

\begin{remark}\label{rem:GF}
	If we also assume that $\Omega$ is bounded, following \cite{HMT:book} we know that the Green function $G_L$ coincides with the one constructed in \cite{gruterwidman}. Consequently, for each $Y\in\Omega$ and $0<r<\delta(Y)$, there holds
	\begin{equation}\label{obsgreen1}
		G_L(\cdot,Y)\in W^{1,2}(\Omega\setminus B(Y,r))\cap W_0^{1,1}(\Omega).
	\end{equation}
	Moreover, for every $\varphi\in \mathscr{C}_c^{\infty}(\Omega)$ such that $0\le \varphi\le1 $ and $\varphi\equiv 1$ in $B(Y,r)$ with $0<r<\delta(Y)$, we have that
	\begin{equation}\label{obsgreen2}
		(1-\varphi)G_L(\cdot,Y)\in W_0^{1,2}(\Omega).
	\end{equation}
\end{remark}
\medskip

The following result lists a number of properties which will be used throughout the paper:

\begin{lemma}\label{lemma:proppde}
	Suppose that $\Omega\subset\re^{n+1}$, $n\ge 2$, is a 1-sided NTA domain satisfying the CDC. Let $L_0=-\div(A_0\nabla)$ and $L=-\div(A\nabla)$ be two real (non-necessarily symmetric) elliptic operators, there exist $C_1\ge 1$, $\rho\in (0,1)$ (depending only on dimension, the 1-sided NTA constants, the CDC constant, and the ellipticity of $L$) and $C_2\ge 1$ (depending on the same parameters and on the ellipticity of $L_0$), such that for every $B_0=B(x_0,r_0)$ with $x_0\in\partial\Omega$, $0<r_0<\diam(\partial\Omega)$, and $\Delta_0=B_0\cap\partial\Omega$ we have the following properties:
	\begin{list}{$(\theenumi)$}{\usecounter{enumi}\leftmargin=1cm \labelwidth=1cm \itemsep=0.1cm \topsep=.2cm \renewcommand{\theenumi}{\alph{enumi}}}
		
		\item $\omega_L^Y(\Delta_0)\geq C_1^{-1}$ for every $Y\in C_1^{-1}B_0\cap\Omega$ and $\omega_L^{X_{\Delta_0}}(\Delta_0)\ge C_1^{-1}$.

		\item If $B=B(x,r)$ with $x\in\partial\Omega$ and $\Delta=B\cap\partial\Omega$ is such that $2B\subset B_0$, then for all $X\in\Omega\setminus B_0$ we have that
		\[
		\frac{1}{C_1}\omega_L^X(\Delta)\leq r^{n-1} G_L(X,X_\Delta)\leq C_1\omega_L^X(\Delta).
		\]

		\item  If $X\in\Omega\setminus 4B_0$, then
		$$
		\omega_{L}^X(2\Delta_0)\leq C_1\omega_{L}^X(\Delta_0).
		$$

		\item  If $B=B(x,r)$ with $x\in\partial\Omega$ and $\Delta:=B\cap\partial\Omega$ is such that $B\subset B_0$, then for every $X\in\Omega\setminus 2\kappa_0B_0$ with $\kappa_0$ as in \eqref{definicionkappa0}, we have that
		$$
		\frac{1}{C_1}\omega_L^{X_{\Delta_0}}(\Delta)\leq \frac{\omega_L^X(\Delta)}{\omega_L^X(\Delta_0)}\leq C_1\omega_L^{X_{\Delta_0}}(\Delta).
		$$
		As a consequence,
		\[
		\frac1C_1 \frac1{\omega_L^X(\Delta_0)}\le  \frac{d\omega_L^{{X_{\Delta_0}}}}{d\omega_L^X}(y)\le C_1 \frac1{\omega_L^X(\Delta_0)},
		\qquad\mbox{for $\omega_L^X$-a.e. $y\in\Delta_0$}.
		\]

		\item For every $X\in B_0\cap\Omega$ and for any $j\ge 1$
		\[
		\frac{d\omega_L^X}{d\omega_L^{X_{2^j\Delta_0}}}(y)\le C_1\,\bigg(\frac{\delta(X)}{2^j\,r_0}\bigg)^{\rho},
		\qquad\mbox{for $\omega_L^X$-a.e. $y\in\pom\setminus 2^j\,\Delta_0$}.
		\]
		
		\item If $0\le u\in W^{1,2}_\loc(B_0\cap \Omega)\cap\mathscr{C}(\overline{B_0\cap \Omega})$ satisfies $Lu=0$ in the weak-sense in $B_0\cap \Omega$ and $u\equiv 0$ in $\Delta_0$ then
\[
u(X)
\le
C_1\,
\Big(\frac{\delta(X)}{r_0}\Big)^\rho u(X_{\Delta_0}), \qquad X\in\tfrac12 B_0\cap\Omega.
\]
	\end{list}

\end{lemma}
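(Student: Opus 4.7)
I would establish \textup{(a)}--\textup{(f)} in an order that lets each assertion feed the next, starting with the boundary Hölder regularity \textup{(f)}. Fix $B_0 = B(x_0,r_0)$ and let $u$ be a nonnegative bounded solution vanishing continuously on $\Delta_0$. The CDC, applied at every scale $r \leq r_0$ at $x_0$, supplies the quantitative Wiener regularity $\Cap(\overline{B(x_0,r)}\setminus\Omega,B(x_0,2r)) \gtrsim r^{n-1}$, and a Caccioppoli/De Giorgi iteration for the zero-extended subsolution $u\,\mathbf{1}_\Omega$ then yields the oscillation decay
\[
\sup_{B(x_0,r/2)\cap\Omega} u \leq (1-\eta)\sup_{B(x_0,r)\cap\Omega} u,\qquad 0<r\leq r_0,
\]
for some $\eta\in(0,1)$ depending only on the allowable parameters. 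Iterating gives \textup{(f)} with $\rho = \log_2\bigl(1/(1-\eta)\bigr)$. With \textup{(f)} in hand, Bourgain's nondegeneracy \textup{(a)} follows by applying \textup{(f)} to $v(Y):=\omega_L^Y(\partial\Omega)-\omega_L^Y(\Delta_0)$, a nonnegative bounded $L$-solution vanishing on the interior of $\Delta_0$ in the Wiener-regular sense. For a suitable Corkscrew point relative to a sub-ball of $\Delta_0$ of comparable radius, \textup{(f)} forces $v(Y)\leq \tfrac12\omega_L^Y(\partial\Omega)$, hence $\omega_L^Y(\Delta_0)\geq \tfrac12\omega_L^Y(\partial\Omega)\gtrsim 1$, and a Harnack chain transfers this bound to all $Y\in C_1^{-1}B_0\cap\Omega$.

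The CFMS comparison \textup{(b)} is the technical heart. For its upper half, test the Riesz formula of Lemma~\ref{lemma:Greensf} (applied to $L^\top$, together with \eqref{G-G-top}) against a Lipschitz cutoff $\varphi$ with $\mathbf{1}_\Delta\leq\varphi\leq\mathbf{1}_{2\Delta}$ and $|\nabla\varphi|\lesssim 1/r$. The resulting integral is handled by Cauchy--Schwarz and a Caccioppoli estimate for $G_L(X,\cdot)$ inside $2B\cap\Omega$ (which vanishes on $2\Delta$), followed by interior Harnack, yielding $\omega_L^X(\Delta)\lesssim r^{n-1}G_L(X,X_\Delta)$. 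For the reverse inequality, apply \textup{(f)} to $Y\mapsto G_L(X,Y)$ to get $G_L(X,Y)\lesssim G_L(X,X_\Delta)$ in a slightly shrunken ball, plug this pointwise estimate into the Riesz formula, and invoke \textup{(a)} at scale $r$ to absorb the resulting $\omega_L^X$-mass and extract $r^{n-1}G_L(X,X_\Delta)\lesssim \omega_L^X(\Delta)$.

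The remaining items are consequences. Doubling \textup{(c)} follows from \textup{(b)} applied at $\Delta_0$ and $2\Delta_0$, together with Harnack to compare $G_L(X,X_{\Delta_0})$ and $G_L(X,X_{2\Delta_0})$ through a chain of uniformly bounded length (recall $X\in\Omega\setminus 4B_0$). The change-of-pole formula \textup{(d)} is the classical ratio argument: \textup{(b)} rewrites both $\omega_L^X(\Delta)/\omega_L^X(\Delta_0)$ and $\omega_L^{X_{\Delta_0}}(\Delta)$ as ratios of Green functions, and the boundary Harnack principle (itself a consequence of \textup{(f)} and \textup{(a)} applied to the pair $G_L(X,\cdot),\,G_L(X_{\Delta_0},\cdot)$) shows that these two ratios agree up to uniform constants, provided $X$ stays outside $2\kappa_0 B_0$ so that both Green functions are $L$-harmonic on $B_0\cap\Omega$ and vanish on $\Delta_0$. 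Finally \textup{(e)} is obtained by applying \textup{(f)} to $Y\mapsto \omega_L^Y(\partial\Omega\setminus 2^j\Delta_0)$, which vanishes on $2^{j-1}\Delta_0$, and then using \textup{(d)} to replace the pole $X$ by $X_{2^j\Delta_0}$. \textbf{The main obstacle} is the CFMS comparison \textup{(b)}: producing matching upper and lower bounds in a 1-sided NTA domain with only the CDC (rather than Ahlfors regularity) requires careful management of the interplay between boundary Hölder regularity, Bourgain nondegeneracy, and Harnack chains at every scale, which is precisely why these proofs are collected in the monograph \cite{HMT:book}.
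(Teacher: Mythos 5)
First, a structural point: the paper does not prove Lemma \ref{lemma:proppde} at all — it is quoted as background, with the proofs explicitly deferred to the forthcoming monograph \cite{HMT:book} (see the sentence preceding Lemma \ref{lemma:Greensf}). So your outline can only be compared with the standard CFMS/Jerison--Kenig toolkit that the monograph implements in the 1-sided NTA $+$ CDC setting, and at that level your ordering (boundary H\"older regularity, Bourgain-type nondegeneracy, CFMS comparison, doubling, change of pole, decay) is indeed the classical route.

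That said, two of your steps would not go through as described, and one more is shakier than you acknowledge. (1) Item (f) is not mere oscillation decay: its right-hand side is $u(X_{\Delta_0})$, the value at a corkscrew point, so (f) is boundary H\"older continuity \emph{combined with} the Carleson estimate $\sup_{\frac12 B_0\cap\Omega}u\lesssim u(X_{\Delta_0})$ for nonnegative solutions vanishing on $\Delta_0$. Iterating $\sup_{B(x_0,r/2)\cap\Omega}u\le(1-\eta)\sup_{B(x_0,r)\cap\Omega}u$ only yields the weak form with $\|u\|_{L^\infty(B_0\cap\Omega)}$ on the right; the Carleson estimate is a separate, nontrivial theorem (proved by an iteration combining the weak decay, Harnack chains and the maximum principle), and you actually need the strong form later, e.g.\ to get $G_L(X,\cdot)\lesssim G_L(X,X_\Delta)$ inside $B$ in part (b). (2) The lower bound in (b), $r^{n-1}G_L(X,X_\Delta)\lesssim\omega_L^X(\Delta)$, cannot be extracted by ``plugging a pointwise upper bound on $G_L(X,\cdot)$ into the Riesz formula'': the Riesz formula expresses $\int\varphi\,d\omega_L^X$ through $\iint \nabla_Z G_L(X,Z)\cdot A\nabla\varphi(Z)\,dZ$, and an upper bound on $G$ only bounds this from above, i.e.\ it reproves the direction you already have. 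The standard argument is a maximum-principle comparison in the \emph{pole} variable: $X\mapsto r^{n-1}G_L(X,X_\Delta)$ and $X\mapsto\omega_L^X(\Delta)$ are both $L$-solutions in $\Omega\setminus\overline{B(X_\Delta,cr)}$; on $\partial B(X_\Delta,cr)$ one has $G_L(X,X_\Delta)\approx r^{1-n}$ by the two-sided size estimates of Lemma \ref{lemma:Greensf} (cf.\ \eqref{sizestimate}) while $\omega_L^X(\Delta)\gtrsim1$ by (a) and a bounded Harnack chain, and the comparison then propagates to $X\in\Omega\setminus B_0$. (3) In (a), the ``Harnack chain transfers this bound to all $Y\in C_1^{-1}B_0\cap\Omega$'' fails for $Y$ close to $\partial\Omega$, where chains have unbounded length; those points must be handled by the boundary estimate itself, e.g.\ by applying the weak H\"older decay to the elliptic measure of the truncated (bounded) domain $B_0\cap\Omega$, or by the capacitary-potential comparison as in \cite{HLMN,Zhao}. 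This also avoids the hidden use of a lower bound on $\omega_L^Y(\partial\Omega)$, which is not automatic when $\Omega$ is unbounded and $\partial\Omega$ bounded. Similarly, in (d) the ``boundary Harnack principle, itself a consequence of (f) and (a)'' is an overstatement: in this generality the comparison/change-of-pole principle is a substantial theorem whose proof rests on (a), (b), (f) together with iterated maximum-principle arguments, which is precisely why these facts are collected from \cite{HMT:book} rather than reproved here.
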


\medskip

\begin{remark}\label{remark:chop-dyadic}
	We note that from $(d)$ in the previous result and Harnack's inequality one can easily see that given $Q, Q', Q''\in\dd(\pom)$
	\begin{equation}\label{chop-dyadic}
		\frac{\omega_L^{X_{Q''}}(Q)}{\omega_L^{X_{Q''}}(Q')}\approx \omega_L^{X_{Q'}}(Q),
		\qquad\mbox{whenever }
		Q\subset Q'\subset Q''.
	\end{equation}
	Also, $(d)$, Harnack's inequality, and \eqref{deltaQ} give
	\begin{equation}\label{chop-dyadic:densities}
		\frac{d\omega_L^{X_{Q'}}}{d\omega_L^{X_{Q''}}}(y)
		\approx
		\frac1{\omega_L^{X_{Q''}}(Q')},
		\qquad
		\mbox{ for $\omega_L^{X_{Q''}}$-a.e. }y\in Q',
		\mbox{whenever }Q'\subset Q''.
	\end{equation}		
	Observe that since $\omega_L^{X_{Q''}}\ll \omega_L^{X_{Q'}}$  we can easily get an analogous inequality for the reciprocal of the Radon-Nikodym derivative.
\end{remark}

\begin{remark}\label{RHp-scale-invariant}
It is not hard to see that if $\omega_L\ll\omega_{L_0}$ then Lemma~\ref{lemma:proppde} gives the following:
\begin{equation}
\omega_L\in RH_p(\pom,\omega_{L_0})\ \Longleftrightarrow \	
\sup_{x \in \partial \Omega, 0 < r < \diam(\pom)} \|h(\cdot\,; L, L_0, X_{\Delta(x,r)})\|_{L^{p}(\Delta(x,r), \omega_{L_0}^{X_{\Delta(x,r)}})} <\infty.
\end{equation}
The left-to-right implication follows at once from \eqref{eqn:def:RHp} by taking $B=B_0$ (hence $\Delta=\Delta_0$) and Lemma~\ref{lemma:proppde} part $(a)$. For the converse, fix  $B_0=B(x_0,r_0)$ and $B=B(x,r)$ with $B\subset B_0$, $x_0,x \in \partial \Omega$ and $0 < r_0,r < \diam(\pom)$. Write $\Delta_0=B_0\cap\pom$ and $\Delta=B\cap\pom$. If $r\approx r_0$ we have, by Lemma~\ref{lemma:proppde} part $(a)$,
\begin{multline*}
\left(\fint_{\Delta}h(y;L,L_0,X_{\Delta_0} )^p d \omega_{L_0}^{X_{\Delta_0}}(y)\right)^{\frac1p}
\lesssim
\|h(\cdot\,; L, L_0, X_{\Delta_0})\|_{L^{p}(\Delta_0, \omega_{L_0}^{X_{\Delta_0}})}
\\
\approx
\|h(\cdot\,; L, L_0, X_{\Delta_0})\|_{L^{p}(\Delta_0, \omega_{L_0}^{X_{\Delta_0}})}
\frac{\omega_L^{X_{\Delta_0}}(\Delta)}{\omega_{L_0}^{X_{\Delta_0}}(\Delta)}.
\end{multline*}
On the other hand, if $r\ll r_0$, we have by Lemma~\ref{lemma:proppde} part $(d)$ and the fact that $\omega_L\ll\omega_{L_0}$ that
\[
h(\cdot\,;L,L_0,X_{\Delta_0})
=
\frac{d\omega_L^{X_{\Delta_0}}}{d\omega_{L_0}^{X_{\Delta_0}}}
=
\frac{d\omega_L^{X_{\Delta_0}}}{d\omega_{L}^{X_{\Delta}}}
\frac{d\omega_L^{X_{\Delta}}}{d\omega_{L_0}^{X_{\Delta}}}
\frac{d\omega_{L_0}^{X_{\Delta}}}{d\omega_{L_0}^{X_{\Delta_0}}}
\approx
h(\cdot\,;L,L_0,X_{\Delta})\frac{\omega_{L}^{X_{\Delta_0}}(\Delta)}{\omega_{L_0}^{X_{\Delta_0}}(\Delta)},
\quad\text{$\omega_{L_0}$-a.e. in $\Delta$.}
\]
This and Lemma~\ref{lemma:proppde} part $(d)$ give
\begin{align*}
\left(\fint_{\Delta}h(y;L,L_0,X_{\Delta_0} )^p d \omega_{L_0}^{X_{\Delta_0}}(y)\right)^{\frac1p}
\approx
\|h(\cdot\,; L, L_0, X_{\Delta_0})\|_{L^{p}(\Delta, \omega_{L_0}^{X_{\Delta}})}	
\approx
\|h(\cdot\,; L, L_0, X_{\Delta})\|_{L^{p}(\Delta, \omega_{L_0}^{X_{\Delta}})}\frac{\omega_{L}^{X_{\Delta_0}}(\Delta)}{\omega_{L_0}^{X_{\Delta_0}}(\Delta)}	
\end{align*}
Thus, \eqref{eqn:def:RHp} holds and the right-to-left implication holds.
\end{remark}

\begin{remark}
It is not difficult to see that under the assumptions of Lemma~\ref{lemma:proppde} one has
	\begin{equation*}
		\|f\|_{\mathrm{BMO}(\partial \Omega, \omega_L)}
		\approx
		\sup_{\Delta \subset \partial \Omega} \inf_{c \in \R} \fint_\Delta |f(x)-c| \, d \omega_L^{X_\Delta}(x),
	\end{equation*}
	where the sup  is taken over all surface balls $\Delta=B(x,r)\cap\pom$ with $x\in \pom$ and $0<r<\diam(\pom)$. Thus, we could have taken this as the definition of $f\in\mathrm{BMO}(\partial \Omega, \omega_L)$.
\end{remark}

\begin{remark}\label{rem:JN}
Under the assumptions of Lemma~\ref{lemma:proppde}, for every $\Delta_0$ as above if $f\in \mathrm{BMO}(\Delta_0, \omega_L)$, then   John-Nirenberg's inequality holds locally in $\Delta_0$ and the implicit constants depend on the doubling property of $\omega_L^{X_{\Delta_0}}$ in $2\Delta_0$. Thus, if one further assumes that $f\in \mathrm{BMO}(\partial \Omega, \omega_L)$, then for every $1<q<\infty$ there holds
	\begin{equation}\label{eqn:JN}
		\|f\|_{\mathrm{BMO}(\partial \Omega, \omega_L)} \approx \sup_{\Delta_0} \sup_{\Delta} \inf_{c \in \R} \Big(\fint_{\Delta} |f(x)-c|^q \, d \omega_L^{X_{\Delta_0}}(x)\Big)^{\frac1q} < \infty,
	\end{equation}
		where the sups are taken respectively over all surface balls $\Delta_0=B(x_0,r_0)\cap\pom$ with $x_0\in \pom$ and $0<r_0<\diam(\pom)$,
	and $\Delta=B\cap\pom$, $B=B(x,r)\subset B_0$ with $x\in \pom$ and $0<r<\diam(\pom)$. Note that the implicit constants depend only on dimension, the 1-sided NTA constants, the CDC constant, the ellipticity of $L$, and $q$.
\end{remark}

\section{Proof of Theorem~\ref{thm:main}}\label{section:proof-main}
We first observe that for any $p\in(1,\infty)$ the equivalence $\textup{(a)}{}_{p'}\Longleftrightarrow\textup{(b)}{}_p$ easily implies $\textup{(a)}\Longleftrightarrow\textup{(b)}$. Also, since Jensen's inequality readily gives that $	\omega_L \in RH_{p'}(\partial \Omega, \omega_{L_0})$ implies $\omega_L \in RH_{q'}(\partial \Omega, \omega_{L_0})$ for all $q\ge p$, the equivalence $\textup{(a)}{}_{p'}\Longleftrightarrow\textup{(b)}{}_p$ yields $\textup{(b)}_p\Longrightarrow\textup{(b)}_q$ for all $q\ge p$. Finally,  $\textup{(b)}_p\Longrightarrow\textup{(b)}_p'$ clearly implies $\textup{(b)}\Longrightarrow\textup{(b)$'$}$. With all these in mind, we will follow the scheme
	\[
	\textup{(a)}_{p'}\Longleftrightarrow\textup{(b)}_p\Longrightarrow \textup{(b)}_p',
	\qquad
	\textup{(b)}'\Longrightarrow \textup{(a)},
	\qquad
	\textup{(a)}\Longrightarrow\textup{(d)}\Longrightarrow \textup{(d)}'\Longrightarrow \textup{(a)},
	\]
	\[
	\textup{(c)}\Longrightarrow\textup{(c)}', \qquad \textup{(e)}\Longrightarrow \textup{(f)}\Longrightarrow \textup{(c)}', \qquad
	\textup{(e)}'\Longrightarrow \textup{(f)}'\Longrightarrow \textup{(c)}'\Longrightarrow \textup{(a)},
	\]
	\[
	\textup{(a)}\Longrightarrow\textup{(c)},\qquad\textup{(a)}\Longrightarrow\textup{(e)}, \qquad \textup{(a)}\Longrightarrow\textup{(e)}'.
	\]
	
Before proving all these implications we present some auxiliary results:

\begin{lemma}\label{lemma:CME-uOmega}
Let $\Omega\subset\mathbb{R}^{n+1}$, $n\ge 2$, be a 1-sided NTA domain (cf. Definition \ref{def1.1nta}) satisfying the capacity density condition  (cf. Definition \ref{def-CDC}),  and let $Lu=-\div(A\nabla u)$ and $L_0u=-\div(A_0\nabla u)$ be real (non-necessarily symmetric) elliptic operators. There exists $\rho\in (0,1)$ (depending only on dimension, the 1-sided NTA constants, the CDC constant, and the ellipticity of $L$) and $C_1\ge 1$ (depending on the same parameters and on the ellipticity of $L_0$) such that the following holds: If $\Delta=B\cap\pom$ and $\Delta'=B'\cap\pom$,   where $B=B(x,r)$ with $x\in \pom$ and $0<r<\diam(\pom)$,
and $B'=B(x',r')$ with $x'\in2\Delta$ and $0<r'<r c_0/4$, where $c_0$ is the Corkscrew constant, and $u_{L,\Omega}(X):=\omega_L^X(\pom)$, $X\in\Omega$, then
	\begin{equation}\label{CME-uOmega}
	\frac{1}{\omega_{L_0}^{X_{\Delta}} (\Delta')} \iint_{B'\cap \Omega} |\nabla u_{L,\Omega}(X)|^2 G_{L_0}(X_\Delta,X) \, d X
	\leq C_1 \Big(\frac{r'}{\diam(\pom)}\Big)^{2\rho}.
\end{equation}
\end{lemma}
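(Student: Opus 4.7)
If $\Omega$ is bounded or $\pom$ is unbounded, the associated elliptic measure is a probability measure, so $u_{L,\Omega}\equiv 1$, $\nabla u_{L,\Omega}\equiv 0$, and the inequality is trivial. Assume therefore that $\Omega$ is unbounded and $D := \diam(\pom) < \infty$, and set $v := 1 - u_{L,\Omega}$. Since $L$ annihilates constants, $v \in W^{1,2}_{\loc}(\Omega)\cap\mathscr{C}(\overline\Omega)$ is a nonnegative weak solution of $Lv=0$ in $\Omega$ with $\|v\|_{L^\infty(\Omega)}\le 1$ and $v\equiv 0$ on $\pom$ (recall every boundary point is Wiener regular under CDC), while $|\nabla v|=|\nabla u_{L,\Omega}|$; so I may work with $v$.

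First I would derive a pointwise H\"older decay for $v$. Fix a small dimensional constant $c_*>0$. If $r'\le c_*D$, choose $B_0=B(x',r_0)$ with $r_0\in (2r',D)$ and $r_0\approx D$, so that $B'\subset \tfrac12 B_0$; then Lemma~\ref{lemma:proppde}(f) applied to $v$ (which is nonnegative, solves $Lv=0$ in $B_0\cap\Omega$, and vanishes on $\Delta_0=B_0\cap\pom$) yields
\[
v(X)\le C_1(\delta(X)/r_0)^{\rho} v(X_{\Delta_0}) \lesssim (\delta(X)/D)^{\rho}, \qquad X\in B'\cap\Omega.
\]
In the complementary case $r'>c_*D$ one has $(r'/D)^{2\rho}\approx 1$ and I would simply use $v\le 1$; the argument below still works with $\ell(I)/D$ replaced by $1$.

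Next I would Whitney-decompose $B'\cap\Omega=\bigcup_{I\cap B'\ne\emptyset} I$, where $\ell(I)\approx \delta(X)\lesssim r'$ for $X\in I$. On each $I$, Caccioppoli's inequality on a slightly fattened Whitney cube $I^*\subset\Omega$ and the pointwise bound on $v$ give
\[
\iint_I |\nabla v|^2\, dX \lesssim \ell(I)^{-2}\iint_{I^*} v^2\, dX \lesssim \ell(I)^{n-1}(\ell(I)/D)^{2\rho}.
\]
For the Green function, choose $y_I\in\pom$ with $|y_I - I|\approx \ell(I)$ and a dyadic cube $Q_I\ni y_I$ with $\ell(Q_I)\approx\ell(I)$. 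Since $|X_\Delta - y_I|\ge \delta(X_\Delta)\approx r \gg r'\ge \ell(I)$, Lemma~\ref{lemma:proppde}(b) and Harnack's inequality yield
\[
G_{L_0}(X_\Delta, X) \approx G_{L_0}(X_\Delta, X_{\Delta_{Q_I}}) \approx \omega_{L_0}^{X_\Delta}(Q_I)/\ell(I)^{n-1}, \qquad X\in I.
\]
Multiplying, summing, and using the bounded overlap of the cubes $\{Q_I\}$, which lie in a fixed dilate $C\Delta'$ because $|y_I-x'|\lesssim r'$, gives
\[
\iint_{B'\cap\Omega} |\nabla v|^2 G_{L_0}(X_\Delta, X)\, dX \lesssim (r'/D)^{2\rho}\, \omega_{L_0}^{X_\Delta}(C\Delta').
\]
A bounded number of iterations of the doubling property (Lemma~\ref{lemma:proppde}(c)), legitimate since $|X_\Delta - x'|\gtrsim r \gg r'$ places $X_\Delta$ outside $4\cdot 2^k B'$ for all relevant $k$, replaces $C\Delta'$ by $\Delta'$ and concludes the proof. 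The main obstacle is bookkeeping the interplay between the four scales $r'$, $r$, $\ell(I)$, and $D$ so that Lemma~\ref{lemma:proppde}(f),~(b), and~(c) can all be invoked in the same region; the constraint $r_0<D$ in part~(f) is precisely what forces the case split at $r'\approx D$.
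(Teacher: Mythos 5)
Your argument is correct and is essentially the paper's own proof: reduce to the case $\Omega$ unbounded with $\pom$ bounded, set $v=1-u_{L,\Omega}$, get the decay $v(X)\lesssim(\delta(X)/\diam(\pom))^\rho$ on $B'\cap\Omega$ from Lemma~\ref{lemma:proppde}(f) applied at scale $\approx\diam(\pom)$, then combine a Whitney decomposition, Caccioppoli, the CFMS-type comparison $G_{L_0}(X_\Delta,\cdot)\approx\omega_{L_0}^{X_\Delta}(Q_I)\ell(I)^{1-n}$ on $I$, and doubling (your case split at $r'\approx\diam(\pom)$ is harmless and in fact vacuous, since $r'<rc_0/4<\diam(\pom)/4$). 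One small caution: the whole family $\{Q_I\}_{I\in\W_{B'}}$ is \emph{not} boundedly overlapping (cubes from different Whitney scales can be nested); as in the paper, the overlap bound should be used only within each fixed scale $\ell(I)=2^{-k}$, and the sum over scales converges thanks to the factor $(2^{-k}/\diam(\pom))^{2\rho}$.
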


\begin{proof}
Fix $B=B(x,r)$ with $x\in \pom$ and $0<r<\diam(\pom)$ and $B'=B(x',r')$ with $x'\in2\Delta$ and $0<r'<r c_0/4$. Let $\Delta=B\cap\pom$, $\Delta'=B'\cap\pom$.	
	
We note that when either $\pom$ is unbounded or $\pom$ and $\Omega$ are both bounded then elliptic measure is a probability, hence $u_{L,\Omega}\equiv 1$ and the desired estimate is trivial. This means that we may assume that $\pom$ is bounded and $\Omega$ is unbounded (e.g, the complement of a closed ball). In that scenario, $u_{L,\Omega}$ decays at $\infty$, $0<u_{L, \Omega}<1$ in $\Omega$, and $u_{L,\Omega}|_{\pom}\equiv 1$. Define $v:=1-u_{L,\Omega}$ and note that our assumptions guarantee that $v\in W^{1,2}_\loc(\Omega)\cap\mathscr{C}(\overline{\Omega})$ with $0\le v\le 1$ and $v|_{\pom}\equiv 0$. By Lemma~\ref{lemma:proppde} part $(f)$ applied in $B(x',\diam(\pom)/2)$ we have
\[
0\le
v(X)\lesssim \Big(\frac{\delta(X)}{\diam(\pom)}\Big)^\rho v(X_{\Delta(x',\diam(\pom)/2)})
\le
\Big(\frac{\delta(X)}{\diam(\pom)}\Big)^\rho, \qquad X\in B'\cap\Omega.
\]
Set $\W_{B'}:=\{I\in\W: I\cap B'\neq\emptyset\}$ and we pick $Z_{I, B'}\in I\cap B'$ for $I \in \W_{B'}$. Caccioppoli's and Harnack's inequalities, and the previous estimate yield
\begin{align*}
\iint_I |\nabla v(X)|^2 dX
\lesssim
\ell(I)^{-2} \iint_{I^*} v(X)^2 dX
\lesssim
\ell(I)^{n-1} v(Z_{I, B'})^2
\lesssim
\ell(I)^{n-1} \Big(\frac{\ell(I)}{\diam(\pom)}\Big)^{2\rho}.
\end{align*}
Thus, Lemma~\ref{lemma:proppde} gives
\begin{align*}
\iint_{B'\cap \Omega} |\nabla v(X)|^2 G_{L_0}(X_\Delta,X) \, d X
&\lesssim
\sum_{I\in \W_{B'}} \omega_{L_0}^{X_{\Delta}}(Q_I)\ell(I)^{1-n} \iint_{I} |\nabla v(X)|^2 \, d X
\\
&\lesssim
\sum_{I\in \W_{B'}} \omega_{L_0}^{X_{\Delta}}(Q_I)  \Big(\frac{\ell(I)}{\diam(\pom)}\Big)^{2\rho}
\\
&\lesssim
\sum_{k: 2^{-k}\lesssim r'} \Big(\frac{2^{-k}}{\diam(\pom)}\Big)^{2\rho}
\sum_{I\in \W_{B'}: \ell(I)=2^{-k}} \omega_{L_0}^{X_{\Delta}}(Q_I),
\end{align*}
where $Q_I\in\dd(\pom)$ is so that $\ell(Q_I)=\ell(I)$ and contains $\widehat{y}_I \in \partial \Omega$ such that
$\dist(I, \partial \Omega)=\dist(\widehat{y}_I, I)$. It is easy to see that if $2^{-k}\lesssim r$, then the family  $\{Q_I\}_{I\in\W_{B'}, \ell(I)=2^{-k}}$ has bounded overlap uniformly on $k$, and also that $Q_I\subset C\Delta'$ for every $I\in\W_{B'}$, where $C$ is some harmless dimensional constant. Hence,
\begin{align*}
\iint_{B'\cap \Omega} |\nabla v(X)|^2 G_{L_0}(X_\Delta,X) \, d X
\lesssim
\sum_{k: 2^{-k}\lesssim r'} \Big(\frac{2^{-k}}{\diam(\pom)}\Big)^{2\rho}\omega_{L_0}^{X_{\Delta}}(C\,\Delta')
\lesssim
\Big(\frac{r'}{\diam(\pom)}\Big)^{2\rho} \omega_{L_0}^{X_{\Delta}}(\Delta').
\end{align*}
This gives the desired estimate.
\end{proof}

Given  $Q_0\in\dd(\partial\Omega), \vartheta \in \mathbb{N}$, and for every $\eta\in(0,1)$ we define the modified non-tangential cone
\begin{equation}\label{def-whitney-eta}
	\Gamma_{Q_0,\eta}^{\vartheta}(x):=\bigcup_{\shortstack{$\scriptstyle Q\in\dd_{Q_0}$\\$\scriptstyle Q\ni x$}}U_{Q,\eta^3}^{\vartheta},\qquad U_{Q,\eta^3}^{\vartheta}:=\bigcup_{\shortstack{$\scriptstyle Q'\in\dd_{Q}$\\$\scriptstyle \ell(Q')>\eta^3\ell(Q)$}}U_{Q'}^{\vartheta}.
\end{equation}
It is not hard to see that the sets $\{U_{Q,\eta^3}^\vartheta\}_{Q\in\dd_{Q_0}}$ have bounded overlap with constant depending on $\eta$.

The following result was obtained in \cite[Lemma 3.10]{CHMT} (for $\beta>0$) and in \cite[Lemma 3.40]{CMO} (for $\beta=0$), both in the context of 1-sided CAD, extending  \cite[Lemma~2.6]{KKoPT} and \cite[Lemma~2.3]{KKiPT}. It is not hard to see that the proof works with no changes in our setting:

\begin{lemma}\label{sq-function->M}
	Let $\Omega\subset\mathbb{R}^{n+1}$, $n\ge 2$, be a 1-sided NTA domain (cf. Definition \ref{def1.1nta}) satisfying the capacity density condition  (cf. Definition \ref{def-CDC}),  and let $Lu=-\div(A\nabla u)$ be a real (non-necessarily symmetric) elliptic operator.	There exist $0<\eta\ll 1$  (depending only on dimension, the 1-sided NTA constants, the CDC constant, and the ellipticity of $L$), and
	$\beta_0\in (0,1)$, $C_\eta\ge 1$ both depending on the same parameters and additionally on $\eta$, such that for every $Q_0\in\dd(\pom)$, for every $0<\beta<\beta_0$, and for every Borel set $F\subset Q_0$ satisfying $\omega_L^{X_{Q_0}}(F)\le\beta \omega_L^{X_{Q_0}}(Q_0)$, there exists a Borel set $S\subset Q_0$ such that the bounded weak solution $u(X)=\omega^X_L(S)$, $X\in\Omega$, satisfies
	\begin{equation}\label{def-mod-sq-function}
		\mathcal{S}_{Q_0,\eta}^{\vartheta}u(x):=\bigg(\iint_{\Gamma_{Q_0,\eta}^{\vartheta}(x)}|\nabla u(Y)|^2\delta(Y)^{1-n}\,dY\bigg)^{\frac12}
		\ge
		C_\eta^{-1} \big(\log(\beta^{-1})\big)^{\frac12}
		,\qquad \forall\,x\in F.
	\end{equation}
Furthermore, in the case $\beta=0$, that is, when $\omega_L^{X_{Q_0}}(F)=0$, there exists a Borel set $S\subset Q_0$ such that the bounded weak solution $u(X)=\omega^X_L(S)$, $X\in\Omega$, satisfies
\begin{equation}\label{def-mod-sq-function:infinte}
	\mathcal{S}_{Q_0,\eta}^{\vartheta}u(x)=\infty,\qquad \forall\,x\in F.
\end{equation}
\end{lemma}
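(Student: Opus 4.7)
The plan is to follow the scheme of \cite{KKoPT, KKiPT} as adapted to the 1-sided setting in \cite{CHMT, CMO}, producing $S$ by a stopping time/alternating-color construction so that the function $u(X) = \omega_L^X(S)$ oscillates between consecutive corkscrews along any dyadic chain descending to a point of $F$, and then converting those oscillations into a lower bound for the square function via Caccioppoli.

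\textbf{Stopping time and construction of $S$ (assertion with $\beta>0$).} First I would fix a threshold $\tau \in (0,1)$ close to $1$ (to be chosen depending on the allowable parameters) and build generations $\{\G_k\}_{k\ge 0}$ as follows: set $\G_0=\{Q_0\}$, and for each $Q \in \G_k$ let $\G_{k+1}(Q)$ be the maximal subcubes $Q'\in\dd_Q\setminus\{Q\}$ such that either (a) $\omega_L^{X_Q}(F\cap Q')/\omega_L^{X_Q}(Q') \ge \tau$, or (b) $\ell(Q') \le \eta^3\ell(Q)$. By Remark \ref{remark:chop-dyadic} and the choice $\omega_L^{X_{Q_0}}(F)\le \beta\,\omega_L^{X_{Q_0}}(Q_0)$, the density at a cube of generation $k$ which contains a point of $F$ grows at most by a controlled factor per generation, so for every $x \in F$ the chain $Q_0\supset Q^{(1)}(x)\supset Q^{(2)}(x)\supset \cdots$ through $x$ has length $N(x)\gtrsim \log(\beta^{-1})/\log(\tau^{-1})$ before the density reaches $1$. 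I then color the generations: define
\[
S := \bigcup_{k \text{ even}}\, \bigcup_{Q\in \G_k} \Big( Q \setminus \bigcup_{Q'\in \G_{k+1}(Q)} Q'\Big),
\]
and set $u(X):=\omega_L^X(S)$, a bounded weak solution of $Lu=0$ in $\Omega$.

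\textbf{Oscillation between corkscrews.} The key step, and the one needing most care in this setting, is to show that for each $x \in F$ and each generation $k\le N(x)$,
\[
\bigl|u(X_{Q^{(k)}(x)}) - u(X_{Q^{(k+1)}(x)})\bigr| \ge c,
\]
for a constant $c>0$ depending only on the allowable parameters (and $\tau$). For this I would use the maximum principle together with Lemma \ref{lemma:proppde} parts (a), (d): on $Q^{(k)}$ the alternating-color rule forces $u(X_{Q^{(k)}})$ and $u(X_{Q^{(k+1)}})$ to be, respectively, close to the $\omega_L^{X_{Q^{(k)}}}$-mass of opposite color layers, while the stopping criterion (a) guarantees that the child $Q^{(k+1)}$ carries a definite fraction of the mass of $Q^{(k)}$, yielding the uniform gap $c$. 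Two issues specific to the 1-sided NTA + CDC setting arise here: elliptic measure need not be a probability (which forces working with the normalized densities of Remark \ref{remark:chop-dyadic}), and dyadic children can coincide with their parent for a uniformly bounded number of steps (which is handled by replacing the raw grid-step by an enlarged step controlled by the ``$\Xi$'' of \eqref{deltaQ}). Both issues are handled precisely as in \cite[Lemma~3.10]{CHMT}; the argument there adapts verbatim since Lemma \ref{lemma:proppde} provides the exact same non-degeneracy we used in the 1-sided CAD context.

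\textbf{From oscillation to the square function.} By construction each pair $X_{Q^{(k)}(x)}$, $X_{Q^{(k+1)}(x)}$ lies in a (connected) fattened Whitney region contained in $U_{Q^{(k)}(x),\eta^3}^{\vartheta,*}\subset \Gamma^\vartheta_{Q_0,\eta}(x)$ as long as $\eta$ is small enough that the associated Harnack chain (joining the two corkscrews) stays inside the fattening; this fixes the dimensional constant $0<\eta \ll 1$. A standard Caccioppoli/Poincaré/Harnack argument then gives
\[
c^2 \;\le\; \bigl|u(X_{Q^{(k)}(x)}) - u(X_{Q^{(k+1)}(x)})\bigr|^2 \;\lesssim\; \iint_{U_{Q^{(k)}(x),\eta^3}^{\vartheta,*}} |\nabla u(Y)|^2\,\delta(Y)^{1-n}\,dY.
\]
Summing over $1\le k\le N(x)$ and using the bounded overlap of the family $\{U_{Q,\eta^3}^{\vartheta}\}_{Q\in\dd_{Q_0}}$ (with overlap constant depending only on $\eta$), one obtains
\[
\mathcal{S}^{\vartheta}_{Q_0,\eta} u(x)^2 \;\gtrsim\; c^2\, N(x) \;\gtrsim\; \log(\beta^{-1}),
\]
which is the claimed bound with $C_\eta$ absorbing all the allowable constants. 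The threshold $\beta_0\in(0,1)$ is chosen so that $N(x)\ge 1$ is ensured by $\beta<\beta_0$.

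\textbf{The case $\beta=0$.} Finally, for $\omega_L^{X_{Q_0}}(F)=0$ I would apply the first part with $\beta_j = 2^{-j}$ for each $j\ge 1$ to obtain Borel sets $S_j\subset Q_0$ with solutions $u_j(X)=\omega_L^X(S_j)$ satisfying $\mathcal{S}^\vartheta_{Q_0,\eta}u_j(x)\ge C_\eta^{-1} j^{1/2}$ on $F$, and then run a diagonal/refinement argument (as in \cite[Lemma~3.40]{CMO}): since $\omega_L^{X_{Q_0}}(F)=0$, one can inductively modify $S_j$ only on portions of $Q_0$ whose $\omega_L^{X_{Q_0}}$-measure shrinks fast enough, so that the $S_j$ stabilize to a Borel set $S\subset Q_0$ with $\mathcal{S}^\vartheta_{Q_0,\eta}(u_j - u)(x)\to 0$ on $F$. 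The resulting $u(X)=\omega_L^X(S)$ then satisfies $\mathcal{S}^\vartheta_{Q_0,\eta}u(x)=\infty$ for every $x\in F$. The main obstacle throughout is Step 2: establishing the uniform oscillation constant $c$ in the present generality, which requires the careful handling of non-probability elliptic measure and of degenerate dyadic generations described above.
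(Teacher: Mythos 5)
Your high-level plan coincides with the paper's treatment of Lemma \ref{sq-function->M}: the paper does not reprove it, but invokes \cite[Lemma~3.10]{CHMT} (for $\beta>0$) and \cite[Lemma~3.40]{CMO} (for $\beta=0$) and observes that those arguments go through once Lemma \ref{lemma:proppde} supplies the nondegeneracy, change of pole, doubling, and boundary H\"older estimates; your deferral of the two setting-specific issues (non-probability elliptic measure, repeated dyadic generations) to \cite{CHMT} is consistent with that. However, the self-contained construction you sketch has a genuine defect. With the fixed absolute threshold $\tau$ close to $1$ in your criterion (a), nothing prevents the density of $F$ from jumping from $\le\beta$ at $Q_0$ directly to $[\tau,C\tau]$ at the very first stopping generation (maximality plus Remark \ref{remark:chop-dyadic} give exactly this), so the assertion that ``the density grows at most by a controlled factor per generation'' is false and the count $N(x)\gtrsim\log(\beta^{-1})$ does not follow. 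What the cited proofs actually use is the \emph{good $\varepsilon$-cover} of \cite{KKoPT}: nested sets $\mathcal{O}_N\subset\dots\subset\mathcal{O}_0\subset Q_0$, each a union of dyadic cubes, with $F\subset\mathcal{O}_k$ for all $k$ and $\omega_L^{X_{Q_0}}(\mathcal{O}_{k+1}\cap Q)\le\varepsilon\,\omega_L^{X_{Q_0}}(Q)$ for every cube $Q$ of $\mathcal{O}_k$; this is produced by a stopping time with geometrically increasing \emph{relative} thresholds and it is what yields $N\approx\log(\beta^{-1})$ levels, as well as the alternating ``almost full/almost empty'' density of $S$ inside consecutive levels, which your fixed-$\tau$ stopping time does not deliver. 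Relatedly, your scale-stopping criterion (b) at $\ell(Q')\le\eta^3\ell(Q)$ creates generations with no density increase, at which the oscillation bound has no reason to hold; in \cite{CHMT} the parameter $\eta$ does not enter the stopping time at all, but only through the fattened regions $U_{Q,\eta^3}^{\vartheta}$ of \eqref{def-whitney-eta}, needed because the oscillation is detected at corkscrew points of auxiliary cubes that need not contain $x$.

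Two further steps are thinner than the argument they appeal to. First, the uniform gap $|u(X_{Q^{(k)}})-u(X_{Q^{(k+1)}})|\ge c$ cannot be obtained merely from the coloring plus Lemma \ref{lemma:proppde} (a), (d): $u(X_Q)=\omega_L^{X_Q}(S)$ carries the exterior mass $\omega_L^{X_Q}(S\setminus Q)$, which is not small in general, and controlling its effect is precisely the delicate core of \cite[Lemma~2.6]{KKoPT} and of its adaptation in \cite{CHMT}; your sketch does not address it. Second, for $\beta=0$ the proof in \cite[Lemma~3.40]{CMO} is not a diagonal/stabilization over the sets $S_j$: since $\omega_L^{X_{Q_0}}(F)=0$, one can build a single good $\varepsilon$-cover with \emph{infinitely many} levels, and the associated alternating set $S$ produces infinitely many uniform oscillations in the cone over every $x\in F$, giving \eqref{def-mod-sq-function:infinte} directly; your claim that the $S_j$ can be modified so that $\mathcal{S}^{\vartheta}_{Q_0,\eta}(u_j-u)\to0$ on $F$ is unsubstantiated and would need quantitative control that the sketch does not provide. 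In short, the route is the right one and matches the paper's (a citation of \cite{CHMT, CMO}), but as written the stopping time, the oscillation step, and the $\beta=0$ case must be replaced by the actual constructions in those references.
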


\begin{lemma}\label{lemma:CME-S}
	Let $\Omega\subset\mathbb{R}^{n+1}$, $n\ge 2$, be a 1-sided NTA domain (cf. Definition \ref{def1.1nta}) satisfying the capacity density condition  (cf. Definition \ref{def-CDC}),  and let $Lu=-\div(A\nabla u)$ and $L_0u=-\div(A_0\nabla u)$ be real (non-necessarily symmetric) elliptic operators. There exists $C\ge 1$  (depending only on dimension, the 1-sided NTA constants, the CDC constant, and the ellipticity of $L$ and $L_0$) such that the following holds. Given $B=B(x,r)$ with $x\in \pom$ and $0<r<\diam(\pom)$, and $B'=B(x',r')$ with $x'\in2\Delta$ and $0<r'<r c_0/4$, let $\Delta=B\cap\pom$, $\Delta'=B'\cap\pom$, for every $u\in W^{1,2}_{\loc}(\Omega)\cap L^\infty(\Omega)$ satisfying $Lu=0$ in the weak sense in $\Omega$ there holds
	\begin{multline*}
		\frac1{\omega_{L_0}^{X_{\Delta}}(\Delta')}\iint_{B'\cap \Omega} |\nabla u(X)|^2 G_{L_0}(X_\Delta,X) \, d X
\\
	\le
	C\,\int_{2\Delta'} \mathcal{S}_{2r'}^{C\alpha} u(y)^2\,d\omega_{L_0}^{X_{2\Delta'}}(y)+ C\,\sup \{|u(Y)|: Y\in 2\,B', \delta(Y)\ge r'/C\}^2.	
	\end{multline*}
	\end{lemma}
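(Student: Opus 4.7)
The plan is to decompose $B' \cap \Omega$ via Whitney cubes of $\Omega$, bound the Green function on each Whitney piece by elliptic measure, and split the collection into cubes that are ``deep'' inside $B'$ versus ``shallow'' near the boundary. Introduce $\W_{B'} := \{I \in \W : I \cap B' \neq \emptyset\}$; since every $Y \in B'$ has $\delta(Y) \leq r'$ and the Whitney property gives $\delta(Y) \approx \ell(I)$ for $Y \in I$, every $I \in \W_{B'}$ satisfies $\ell(I) \lesssim r'$. For each such $I$, fix $Z_I \in I \cap B'$, let $y_I \in \pom$ realize $\dist(Z_I, \pom)$, and associate the dyadic cube $Q_I$ with $\ell(Q_I) = \ell(I)$ containing $y_I$. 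Because $\delta(X_\Delta) \approx r \gg r' \gtrsim \ell(I)$, Lemma~\ref{lemma:proppde}(b) combined with Harnack's inequality yields
\[
G_{L_0}(X_\Delta, Y) \,\lesssim\, \frac{\omega_{L_0}^{X_\Delta}(Q_I)}{\ell(I)^{n-1}}, \qquad Y \in I, \ I \in \W_{B'}.
\]

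Next, fix a small constant $c$ and split $\W_{B'} = \W^{s} \cup \W^{\ell}$ with $\W^{\ell} := \{I : \ell(I) \geq c\,r'\}$. For $I \in \W^{\ell}$ there are only boundedly many such cubes, $I^* \subset 2B'$, and $\delta(Y) \gtrsim r'$ on $I^*$; apply Caccioppoli with constant $0$ to get $\iint_I |\nabla u|^2 \lesssim \ell(I)^{n-1} \sup_{I^*} |u|^2$. Since $\bigcup_{I \in \W^{\ell}} Q_I \subset C'\Delta'$ and $\omega_{L_0}^{X_\Delta}$ is doubling in $\Delta'$ (Lemma~\ref{lemma:proppde}(c) applied to an appropriate base ball, using $\delta(X_\Delta) \gg r'$), the contribution of $\W^{\ell}$ is at most a constant times $\omega_{L_0}^{X_\Delta}(\Delta') \, \sup\{|u(Y)| : Y \in 2B',\, \delta(Y) \geq r'/C\}^2$, matching the second term in the claimed estimate.

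For the small cubes $\W^{s}$ do \emph{not} use Caccioppoli. Instead write $\omega_{L_0}^{X_\Delta}(Q_I) = \int_{Q_I} d\omega_{L_0}^{X_\Delta}$ and swap sum and integral via Fubini:
\[
\sum_{I \in \W^{s}} G_{L_0}(X_\Delta, \cdot) \iint_I |\nabla u|^2 \,\lesssim\, \int_{\pom} \bigg(\sum_{I \in \W^{s} :\, Q_I \ni y} \iint_I |\nabla u(Y)|^2 \,\delta(Y)^{1-n} dY\bigg) d\omega_{L_0}^{X_\Delta}(y),
\]
using $\ell(I)^{1-n} \approx \delta(Y)^{1-n}$ on $I$. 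For $y \in Q_I$ and $Y \in I$ one checks $|Y-y| \lesssim \ell(I) \approx \delta(Y)$ and $|Y - y| \lesssim r'$, hence $Y \in \Gamma_{2r'}^{C\alpha}(y)$ for a sufficiently large aperture. Also $y \in Q_I \subset C'\Delta'$. Therefore the inner sum is bounded by $\mathcal{S}_{2r'}^{C\alpha} u(y)^2$, producing
\[
\sum_{I \in \W^{s}} G_{L_0}(X_\Delta, \cdot) \iint_I |\nabla u|^2 \,\lesssim\, \int_{C'\Delta'} \mathcal{S}_{2r'}^{C\alpha} u(y)^2 \, d\omega_{L_0}^{X_\Delta}(y).
\]

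To finish, perform a change of pole and replace $C'\Delta'$ by $2\Delta'$. Since $\delta(X_\Delta) \geq c_0 r > 4 r'$, the point $X_\Delta$ lies outside $2\kappa_0$ times a ball comparable to $B'$, so Lemma~\ref{lemma:proppde}(d) gives
\[
\int_{C'\Delta'} F \, d\omega_{L_0}^{X_\Delta} \,\approx\, \omega_{L_0}^{X_\Delta}(\Delta') \int_{C'\Delta'} F \, d\omega_{L_0}^{X_{2\Delta'}}
\]
for nonnegative $F$, and one further application of doubling of $\omega_{L_0}^{X_{2\Delta'}}$ reduces the integration set to $2\Delta'$. Dividing by $\omega_{L_0}^{X_\Delta}(\Delta')$ in both the $\W^{s}$ and $\W^{\ell}$ contributions yields the stated inequality. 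The main technical obstacle is bookkeeping the geometric constants so that the aperture in the square function and the radius of the final surface ball come out as $C\alpha$ and $2\Delta'$ (respectively), which is the reason for the freedom in the constants $C, c$ throughout.
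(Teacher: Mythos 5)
Your overall strategy is the same as the paper's: decompose $B'\cap\Omega$ into Whitney cubes, use Lemma~\ref{lemma:proppde} to replace $G_{L_0}(X_\Delta,\cdot)$ on $I$ by $\omega_{L_0}^{X_\Delta}(Q_I)\,\ell(I)^{1-n}$, treat the boundedly many cubes with $\ell(I)\gtrsim r'$ by Caccioppoli (this gives exactly the second term), and handle the small cubes by Fubini together with the containment $I\subset\Gamma^{C\alpha}_{2r'}(y)$ for $y\in Q_I$, followed by a change of pole. All of that is sound and matches the paper's argument (the paper inserts an intermediate dyadic square function $\mathcal{S}^\vartheta_Q$ via a family of maximal cubes, but your direct passage from Whitney cubes to the truncated cone, using that the Whitney cubes have disjoint interiors, accomplishes the same thing).

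The one step that does not work as written is the final reduction ``one further application of doubling of $\omega_{L_0}^{X_{2\Delta'}}$ reduces the integration set to $2\Delta'$.'' Doubling compares the measures of concentric surface balls; it cannot dominate $\int_{C'\Delta'}\mathcal{S}^{C\alpha}_{2r'}u^2\,d\omega_{L_0}$ by $\int_{2\Delta'}\mathcal{S}^{C\alpha}_{2r'}u^2\,d\omega_{L_0}$, since the square-function values on $C'\Delta'\setminus 2\Delta'$ are not controlled by anything on the right-hand side (for an extreme example, $u$ could be chosen so that $\mathcal{S}u$ essentially lives on that annulus). The repair is already contained in your own constants: in the small-cube regime you have $\ell(I)<c\,r'$, and for $z\in Q_I$ one gets $|z-x'|\le C\,\ell(I)+|Z_I-x'|<(1+Cc)\,r'<2r'$ once $c$ is chosen small (depending only on dimension and the dyadic/Whitney constants), so in fact $Q_I\subset 2\Delta'$ and the Fubini step lands directly on $\int_{2\Delta'}$ with no shrinking of the integration set needed; this is precisely how the paper proceeds, with its threshold $\ell(I)<r'/M$, $M$ large. (A minor additional remark: the change of pole from $X_\Delta$ to $X_{2\Delta'}$ via Lemma~\ref{lemma:proppde} part (d) requires $X_\Delta\notin 2\kappa_0\,B(x',2r')$, which need not follow from $\delta(X_\Delta)>4r'$ alone when $r'\approx c_0r/4$ and $\kappa_0$ is large; in that regime one should instead note $\delta(X_\Delta)\approx r\approx r'$ and $|X_\Delta-X_{2\Delta'}|\lesssim r$, so a bounded Harnack chain gives $\omega_{L_0}^{X_\Delta}\approx\omega_{L_0}^{X_{2\Delta'}}$ directly. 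This is at the same level of brevity as the paper's own invocation of Lemma~\ref{lemma:proppde}, so it is a caveat rather than a gap.)
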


\begin{proof}
Fix $B$, $B'$, $\Delta$, $\Delta'$, and $u$ as in the statement. Define
	$\W_{B'}:=\{I\in\W: I\cap B'\neq\emptyset\}$ and $\W_{B'}^M:=\{I\in\W_{B'}: \ell(I)<r'/M\}$ for $M\ge 1$ large enough to be taken. For each $I\in\W_{B'}$ pick $Z_I\in I\cap B'$ and  $Q_I\in\dd(\pom)$ so that $\ell(Q_I)=\ell(I)$ and contains $\widehat{y}_I \in \partial \Omega$ such that
	$\dist(I, \partial \Omega)=\dist(\widehat{y}_I, I)$. If $z\in Q_I$ and $I\in \W_{B'}^M$, then
	\[
	|z-x'|
	\le
	|z-\widehat{y}_I|+\dist(\widehat{y}_I, I)+\diam(I)+|Z_I-x'|
	\le
	C_n\ell(I)+r'
	<(1+C_n/M)r'
	<2r',
	\]
	provided $M>C_n$. Hence, $Q_I\subset 2\Delta'$ for every $I\in\W_{B'}^M$. Write $\F$ for the collection of maximal cubes in $\{Q_I\}_{I\in\W_{B'}^M}$, with respect to the inclusion (maximal cubes exist since $Q_I\subset 2\Delta'$ for every $I\in\W_{B'}^M$). Hence $Q_I\subset Q$ for some $Q\in\F$.
	Let $\vartheta=\vartheta_0$ and by construction $I\in \W_{Q_I}^\vartheta\subset \W_{Q_I}^{\vartheta,*}$ (see Section~\ref{section:dyadic}). Hence, for every $y\in Q\in\F$
	\[
	\bigcup_{I\in \W_{B'}^M: y\in Q_I\in\dd_Q} I
	\subset
	\bigcup_{I\in \W_{B'}^M: y\in Q_I\in\dd_Q} U_{Q_I}^{\vartheta}
	\subset
	\bigcup_{y\in Q'\in\dd_Q} U_{Q'}^{\vartheta}
	=
	\Gamma_{Q}^\vartheta(y).
	\]
	This gives
	\begin{align*}
		\Sigma_1:
		&=
		\sum_{I\in \W_{B'}^M} \omega_{L_0}^{X_{\Delta}}(Q_I)\iint_{I} |\nabla u(X)|^2 \, \delta(X)^{1-n} d X
		\\
		&=
		\sum_{Q\in\F} \sum_{I\in \W_{B'}^M:Q_I\in\dd_Q} \omega_{L_0}^{X_{\Delta}}(Q_I)\iint_{I} |\nabla u(X)|^2 \, \delta(X)^{1-n} d X
		\\
		&=
		\sum_{Q\in\F} \int_{Q} \sum_{I\in \W_{B'}^M: y\in Q_I\in\dd_Q} \iint_{I} |\nabla u(X)|^2 \, \delta(X)^{1-n} d X\,  d\omega_{L_0}^{X_{\Delta}}(y)
		\\
		&\le
		\sum_{Q\in\F} \int_{Q} \iint_{\Gamma_{Q}^\vartheta(y) } |\nabla u(X)|^2 \, \delta(X)^{1-n} d X  d\omega_{L_0}^{X_{\Delta}}(y)
		\\
		&=
		\sum_{Q\in\F} \int_{Q} \mathcal{S}_Q^\vartheta u(y)^2\,d\omega_{L_0}^{X_{\Delta}}(y).
	\end{align*}
	To continue let $y\in Q\in\F$ and $X\in \Gamma_Q^\vartheta(y)$. Then $X\in I^*$ with $I\in\W_{Q'}^{\vartheta,*}$ and $y\in Q'\in\dd_{Q}$. Thus,
	\[
	|X-y|
	\le
	\diam(I^*)+\dist(I,Q')+\diam(Q')
	\lesssim_{\vartheta}
	\ell(I)
	\approx
	\delta(X)
	\lesssim r'/M.
	\]
	where we have used  \eqref{definicionkappa12}  and the last estimate holds since $\ell(I)<r'/M$ for every $I\in \W_{B'}^M$. This shows that
	taking $M$ large enough
	$X\in\Gamma^{\alpha'}_{2 r'}(y)$ for some $\alpha'=\alpha'(\vartheta)$.  Note also that $2r'<r c_0/2<\diam(\pom)$, and we can now conclude that
	\begin{align*}
		\Sigma_1
		\lesssim
		\sum_{Q\in\F} \int_{Q} \mathcal{S}_{2r'}^{\alpha'} u(y)^2\,d\omega_{L_0}^{X_{\Delta}}(y)
		\lesssim
		\int_{2\Delta'} \mathcal{S}_{2r'}^{\alpha'} u(y)^2\,d\omega_{L_0}^{X_{\Delta}}(y)
		\approx
		\omega_{L_0}^{X_\Delta}(\Delta') \int_{2\Delta'} \mathcal{S}_{2r'}^{\alpha'} u(y)^2\,d\omega_{L_0}^{X_{2\Delta'}}(y),
		\end{align*}
	where we have used Lemma~\ref{lemma:proppde}.
	
	Now, we note that for each $I\in \W_{B'}\setminus \W_{B'}^M$ we have $\ell(Q_I)=\ell(I)\approx_M r'$, hence for every  $Y\in I^*$ we have
	\[
	r'\lesssim_M \delta(Y) \le |Y-Z_I|+\delta(Z_I)
	\le
	\diam(I^*)+\delta(Z_I)
	<
	\dist(I,\pom)+\delta(Z_I)
	\le
	2\,\delta(Z_I)
	\le
	2|Z_I-x'|
	<2r'.
	\]
	Also,
	\[
	|\widehat{y}_I-x'|
	+
	\dist(\widehat{y}_I, I)+\diam(I)+|Z_I-x'|
	\lesssim
	\dist(I,\pom)+|Z_I-x'|
	\le
	2|Z_I-x'|<2r'.
	\]
	Thus, Lemma~\ref{lemma:proppde} implies that $\omega_{L_0}^{X_{\Delta}}(Q_I)\approx_M \omega_{L_0}^{X_{\Delta}}(\Delta')$. As a consequence of this, we get
	\begin{align*}
		\Sigma_2:
		&=
		\sum_{I\in \W_{B'}\setminus \W_{B'}^M} \omega_{L_0}^{X_{\Delta}}(Q_I)\iint_{I} |\nabla u(X)|^2 \, \delta(X)^{1-n} d X
		\\
		&\lesssim
		\omega_{L_0}^{X_{\Delta}}(\Delta')\sum_{I\in \W_{B'}\setminus \W_{B'}^M}  \ell(I)^{1-n} \iint_{I} |\nabla u(X)|^2 d X
		\\
		&\lesssim
		\omega_{L_0}^{X_{\Delta}}(\Delta')
		\sum_{I\in \W_{B'}\setminus \W_{B'}^M} \ell(I)^{-n-1}\iint_{I^*} |u(X)|^2 \, d X
		\\
		&\lesssim
	\omega_{L_0}^{X_{\Delta}}(\Delta')\,\#(\W_{B'}\setminus \W_{B'}^M) \sup \{|u(Y)|: Y\in 2\,B', \delta(Y)\ge r'/C\}^2
		\\
		&\lesssim_M
		\omega_{L_0}^{X_{\Delta}}(\Delta') \sup \{|u(Y)|: Y\in 2\,B', \delta(Y)\ge r'/C\}^2,
	\end{align*}
	where we have used that $\W_{B'}\setminus \W_{B'}^M$ has bounded cardinality depending on $n$ and $M$.
	
	To complete the proof we use Lemma~\ref{lemma:proppde} and the estimates proved for $\Sigma_1$ and $\Sigma_2$:
	\begin{align*}
		&\iint_{B'\cap \Omega} |\nabla u(X)|^2 G_{L_0}(X_\Delta,X) \, d X
		\le
		\sum_{I\in \W_{B'}}
		\iint_{I} |\nabla u(X)|^2 G_{L_0}(X_\Delta,X) \, d X
		\\
		&\qquad \approx
		\sum_{I\in \W_{B'}}
		\omega_{L_0}^{X_{\Delta}}(Q_I)\iint_{I} |\nabla u(X)|^2 \, \delta(X)^{1-n} d X
		\\
		&\qquad
		=
		\Sigma_1+\Sigma_2
		\\
		&\qquad  \lesssim
		\omega_{L_0}^{X_\Delta}(\Delta')\Big( \int_{2\Delta'} \mathcal{S}_{2r'}^{\alpha'} u(y)^2\,d\omega_{L_0}^{X_{2\Delta'}}(y)+ \sup \{|u(Y)|: Y\in 2\,B', \delta(Y)\ge r'/C\}^2\Big).
	\end{align*}
	This completes the proof.
\end{proof}

For the following result we need to introduce some notation:
\begin{equation*}
	\mathcal{A}_{r}^\alpha F(x):=\bigg(\iint_{\Gamma_{r}^\alpha(x)}|F(Y)|^2\,dY\bigg)^{\frac12},
	\qquad x\in\pom,\ 0<r<\infty, \alpha > 0, 	
\end{equation*}
 for any $F\in L^2_\loc(\Omega\cap B(x,r))$.

\begin{lemma}\label{lemma-ChAO}
Let $\Omega\subset\mathbb{R}^{n+1}$, $n\ge 2$, be a 1-sided NTA domain (cf. Definition \ref{def1.1nta}) satisfying the capacity density condition  (cf. Definition \ref{def-CDC}),  and let $L_0u=-\div(A_0\nabla u)$ be a real (non-necessarily symmetric) elliptic operator. Given $0<q<\infty$, $0<\alpha,\alpha'<\infty$ there exists $C\ge 1$  (depending only on dimension, the 1-sided NTA constants, the CDC constant, the ellipticity of $L_0$, $q$, $\alpha$, and $\alpha'$) such that the following holds. Given $B=B(x,r)$ with $x\in \pom$ and $0<r<\diam(\pom)$, let $\Delta=B\cap\pom$, for every $F\in L^2_{\loc}(\Omega)$ there holds
\begin{equation}\label{eqn:ChAO:S}
	\|\mathcal{A}_{r}^\alpha F\|_{L^q(\Delta, \omega_{L_0}^{X_{\Delta}})}
\le
C
\|\mathcal{A}_{3r}^{\alpha'} F\|_{L^q(3\Delta, \omega_{L_0}^{X_{3\Delta}})},
\qquad F\in L^2_{\loc}(\Omega\cap 6B),
\end{equation}
and
\begin{equation}\label{eqn:ChAO:N}
\|\mathcal{N}_{r}^\alpha F\|_{L^q(\Delta, \omega_{L_0}^{X_{\Delta}})}
\le
C
\|\mathcal{N}_{4r}^{\alpha'} F\|_{L^q(4\Delta, \omega_{L_0}^{X_{4\Delta}})},
\qquad F\in \mathscr{C}(\Omega\cap 8B).
\end{equation}
\end{lemma}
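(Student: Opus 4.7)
The plan is a direct Fubini-type argument exploiting the geometry of cones and the doubling of elliptic measure. When $\alpha\le\alpha'$ one has the pointwise bounds $\mathcal{A}_{r}^{\alpha}F(x)\le\mathcal{A}_{3r}^{\alpha'}F(x)$ and $\mathcal{N}_{r}^{\alpha}F(x)\le\mathcal{N}_{4r}^{\alpha'}F(x)$ since $\Gamma_{r}^{\alpha}(x)\subset\Gamma_{3r}^{\alpha'}(x)\subset\Gamma_{4r}^{\alpha'}(x)$. Harnack's inequality applied to the non-negative harmonic function $X\mapsto \omega_{L_0}^{X}(E)$ along the short chain connecting $X_\Delta$ with $X_{3\Delta}$ (respectively $X_{4\Delta}$) shows $\omega_{L_0}^{X_\Delta}\approx \omega_{L_0}^{X_{3\Delta}}$ (resp.\ $\omega_{L_0}^{X_\Delta}\approx \omega_{L_0}^{X_{4\Delta}}$) as measures on $\pom$, and the conclusion is immediate.

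The substantive case is $\alpha>\alpha'$, and rests on a shadow lemma. For $Y\in\Omega$, let $\pi(Y)\in\pom$ realize $|\pi(Y)-Y|=\delta(Y)$. By the triangle inequality
\[
\{z\in\pom: Y\in\Gamma^{\alpha}(z)\}\subset\Delta(\pi(Y), (2+\alpha)\delta(Y)), \qquad \Delta(\pi(Y), c\delta(Y))\subset\{z\in\pom: Y\in\Gamma^{\alpha'}(z)\}
\]
with $c=\min(1,\alpha')$. If moreover $Y\in\Gamma_{r}^{\alpha}(x)$ for some $x\in\Delta$, then $\delta(Y)<r$, $\pi(Y)\in 3\Delta\cap 4\Delta$, and every $z\in\Delta(\pi(Y), c\delta(Y))$ satisfies $z\in 3\Delta\cap 4\Delta$ and $|Y-z|\le 2\delta(Y)<2r$. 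Iterating Lemma~\ref{lemma:proppde} part $(c)$ a bounded number of times depending on $\alpha$ and $\alpha'$ then yields the doubling comparison
\[
\omega_{L_0}^{X_{3\Delta}}\big(\Delta(\pi(Y), (2+\alpha)\delta(Y))\big)\lesssim_{\alpha,\alpha'}\omega_{L_0}^{X_{3\Delta}}\big(\Delta(\pi(Y), c\delta(Y))\big),
\]
and the analogous bound with $X_{4\Delta}$ in place of $X_{3\Delta}$.

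For \eqref{eqn:ChAO:S} with $q=2$, Fubini rewrites both sides as integrals of $|F(Y)|^2$ weighted by the $\omega_{L_0}^{X_{3\Delta}}$-measures of the corresponding cone shadows, and the combination of the two shadow inclusions with the doubling bound yields the comparison; Harnack converts the pole on the left to $X_\Delta$. The extension to general $q\in(0,\infty)$ is the main technical obstacle, since area-integral level sets are not localized. The plan is to run a good-$\lambda$ inequality in the Coifman-Meyer-Stein spirit adapted to the doubling measure $\omega_{L_0}^{X_{3\Delta}}$, decomposing $\Gamma_{r}^{\alpha}(x)$ into dyadic annular pieces $\{Y\in\Omega:\,2^{-k-1}r<\delta(Y)\le 2^{-k}r\}$ so that the shadow argument can be localized scale-by-scale; an alternative route, more in line with Section~\ref{section:dyadic}, is to show $\mathcal{A}_{r}^{\alpha}F\approx_{\alpha,q}\mathcal{S}_{Q_0}^\vartheta F$ for an appropriate cube $Q_0$ and $\vartheta=\vartheta(\alpha)$, and then trade apertures against the Whitney parameter $\vartheta$.

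The proof of \eqref{eqn:ChAO:N} is more direct and uniform in $q$. If $x\in\Delta$ satisfies $\mathcal{N}_{r}^{\alpha}F(x)>\lambda$ and $Y\in\Gamma_{r}^{\alpha}(x)$ realizes it, the shadow lemma produces a surface ball $\Delta(\pi(Y), c\delta(Y))\subset 4\Delta$ on which $\mathcal{N}_{4r}^{\alpha'}F>\lambda$, and the doubling bound yields $M_{\omega_{L_0}^{X_{4\Delta}}}\big[\mathbf{1}_{\{\mathcal{N}_{4r}^{\alpha'}F>\lambda\}\cap 4\Delta}\big](x)\gtrsim_{\alpha,\alpha'} 1$, where $M_{\omega_{L_0}^{X_{4\Delta}}}$ denotes the Hardy-Littlewood maximal function on the doubling space $(\pom, \omega_{L_0}^{X_{4\Delta}})$. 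Weak-$(1,1)$ boundedness of $M_{\omega_{L_0}^{X_{4\Delta}}}$ then furnishes the level-set inequality $\omega_{L_0}^{X_{4\Delta}}(\{\mathcal{N}_{r}^{\alpha}F>\lambda\}\cap\Delta)\lesssim_{\alpha,\alpha'}\omega_{L_0}^{X_{4\Delta}}(\{\mathcal{N}_{4r}^{\alpha'}F>\lambda\}\cap 4\Delta)$, whence the layer cake formula together with the Harnack change of pole delivers \eqref{eqn:ChAO:N} for every $0<q<\infty$.
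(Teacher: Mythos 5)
Your treatment of \eqref{eqn:ChAO:N} is complete and is essentially the paper's own argument (level-set inclusion via the shadow ball, the Hardy--Littlewood maximal function for the doubling measure $\omega_{L_0}^{X_{4\Delta}}$, weak-$(1,1)$, layer cake, and Harnack to move the pole), and your shadow lemma and Fubini computation correctly give \eqref{eqn:ChAO:S} in the case $q=2$ (and the trivial case $\alpha\le\alpha'$). The genuine gap is in \eqref{eqn:ChAO:S} for general $q\in(0,\infty)$: this is precisely the nontrivial content of the lemma, and you do not prove it --- you only announce a ``plan'' (a good-$\lambda$ inequality in the Coifman--Meyer--Stein spirit, or alternatively a comparison $\mathcal{A}_r^\alpha F\approx \mathcal{S}_{Q_0}^\vartheta F$ with an aperture-versus-$\vartheta$ trade). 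Neither route is carried out: no good-$\lambda$ inequality is stated or proved, the scale-by-scale localization of the shadow argument is not made precise, and the dyadic-cone alternative would in any case only shift the problem to proving a change of aperture for $\mathcal{S}_{Q_0}^\vartheta$ in $L^q(\omega_{L_0})$, which is the same difficulty. Since the lemma is invoked in the paper for arbitrary $q$ (e.g.\ with $q<2$ and $q>2$ in the proofs of (d)$'\Rightarrow$(a) and (b)$_p\Rightarrow$(a)$_{p'}$), the $q=2$ case alone does not suffice, and as written the proposal is incomplete.

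For comparison, the paper closes exactly this gap by a different mechanism: it runs your Fubini/shadow computation not just for $d\omega_{L_0}^{X_\Delta}$ but for $v\,d\omega_{L_0}^{X_\Delta}$ with an arbitrary weight $v\in A_\infty(\pom,\omega_{L_0})$, obtaining the weighted $L^2$ bound with constant controlled by a local $A_p(\Delta,\omega_{L_0})$ characteristic of $v$ (this is where the doubling of $\omega_{L_0}^{X_\Delta}$ enters, as in your displayed comparison of shadow balls), and then invokes Rubio de Francia extrapolation, localized to $3\Delta$ as in \cite{CMP}, to pass from weighted $L^2$ to $L^q(v\,d\omega_{L_0}^{X_\Delta})$ for all $0<q<\infty$; taking $v\equiv 1$ gives \eqref{eqn:ChAO:S}. (This follows \cite{Martell-Prisuelos}.) If you wish to keep your structure, you must either execute the good-$\lambda$ argument in full --- including the handling of the truncations at radii $r$ and $3r$, the localization to $\Delta$ and $3\Delta$, and the fact that the underlying measure is $\omega_{L_0}^{X_\Delta}$, which is only doubling at scales below $r$ near $\Delta$ (a point where Lemma~\ref{lemma:proppde} part $(c)$ needs the pole to be away from the enlarged ball, so the regime $\delta(Y)\approx r$ has to be treated with part $(a)$ and Harnack) --- or adopt the weighted-$L^2$-plus-extrapolation route above, which converts your existing $q=2$ computation into the full statement.
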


\begin{proof}
We start with \eqref{eqn:ChAO:S} and borrow some ideas from \cite[Proposition~3.2]{Martell-Prisuelos}. We may assume that $\alpha>\alpha'$, otherwise the desired estimate follows trivially. Let $v\in A_{\infty}(\pom, \omega_{L_0})$.  By the classical theory of weights (cf.~\cite{CF-1974, GR}), we can find $p\in(1,\infty)$ such for every $\Delta$ as in the statement we have
\[
C_0:=\sup_{\Delta}[v]_{A_{p}(\Delta, \omega_{L_0})}:=\sup_{\Delta}\sup_{\Delta'} \Big(\fint_{\Delta'} v(x)\,d\omega_{L_0}^{X_{\Delta}}(x)\Big)\Big(\fint_{\Delta'} v(x)^{1-p'}\,d\omega_{L_0}^{X_{\Delta}}(x)\Big)^{p-1}<\infty,
\]
where the sups are taken over all $\Delta'=B'\cap \partial\Omega$ with $B'\subset 5B$, $B'=B(x',r')$, $x'\in \partial\Omega$, $0<r'<\diam(\partial\Omega)$, and where  $C_0$ depends on $[v]_{A_{\infty}(\pom, \omega_{L_0})}$.
Note that for any such $\Delta'$ and for any Borel set $F\subset\Delta'$ we have, by H\"older's inequality,
\begin{multline}\label{doubling-Ap}
\Big(	\frac{\omega_{L_0}^{X_{\Delta}}(F)}{\omega_{L_0}^{X_{\Delta}}(\Delta')}\Big)^p
=
\Big(	\fint_{\Delta'} \mathbf{1}_F d\omega_{L_0}^{X_{\Delta}}\Big)^p
=
\Big(\fint_{\Delta'} \mathbf{1}_F v^{\frac1p}\,v^{-\frac1p}d\omega_{L_0}^{X_{\Delta}}\Big)^p
\\
\le
\Big(\fint_{\Delta'} \mathbf{1}_F v\,d\omega_{L_0}^{X_{\Delta}}\Big)
\Big(\fint_{\Delta'} v^{1-p'}d\omega_{L_0}^{X_{\Delta}}\Big)^{p-1}
\\
\le
C_0
\Big(\fint_{\Delta'} \mathbf{1}_F v\,d\omega_{L_0}^{X_{\Delta}}\Big)\Big(\fint_{\Delta'} v\,d\omega_{L_0}^{X_{\Delta}}\Big)^{-1}
=
C_0
\frac{\int_{F} v\,d\omega_{L_0}^{X_{\Delta}}}{\int_{\Delta'} v\,d\omega_{L_0}^{X_{\Delta}}}.
\end{multline}

Let $y\in\Delta$ and $X\in \Gamma_{r}^\alpha(y)$ and pick $\widehat{x}$ so that $|X-\widehat{x}|=\delta(X)$. Then one can easily see that
\[
X\in 2B,\quad \delta(X)<r,\quad y\in\Delta\big(\widehat{x}, \min\{(3+\alpha)\delta(X), 2r\}\big)=:\widetilde{\Delta},
\quad
\widetilde{B}:=B\big(\widehat{x}, \min\{(3+\alpha)\delta(X), 2r\}\big)
\subset 5B.
\]
Then, by \eqref{doubling-Ap} and Lemma~\ref{lemma:proppde} we get
\begin{align*}
\int_{\widetilde{\Delta}} v\,d\omega_{L_0}^{X_{\Delta}}
\le
C_0
\Big(	\frac{\omega_{L_0}^{X_{\Delta}}(\widetilde{\Delta})}{\omega_{L_0}^{X_{\Delta}}(\widehat{\Delta})}\Big)^{p} \int_{\widehat{\Delta}} v\,d\omega_{L_0}^{X_{\Delta}}
\lesssim_{\alpha,\alpha',p} C_0
\int_{\widehat{\Delta}} v\,d\omega_{L_0}^{X_{\Delta}},
\end{align*}
where $\widehat{\Delta}:=\Delta(\widehat{x}, \min\{\alpha',1\}\delta(X))$. Moreover, if $X\in 2B$ with $\delta(X)<r$ and $y\in \widehat{\Delta}$ one can easily show that
\[
|y-x|<3r,\quad |X-y|\le \min\{1+\alpha',2\}\delta(X).
\]
If we now combine the previous estimates, then we conclude that
\begin{align*}
\|\mathcal{A}_{r}^\alpha F\|_{L^2(\Delta, v\,d\omega_{L_0}^{X_{\Delta}})} ^2
&=
\int_{\Delta} \iint_{\Gamma_{r}^\alpha(y)}|F(X)|^2\,dX\,v(y)\,d\omega_{L_0}^{X_{\Delta}}(y)
\\
&\le
\iint_{2B\cap\{\delta(X)<r\}} |F(X)|^2\, \Big(\int_{\widetilde{\Delta}} v(y)\,d\omega_{L_0}^{X_{\Delta}}(y)\Big)  dX
\\
&
\lesssim_{\alpha,\alpha',p}
C_0
\iint_{2B\cap\{\delta(X)<r\}} |F(X)|^2\, \Big(\int_{\widehat{\Delta}} v(y)\,d\omega_{L_0}^{X_{\Delta}}(y)\Big)  dX
\\
&
\le
C_0
\int_{3\Delta} \iint_{\Gamma_{3r}^{\alpha'}(y)}|F(X)|^2\,dX\,v(y)\,d\omega_{L_0}^{X_{\Delta}}(y)
\\
&
=
C_0 \|\mathcal{A}_{3r}^{\alpha'} F\|_{L^2(3\Delta, v\,d\omega_{L_0}^{X_{\Delta}})}^2.
\end{align*}
We can now extrapolate (locally in $3\Delta$) as in \cite[Corollary 3.15]{CMP} to conclude that
\[
\|\mathcal{A}_{r}^\alpha F\|_{L^q(\Delta, v\,d\omega_{L_0}^{X_{\Delta}})}
\lesssim_{\alpha,\alpha',q}
\|\mathcal{A}_{3r}^{\alpha'} F\|_{L^1(3\Delta, v\,d\omega_{L_0}^{X_{\Delta}})}.
\]
The desired estimate follows at once by taking $v\equiv 1$ which clearly belongs to $A_{\infty}(\pom, \omega_{L_0})$.

Let us next consider \eqref{eqn:ChAO:N}. First, introduce
\[
\M^\Delta_{\omega_{L_0}} h(z)
:=
\sup_{0<s\le 3r} \fint_{\Delta(z,s)} |h|\,d\omega_{L_0}^{X_\Delta} =
\sup_{0<s\le 3r} \fint_{\Delta(z,s)} |h|\mathbf{1}_{4\Delta}\,d\omega_{L_0}^{X_\Delta}
,\qquad z\in\Delta.
\]
We proceed as in \cite[Proposition~2.2]{HMiT} and write for any $\lambda>0$ and $\beta>0$
\[
E(\beta,r,\lambda):=\{y\in\pom: \mathcal{N}_{r}^\beta F(y)>\lambda\}.
\]
Let $y\in E(\alpha,r,\lambda)\cap\Delta$. Hence, there is $X\in \Gamma_{r}^\alpha(y)$ with $|F(X)|>\lambda$. Pick $\widehat{x}\in \pom$ so that $|X-\widehat{x}|=\delta(X)$. Note that
\[
 \widetilde{\Delta}:=\Delta(\widehat{x},\min\{1,\alpha'\}\delta(X))\subset \widehat{\Delta}:=\Delta(y, \min\{(2+\alpha+\alpha')\delta(X), 3r\})
 \quad\text{and}\quad \widetilde{\Delta}\subset 2\Delta.
\]
One can easily see that if $z\in \widetilde{\Delta}$ then $X\in\Gamma_{3r}^{\alpha'}(z)$. Hence,
\[
 \widetilde{\Delta}\subset E(\alpha',3r,\lambda)\cap \widehat{\Delta}
\]
and
\[
\M^\Delta_{\omega_{L_0}} \mathbf{1}_{ E(\alpha',3r,\lambda)}(y)
\ge
\frac{\omega_{L_0}^{X_\Delta}( E(\alpha',3r,\lambda)\cap \widehat{\Delta})}{\omega_{L_0}^{X_\Delta}(\widehat{\Delta})}
\ge
\frac{\omega_{L_0}^{X_\Delta}(\widetilde{\Delta})}{\omega_{L_0}^{X_\Delta}(\widehat{\Delta})}
>
\gamma=\gamma_{\alpha,\alpha'},
\]
where in the last estimate we have used that
\[
\omega_{L_0}^{X_\Delta}(\widehat{\Delta})
\le
\omega_{L_0}^{X_\Delta}(\Delta(\widehat{x},\min\{(4+2\alpha+\alpha')\delta(X),5r\}))
\lesssim_{\alpha,\alpha'}
\omega_{L_0}^{X_\Delta}(\widetilde{\Delta}).
\]
We have then shown that
\[
 E(\alpha,r,\lambda)\cap\Delta\subset \{y\in\Delta:\M^\Delta_{\omega_{L_0}} \mathbf{1}_{ E(\alpha',3r,\lambda)}(y)>\gamma  \},
\]
and by the  Hardy-Littlewood maximal inequality we get
\begin{multline*}
	\omega_{L_0}^{X_\Delta}( E(\alpha,r,\lambda)\cap\Delta)
\le
\omega_{L_0}^{X_\Delta}(\{y\in\Delta:\M^\Delta_{\omega_{L_0}} \mathbf{1}_{ E(\alpha',3r,\lambda)}(y)>\gamma  \})
\\
\lesssim
\omega_{L_0}^{X_\Delta}( E(\alpha',3r,\lambda)\cap4 \Delta)
\lesssim
\omega_{L_0}^{X_{4\Delta}}( E(\alpha',4r,\lambda)\cap4 \Delta).
\end{multline*}
This readily implies \eqref{eqn:ChAO:N}.
\end{proof}

\subsection{Proof of \texorpdfstring{$\mathrm{(a)}_{p'}\ \Longrightarrow\ \mathrm{(b)}_p$}{(a)p' implies (b)p}}
Fix  $\alpha>0$ and $N\ge 1$. Take $\Delta_0 = \Delta(x_0, r_0)$ with $x_0 \in \partial \Omega$ and $0 < r_0 <\diam(\pom)$
and fix $f \in \mathscr{C}(\partial \Omega)$ with $\supp f \subset N\Delta_0$. We may assume that $N r_0<4\,\diam(\pom)$, otherwise $\pom$ is bounded and $4\diam(\pom)/N\le r_0<\diam(\pom)$ and we can work with $N'= 2\diam(\pom)/r_0\in(2, N/2]$ and $N'\Delta_0=\pom$.

Let $u$ be the associated elliptic measure $L$-solution as in \eqref{u-elliptic:L-sol}. Assume $\omega_L \in RH_{p'}(\partial \Omega, \omega_{L_0})$ and our goal is to obtain that \eqref{solv-Lp} holds. By Gehring's lemma \cite{G} (see also \cite{CF-1974}) there exists $s>1$ such that $\omega_{L}\in RH_{p's} (\pom,\omega_{L_0})$.

Introduce the family of pairwise disjoint cubes
\[
\F_{\Delta_0}:=\{Q\in\dd(\pom): (N+3\Xi)r_0<\ell(Q)\le 2(N+3\Xi) r_0,\ Q\cap 3\Xi\Delta_0\neq\emptyset\}.
\]
Take $x \in \Delta_0$ and $X \in \Gamma_{r_0}^\alpha(x)$. Let $I_X \in \mathcal{W}$ be such that $X \in I_X$.
Take $y_X \in \partial \Omega$ such that $\dist(I_X, \partial \Omega) = \dist(I_X, y_X)$ and let $Q_X \in \dd$ be the unique dyadic cube satisfying $\ell(Q_X) = \ell(I_X)$ and $y_X \in Q_X$. By construction (see Section~\ref{section:dyadic}), $I_X \in \mathcal{W}_{Q_X}^{\vartheta,*}$ and thus $I^* \subset \Gamma_{Q_X}(y_X)$. Thus, by the properties of the Whitney cubes
\[
\delta(X)
\le
|X-y_X|
\le
\diam(I_X)+\dist(I_X, y_X)
\le
\frac54\dist(I_X, \pom)
\le
\frac54\delta(X)
\]
and
\begin{align*}
4\ell(Q_X)
=
4\ell(I_X)
\le
\dist(I_X,\pom)
\le
\delta(X)
\le
\frac54\dist(I_X, \pom)
\le
50\sqrt{n+1}\ell(I_X)
=
50\sqrt{n+1}\ell(Q_X).
\end{align*}
These and the fact that  $X \in \Gamma_{r_0}^\alpha(x)$ give
\[
\ell(Q_X)
<
\frac1{4}\,\delta(X)
\le
\frac14 |X-x|
<
\frac14r_0.
\]
Also, for every $z\in Q_X$
\[
|z-x_0|
\le
|z-y_X|+|y_X-X|+|X-x|+|x-x_0|
<
2\Xi\ell(Q_X)+ \frac94 |X-x| +r_0
<
(\Xi+ 4)\,r_0
\le
3\Xi r_0,
\]
since $\Xi\ge 2$, and
\[
|z-x|
\le
|z-y_X|+|y_X-X|+ |X-x|
<
2\Xi\ell(Q_X)+(3+\alpha)\delta(X)
<
(2\Xi+\alpha) \delta(X)
=:C_\alpha\delta(X),
\]
since $X \in \Gamma_{r_0}^\alpha(x)$. Thus, $Q_X\subset 3\Xi\Delta_0\cap \Delta(x, C_\alpha\delta(X))$ and there exists a unique $\widetilde{Q}_X\in\F_{\Delta_0}$ such that $Q_X\subsetneq \widetilde{Q}_X$. In particular,  $X\in I_X\subset U_{Q_X}\subset \Gamma_{\widetilde{Q}_X}(y)$ for all $y \in Q_X$ and
\begin{equation*}
	|u(X)| \leq \mathcal{N}_{\widetilde{Q}_X} u (y), \qquad \text{for all } y \in Q_X.
\end{equation*}
Taking the average over $Q_X$ with respect to $\omega_{L_0}^{X_{\Delta_0}}$ we arrive at
\begin{multline*}
	|u(X)| \leq
	\fint_{Q_X} \mathcal{N}_{\widetilde{Q}_X} u (y) \, d \oLo^{X_{\Delta_0}}(y)
	\le
	\fint_{Q_X} \sup_{Q \in \mathcal{F}_{\Delta_0}} \mathcal{N}_Q u (y) \, d \oLo^{X_{\Delta_0}}(y)
	\\
	\lesssim_\alpha
	\fint_{\Delta(x, C_\alpha \delta(X))} \sup_{Q \in \mathcal{F}_{\Delta_0}} \mathcal{N}_Q u (y) \, d \oLo^{X_{\Delta_0}}(y)
	\le
	\sup_{0<r \leq C_\alpha r_0}\fint_{ \Delta(x,r)}  \sup_{Q \in \mathcal{F}_{\Delta_0}} \mathcal{N}_Q u (y) \, d \oLo^{X_{\Delta_0}}(y),
\end{multline*}
where in the last inequality we have used that $\delta(X)\le |X-x|< r_0$ since $\Gamma_{r_0}^\alpha(x)\subset B(x,r_0)$. Taking now the supremum over all $X \in \Gamma_{r_0}^\alpha(x)$, we arrive at
\begin{equation*}
	\mathcal{N}^\alpha_{r_0} u (x)
	\lesssim_\alpha
	\sup_{0<r \leq C_\alpha r_0}\fint_{ \Delta(x,r)}  \sup_{Q \in \mathcal{F}_{\Delta_0}} \mathcal{N}_Q u (y) \, d \oLo^{X_{\Delta_0}}(y), \qquad \text{for all }x \in \Delta_0.
\end{equation*}
Applying the Hardy-Littlewood maximal inequality and the fact that the set $\mathcal{F}_{\Delta_0}$ has bounded cardinality, we have
\begin{multline}\label{34qweva}
	\|\mathcal{N}^\alpha_{r_0} u\|_{L^{p}(\Delta_0, \omega_{L_0}^{X_{\Delta_0}})}
		\lesssim_\alpha
	\left\|  \sup_{0<r \leq C_\alpha r_0}\fint_{ \Delta(\cdot,r)}  \sup_{Q \in \mathcal{F}_{\Delta_0}} \mathcal{N}_Q u (y) \, d \oLo^{X_{\Delta_0}}(y)  \right\|_{L^{p}(\Delta_0, \omega_{L_0}^{X_{\Delta_0}})}
	\\
	 \lesssim
	\Big\|\sup_{Q \in \mathcal{F}_{\Delta_0}} \mathcal{N}_Q u \Big\|_{L^{p}(\Delta_0, \omega_{L_0}^{X_{\Delta_0}})}
\lesssim
\sup_{Q \in \mathcal{F}_{\Delta_0}}  \| \mathcal{N}_Q u\|_{L^{p}(\Delta_0, \omega_{L_0}^{X_{\Delta_0}})}
\approx_N
\sup_{Q \in \mathcal{F}_{\Delta_0}}  \| \mathcal{N}_Q u\|_{L^{p}(Q, \omega_{L_0}^{X_{\Delta_0}})},
\end{multline}
where we have used that for every $Q \in \mathcal{F}_{\Delta_0}$ we have $\supp (\mathcal{N}_Q u)\subset Q$.

Let us also observe that for every $Q\in \mathcal{F}_{\Delta_0}$ we can pick $y_Q\in Q\cap 3\Xi\Delta_0$ so that if $z\in N\Delta_0$ there holds
\[
|z-x_Q|
\le
|z-x_0|+|x_0-y_Q|+|y_Q-x_Q|
\le
(N+3\Xi) r_0+\Xi r_Q
<2\Xi r_Q.
\]
That is, $N\Delta_0\subset 2\widetilde{\Delta}_{Q}$ and we are now ready to invoke \cite[Lemma 3.20]{AHMT} to see that
\begin{equation}\label{gweravgvb}
\mathcal{N}_{Q} u (x)
\lesssim
\sup_{\substack{\Delta \ni x \\ 0 < r_\Delta < 4\Xi r_Q}} \fint_{\Delta} |f(y)| \, d \omega_{L}^{X_{Q}} (y), \quad x \in Q.
\end{equation}
To continue let $x\in Q\in \F_{\Delta_0}$ and let $\Delta$ be a surface ball such that $x \in \Delta$ and $0 < r_\Delta < 4\Xi r_Q$. In particular, $\Delta\subset C_N\Delta_0=\widetilde{\Delta}_0$ and $Q\subset \widetilde{\Delta}_0$. Note that $\omega_{L_0}^{X_{\Delta_0}}\approx_N \omega_{L_0}^{X_{\widetilde{\Delta}_0}}$ by Harnack's inequality and the fact that $\delta(X_{\Delta_0})\approx r_0$, $\delta(X_{\widetilde{\Delta}_0})\approx_N r_0$, and $|X_{\Delta_0}-X_{\widetilde{\Delta}_0}|\lesssim_N r_0$.

Recall that $\omega_L \in RH_{p' s}(\pom, \omega_{L_0})$ implies  $\omega_L \in RH_{p' s}(\widetilde{\Delta}_0, \omega^{X_{\widetilde{\Delta}_0}}_{L_0})$ (uniformly). Therefore, using H\"older's inequality and recalling that $h(\cdot; L, L_0, X)$ denotes the Radon-Nikodym derivative of $\omega_L^X$ with respect to $\omega_{L_0}^X$, we get
\begin{align*}
	\fint_{\Delta} |f(y)| \, d \omega_L^{X_{\Delta_0}} (y)
	&
	\approx_N
	\frac{\omega_{L_0}^{X_{\widetilde{\Delta}_0}} (\Delta)}{\omega_L^{X_{\widetilde{\Delta}_0}}(\Delta)} \fint_\Delta |f(y)| h(y; L, L_0, X_{\widetilde{\Delta}_0}) \, d \omega_{L_0}^{X_{\widetilde{\Delta}_0}} (y) \\
	&
	\leq
	\frac{\omega_{L_0}^{X_{\widetilde{\Delta}_0}} (\Delta)}{\omega_L^{X_{\widetilde{\Delta}_0}}(\Delta)} \bigg(\fint_{\Delta} h(y;L,L_0,X_{\widetilde{\Delta}_0})^{p's} \, d \omega_{L_0}^{X_{\widetilde{\Delta}_0}}(y) \bigg)^{\frac1{p's}} \bigg( \fint_\Delta |f(y)|^{(p's)'} \, d \omega_{L_0}^{X_{\widetilde{\Delta}_0}} (y) \bigg)^{\frac1{(p's)'}}  \\
	&
	\lesssim
	\frac{\omega_{L_0}^{X_{\widetilde{\Delta}_0}} (\Delta)}{\omega_L^{X_{\widetilde{\Delta}_0}}(\Delta)} \fint_\Delta h(y;L,L_0,X_{\widetilde{\Delta}_0}) \, d \omega_{L_0}^{X_{\widetilde{\Delta}_0}}(y)
	\bigg( \fint_\Delta |f(y)|^{(p's)'} \, d \omega_{L_0}^{X_{\widetilde{\Delta}_0}} (y) \bigg)^{^{\frac1{(p's)'}}}
	\\
	&
	=
	 \bigg( \fint_\Delta |f(y)|^{(p's)'} \, d \omega_{L_0}^{X_{\widetilde{\Delta}_0}} (y) \bigg)^{\frac1{(p's)'}}.
\end{align*}
This, \eqref{gweravgvb}, and \eqref{34qweva} yield
\begin{multline*}
\|\mathcal{N}^\alpha_{r_0} u\|_{L^{p}(\Delta_0, \omega_{L_0}^{X_{\Delta_0}})}^{p}
\lesssim_{\alpha,N}
\sup_{Q \in \mathcal{F}_{\Delta_0}}
\int_{\widetilde{\Delta}_0}
\bigg(
\sup_{\substack{\Delta \ni x \\ 0 < r_\Delta < 4\Xi r_Q}} \fint_\Delta |f(y)|^{(p's)'} \, d \omega_{L_0}^{X_{\Delta_0}} (y)
\bigg)^{\frac{p}{(p's)'}} d\omega_{L_0}^{X_{\widetilde{\Delta}_0}}(x)
\\
\lesssim
\int_{\widetilde{\Delta}_0} |f(x)|^{p} d\omega_{L_0}^{X_{\widetilde{\Delta}_0}}(x)
\approx_N
\|f\|_{L^{p}(N\Delta_0, \omega_{L_0}^{X_{\Delta_0}})}^{p},
\end{multline*}
where we have used the boundedness of the local Hardy-Littlewood maximal function in the second term on $L^{\frac{p}{(p's)'}}(\widetilde{\Delta}_0,\omega_{L_0}^{X_{\widetilde{\Delta}_0}})$, which follows from $p>(p's)'$ and the fact that $\omega_{L_0}^{X_{\widetilde{\Delta}_0}}$ is doubling in $10\widetilde{\Delta}_0$. This completes the proof of $\mathrm{(b)}_p$.\qed

\subsection{Proof of \texorpdfstring{$\mathrm{(b)}_p\ \Longrightarrow\ \mathrm{(a)}_{p'}$}{(b)p implies (a)p'}}
Fix $p\in (1,\infty)$ and assume that $L$ is $L^{p}(\omega_{L_0})$-solvable. That is, for some fixed $\alpha_0$ and some $N\ge 1$ there exists $C_{\alpha_0, N}\ge 1$ (depending only on $n$, the 1-sided NTA constants, the CDC constant, the ellipticity of $L_0$ and $L$, $\alpha_0$, $N$, and $p$) such that \eqref{solv-Lp} holds for $u$ as in \eqref{u-elliptic:L-sol} for any $f\in\mathscr{C}(\pom)$ with $\supp f\subset N\Delta_0$. From this and \eqref{eqn:ChAO:N} we conclude that we can assume that
$\alpha\ge c_0^{-1}-1$, where $c_0$ is the Corkscrew constant (cf.~Definition~\ref{def1.cork}), and we have
\begin{equation}\label{solv-Lp:proof}
\|\mathcal{N}^\alpha_{r_0} u\|_{L^p(\Delta_0, \omega_{L_0}^{X_{\Delta_0}})}
\lesssim_{\alpha,\alpha_0}	
\|\mathcal{N}^{\alpha_0}_{4r_0} u\|_{L^p(4\Delta_0, \omega_{L_0}^{X_{4\Delta_0}})}
	\leq C_{\alpha_0,N} \|f\|_{L^p(N\Delta_0, \omega_{L_0}^{X_{\Delta_0}})},
\end{equation}
for $u$ as in \eqref{u-elliptic:L-sol} with $f\in\mathscr{C}(\pom)$ with $\supp f\subset N\Delta_0$ and for any $\Delta_0 = \Delta(x_0,r_0)$, $x_0 \in \partial \Omega$ and $0 < r_0 < \diam(\partial \Omega)/4$. It is routine to see this estimate also holds with $r_0\approx\diam(\pom)$. Indeed, by splitting $f$ into its positive and negative part we may assume that $f\ge 0$. In that case if $x\in\pom$ and $X\in\Gamma^{\alpha}_{r_0}(x)\setminus\Gamma^{\alpha}_{\diam(\pom)/5}(x)$ we have that $\delta(X)\approx\diam(\pom)$ and by \eqref{cone-CKS} one has that $X':=X_{\Delta(x,\diam(\pom)/5)}\in \Gamma^{\alpha}_{\diam(\pom)/5}(x)$. Harnack's inequality implies then that $u(X)\approx u(X')$ and this shows that $\mathcal{N}^{\alpha}_{r_0} u(x)\lesssim \mathcal{N}^{\alpha}_{\diam(\pom)/5} u(x)$. Further details are left to the interested reader.

We claim that for every  $\Delta_0 = \Delta(x_0,r_0)$, $x_0 \in \partial \Omega$ and $0 < r_0 < \diam(\partial \Omega)$, and for every $ f\in\mathscr{C}(\pom)$ with $\supp f\subset N\Delta_0$
\begin{equation}\label{AWfeve}
\Big|\int_{\Delta_0} f(y)\,d\omega_L^{X_{\Delta_0}}(y)\Big| \lesssim_{\alpha, N}\|f\|_{L^{p}(N \Delta_0, \omega_{L_0}^{X_{\Delta_0}})}.
\end{equation}
To see this let $u$ be the $L$-solution with datum $|f|$ (see \eqref{u-elliptic:L-sol}). Write $X_0:=X_{\Delta_0}$ and $\widetilde{X}_0:=X_{(2+\alpha)^{-1}\Delta_0}$. Note that $\delta(X_0)\approx r_0$, $\delta(\widetilde{X}_0)\approx_\alpha r_0$, and $|X_0-\widetilde{X}_0|<2\,r_0$. Hence Harnack's inequality yields $u(\widetilde{X}_0)\approx_\alpha
u(X_0)$. The choice of $\alpha$ guarantees that $\widetilde{X}_0\in \Gamma_{(2+\alpha)^{-1}r_0}^\alpha(x_0)\subset\Gamma_{r_0}^\alpha(x_0)$, see \eqref{cone-CKS}. Let $\widetilde{x}_0\in\pom$ so that $\delta(\widetilde{X}_0)=|\widetilde{X}_0-\widetilde{x}_0|$. Clearly, for every $z\in \Delta(\widetilde{x}_0,\alpha\delta(\widetilde{X}_0))$,
\[
|\widetilde{X}_0-z|
\le
|\widetilde{X}_0-\widetilde{x}_0|+|\widetilde{x}_0-z|
<
(1+\alpha)\delta(\widetilde{X}_0)
\le
\frac{1+\alpha}{2+\alpha }r_0
<r_0,
\]
thus $\widetilde{X}_0\in \Gamma_{r_0}^\alpha(z) $ and
\begin{equation*}
\mathcal{N}^\alpha_{r_0} u(z)
\ge
u(\widetilde{X}_0)
\approx_\alpha
u(X_0),
\qquad\text{for every }z\in \Delta(\widetilde{x}_0,\alpha\delta(\widetilde{X}_0)).
\end{equation*}
Note also that if $z\in \Delta(\widetilde{x}_0,\alpha\delta(\widetilde{X}_0))$ then
\[
|z-x_0|
\le
|z-\widetilde{x}_0|+|\widetilde{x}_0-\widetilde{X}_0|+ |\widetilde{X}_0-x_0|
<
(\alpha+1)\delta(\widetilde{X}_0)+|\widetilde{X}_0-x_0|
\le
(\alpha+2)|\widetilde{X}_0-x_0|
\le
r_0,
\]
hence  $\Delta(\widetilde{x}_0,\alpha\delta(\widetilde{X}_0))\subset\Delta_0$. Additionally, if $z\in\Delta_0$ then
\[
|z-\widetilde{x}_0|
\le |z-x_0|+|x_0-\widetilde{X}_0|+ |\widetilde{X}_0-\widetilde{x}_0|
<
r_0+|x_0-\widetilde{X}_0|+ \delta(\widetilde{X}_0)
\le
r_0+2|x_0-\widetilde{X}_0|
\le
\Big(1+\frac2{2+\alpha}\Big)r_0
\le
2r_{0},
\]
and this shows that $\Delta_0\subset\Delta(\widetilde{x}_0, 2r_0)$. This together with Lemma~\ref{lemma:proppde} gives
\[
1
\lesssim
\omega_{L_0}^{X_0}(\Delta_0)
\le
\omega_{L_0}^{X_0}(\Delta(\widetilde{x}_0, 2r_0))
\lesssim_\alpha
\omega_{L_0}^{X_0}(\Delta(\widetilde{x}_0,\alpha\,c_0\,r_0/(2+\alpha)))
\le
\omega_{L_0}^{X_0}(\Delta(\widetilde{x}_0,\alpha\delta(\widetilde{X}_0)))
\]
and the previous estimates readily give \eqref{AWfeve}:
\[
\Big|\int_{\Delta_0} f(y)\,d\omega_L^{X_{\Delta_0}}(y)\Big|
\le
u(X_0)
\lesssim_\alpha
u(\widetilde{X}_0)\omega_{L_0}^{X_0}(\Delta(\widetilde{x}_0,\alpha\delta(\widetilde{X}_0)))^{\frac1p}
\le
\|\mathcal{N}^\alpha_{r_0} u\|_{L^{p}(\Delta_0, \omega_{L_0}^{X_{0}})}
\lesssim_{\alpha, N}
\|f\|_{L^{p}(N \Delta_0, \omega_{L_0}^{X_{0}})}.
\]

To proceed we fix $\Delta_0 = \Delta(x_0,r_0)$, $x_0 \in \partial \Omega$ and $0 < r_0 < \diam(\partial \Omega)/2$. Let $F \subset \Delta_0$ be a Borel set.
Since $\omega_{L_0}^{X_{2\,\Delta_0}}$ and $\omega_L^{X_{2\,\Delta_0}}$ are Borel regular, for each $\varepsilon > 0$, there exist compact set $K$ and an open set $U$ such that $K \subset F \subset U \subset 2\,\Delta_0$ satisfying
\begin{equation}\label{we54srbrt}
	\omega_{L_0}^{X_{2\,\Delta_0}} (U \setminus K) + \omega_{L}^{X_{2\,\Delta_0}} (U \backslash K) < \varepsilon.
\end{equation}
Using Urysohn's lemma we can construct $f_F \in \mathscr{C}_c(\partial \Omega)$ such that $\mathbf{1}_K \leq f_F \leq \mathbf{1}_U$. Then, by \eqref{AWfeve} (applied with $2\Delta_0$) and \eqref{we54srbrt} yield
\begin{multline*}
\omega_L^{X_{2\,\Delta_0}}(F)
<
\varepsilon + \omega_L^{X_{2\,\Delta_0}}(K)
\leq
\varepsilon + \int_{\partial \Omega} f_F(z) \, d \omega_L^{X_{2\,\Delta_0}}(z) \\
\le
\varepsilon + C_{\alpha, N}\|f_F\|_{L^{p}(\Delta_0, \omega_{L_0}^{X_{2\,\Delta_0}})}
\lesssim
\varepsilon +  C_{\alpha, N}\omega_{L_0}^{X_{2 \Delta_0}} (U)^{\frac1p}
<
\varepsilon +  C_{\alpha, N}(\omega_{L_0}^{X_{2 \Delta_0}} (F) + \varepsilon)^{\frac1p} .
\end{multline*}
Letting $\varepsilon \to 0+$, we obtain that $\omega_L^{X_{2\,\Delta_0}}(F) \lesssim _{\alpha, N}\omega_{L_0}^{X_{2 \Delta_0}} (F)^{\frac1p}$. Hence,
$\omega_L^{X_{2\,\Delta_0}}\ll \omega_{L_0}^{X_{2 \Delta_0}}$ in $\Delta_0$. By Harnack's inequality and the fact that we can cover $\pom$ with surface balls like $\Delta_0$ we conclude that $\omega_L \ll \omega_{L_0}$ in $\pom$. We can write $h(\cdot; L, L_0, X) = \frac{d \omega_L^{X}}{d \omega_{L_0}^{X}} \in L_\loc^1(\pom, \omega_{L_0}^{X})$ which is well-defined  $\omega_{L_0}^{X}$-a.e. in $\pom$. Thus, for every $ f\in\mathscr{C}(\pom)$ with $\supp f\subset 2\Delta_0$ we obtain from \eqref{AWfeve}
\[
\Big|\int_{2\Delta_0} f(y)\,h(y; L, L_0, X_{2\Delta_0})\,d\omega_{L_0}^{X_{2\Delta_0}}(y)\Big|
=
\Big|\int_{2\Delta_0} f(y)\,d\omega_L^{X_{2\Delta_0}}(y)\Big|
 \lesssim_{\alpha, N}\|f\|_{L^{p}(2\Delta_0, \omega_{L_0}^{X_{2\Delta_0}})}.
\]
Using the ideas in \cite[Lemma~3.38]{AHMT} and with the help of  \cite[Lemma~3.29]{AHMT}, we can then conclude that
\begin{equation*}
	\|h(\cdot\,; L, L_0, X_{2\Delta_0})\|_{L^{p'}(\Delta_0, \omega_{L_0}^{X_{2\Delta_0}})} \lesssim_{\alpha, N} 1.
\end{equation*}
This, Harnack's inequality, and the fact that $\Delta_0 = \Delta(x_0,r_0)$ with $x_0 \in \partial \Omega$ and $0 < r_0 < \diam(\partial \Omega)/2$ arbitrary easily yield that
\begin{equation*}
	\|h(\cdot\,; L, L_0, X_{\Delta(x,r)})\|_{L^{p'}(\Delta(x,r), \omega_{L_0}^{X_{\Delta(x,r)}})} \lesssim_{\alpha, N} 1,
	\qquad\text{for every $x \in \partial \Omega$ and  $0 < r < \diam(\partial \Omega)$}.
\end{equation*}
This and Remark~\ref{RHp-scale-invariant} readily imply that $\omega_L\in RH_{p'}(\pom,\omega_{L_0})$ and the proof is complete.
\qed

\subsection{Proof of \texorpdfstring{$\mathrm{(b)}_p\ \Longrightarrow\ \mathrm{(b)}_p'$}{(b)p implies (b)'p}}
Assume that $L$ is  $L^{p}(\omega_{L_0})$-solvable with $p\in (1,\infty)$. Fix $\alpha>0$, $N\ge 1$, a surface ball $\Delta_0$, and a Borel set $S\subset N\Delta_0$. Take an arbitrary $\varepsilon>0$ and since $\omega_{L_0}^{X_{\Delta_0}}$ and $\omega_L^{X_{\Delta_0}}$ are Borel regular, we can find a closed set $F$ and an open set $U$ such that
$F\subset S\subset U\subset (N+1)\Delta_0$ and
\[
\omega_{L_0}^{X_{\Delta_0}}(U\setminus F)+\omega_L^{X_{\Delta_0}}(U\setminus F)<\varepsilon.
\]
Using Urysohn's lemma we can then construct $f \in \mathscr{C}_c(\partial \Omega)$ such that $\mathbf{1}_S \leq f \leq \mathbf{1}_U$. Set,
\[
u(X):
=
\omega_L^X(S),\qquad
v(X):=
\int_{\pom} f(y)\,d\omega_{L}^X(y),
\qquad X\in\Omega.
\]
For every $M\ge c_0^{-1}$ define the truncated cone and truncated non-tangential maximal function
\[
\Gamma_{r_0, M}^{\alpha}(x)
:=
\Gamma_{r_0}^{\alpha}(x)\cap\{X\in\Omega: \delta(X)\ge r_0/M\}, \qquad
\mathcal{N}^\alpha_{r_0,M} u(x)
:=
\sup_{X \in \Gamma^\alpha_{r_0,M} (x)} |u(X)|, \qquad
x\in\pom.
\]
Note that if $x\in\Delta_0$ and $X\in \Gamma_{r_0, M}^{\alpha}(x)$ then $r_0/M\le \delta(X)\le r_0$, $c_0\,r_0\le \delta(X_{\Delta_0})\le r_0$, and $|X-X_{\Delta_0}|<2r_0$. Hence, by the Harnack chain condition and Harnack's inequality, there is a constant $C_M$ depending on $M$ such that
\[
\omega_L^X(U\setminus F)\le C_M\,\omega_L^{X_{\Delta_0}}(U\setminus F)\le C_M\,\varepsilon,
\]
and
\[
0
\le
u(X)
=
\omega_L^X(S)
\le
C_M\,\varepsilon
+
\omega_L^X(F)
\le
C_M\,\varepsilon+
\int_{\pom}f(y)\,d\omega_{L}^X(y)
=
C_M\,\varepsilon+v(X).
\]
Thus
\[
\mathcal{N}^\alpha_{r_0,M} u(x)
\le
C_M\,\varepsilon+
\mathcal{N}^\alpha_{r_0} v(x),\qquad\forall\,x\in\Delta_0.
\]
Note that our assumption is that $L^{p}(\omega_{L_0})$-solvability holds with the fixed parameters $\alpha>0$ and $N\ge 1$, but since we already know that $\mathrm{(a)} \Longleftrightarrow\ \mathrm{(b)}$ it follows that the $L^{p}(\omega_{L_0})$-solvability holds with $\alpha>0$ and $N+1$.  Thus, the fact that $f \in \mathscr{C}_c(\partial \Omega)$ with $\supp f\subset U\subset (N+1)\Delta_0$ gives
\begin{multline*}
\|\mathcal{N}^\alpha_{r_0,M} u\|_{L^p(\Delta_0, \omega_{L_0}^{X_{\Delta_0}})}
\leq
C_M\,\varepsilon\,  \omega_{L_0}^{X_{\Delta_0}}(\Delta_0)^{\frac1p}+
\|\mathcal{N}^\alpha_{r_0} v\|_{L^p(\Delta_0, \omega_{L_0}^{X_{\Delta_0}})}
\leq
C_M\,\varepsilon+
C_{\alpha,N}
\|f\|_{L^p((N+1)\Delta_0, \omega_{L_0}^{X_{\Delta_0}})}
\\
\le
C_M\,\varepsilon+
C_{\alpha,N} \omega_{L_0}^{X_{\Delta_0}}(U)^{\frac1{p}}
<
C_M\,\varepsilon+
C_{\alpha,N} (\omega_{L_0}^{X_{\Delta_0}}(S)+\varepsilon)^{\frac1{p}}
=
C_M\,\varepsilon+
C_{\alpha,N} \big(\|\mathbf{1}_S\|^p_{L^p(N\Delta_0, \omega_{L_0}^{X_{\Delta_0}})} +\varepsilon)^{\frac1{p}}.
\end{multline*}
We let $\varepsilon\to 0^+$ and obtain $\|\mathcal{N}^\alpha_{r_0,M} u\|_{L^p(\Delta_0, \omega_{L_0}^{X_{\Delta_0}})} \le C_{\alpha,N} \|\mathbf{1}_S\|_{L^p(N\Delta_0, \omega_{L_0}^{X_{\Delta_0}})}$. Since $\mathcal{N}^\alpha_{r_0,M} u(x)\nearrow \mathcal{N}^\alpha_{r_0} u(x)$ for every $x\in\pom$ as $M\to\infty$ we conclude the desired estimate by simply applying the monotone convergence theorem. \qed

\subsection{Proof of \texorpdfstring{$\mathrm{(b)}'\ \Longrightarrow\ \mathrm{(a)}$}{(b)' implies (a)}}
Fix $p\in (1,\infty)$ and assume that $L$ is $L^{p}(\omega_{L_0})$-solvable for characteristic functions. That is for some $\alpha>0$ and some $N\ge 1$ there exists $C_{\alpha, N}\ge 1$ (depending only on $n$, the 1-sided NTA constants, the CDC constant, the ellipticity of $L_0$ and $L$, $\alpha$, $N$, and $p$) such that \eqref{solv-Lp} holds for $u$ as in \eqref{u-elliptic:L-sol} for any $f=\mathbf{1}_S$ with $S$ being a Borel set $S\subset N\Delta_0$.

Take an arbitrary $\Delta_0 = \Delta(x_0,r_0)$, $x_0 \in \partial \Omega$ and $0 < r_0 < \diam(\partial \Omega)$. We follow the proof of $\mathrm{(b)}_p\ \Longrightarrow\ \mathrm{(a)}_{p'}$ and observe that the same argument we used to obtain \eqref{AWfeve} easily gives, taking $f=\mathbf{1}_S$ with $S$ being a Borel set $S\subset N \Delta_0$, that
\begin{equation}\label{AWfeve:char}
\omega_L^{X_{\Delta_0}}(S)=	\int_{\Delta_0} \mathbf{1}_S(y)\,d\omega_L^{X_{\Delta_0}}(y)
\lesssim_{\alpha, N}
\|\mathbf{1}_S\|_{L^{p}(\Delta_0, \omega_{L_0}^{X_{\Delta_0}})}
=
\omega_{L_0}^{X_{\Delta_0}}(S)^\frac1p.
\end{equation}
This readily implies that $\omega_L^{X_{\Delta_0}}\ll \omega_{L_0}^{X_{\Delta_0}}$ in $\Delta_0$, and since $\Delta_0$ is arbitrary we conclude that $\omega_L\ll \omega_{L_0}$ in $\pom$. To proceed, fix  $B_0=B(x_0,r_0)$ and $B=B(x,r)$ with $B\subset B_0$, $x_0,x \in \partial \Omega$ and $0 < r_0,r < \diam(\pom)$. Write $\Delta_0=B_0\cap\pom$ and $\Delta=B\cap\pom$. Let $S\subset \Delta$ be an arbitrary Borel set. If $r\approx r_0$ we have by Harnack's inequality and Lemma~\ref{lemma:proppde} part $(a)$
\begin{align*}
\frac{\omega_L^{X_{\Delta_0}}(S)}{\omega_L^{X_{\Delta_0}}(\Delta)}	
\approx
\frac{\omega_L^{X_{\Delta}}(S)}{\omega_L^{X_{\Delta}}(\Delta)}
\approx
\omega_L^{X_{\Delta}}(S)
\lesssim_{\alpha, N}
\omega_{L_0}^{X_{\Delta}}(S)^{\frac1p}
\approx
\Big(\frac{\omega_{L_0}^{X_{\Delta}}(S)}{\omega_{L_0}^{X_{\Delta}}(\Delta)}\Big)^{\frac1p}
\approx
\Big(\frac{\omega_{L_0}^{X_{\Delta_0}}(S)}{\omega_{L_0}^{X_{\Delta_0}}(\Delta)}\Big)^{\frac1p},	
\end{align*}
where in the third estimate we have used \eqref{AWfeve:char} with $\Delta$ in place of $\Delta_0$. On the other hand, if $r\ll r_0$ we have by Lemma~\ref{lemma:proppde} part $(d)$ that $\omega_L\ll\omega_{L_0}$ with
\begin{align*}
\frac{\omega_L^{X_{\Delta_0}}(S)}{\omega_L^{X_{\Delta_0}}(\Delta)}	
\approx
	\omega_L^{X_{\Delta}}(S)
	\lesssim_{\alpha, N}
	\omega_{L_0}^{X_{\Delta}}(S)^{\frac1p}
	\approx
	\Big(\frac{\omega_{L_0}^{X_{\Delta_0}}(S)}{\omega_{L_0}^{X_{\Delta_0}}(\Delta)}\Big)^{\frac1p},	
\end{align*}
where again we have used \eqref{AWfeve:char} with $\Delta$ in place of $\Delta_0$ in the middle estimate. In short we have proved that
\[
\frac{\omega_L^{X_{\Delta_0}}(S)}{\omega_L^{X_{\Delta_0}}(\Delta)}	\lesssim_{\alpha, N} \Big(\frac{\omega_{L_0}^{X_{\Delta_0}}(S)}{\omega_{L_0}^{X_{\Delta_0}}(\Delta)}\Big)^{\frac1p},
\qquad \text{for any Borel set $S\subset\Delta$}.
\]
Using the fact that the implicit constants do not depend on $\Delta$ (nor on $\Delta_0$) and Lemma~\ref{lemma:proppde} part $(c)$, this readily implies that $\omega_L^{X_{\Delta_0}}\in RH_q(\Delta_0, \omega_{L_0}^{X_{\Delta_0}})$ for some $q\in (1,\infty)$ where $q$ and the implicit constants do not depend on $\Delta_0$, see \cite{CF-1974,GR}. Hence, we readily conclude that $\omega_L\in RH_q(\pom, \omega_{L_0})$ (see Definition~\ref{def:Ainfty}). This completes the proof of the present implication. \qed

\subsection{Proof of \texorpdfstring{$\mathrm{(a)}\ \Longrightarrow\ \mathrm{(d)}$}{(a) implies (d)}}
Assume that $\omega_L\in A_\infty(\pom,\omega_{L_0})$. By the classical theory of weights (cf.~\cite{CF-1974, GR}) and Lemma~\ref{lemma:proppde} part $(c)$ it is not hard to see that $\omega_{L_0}\in A_\infty(\pom,\omega_{L})$, hence $\omega_{L_0}\in RH_{p}(\pom,\omega_{L})$ for some $1<p<\infty$. In particular for every $Q_0\in\dd(\pom)$ and $Q\in\dd_{Q_0}$,
by Lemma~\ref{lemma:proppde} part $(c)$ we have
\begin{align*}
	\left(\fint_{Q}h(y;L_0, L,X_{Q_0} )^p d \omega_{L}^{X_{Q_0}}(y)\right)^{\frac1p}
	\leq
	C
	\fint_{Q} h(y;L_0, L,X_{Q_0} ) d \omega_{L}^{X_{Q_0}}(y)
	=
	C\frac{\omega_{L_0}^{X_{Q_0}}(Q)}{\omega_{L}^{X_{Q_0}}(Q)}.
\end{align*}
Thus, for $F\subset Q$ we obtain, by H\"older's inequality,
\begin{multline}\label{341frqfa}
\frac{\omega_{L_0}^{X_{Q_0}}(F)}{\omega_{L_0}^{X_{Q_0}}(Q)}
=
\fint_{Q} \mathbf{1}_F(y) d\omega_{L_0}^{X_{Q_0}}(y)
=
\frac{\omega_{L}^{X_{Q_0}}(Q)}{\omega_{L_0}^{X_{Q_0}}(Q)} \fint_{Q} \mathbf{1}_F(y)  h(y;L_0, L,X_{Q_0} )d \omega_{L}^{X_{Q_0}}(y)
\\
\le
\frac{\omega_{L}^{X_{Q_0}}(Q)}{\omega_{L_0}^{X_{Q_0}}(Q)}
\Big(\fint_{Q} h(y;L_0, L,X_{Q_0}) ^pd \omega_{L}^{X_{Q_0}}(y)\Big)^\frac1p
\Big(\frac{\omega_{L}^{X_{Q_0}}(F)}{\omega_{L}^{X_{Q_0}}(Q)}\Big)^\frac1{p'}
\lesssim
\Big(\frac{\omega_{L}^{X_{Q_0}}(F)}{\omega_{L}^{X_{Q_0}}(Q)}\Big)^\frac1{p'}.
\end{multline}

To continue we need a dyadic version of \eqref{S<N}: for every $Q_0\in\dd(\pom)$ and for every $\vartheta\ge \vartheta_0$ we claim that
\begin{equation}\label{S<N:dyadic}
	\|\mathcal{S}^\vartheta_{Q_0} u\|_{L^q(Q_0, \omega_{L_0}^{X_{Q_0}})}
	\leq
	C_\vartheta \|\mathcal{N}^\vartheta_{Q_0} u\|_{L^q(Q_0, \omega_{L_0}^{X_{Q_0}})}, \quad 0<q<\infty.
\end{equation}
This estimate can be proved following the argument in \cite[Section~5.2]{AHMT} with the following changes. Recall \cite[(5.9)]{AHMT} (here we note that the argument in \cite[Section~5.2]{AHMT} was done with a fixed value of $\vartheta$ sufficiently large, but it is routine to see that one can repeat it with this parameter with harmless changes)
\begin{align}\label{good-lambda:goal:local}
	\omega_L^{X_{Q_0}}\big(\big\{x\in Q_j: \mathcal{S}_{Q_j}^{\vartheta, k_0}u(x)>\beta\,\lambda,\ \mathcal{N}_{Q_0}^\vartheta u(x)\le \gamma\,\lambda \big\}\big)
	\lesssim
	\Big(\frac{\gamma}{\beta}\Big)^\vartheta\, \omega_L^{X_{Q_0}}(Q_j),
\end{align}
where $\lambda$, $\beta$, $\gamma > 0$; $Q_j$ is some dyadic cube (see \cite[Section~5.2]{AHMT});
$\mathcal{S}_{Q_j}^{\vartheta, k_0}u$ is a truncated localized  dyadic conical square function with respect to the cones
\[
\Gamma_{Q_j}^{\vartheta, k_0}(x) := \bigcup_{\substack{x\in Q'\in\dd_{Q} \\ \ell(Q')\ge 2^{-k_0}\,\ell(Q_0)}}  U_{Q'}^\vartheta;
\]
and $k_0$ is large enough (eventually $k_0\to\infty$). It should be noted that the implicit constant in the inequality \eqref{good-lambda:goal:local} does not depend on $k_0$. Combining \eqref{good-lambda:goal:local} with \eqref{341frqfa} we easily arrive at
\begin{align}\label{good-lambda:goal:local:omega0}
	\omega_{L_0}^{X_{Q_0}}\big(\big\{x\in Q_j: \mathcal{S}_{Q_j}^{\vartheta, k_0}u(x)>\beta\,\lambda,\ \mathcal{N}_{Q_0}^\vartheta u(x)\le \gamma\,\lambda \big\}\big)
	\lesssim
	\Big(\frac{\gamma}{\beta}\Big)^{\frac{\vartheta}{p'}}\, \omega_{L_0}^{X_{Q_0}}(Q_j).
\end{align}
From this we can derive \cite[(5.7)]{AHMT} with $\omega_{L_0}^{X_{Q_0}}$ in place of $\omega_{L}^{X_{Q_0}}$ and a typical good-$\lambda$ argument much as in \cite[Section~5.2]{AHMT} readily leads to \eqref{S<N:dyadic}.

With \eqref{S<N:dyadic} at our disposal we can then proceed to obtain \eqref{S<N}. Fix $\Delta_0 = \Delta(x_0, r_0)$ with $x_0 \in \partial \Omega, 0 < r_0 < \diam (\partial \Omega)$. Let $M\ge 1$ be large enough to be chosen and set
\[
\mathcal{F}_{\Delta_0}
:=
\big\{
Q\in\dd(\pom): r_0/(2 M)\le \ell(Q)<r_0/M, Q\cap\Delta_0\neq \emptyset
\big\}.
\]
One has that $\mathcal{F}_{\Delta_0}$ is a pairwise disjoint family and
\[
\Delta_0
\subset
\bigcup_{Q\in \mathcal{F}_{\Delta_0}} Q
\subset
\tfrac54\Delta_0,
\]
provided $M$ is large enough.

Write $\widetilde{r}_0:=r_0/2M$. Let $x\in Q_0\in \mathcal{F}_{\Delta_0}$ and $X\in\Gamma_{\widetilde{r}_0}^\alpha(x)$.
Let $I_X\in\W$ be so that $I_X\ni X$ and pick $Q_X\in\dd(\pom)$ with $x\in Q_X$ and $\ell(Q_X)=\ell(I_X)$. Note that
\[
\ell(Q_X)
=
\ell(I_X)
\le
\diam(I_X)
\le
\dist(I_X,\pom)
\le
\delta(X)
\le
|X-x|
<
\widetilde{r_0}
=
\frac{r_0}{2M}
\le
\ell(Q_0).
\]
This and the fact that $x\in Q_0\cap Q_X$ gives $Q_X\subset Q_0$. On the other hand,
\begin{multline*}
	\dist(I_X,Q_X)
\le
|X-x|
\le
(1+\alpha)\delta(X)
\le
(1+\alpha)(\diam(I_X)+\dist(I_X,\pom))
\\
\le
41\sqrt{n+1}(1+\alpha)\ell(I_X)
=
41\sqrt{n+1}(1+\alpha)\ell(Q_X).
\end{multline*}
This shows that if we fix $\vartheta=\vartheta(\alpha)$ so that $2^\vartheta\ge 41\sqrt{n+1}(1+\alpha)$ then $I_X\in\W^\vartheta_{Q_X}\subset \W^{\vartheta,*}_{Q_X}$. As a result,
$X\in I_X\subset U_{Q_X}^{\vartheta}$ and $X\in \Gamma_{Q_0}^\vartheta(x)$. All these show that for every $Q_0\in \mathcal{F}_{\Delta_0}$ and  $x\in Q_0\in \mathcal{F}_{\Delta_0}$ we have $\Gamma_{\widetilde{r}_0}^\alpha(x)\subset \Gamma_{Q_0}^\vartheta(x)$. Thus \eqref{S<N:dyadic} yields
\begin{multline*}
\|\mathcal{S}^\alpha_{\widetilde{r}_0} u\|_{L^q(\Delta_0, \omega_{L_0}^{X_{\Delta_0}})}^q
\le
\sum_{Q_0\in \mathcal{F}_{\Delta_0}} \int_{Q_0} \mathcal{S}^\alpha_{\widetilde{r}_0} u(x)^q\,d\omega_{L_0}^{X_{\Delta_0}}(x)
\\
\le
\sum_{Q_0\in \mathcal{F}_{\Delta_0}} \int_{Q_0} \mathcal{S}^\vartheta_{Q_0} u(x)^q\,d\omega_{L_0}^{X_{\Delta_0}}(x)
\lesssim_\alpha
\sum_{Q_0\in \mathcal{F}_{\Delta_0}} \int_{Q_0} \mathcal{N}^\vartheta_{Q_0} u(x)^q\,d\omega_{L_0}^{X_{\Delta_0}}(x).
\end{multline*}
To continue let $Q_0\in\F_{\Delta_0}$, $x\in Q_0$ and $X\in \Gamma_{Q_0}^{\vartheta, *}(x)$. Then $X\in I^{**}$ with $I\in \W_{Q}^{\vartheta,*}$ and $x\in Q\subset Q_0$. As a consequence,
\[
|X-x|
\le
\diam(I^{**})+\dist(I,Q_0)+\diam(Q_0)
\lesssim_{\vartheta}
\ell(I)
\approx
\delta(X)
\le
\kappa_0 \ell(Q_0)
<
2\kappa_0 \widetilde{r}_0
\]
where we have used  \eqref{definicionkappa12}  and the last estimate holds provided $M$ is large enough. This shows that $X\in\Gamma^{\alpha'}_{2\kappa_0 \widetilde{r}_0}(x)$ for some $\alpha'=\alpha'(\vartheta)$ (hence depending on $\alpha$). As a consequence of these, we obtain
\begin{multline*}
	\sum_{Q_0\in \mathcal{F}_{\Delta_0}} \int_{Q_0} \mathcal{N}^\vartheta_{Q_0} u(x)^q\,d\omega_{L_0}^{X_{\Delta_0}}(x)
\le
\int_{\frac54\Delta_0} \mathcal{N}^{\alpha'}_{2\kappa_0 \widetilde{r}_0} u(x)^q\,d\omega_{L_0}^{X_{\Delta_0}}(x)
\\
\lesssim_\alpha
\int_{5\Delta_0} \mathcal{N}^{\alpha}_{8\kappa_0 \widetilde{r}_0} u(x)^q\,d\omega_{L_0}^{X_{\Delta_0}}(x)
\le
\int_{5\Delta_0} \mathcal{N}^{\alpha}_{r_0} u(x)^q\,d\omega_{L_0}^{X_{\Delta_0}}(x),
\end{multline*}
where we have used \eqref{eqn:ChAO:N} and the last estimate follows provided $M$ is large enough. \qed

\subsection{Proof of \texorpdfstring{$\mathrm{(d)}\ \Longrightarrow\ \mathrm{(d)}'$}{(d) implies (d)'}}
This is trivial since for any arbitrary Borel set $S\subset\pom$, the solution $u(X)=\omega_L^X(S)$, $X\in\Omega$, belongs to $u\in W^{1,2}_{\loc}(\Omega)$. \qed

\subsection{Proof of \texorpdfstring{$\mathrm{(d)}'\ \Longrightarrow\ \mathrm{(a)}$}{(d)' implies (a)}}
Assume that  \eqref{S<N} holds for some fixed $\alpha_0$ and $q\in (0,\infty)$ and for $u(X)=\omega_L^X(S)$, $X\in\Omega$, for any arbitrary Borel set $S\subset\pom$.
By Lemma~\ref{lemma-ChAO} (applied to $F(X)=|\nabla u(X)|\delta(X)^{(1-n)/2}$), for any $\alpha$ large enough to be chosen we have
\begin{equation}\label{S<N:proof}
	\|\mathcal{S}^\alpha_{r_0} u\|_{L^q(\Delta_0, \omega_{L_0}^{X_{\Delta_0}})}
	\lesssim_{\alpha,\alpha_0}
	\|\mathcal{S}^{\alpha_0}_{3r_0} u\|_{L^q(3\Delta_0, \omega_{L_0}^{X_{3\Delta_0}})}
	\lesssim_{\alpha_0}
	\omega_{L_0}^{X_{15\Delta_0}}(15\Delta_0)^{\frac1q}
	\approx
	\omega_{L_0}^{X_{\Delta_0}}(\Delta_0)^{\frac1q}
	,
\end{equation}	
for every $\Delta_0 = \Delta(x_0, r_0)$ with $x_0 \in \partial \Omega, 0 < r_0< \diam (\partial \Omega)/3$, and where we have used that $0\le u\le 1$. Let us see how to extend the previous estimate, in the case $\pom$ is bounded, to any  $\diam(\pom)/3\le r_0<\diam(\pom)$. Note that if $x\in\Delta_0$ and $X\in\Gamma^\alpha_{\diam (\partial \Omega)}(x)\setminus \Gamma^\alpha_{\diam (\partial \Omega)/4}(x)$, then
\[
\frac14\diam (\partial \Omega) \le |X-x|\le (1+\alpha)\delta(X) \le (1+\alpha) |X-x|<(1+\alpha)\diam (\partial \Omega).
\]
Set $\W_{x}=\{I\in\W: I\cap (\Gamma^\alpha_{\diam (\partial \Omega)}(x)\setminus \Gamma^\alpha_{\diam (\partial \Omega)/4}(x))\neq\emptyset\}$, whose cardinality is uniformly bounded (depending in dimension and $\alpha$). Thus, since $\|u\|_{L^\infty(\Omega)}\le 1$, Caccioppoli's inequality gives
\begin{multline*}
\iint_{\Gamma^\alpha_{\diam (\partial \Omega)}(x)\setminus \Gamma^\alpha_{\diam (\partial \Omega)/4}(x)} |\nabla u(X)|^2 \delta(X)^{1-n}\,dX
\lesssim
\sum_{I\in \W_{x}} \ell(I)^{1-n} \iint_I |\nabla u(X)|^2\,dX
\\
\lesssim
\sum_{I\in \W_{x}} \ell(I)^{-1-n} \iint_{I^*} |u(X)|^2\,dX
\lesssim
\# \W_{x}
\lesssim_\alpha 1.
\end{multline*}
With this in hand and \eqref{S<N:proof}  applied with $r_0=\diam(\pom)/4<\diam(\pom)/3$, we readily obtain
\begin{multline*}
\|\mathcal{S}^\alpha_{r_0} u\|_{L^q(\Delta_0, \omega_{L_0}^{X_{\Delta_0}})}
\le
\|\mathcal{S}^\alpha_{\diam(\pom)} u\|_{L^q(\Delta_0, \omega_{L_0}^{X_{\Delta_0}})}
\\
\le
\|\mathcal{S}^\alpha_{\diam(\pom)} u-\mathcal{S}^\alpha_{\diam(\pom)/4} u\|_{L^q(\Delta_0, \omega_{L_0}^{X_{\Delta_0}})}
+
\|\mathcal{S}^\alpha_{\diam(\pom)/4} u\|_{L^q(\Delta_0, \omega_{L_0}^{X_{\Delta_0}})}
\lesssim
\omega_{L_0}^{X_{\Delta_0}}(\Delta_0)^{\frac1q}.
\end{multline*}

We next see that given $\gamma\in (0,1)$ there exists $\beta\in (0,1)$ so that for every $Q_0\in\dd(\pom)$ and for every Borel set $F\subset Q_0$ we have
\begin{equation}\label{Ainfty-CF}
	\frac{\omega_{L}^{X_{Q_0}}(F)}{\omega_{L}^{X_{Q_0}}(Q_0)}\le \beta
	\quad\Longrightarrow\quad \frac{\omega_{L_0}^{X_{Q_0}}(F)}{\omega_{L_0}^{X_{Q_0}}(Q_0)}\le \gamma.
\end{equation}
Indeed, fix $\gamma\in (0,1)$ and $Q_0\in\mathbb{D}(\pom)$, and take a Borel set $F\subset Q_0$ so that
${\omega_L^{X_{Q_0}}(F)}\le \beta\omega_L^{X_{Q_0}}(Q_0) $, where $\beta\in (0,1)$ is small enough to be chosen. Applying Lemma \ref{sq-function->M}, if we assume that $0<\beta<\beta_0$, then  $u(X)=\omega^X_L(S)$ satisfies \eqref{def-mod-sq-function} and therefore
\begin{align}\label{endproof1}
	C_\eta^{-q} \log{(\beta^{-1})}^{\frac{q}{2}}\omega_{L_0}^{X_{Q_0}}(F)
	\leq
	\int_F \mathcal{S}_{Q_0,\eta}^{\vartheta_0}u(x)^q\,d \omega_{L_0}^{X_{Q_0}} (x)
	\leq
	\int_{Q_0} \mathcal{S}_{Q_0,\eta}^{\vartheta_0}u(x)^q\,d \omega_{L_0}^{X_{Q_0}} (x).
\end{align}

We claim that there exists $\alpha_0=\alpha_0(\vartheta_0, \eta)$ (hence, depending on the allowable parameters) such that
\begin{equation}\label{daydic-cone:regular-cone}
\Gamma_{Q_0,\eta}^{\vartheta_0}(x) \subset \Gamma^{\alpha_0}_{r_{Q_0}^*}(x), \qquad x\in Q_0,
\end{equation}
with $r_{Q_0}^*=2 \kappa_0 r_{Q_0}$ (cf.~\eqref{definicionkappa12}). To see this, let $x\in Q_0$ and $X\in \Gamma_{Q_0,\eta}^{\vartheta_0}(x)$. Then $X\in I^*$ for some $I\in\W_{Q'}^{\vartheta_0,*}$, where $Q'\subset Q\in\dd_{Q_0}$ with $Q\ni x$ and $\ell(Q')>\eta^3\ell(Q)$. Then $X\in T_{Q_0}^{\vartheta_0,*}\subset B_{Q_0}^*\cap\Omega$ (see \eqref{definicionkappa12})
and
\[
|X-x|\le |X-x_{Q_0}|+|x_{Q_0}-x|
<
\kappa_0 r_{Q_0}+\Xi r_{Q_0}
\le
2\kappa_0 r_{Q_0}:=r_{Q_0}^*,
\]
and also
\[
|X-x|
\le
\diam(I^*)+\dist(I,Q')+\diam(Q)
\lesssim_{\vartheta_0, \eta}
\ell(I)
\approx
\delta(X).
\]
Hence, there exists $\alpha_0=\alpha_0(\vartheta_0, \eta)$  such that $X\in\Gamma^{\alpha_0}_{r_{Q_0}^*}(x)$, that is,  \eqref{daydic-cone:regular-cone} holds.

To continue, observe first that by \eqref{deltaQ} and the fact that $\kappa_0\ge 16 \Xi$ (cf.~\eqref{definicionkappa12}) we have $Q_0\subset \Delta_{Q_0}^*$. This, \eqref{daydic-cone:regular-cone}, Harnack's inequality, \eqref{S<N:proof}, and Lemma \ref{lemma:proppde} imply
\begin{multline}\label{34wg4sg4s}
	\int_{Q_0} \mathcal{S}_{Q_0,\eta}^{\vartheta_0}u(x)^q\,d \omega_{L_0}^{X_{Q_0}} (x)
	\lesssim
	\int_{\Delta_{Q_0}^*} \mathcal{S}_{r_{Q_0}^*}^{\alpha}u(x)^q\,d \omega_{L_0}^{X_{Q_0}} (x)
	\\
	\approx
	\int_{\Delta_{Q_0}^*} \mathcal{S}_{r_{Q_0}^*}^{\alpha}u(x)^q\,d \omega_{L_0}^{X_{\Delta_{Q_0}^*}} (x)
	\lesssim_\alpha
	\omega_{L_0}^{X_{\Delta_{Q_0}^*}} (2\Delta_{Q_0}^*)
	\approx
	\omega_{L_0}^{X_{Q_0}}(Q_0).
\end{multline}
Combining \eqref{endproof1} and \eqref{34wg4sg4s} we conclude that
\[
\frac{\omega_{L_0}^{X_{Q_0}}(F)}{\omega_{L_0}^{X_{Q_0}}(Q_0)}
\le
C_{\eta,\vartheta_0,q} \, \log{(\beta^{-1})}^{-\frac{q}{2}}.
\]
This readily gives \eqref{Ainfty-CF} by choosing $\beta$ small enough so that $C_{\eta,\vartheta_0,q} \, \log{(\beta^{-1})}^{-\frac{q}{2}}<\gamma$.

Next, we show that \eqref{Ainfty-CF} implies $\omega_{L}\in A_\infty(\partial\Omega, \omega_{L_0})$. To see this we first obtain a dyadic-$A_\infty$ condition. Fix $Q^0, Q_0\in\mathbb{D}$ with $Q_0\subset Q^0$. Remark~\ref{remark:chop-dyadic} gives for every $F\subset Q_0$
\begin{equation}\label{cop-dyadic}
	\frac{1}{C_1}\frac{\omega_{L_0}^{X_{Q_0}}(F)}{\omega_{L_0}^{X_{Q_0}}(Q_0)}
	\leq
	\frac{\omega_{L_0}^{X_{Q^0}}(F)}{\omega_{L_0}^{X_{Q^0}}(Q_0)}
	\leq
	C_1\frac{\omega_{L_0}^{X_{Q_0}}(F)}{\omega_{L_0}^{X_{Q_0}}(Q_0)}
	\ \ \ \text{and}\ \ \
	\frac{1}{C_1}\frac{\omega_L^{X_{Q_0}}(F)}{\omega_L^{X_{Q_0}}(Q_0)}
	\leq
	\frac{\omega_L^{X_{Q^0}}(F)}{\omega_L^{X_{Q^0}}(Q_0)}
	\leq
	C_1\frac{\omega_L^{X_{Q_0}}(F)}{\omega_L^{X_{Q_0}}(Q_0)},
\end{equation}
for some $C_1>1$.
Thus, given $\gamma\in (0,1)$, take the corresponding $\beta\in (0,1)$ so that \eqref{Ainfty-CF} holds with $\gamma/C_1$ in place of $\gamma$. Then, \begin{equation}\label{Ainfty-daydic}
	\frac{\omega_L^{X_{Q^0}}(F)}{\omega_L^{X_{Q^0}}(Q_0)}\le \frac{\beta}{C_1}
	\implies
	\frac{\omega_L^{X_{Q_0}}(F)}{\omega_L^{X_{Q_0}}(Q_0)}\le \beta
	\implies
	\frac{\omega_{L_0}^{X_{Q_0}}(F)}{\omega_{L_0}^{X_{Q_0}}(Q_0)}\le \frac{\gamma}{C_1}
	\implies
	\frac{\omega_{L_0}^{X_{Q^0}}(F)}{\omega_{L_0}^{X_{Q^0}}(Q_0)}\le \gamma.
\end{equation}

To complete the proof we need to see that \eqref{Ainfty-daydic} gives $\omega_{L}\in A_\infty(\partial\Omega, \omega_{L_0})$. Fix $\gamma\in (0,1)$ and  a surface ball $\Delta_0=B_0\cap\partial\Omega$, with $B_0=B(x_0,r_0)$, $x_0\in\partial\Omega$, and $0<r_0<\diam(\partial\Omega)$. Take an arbitrary surface ball $\Delta=B\cap\partial\Omega$ centered at $\partial\Omega$ with $B=B(x,r)\subset B_0$, and let $F\subset\Delta$ be a Borel set such that $\omega_{L_0}^{X_{\Delta_0}}(F)>\gamma \omega_{L_0}^{X_{\Delta_0}}(\Delta)$. Consider the pairwise disjoint family $\mathcal{F}=\{Q\in \mathbb{D}: Q\cap \Delta\neq\emptyset, \frac{r}{4\,\Xi}<\ell(Q)\le \frac{r}{2\,\Xi}\}$ where $\Xi$ is the constant in \eqref{deltaQ}. In particular,  $\Delta\subset \bigcup_{Q\in \mathcal{F}} Q\subset 2\Delta$. The pigeon-hole principle yields that there is a constant $C'>1$ depending just on the doubling constant of $\omega_{L_0}^{X_{\Delta_0}}$ so that ${\omega_{L_0}^{X_{\Delta_0}}(F\cap Q_0)}/{\omega_{L_0}^{X_{\Delta_0}}(Q_0)}>{\gamma}/{C'}$ for some $Q_0\in\mathcal{F}$. Let $Q^0\in\mathbb{D}$ be the unique dyadic cube such that $Q_0\subset Q^0$ and  $\frac{r_0}{2}<\ell(Q^0)\le r_0$. We can then invoke the contrapositive of \eqref{Ainfty-daydic} with ${\gamma}/{C'}$ in place of $\gamma$   to find $\beta\in (0,1)$ such that by Lemma \ref{lemma:proppde}, and Harnack's inequality we arrive at
\[
\frac{\omega_L^{X_{\Delta_0}}(F)}{\omega_L^{X_{\Delta_0}}(\Delta)}
\ge
\frac{\omega_L^{X_{\Delta_0}}(F\cap Q_0)}{\omega_L^{X_{\Delta_0}}(\Delta)}
\approx
\frac{\omega_L^{X_{\Delta_0}}(F\cap Q_0)}{\omega_L^{X_{\Delta_0}}(Q_0)}
\approx
\frac{\omega_L^{X_{Q^0}}(F\cap Q_0)}{\omega_L^{X_{Q^0}}(Q_0)}
>\frac{\beta}{C_1}.
\]
In short, we have obtained that for every $\gamma\in (0,1)$ there exists $\widetilde{\beta}\in (0,1)$ such that
\[
\frac{\omega_{L_0}^{X_{\Delta_0}}(F)}{\omega_{L_0}^{X_{\Delta_0}}(\Delta)}>\gamma
\implies
\frac{\omega_L^{X_{\Delta_0}}(F)}{\omega_L^{X_{\Delta_0}}(\Delta)}>\widetilde{\beta}.
\]
This and the classical theory of weights (cf.~\cite{CF-1974, GR}) show that $\omega_{L}\in A_\infty(\partial\Omega, \omega_{L_0})$, and the proof is complete. \qed

\subsection{Proof of \texorpdfstring{$\mathrm{(c)}\ \Longrightarrow\ \mathrm{(c)}'$}{(c) implies (c)'}}
This is trivial since for any arbitrary Borel set $S\subset\pom$, the solution $u(X)=\omega_L^X(S)$, $X\in\Omega$, belongs to $W^{1,2}_{\loc}(\Omega)\cap L^\infty(\Omega)$. \qed

\subsection{Proof of \texorpdfstring{$\mathrm{(e)} \ \Longrightarrow\ \mathrm{(f)}$}{(e) implies (f)}}
Let $\Delta_\varepsilon=B_\varepsilon\cap\pom$, $\Delta'=B'\cap\pom$,  where $B_\varepsilon=B(x,\varepsilon r)$ with $x\in \pom$ and $0<r<\diam(\pom)$,
and $B'=B(x',r')$ with $x'\in2\Delta_\varepsilon$ and $0<r'<\varepsilon r c_0/4$, and $c_0$ is the Corkscrew constant. Using \eqref{CME:BMO} and Lemma~\ref{lemma:CME-uOmega} we easily obtain
\begin{align*}
& \frac{1}{\omega_{L_0}^{X_{\Delta_\varepsilon}} (\Delta')} \iint_{B'\cap \Omega} |\nabla u(X)|^2 G_{L_0}(X_{\Delta_\varepsilon},X) \, d X
 \\
&\qquad\lesssim
\|f\|^2_{\mathrm{BMO}(\partial \Omega, \omega_{L_0})}
+|f_{\Delta, L_0}|^2
\frac{1}{\omega_{L_0}^{X_{\Delta_\varepsilon}} (\Delta')} \iint_{B'\cap \Omega} |\nabla u_{L,\Omega}(X)|^2 G_{L_0}(X_{\Delta_\varepsilon},X) \, d X
\\
&\qquad \lesssim \|f\|^2_{\mathrm{BMO}(\partial \Omega, \omega_{L_0})}+\|f\|^2_{L^\infty(\partial \Omega, \omega_{L_0})} \Big(\frac{r'}{\diam(\pom)}\Big)^{2\rho}
\\
&\qquad \lesssim \|f\|^2_{\mathrm{BMO}(\partial \Omega, \omega_{L_0})}+\|f\|^2_{L^\infty(\partial \Omega, \omega_{L_0})}  \varepsilon^{2\rho}.
\end{align*}
Taking the sup over $B_\varepsilon$ and $B'$ we readily arrive at \eqref{CME:BMO-Linfty}.
\qed

\subsection{Proof of \texorpdfstring{$\mathrm{(f)} \ \Longrightarrow\ \mathrm{(c)}'$}{(f)  implies (c)'}}
We first observe that (f) applied with $\varepsilon=1$ gives
	\begin{multline}\label{Qwerfqeavegv:1}
	\sup_{B} \sup_{B'} \frac{1}{\omega_{L_0}^{X_{\Delta}} (\Delta')} \iint_{B'\cap \Omega} |\nabla u(X)|^2 G_{L_0}(X_{\Delta},X) \, d X
	\\
	\le C \big(\|f\|^2_{\mathrm{BMO}(\partial \Omega, \omega_{L_0})} + \varrho(1)\|f\|_{L^\infty(\pom, \omega_{L_0})}^2\big)
		\lesssim
	\|f\|_{L^\infty(\pom, \omega_{L_0})}^2
	,
\end{multline}
where $\Delta=B\cap\pom$, $\Delta'=B'\cap\pom$,  and the sups are taken respectively over all balls $B=B(x,r)$ with $x\in \pom$ and $0<r<\diam(\pom)$,
and $B'=B(x',r')$ with $x'\in2\Delta$ and $0<r'<r c_0/4$, and $c_0$ is the Corkscrew constant.

With this in place we are now ready to establish (c)$'$.
Take an arbitrary Borel set $S\subset\pom$ and let $u(X)=\omega_L^X(S)$, $X\in\Omega$. Fix $X_0\in\Omega$ and use that $\omega_L^{X_0}$ is Borel regular to see that for every $j\ge 1$ there exist a closed set $F_j$ and an open set $U_j$ so that $F_j\subset S\subset U_j$ and $\omega_L^{X_0}(U_j\setminus F_j)<j^{-1}$. Using Urysohn's lemma we can construct $f_j \in \mathscr{C}(\partial \Omega)$ such that $\mathbf{1}_{F_j} \leq f_j \leq \mathbf{1}_{U_j}$, and for $X\in\Omega$ set
\[
v_j(X):=\int_{\pom}f_j(y)d\omega_L^X(y).
\]
It is straightforward to see that $|\mathbf{1}_S(x)-f_j(x)|\le \mathbf{1}_{U_j\setminus F_j}(x)$ for every $x\in\pom$, hence for every compact set $K\subset\Omega$ and for every $X\in K$ we have by Harnack's inequality
\[
|u(X)-v_j(X)|
\le
\int_{\pom} |\mathbf{1}_S(x)-f_j(x)| \,d\omega_L^{X}(x)
\le
\omega_L^{X}(U_j\setminus F_j)
\le
C_{K,X_0} \omega_L^{X_0}(U_j\setminus F_j)
< C_{K,X_0} j^{-1}.
\]
Thus $v_j\longrightarrow u$ uniformly on compacta in $\Omega$. This together with Caccioppoli's inequality readily imply that $\nabla v_j \longrightarrow\nabla u$ in $L^2_{\loc }(\Omega)$. In particular,  $\nabla v_j \longrightarrow\nabla u$ in $L^2(K)$ for every compact set $K\subset\Omega$.

Fix $\Delta=B\cap\pom$, $\Delta'=B'\cap\pom$,  with $x\in \pom$ and $0<r<\diam(\pom)$, and $B'=B(x',r')$ with $x'\in2\Delta$ and $0<r'<r c_0/4$, and $c_0$ is the Corkscrew constant. Let $f_{j,\Delta, L_0}:=	\fint_{\Delta} f_j\,d\omega_{L_0}^{X_\Delta}$ and $u_{L,\Omega}(X):=\omega_L^X(\pom)$, $X\in\Omega$. For every compact set $K\subset\Omega$ we then have by  \eqref{Qwerfqeavegv:1}  applied to each $f_j$
\begin{align*}
\frac{1}{\omega_{L_0}^{X_{\Delta}} (\Delta')} \iint_{K\cap B'\cap \Omega} |\nabla u(X)|^2 G_{L_0}(X_\Delta,X) \, d X
=
\lim_{j\to\infty} \frac{1}{\omega_{L_0}^{X_{\Delta}} (\Delta')} \iint_{K\cap B'\cap \Omega} |\nabla v_j(X)|^2 G_{L_0}(X_\Delta,X) \, d X
\lesssim
1.
\end{align*}
Taking the sup over $B$ and $B'$ we then conclude that $(c)'$ holds since by the maximum principle one has $\|u\|_{L^\infty(\Omega)}=1$. \qed

\subsection{Proof of \texorpdfstring{$\mathrm{(e)}' \ \Longrightarrow\ \mathrm{(f)}'$}{(e)' implies (f)'}}
The argument used to see that $\mathrm{(e)} \ \Longrightarrow\ \mathrm{(f)}$ can be carried out in the present scenario with no changes. \qed

\subsection{Proof of \texorpdfstring{$\mathrm{(f)}'\ \Longrightarrow\ \mathrm{(c)}'$}{(f)' implies (c)'}}
Let $f=\mathbf{1}_S$ with $S\subset \pom$ a Borel set such that $\omega_{L_0}^X(S)\not= 0$ for some (or all) $X\in\Omega$. Note that $\|f\|_{\mathrm{BMO}(\partial \Omega, \omega_{L_0})}\le \|f\|_{L^\infty(\partial \Omega, \omega_{L_0})}=1$. This and the fact that $u(X)=\omega_L^X(S)$, $X\in \Omega$, verifies $\|u\|_{L^\infty(\Omega)}=1$, we readily see that \eqref{CME:BMO-Linfty} with $\varepsilon=1$ readily implies \eqref{CME:Linfty}. \qed

\subsection{Proof of \texorpdfstring{$\mathrm{(c)}'\ \Longrightarrow\ \mathrm{(a)}$}{(c)' implies (a)}}

Let $u(X)=\omega_L^X(S)$, $X\in\Omega$, for an arbitrary Borel set $S\subset\pom$. Let $\vartheta\ge \vartheta_0$  and $\eta\in (0,1)$.  Then
\begin{multline}\label{5qwtgg4}
	\int_{Q_0} \mathcal{S}_{Q_0,\eta}^{\vartheta}u(x)^2\,d \omega_{L_0}^{X_{Q_0}} (x)
	=
	\int_{Q_0}\bigg(\iint_{\Gamma_{Q_0,\eta}^{\vartheta}(x)}|\nabla u(Y)|^2\delta(Y)^{1-n}\,dY\bigg)\,d\omega_{L_0}^{X_{Q_0}}(x)
	\\
	=\iint_{B_{Q_0}^*\cap \Omega}|\nabla u(Y)|^2\delta(Y)^{1-n}\bigg(\int_{Q_0}\mathbf{1}_{\Gamma_{Q_0,\eta}^{\vartheta}(x)}(Y)\,d\omega_{L_0}^{X_{Q_0}}(x)\bigg)\,dY,
\end{multline}
	where we have used that $\Gamma_{Q_0,\eta}^{\vartheta}(x)\subset T_{Q_0}^{\vartheta,*}\subset B_{Q_0}^*\cap \overline{\Omega}$ (see \eqref{definicionkappa12}), and Fubini's theorem. To estimate the inner integral we fix $Y\in B_{Q_0}^*\cap\Omega$ and $\widehat{y}\in \partial\Omega$ such that $|Y-\widehat{y}|=\delta(Y)$. We claim that
\begin{equation}\label{claimendproof}
	\big\{x\in Q_0:\:Y\in\Gamma_{Q_0,\eta}^{\vartheta}(x)\big\}\subset\Delta(\widehat{y},C_\vartheta\eta^{-3}\delta(Y)).
\end{equation}
To show this let $x\in Q_0$ be such that $Y\in\Gamma_{Q_0,\eta}^{\vartheta}(x)$. This means that there exists $Q\in\mathbb{D}_{Q_0}$ such that $x\in Q$ and $Y\in U^\vartheta_{Q,\eta^3}$. Hence, there is $Q'\in\mathbb{D}_Q$ with $\ell(Q')>\eta^3\ell(Q)$ such that $Y\in U^\vartheta_{Q'}$ and consequently $\delta(Y)\approx_\vartheta \dist(Y,Q')\approx_\vartheta \ell(Q')$. As a result,
$$
|x-\widehat{y}|\leq\diam(Q)+\dist(Y,Q')+\delta(Y)
\lesssim_\vartheta\ell(Q)+\delta(Y)\lesssim_\vartheta \eta^{-3}\delta(Y),
$$
thus $x\in\Delta(\widehat{y},C_\vartheta\eta^{-3}\delta(Y))$ as desired. If we now use \eqref{claimendproof}  we conclude that for every $Y\in B_{Q_0}^*\cap\Omega$
\begin{equation}\label{342qw3f}
	\int_{Q_0}\mathbf{1}_{\Gamma_{Q_0,\eta}^{\vartheta}(x)}(Y)\,d\omega_{L_0}^{X_{Q_0}}(x)
\le
\omega_{L_0}^{X_{Q_0}}\big(\Delta(\widehat{y},C_\vartheta \eta^{-3}\delta(Y))\big)
\lesssim_{\vartheta,\eta}
\omega_{L_0}^{X_{Q_0}}\big(\Delta(\widehat{y},\delta(Y))\big).
\end{equation}
Write $B=8c_0^{-1} B_{Q_0}^*$, $B'=B_{Q_0}^*$, $\Delta=B\cap\pom$,  $\Delta'=B'\cap\pom$. Assuming that  $r_B=16c_0^{-1} \kappa_0 r_{Q_0}<\diam(\pom)$ we have by Lemma~\ref{lemma:proppde} part (b) and Harnack's inequality
\begin{equation}\label{342qw3f:1}
\omega_{L_0}^{X_{Q_0}}\big(\Delta(\widehat{y},\delta(Y))\big)
\approx
\omega_{L_0}^{X_{\Delta}}\big(\Delta(\widehat{y},\delta(Y))\big)
\approx
\delta(Y)^{n-1} G_{L_0}(X_{\Delta}, Y),
\qquad Y\in B_{Q_0}^*\cap \overline{\Omega}=B'\cap \overline{\Omega}.
\end{equation}
If we then combine \eqref{5qwtgg4}, \eqref{342qw3f}, and \eqref{342qw3f:1} we conclude that (c)$'$ and Lemma~\ref{lemma:proppde} yield
\begin{equation}\label{S<CME}
\int_{Q_0} \mathcal{S}_{Q_0,\eta}^{\vartheta}u(x)^2\,d \omega_{L_0}^{X_{Q_0}} (x)
\lesssim_{\vartheta,\eta}
\iint_{B' \cap\Omega}|\nabla u(Y)|^2 G_{L_0}(X_{\Delta}, Y)\,dY
\lesssim
\omega_{L_0}^{X_{\Delta}}(\Delta')\,\|u\|^2_{L^\infty(\Omega)}
\lesssim
\omega_{L_0}^{X_{Q_0}}(Q_0).
\end{equation}
Note that this estimate corresponds to \eqref{34wg4sg4s} for $q=2$. Hence the same argument we use in $\mathrm{(d)}'\ \Longrightarrow\ \mathrm{(a)}$ applies in this scenario. Note however, that we have assumed that $16c_0^{-1} \kappa_0 r_{Q_0}<\diam(\pom)$ and this causes that
\eqref{Ainfty-daydic} is valid under this restriction. If $\pom$ is unbounded then the same argument applies. When $\pom$ is bounded we can replace the family $\F$ by $\F'$ consisting on all $Q'\in\dd$ with $Q'\subset Q$ for some $Q\in\F$ and $\ell(Q')=2^{-M}\ell(Q)$ where $M$ is large enough so that $2^{-M}<\Xi c_0/(8\kappa_0)$. This guarantees that $16c_0^{-1} \kappa_0 r_{Q'}<\diam(\pom)$ for every $Q'\in\F'$ and thus \eqref{Ainfty-daydic} holds for every $Q'\in\F'$. At this point the rest of the argument can be carried out \textit{mutatis mutandis}, details are left to the reader. \qed

\subsection{Proof of \texorpdfstring{$\mathrm{(a)}\ \Longrightarrow\ \mathrm{(c)}$}{(a) implies (c)}}
Note that we have already proved that (a) implies (d), in particular we know that \eqref{S<N} holds with $q=2$ and for any $\alpha\ge c_0^{-1}$. Our goal is to see that the latter estimate implies (c). With this goal in mind consider $u\in W^{1,2}_{\loc}(\Omega)\cap L^\infty(\Omega)$ satisfying $Lu=0$ in the weak sense in $\Omega$.
Fix $B=B(x,r)$ with $x\in \pom$ and $0<r<\diam(\pom)$ and $B'=B(x',r')$ with $x'\in2\Delta$ and $0<r'<r c_0/4$. Let $\Delta=B\cap\pom$, $\Delta'=B'\cap\pom$.  Note that $2r'<r c_0/2<\diam(\pom)$ and we can now invoke Lemma~\ref{lemma:CME-S} and \eqref{S<N} with $q=2$  to conclude that
	\begin{align*}
	&\frac1{\omega_{L_0}^{X_{\Delta}}(\Delta')}\iint_{B'\cap \Omega} |\nabla u(X)|^2 G_{L_0}(X_\Delta,X) \, d X
	\\
	&\qquad \lesssim
	\int_{2\Delta'} \mathcal{S}_{2r'}^{C\alpha} u(y)^2\,d\omega_{L_0}^{X_{2\Delta'}}(y)+ \sup \{|u(Y)|: Y\in 2\,B', \delta(Y)\ge r'/C\}^2
	\\
	&\qquad \lesssim
	\int_{4\Delta'} \mathcal{N}_{2r'}^{\alpha'} u(y)^2\,d\omega_{L_0}^{X_{2\Delta'}}(y) + \|u\|_{L^\infty(\Omega)}^2
		\\
	&\qquad \lesssim
	\|u\|_{L^\infty(\Omega)}^2.
\end{align*}
Taking the sup over $B$ and $B'$ we have then shown \eqref{CME:Linfty}. \qed

\subsection{Proof of \texorpdfstring{$\mathrm{(a)}\ \Longrightarrow\ \mathrm{(e)}$}{(a) implies (e)}}
Fix $f \in \mathscr{C}(\partial \Omega)\cap L^\infty(\pom)$ and let $u$ be its associated solution as in \eqref{u-elliptic:L-sol}. Let $u_{L,\Omega}(X):=\omega_L^X(\pom)$, $X\in\Omega$. Fix $B=B(x,r)$ with $x\in \pom$ and $0<r<\diam(\pom)$ and $B'=B(x',r')$ with $x'\in2\Delta$ and $0<r'<r c_0/4$. Let $\Delta=B\cap\pom$, $\Delta'=B'\cap\pom$.
Let $\varphi\in \mathscr{C}(\re)$ with $\mathbf{1}_{[0,4)}\le \varphi\le \mathbf{1}_{[0,8)}$ and $\varphi_{\Delta'}:=\varphi(|\cdot-x'|/r')$ so that $\mathbf{1}_{4\Delta'}\le \varphi_{\Delta'} \le \mathbf{1}_{8\Delta'}$.
Recall that for every surface ball $\widetilde{\Delta}$ we write $f_{\widetilde{\Delta}, L_0}:=\displaystyle\fint_{\widetilde{\Delta}} f\,d\omega_{L_0}^{X_{\widetilde{\Delta}}}$. Then,
\begin{multline*}
f-f_{\Delta, L_0}
=
(f-f_{8\Delta', L_0})+
(f_{8\Delta', L_0}-f_{\Delta, L_0})
=
(f-f_{8\Delta', L_0})\varphi_{\Delta'}+ (f-f_{8\Delta', L_0})(1-\varphi_{\Delta'})+(f_{8\Delta', L_0}-f_{\Delta, L_0})
\\
=:
h_{\loc}+h_\mathrm{glob}+ (f_{8\Delta', L_0}-f_{\Delta, L_0}).
\end{multline*}
Hence,
\begin{multline}\label{decomp-v}
	v(X)
:=
u(X)-f_{\Delta, L_0}u_{L,\Omega}(X)
=
\int_{\pom} (f(y)-f_{\Delta, L_0})\,d\omega_{L}^{X}(y)
\\
=
\int_{\pom} h_\loc(y)\,d\omega_{L}^{X}(y)
+
\int_{\pom} h_\mathrm{glob}(y)\,d\omega_{L}^{X}(y) +(f_{8\Delta', L_0}-f_{\Delta, L_0})u_{L,\Omega}(X)
\\
=:
v_\loc(X)+v_\mathrm{glob}(X)+(f_{8\Delta', L_0}-f_{\Delta, L_0})u_{L,\Omega}(X).
\end{multline}
Note that $h_\loc, h_\mathrm{glob} \in \mathscr{C}(\partial \Omega) \cap L^\infty(\partial \Omega)$.

Let us observe that we have already proved that (a) implies (d), in particular we know that \eqref{S<N} holds with $q=2$ and for any $\alpha\ge c_0^{-1}$. Hence, since $2r'<r c_0/2<\diam(\pom)$ and we can now invoke Lemma~\ref{lemma:CME-S} and \eqref{S<N} with $q=2$  to conclude that
\begin{align}\label{BMO-loc}
	&\frac1{\omega_{L_0}^{X_{\Delta}}(\Delta')}\iint_{B'\cap \Omega} |\nabla v_\loc(X)|^2 G_{L_0}(X_\Delta,X) \, d X
	\\    \nonumber
	&\qquad \lesssim
	\int_{2\Delta'} \mathcal{S}_{2r'}^{C\alpha} v_\loc(y)^2\,d\omega_{L_0}^{X_{2\Delta'}}(y)+ \sup \{|v_\loc(Y)|: Y\in 2\,B', \delta(Y)\ge r'/C\}^2
	\\ \nonumber
	&\qquad \lesssim
	\int_{4\Delta'} \mathcal{N}_{2r'}^{\alpha'} v_\loc(y)^2\,d\omega_{L_0}^{X_{2\Delta'}}(y) + \Big(\int_{\pom} |h_\loc(y)|\,d\omega_{L_0}^{X_{\Delta'}}(y)\Big)^2
	\\ \nonumber
	&\qquad \lesssim
	\int_{4\Delta'} \mathcal{N}_{4r'}^{\alpha'} v_\loc(y)^2\,d\omega_{L_0}^{X_{4\Delta'}}(y)
	+ \Big(\int_{\pom} |h_\loc(y)|\,d\omega_{L_0}^{X_{8\Delta'}}(y)\Big)^2
		\\ \nonumber
	&\qquad =: \mathcal{I}_1+\mathcal{I}_2.
\end{align}
Regarding $\mathcal{I}_2$ we note that by Lemma~\ref{lemma:proppde} part $(a)$ there holds
\begin{align}
	\label{BMO-loc:I2}
\mathcal{I}_2
\le
\Big(\int_{8\Delta'} |f(y)-f_{8\Delta', L_0}|d\omega_{L_0}^{X_{8\Delta'}}(y)\Big)^2
\lesssim  \|f\|^2_{\mathrm{BMO}(\partial \Omega, \omega_{L_0})}.
\end{align}
To estimate $\mathcal{I}_1$ we first observe that since $\omega_L\in A_\infty(\pom, \omega_{L_0})$, there is $q\in (1,\infty)$ so that $\omega_L\in RH_q(\pom, \omega_{L_0})$. Note that by Jensen's inequality we may assume that $q<2$ (since $RH_{q_1}(\pom, \omega_{L_0})\subset RH_{q_2}(\pom, \omega_{L_0})$ if $q_2\le q_1$). Note that we have already proved that (a)${}_{q}$ implies (b)${}_{q'}$, hence \eqref{solv-Lp} holds with $p=q' > 2$. This, H\"older's inequality and the fact that $h_\loc\in \mathscr{C}(\partial \Omega)$ with $\supp h_\loc\subset 8\Delta'$ readily lead to
\begin{multline}\label{BMO-loc:I1}
	 \mathcal{I}_1
 \le
 \|\mathcal{N}_{4r'}^{\alpha'} v_\loc\|_{L^{q'}(4\Delta',\omega_{L_0}^{X_{4\Delta'}})}^2 \omega_{L_0}^{X_{4\Delta'}}(4\Delta')^{\frac1{(q'/2)'}}
 \lesssim
 \|h_\loc\|_{L^{q'}(8\Delta',\omega_{L_0}^{X_{4\Delta'}})}^2
 \\
 \lesssim
 \Big(\int_{8\Delta'} |f(y)-f_{8\Delta', L_0}|^{q'}d\omega_{L_0}^{X_{4\Delta'}}(y) \Big)^\frac{2}{q'}
 \lesssim  \|f\|^2_{\mathrm{BMO}(\partial \Omega, \omega_{L_0})},
\end{multline}
where the last estimate uses Lemma~\ref{lemma:proppde} part $(a)$ and John-Nirenberg's inequality (cf.~\eqref{eqn:JN}).

We next turn our attention to the estimate involving $v_\mathrm{glob}$. Note that
\begin{multline*}
|h_\mathrm{glob}|
\le
|f-f_{8\Delta', L_0}|\mathbf{1}_{\pom\setminus 4\Delta'}
=
\sum_{k=2}^\infty |f-f_{8\Delta', L_0}|\mathbf{1}_{2^{k+1}\Delta'\setminus 2^k\Delta'}
\\
\le
\sum_{k=2}^\infty |f-f_{8\Delta', L_0}|(\varphi_{2^{k-1}\Delta'}-\varphi_{2^{k-3}\Delta'})
=:
\sum_{k\ge 2: 2^kr' \leq \diam(\pom)}h_\mathrm{glob,k},
\end{multline*}
with the understanding that the sum runs from $k=2$ to infinity when $\pom$ is unbounded.

Fix $k\ge 2$ with $2^kr' \leq \diam(\pom)$ and note that $h_\mathrm{glob, k}\in \mathscr{C}(\partial \Omega)$ with $\supp h_\mathrm{glob, k}\subset 2^{k+2}\Delta'\setminus 2^{k-1}\Delta'$. Thus, for every $X\in B'\cap\Omega$, by Lemma~\ref{lemma:proppde} part $(f)$ we have
\begin{align}\label{sdfffrgq}
v_\mathrm{glob,k}(X):=\int_{\pom} h_\mathrm{glob,k}(y) \, d{\omega_{L}^X} (y)
\lesssim
\Big(\frac{\delta(X)}{2^{k-1}r'}\Big)^\rho v_\mathrm{glob,k}(X_{2^{k-1}\Delta'}).
\end{align}
Next we estimate $v_\mathrm{glob,k}(X_{2^{k-1} \Delta'}), k \geq 2,$ via a telescopic argument. Indeed applying Harnack's inequality,  that $\omega_L \in RH_q(\partial \Omega, \omega_{L_0})$, Lemma~\ref{lemma:proppde},  and John-Nirenberg's inequality  (cf.~\eqref{eqn:JN}) we arrive at
\begin{align*}
v_\mathrm{glob,k}(X_{2^{k-1} \Delta'})
&
\le
\int_{2^{k+2} \Delta' } |f(y) - f_{8 \Delta', L_0}| \, d \omega_L^{X_{2^{k-1} \Delta'}}(y)
\\
&
\lesssim
\int_{2^{k+2} \Delta' } |f(y) - f_{8 \Delta', L_0}| \, d \omega_L^{X_{2^{k+2} \Delta'}}(y)
\\
&
\lesssim
\Big(\fint_{2^{k+2} \Delta' } |f(y) - f_{8 \Delta', L_0}|^{q'} \, d \omega_{L_0}^{X_{2^{k+2} \Delta'}}(y) \Big)^{\frac1{q'}}
\\
&
\le
\Big(\fint_{2^{k+2} \Delta' } |f(y) - f_{2^{k+2} \Delta', L_0}|^{q'} \, d \omega_{L_0}^{X_{2^{k+2} \Delta'}}(y) \Big)^{\frac1{q'}}
+
\sum_{j=3}^{k+1} |f_{2^{j+1} \Delta', L_0} - f_{2^{j} \Delta', L_0}|
\\
&
\le
\Big(\fint_{2^{k+2} \Delta' } |f(y) - f_{2^{k+2} \Delta', L_0}|^{q'} \, d \omega_{L_0}^{X_{2^{k+2} \Delta'}}(y) \Big)^{\frac1{q'}}
+
\sum_{j=3}^{k+1} \fint_{2^{j}\Delta' } |f(y) - f_{2^{j+1} \Delta', L_0}| \, d \omega_{L_0}^{X_{2^{j} \Delta'}}(y)
\\
&
\lesssim
\sum_{j=3}^{k+1} \Big(\fint_{2^{j+1}\Delta' } |f(y) - f_{2^{j+1} \Delta', L_0}|^{q'} \, d \omega_{L_0}^{X_{2^{j+1} \Delta'}}(y) \Big)^{\frac1{q'}}
\\
&
\lesssim
k\,\|f\|_{\mathrm{BMO}(\partial \Omega, \omega_{L_0})}.
\end{align*}
This and \eqref{sdfffrgq} give for every $X\in B' \cap \Omega$
\begin{multline*}
\int_{\pom} |h_\mathrm{glob}(y)|\,d\omega_{L}^{X}(y)
\le
\sum_{k\ge 2: 2^kr' \leq \diam(\pom)} \int_{\pom}  h_\mathrm{glob,k}(y) \,d\omega_{L}^{X}(y)
=
\sum_{k\ge 2: 2^kr' \leq \diam(\pom)} v_\mathrm{glob,k}(X)
\\
\lesssim
\sum_{k\ge 2} k\,\Big(\frac{\delta(X)}{2^{k-1}r'}\Big)^\rho \|f\|_{\mathrm{BMO}(\partial \Omega, \omega_{L_0})}
\approx
\Big(\frac{\delta(X)}{r'}\Big)^\rho \|f\|_{\mathrm{BMO}(\partial \Omega, \omega_{L_0})}
.
\end{multline*}
If we next write  $\W_{B'}:=\{I\in\W: I\cap B'\neq\emptyset\}$ and pick $Z_{I, B'}\in I\cap B'$, the previous estimate gives for every $I\in\W_{B'}$
\begin{multline*}
\iint_I |\nabla v_\mathrm{glob}(X)|^2 dX
	\lesssim
	\ell(I)^{-2} \iint_{I^*} v_\mathrm{glob}(X)^2 dX
	\le
	\ell(I)^{-2} \iint_{I^*} \Big(\int_{\pom} |h_\mathrm{glob}(y)|\,d\omega_{L}^{X}(y)\Big)^2\,dX
	\\
	\approx
	\ell(I)^{n-1} \Big(\int_{\pom} h_\mathrm{glob}(y)\,d\omega_{L}^{Z_{I, B'}}(y)\Big)^2
	\lesssim
	\ell(I)^{n-1} \Big(\frac{\ell(I)}{r'}\Big)^{2\rho} \|f\|^2_{\mathrm{BMO}(\partial \Omega, \omega_{L_0})}.
\end{multline*}
Thus, Lemma~\ref{lemma:proppde} gives
\begin{align*}
	\iint_{B'\cap \Omega} |\nabla v_\mathrm{glob}(X)|^2 G_{L_0}(X_\Delta,X) \, d X
	&\lesssim
	\sum_{I\in \W_{B'}} \omega_{L_0}^{X_{\Delta}}(Q_I)\ell(I)^{1-n} \iint_{I} |\nabla v_\mathrm{glob}(X)|^2 \, d X
	\\
	&\lesssim \|f\|^2_{\mathrm{BMO}(\partial \Omega, \omega_{L_0})}
	\sum_{I\in \W_{B'}} \omega_{L_0}^{X_{\Delta}}(Q_I)   \Big(\frac{\ell(I)}{r'}\Big)^{2\rho}
	\\
	&\lesssim \|f\|^2_{\mathrm{BMO}(\partial \Omega, \omega_{L_0})}
	\sum_{k: 2^{-k}\lesssim r'} \Big(\frac{2^{-k}}{r'}\Big)^{2\rho}
	\sum_{I\in \W_{B'}: \ell(I)=2^{-k}} \omega_{L_0}^{X_{\Delta}}(Q_I),
\end{align*}
where $Q_I\in\dd(\pom)$ is so that $\ell(Q_I)=\ell(I)$ and contains $\widehat{y}_I \in \partial \Omega$ such that
$\dist(I, \partial \Omega)=\dist(\widehat{y}_I, I)$. It is easy to see that for every $k$ with $2^{-k}\lesssim r'$, the family  $\{Q_I\}_{I\in\W_{B'}, \ell(I)=2^{-k}}$ has bounded overlap and also that  $Q_I\subset C\Delta'$ for every $I\in\W_{B'}$, where $C$ is some harmless dimensional constant. Hence,
\begin{multline}\label{BMO-glob}
	\iint_{B'\cap \Omega} |\nabla v_\mathrm{glob}(X)|^2 G_{L_0}(X_\Delta,X) \, d X
	\lesssim
	 \|f\|^2_{\mathrm{BMO}(\partial \Omega, \omega_{L_0})}
	\sum_{k: 2^{-k}\lesssim r'} \Big(\frac{2^{-k}}{r'}\Big)^{2\rho} \omega_{L_0}^{X_{\Delta}}(C\,\Delta')
	\\
	\lesssim
 \|f\|^2_{\mathrm{BMO}(\partial \Omega, \omega_{L_0})}\omega_{L_0}^{X_{\Delta}}(\Delta').
\end{multline}

To continue we pick $k_0\ge 3$ such that $r<2^{k_0}r'\le 2\,r$. Note that $2^{k_0+1} \Delta'$ and $\Delta$ have comparable radius and $x'\in 2\Delta\cap 2^{k_0+1} \Delta'$. Hence, Lemma~\ref{lemma:proppde} and Harnack's inequality yield
\begin{align}\label{34qr34fa}
|f_{8\Delta', L_0}-f_{\Delta, L_0}|
&\le
\sum_{k=3}^{k_0} |f_{2^k\Delta', L_0}-f_{2^{k+1}\Delta, L_0}|+ |f_{2^{k_0+1}\Delta', L_0}-f_{\Delta, L_0}|
\\ \nonumber
&\le
\sum_{k=3}^{k_0} \fint_{2^{k}\Delta' } |f(y) - f_{2^{k+1} \Delta', L_0}| \, d \omega_{L_0}^{X_{2^{k} \Delta'}}(y)
+
\fint_{\Delta} |f(y) - f_{2^{k_0+1} \Delta', L_0}| \, d \omega_{L_0}^{X_{\Delta}}(y)
\\ \nonumber
&\lesssim
\sum_{k=3}^{k_0} \fint_{2^{k+1}\Delta' } |f(y) - f_{2^{k+1} \Delta', L_0}| \, d \omega_{L_0}^{X_{2^{k+1} \Delta'}}(y)
\\ \nonumber
&\lesssim
k_0\,\|f\|_{\mathrm{BMO}(\partial \Omega, \omega_{L_0})}
\\ \nonumber
& \le
(1+\log(r/r'))\,\|f\|_{\mathrm{BMO}(\partial \Omega, \omega_{L_0})}.
\end{align}
This and Lemma~\ref{lemma:CME-uOmega} imply
\begin{multline}\label{BMO-rest}
\frac{1}{\omega_{L_0}^{X_{\Delta}} (\Delta')} \iint_{B'\cap \Omega} \big| (f_{8\Delta', L_0}-f_{\Delta, L_0}) \nabla u_{L,\Omega}(X)\big|^2 G_{L_0}(X_\Delta,X) \, d X
\\
\lesssim
(1+\log(r/r'))^2\,\Big(\frac{r'}{\diam(\pom)}\Big)^{2\rho} \|f\|^2_{\mathrm{BMO}(\partial \Omega, \omega_{L_0})}
\\
\le
(1+\log(r/r'))^2\,\Big(\frac{r'}{r}\Big)^{2\rho} \|f\|^2_{\mathrm{BMO}(\partial \Omega, \omega_{L_0})}
\lesssim
\|f\|^2_{\mathrm{BMO}(\partial \Omega, \omega_{L_0})}
.
\end{multline}
Here we note in passing that if $\diam(\pom)=\infty$ (or if both $\pom$ and $\Omega$ are bounded)  then the left-hand side of the previous estimate vanishes as we know that $u_{L,\Omega}\equiv 1$.

To complete the proof we just collect \eqref{decomp-v}--\eqref{BMO-loc:I1}, \eqref{BMO-glob}, and \eqref{BMO-rest}:
\begin{align*}
\iint_{B'\cap \Omega} |\nabla v(X)\big|^2 G_{L_0}(X_\Delta,X) \, d X
&\lesssim
\iint_{B'\cap \Omega} |\nabla v_{\loc}(X)\big|^2 G_{L_0}(X_\Delta,X) \, d X
+\iint_{B'\cap \Omega} |\nabla v_\mathrm{glob}(X)\big|^2 G_{L_0}(X_\Delta,X) \, d X
\\
&
\qquad\qquad+
\iint_{B'\cap \Omega} \big| (f_{8\Delta', L_0}-f_{\Delta, L_0}) \nabla u_{L,\Omega}(X)\big|^2 G_{L_0}(X_\Delta,X) \, d X
\\
&	\lesssim
\|f\|^2_{\mathrm{BMO}(\partial \Omega, \omega_{L_0})}\omega_{L_0}^{X_{\Delta}}(\Delta').
\end{align*}
This completes the proof.\qed

\begin{remark}\label{remark:BMO-other-cons:proof}
It is not difficult to see that in \eqref{CME:BMO} one can replace $f_{\Delta, L_0}$ by  $f_{\Delta', L_0}$. Indeed, this is what we have essentially done in the proof: much as in \eqref{34qr34fa} one has that
	\[
	|f_{\Delta, L_0}-f_{\Delta', L_0}|\lesssim (1+\log(r/r'))\,\|f\|_{\mathrm{BMO}(\partial \Omega, \omega_{L_0})}.
	\]
	With this we can proceed as in \eqref{BMO-rest} to see that
	\begin{align*}
		\frac{1}{\omega_{L_0}^{X_{\Delta}} (\Delta')} \iint_{B'\cap \Omega} \big| (f_{\Delta, L_0}-f_{\Delta', L_0}) \nabla u_{L,\Omega}(X)\big|^2 G_{L_0}(X_\Delta,X) \, d X
		\lesssim
		\|f\|^2_{\mathrm{BMO}(\partial \Omega, \omega_{L_0})}.		
	\end{align*}
	Hence, \eqref{CME:BMO} with $f_{\Delta, L_0}$ is equivalent to \eqref{CME:BMO} with $f_{\Delta', L_0}$.
	
	On the other hand, when $\Omega$ is unbounded and $\pom$ bounded,  in \eqref{CME:BMO} one can replace $f_{\Delta, L_0}$  by $f_{\pom, L_0}:=\displaystyle\fint_{\pom} f\,d\omega_{L_0}^{X_\Omega}$, where $X_\Omega\in\Omega$ satisfy $\delta(X_\Omega)\approx \diam(\pom)$ (say, $X_\Omega=X_{\Delta(x_0, r_0)}$ with $x_0\in \pom$ and $r_0\approx\diam(\pom)$). To see this, one proceeds as in \eqref{34qr34fa} to see that
	\[
	|f_{\Delta, L_0}-f_{\pom, L_0}|\lesssim (1+\log(\diam(\pom)/r))\,\|f\|_{\mathrm{BMO}(\partial \Omega, \omega_{L_0})}.
	\]
	This and Lemma~\ref{lemma:CME-uOmega} readily give
	\begin{multline*}
		\frac{1}{\omega_{L_0}^{X_{\Delta}} (\Delta')} \iint_{B'\cap \Omega} \big| (f_{\Delta, L_0}-f_{\pom, L_0}) \nabla u_{L,\Omega}(X)\big|^2 G_{L_0}(X_\Delta,X) \, d X
		\\
		\lesssim
		(1+\log(\diam(\pom)/r))^2\,\Big(\frac{r'}{\diam(\pom)}\Big)^{2\rho} \|f\|^2_{\mathrm{BMO}(\partial \Omega, \omega_{L_0})}
		\\
		\le
		(1+\log(\diam(\pom)/r))^2\,\Big(\frac{r}{\diam(\pom)}\Big)^{2\rho} \|f\|^2_{\mathrm{BMO}(\partial \Omega, \omega_{L_0})}
		\lesssim
		\|f\|^2_{\mathrm{BMO}(\partial \Omega, \omega_{L_0})}
		.
	\end{multline*}
	Hence, \eqref{CME:BMO} with $f_{\Delta, L_0}$ is equivalent to \eqref{CME:BMO} with $f_{\pom, L_0}$.

\end{remark}

\subsection{Proof of \texorpdfstring{$\mathrm{(a)}\ \Longrightarrow\ \mathrm{(e)}'$}{(a) implies (e)'}}
The proof is almost the same as the previous one with the following modifications. We work with $f=\mathbf{1}_S$ with $S\subset\pom$ an arbitrary Borel set. We replace $\varphi$ by $\mathbf{1}_{[0,4)}$ and use in \eqref{BMO-loc} that Lemma~\ref{lemma:CME-S} is also valid for the associated $v_{\loc}$ since it belongs to $W^{1,2}_{\loc}(\Omega)\cap L^\infty(\Omega)$. Also, in \eqref{BMO-loc:I2} we need to invoke that  $\mathrm{(a)}{}_{q}\ \Longrightarrow\ \mathrm{(b)}_{q'}\ \Longrightarrow\  \mathrm{(b)}'_{q'}$. The rest of the proof remains the same, details are left to the interested reader.\qed

\section{Proof of Theorem~\ref{thm:abs}}\label{sec:abs}

The implications $\mathrm{(b)}\ \Longrightarrow\ \mathrm{(c)}\ \Longrightarrow\ \mathrm{(d)}$, $\mathrm{(b)}'\ \Longrightarrow\ \mathrm{(c)}'\ \Longrightarrow\ \mathrm{(d)'}$ are trivial. Also, since for any Borel set $S\subset\pom$ the solution $u(X)=\omega_L^X(S)$ belongs to $W^{1,2}_\loc(\Omega)\cap L^\infty(\Omega)$, it is also straightforward that $\mathrm{(b)}\ \Longrightarrow\ \mathrm{(b)}'$, $\mathrm{(c)}\ \Longrightarrow\ \mathrm{(c)}'$, and $\mathrm{(d)}\ \Longrightarrow\ \mathrm{(d)}'$.

We next observe that for every $\alpha>0$, $0<r<r'$, and $\varpi\in \R$, if $F \subset\pom$ is a bounded set  and $v\in L^2_{\loc}(\Omega)$, then
	\begin{equation}\label{est:two-trunc}
		\sup_{x\in F}\iint_{\Gamma^{\alpha}_{r'}(x) \setminus \Gamma^{\alpha}_{r}(x)}
		|v(Y)|^2 \delta(Y)^{\varpi} dY
		<\infty.
	\end{equation}
	To see this we first note that  since $F$ is bounded we can find $R$ large enough so that $F\subset B(0,R)$. Then, if $x\in F$ one readily sees that
	\begin{align*}
		\Gamma^{\alpha}_{r'}(x) \backslash \Gamma^{\alpha}_{r}(x)
		\subset \overline{B(0, r'+R)} \cap
		\Big\{Y \in \Omega: \frac{r}{1+\alpha} \leq \delta(Y) \leq r' \Big\} =:K.
	\end{align*}
	Note that $K \subset \Omega$ is a compact set.  Then, since  $v \in L^2_{\loc}(\Omega)$, we conclude that
	\begin{align}\label{eq:Gr-Gr0}
		\sup_{x\in F}\iint_{\Gamma^{\alpha}_{r'}(x) \setminus \Gamma^{\alpha}_{r}(x)}
		|v(Y)|^2 \delta(Y)^{\varpi} dY
		\leq \max\left\{r',\frac{1+\alpha}{r}\right\}^{|\varpi|} \iint_{K}  |v(Y)|^2 dY<\infty.
	\end{align}

Using then \eqref{est:two-trunc} it is also trivial to see that $\mathrm{(d)}\ \Longrightarrow\ \mathrm{(c)}$ and $\mathrm{(d)}'\ \Longrightarrow\ \mathrm{(c)}'$. Hence we are left with showing
\[
\mathrm{(a)}\ \Longrightarrow\ \mathrm{(b)}\quad\text{ and }\quad\mathrm{(c)}'\ \Longrightarrow\ \mathrm{(a)}.
\]

\subsection{Proof of \texorpdfstring{$\mathrm{(a)}\ \Longrightarrow\ \mathrm{(b)}$}{(a) implies (b)}}\label{sec:local}
Assume that $\w_{L_0} \ll \w_L$. Let $\vartheta\ge \vartheta_0$ large enough to be chosen (this choice will depend on $\alpha$). Fix an arbitrary $Q_0 \in \D_{k_0}$ where $k_0 \in \Z$ is taken so that $2^{-k_0}=\ell(Q_0) < \diam(\partial \Omega)/M_0$, where $M_0>8\kappa_0 c_0^{-1}$, $\kappa_0$ is taken from \eqref{definicionkappa12}, and $c_0$ is the Corkscrew constant. Let $X_0:=X_{M_0 \Delta_{Q_0}}$ be a Corkscrew point relative to $M_0 \Delta_{Q_0}$ so that $X_0 \notin 4B_{Q_0}^*$ by the choice of $M_0$. By Lemma~\ref{lemma:proppde} part $(a)$ and Harnack's inequality, there exists $C_0>1$ such that
\begin{equation}\label{eq:lowerbdd}
	\w_L^{X_0}(Q_0) \geq C_0^{-1}.
\end{equation}
Set
\begin{align}\label{eq:normalize}
	\w_0:=\w_{L_0}^{X_0},\quad \w:= C_0 \w_{L_0}^{X_0}(Q_0) \w_L^{X_0}, \quad  \G_0:= G_{L_0}(X_0, \cdot), \quad\text{ and }\quad
	\G:= C_0 \w_{L_0}^{X_0}(Q_0) G_L(X_0, \cdot).
\end{align}
By assumption,  we have $\w_0 \ll \w$ and by \eqref{eq:lowerbdd},
\begin{equation}\label{eq:QC}
	1 \leq \frac{\w(Q_0)}{\w_0(Q_0)} = C_0 \w_{L}^{X_0}(Q_0) \leq C_0.
\end{equation}
For $N > C_0$, we let $\F_N^+ :=\{Q_j\} \subset \D_{Q_0} \backslash \{Q_0\}$, respectively, $\F_N^- :=\{Q_j\} \subset \D_{Q_0} \backslash \{Q_0\}$, be the collection of descendants of $Q_0$ which are maximal (and therefore pairwise disjoint) with respect to the property that
\begin{align}\label{eq:stopping}
	\frac{\w(Q_j)}{\w_0(Q_j)} < \frac{1}{N}, \qquad\text{ respectively}\quad \frac{\w(Q_j)}{\w_0(Q_j)} >N.
\end{align}
Write $\F_N:=\F_N^+\cup\F_N^-$ and note that $\F_N^+\cap\F_N^-=\emptyset$. By maximality, there holds
\begin{align}\label{eq:NN}
	\frac{1}{N}\leq \frac{\w(Q)}{\w_0(Q)} \leq N, \qquad \forall\,Q \in \D_{\F_N, Q_0}.
\end{align}
Denote, for every $N>C_0$,
\begin{align}\label{eq:E0N-EN}
	E_N^\pm := \bigcup_{Q \in \F_N^\pm} Q,
	\qquad
	E_N^0:=E_N^+\cup E_N^-, \qquad E_N := Q_0\setminus E_N^0,
\end{align}
and
\begin{align}\label{eq:Q-decom}
	Q_0 = \bigg(\bigcap_{N>C_0} E_N^0\bigg)\cup \bigg(\bigcup_{N>C_0} E_N \bigg)
	=: E_0\cup \bigg(\bigcup_{N>C_0} E_N \bigg).
\end{align}

By Lemma~\ref{lemma:CDC-inherit}, $\Omega_{\F_N, Q_0}^\vartheta$ is a bounded 1-sided NTA satisfying the CDC for any $\vartheta\ge \vartheta_0$. As in \cite[Proposition~6.1]{HM1}
\begin{align*}
	E_N \subset F_N:=\partial \Omega \cap \partial \Omega_{\F_N, Q_0}^{\vartheta}
	\subset \overline{Q}_0 \setminus \bigcup_{Q_j \in \F_N} \interior(Q_j).
\end{align*}
Hence,
\begin{align*}
	F_N \backslash E_N
	\subset \bigg(\overline{Q}_0 \backslash \bigcup_{Q_j \in \F_N} \interior(Q_j) \bigg)
	\backslash \bigg(Q_0 \backslash \bigcup_{Q_j \in \F_N} Q_j \bigg)
	\subset \partial Q_0 \cup \bigg(\bigcup_{Q_j \in \F_N} \partial Q_j\bigg).
\end{align*}
This, \cite[Lemma~2.37]{AHMT}, and Lemma~\ref{lemma:proppde} imply
\begin{align}\label{eq:ENFN}
	\w_0(F_N \setminus E_N) =0.
\end{align}

Next, we are going to show
\begin{equation}\label{eq:WE}
	\w_0(E_0)=0.
\end{equation}
Let $x \in E_{N+1}^{ \pm}$. Then there exists $Q_x \in \F_{N+1}^\pm$ such that $x \in Q_x$. By \eqref{eq:stopping}, we have
\begin{align*}
	\frac{\w(Q_x)}{\w_0(Q_x)} < \frac{1}{N+1} <\frac{1}{N} \quad\text{if $Q_x \in \F_{N+1}^+$} \qquad \text{or} \qquad
	\frac{\w(Q_x)}{\w_0(Q_x)} >N+1>N \quad\text{if $Q_x \in \F_{N+1}^-$}.
\end{align*}
By the maximality of the cubes in $\F_N^\pm$, one has $Q_x \subset Q'_x$ for some $Q'_x \in \F_N^\pm$ with $x \in Q'_x \subset E_N^\pm$. Thus, $\{E_N^+\}_N$, $\{E_N^-\}_N$ and $\{E_N^0\}_N$ are decreasing sequences of sets. This, together with the fact that $\w(E_N^\pm)\le \w(Q_0)\le C_0\omega_0(Q_0) \le C_0$ and $\w_0(E_N^\pm)\le \w_0(Q_0) \le 1$, implies that
\begin{equation}\label{wrqfawfvrw}
	\w\bigg(\bigcap_{N>C_0} E_N^\pm\bigg)=\lim_{N\to\infty} \w(E_N^\pm)
	\qquad\text{and}\qquad
	\w_0\bigg(\bigcap_{N>C_0} E_N^\pm\bigg)=\lim_{N\to\infty} \w_0(E_N^\pm).
\end{equation}
By \eqref{eq:stopping} and \eqref{eq:E0N-EN},
\[
\omega(E_N^+) = \sum_{Q\in \F_N^+} \omega(Q)
<\frac1N\sum_{Q\in \F_N^+} \w_0(Q)
=\frac1N\w_0(E_N^+) \le \frac1N,
\]
which together with \eqref{wrqfawfvrw} yields
\[
\omega\bigg(\bigcap_{N>C_0} E_N^+\bigg)=\lim_{N\to\infty} \omega(E_N^+)=0.
\]
In view of the fact that by assumption $\w_0 \ll \w$, we then conclude that
\begin{equation}\label{eq:EN+}
	0=\w_0\bigg(\bigcap_{N>C_0} E_N^+\bigg)=\lim_{N\to\infty} \w_0(E_N^+).
\end{equation}
On the other hand, \eqref{eq:stopping} yields
\[
\w_0(E_N^-) = \sum_{Q\in \F_N^-} \w_0(Q)
<\frac1N\sum_{Q\in \F_N^-} \omega(Q)
=\frac1N\omega(E_N^-) \le \frac{C_0}N,
\]
and hence,
\begin{equation}\label{eq:EN-}
	\w_0\bigg(\bigcap_{N>C_0} E_N^-\bigg)=\lim_{N\to\infty} \w_0(E_N^-)=0.
\end{equation}
Since $\{E_N^0\}_N$ is a decreasing sequence of sets with $\w_0(E_N^0) \le \w_0(Q_0) \le 1$, \eqref{eq:EN+} and \eqref{eq:EN-} readily imply \eqref{eq:WE}:
\begin{equation*}
	\w_0(E_0) = \lim_{N\to\infty} \w_0(E_N^0)
	\le \lim_{N\to\infty} \w_0(E_N^+) + \lim_{N\to\infty} \w_0(E_N^-)
	=0.
\end{equation*}

Now we turn our attention to the square function estimates in $L^q(F_N, \w_0)$ for $q\in (0,\infty)$. Let $u \in W_{\loc}^{1,2}(\Omega) \cap L^{\infty}(\Omega)$ be a weak solution of $Lu=0$ in $\Omega$. To continue, we observe that if $Q \in \D_{Q_0}$ is so that $Q \cap E_N \neq \emptyset$, then necessarily $Q \in \D_{\F_N, Q_0}$,
otherwise, $Q \subset Q' \in \F_N$, hence $Q \subset Q_0 \backslash E_N$ which is a contradiction. As a result,
\[
\frac{\omega_0(Q)}{\omega(Q)}
\approx_N 1,
\qquad
\forall x\in E_N,\ Q\in\dd_{Q_0},\ Q\ni x.
\]
By the (dyadic) Lebesgue differentiation theorem with respect to $\omega$, along with the fact that $\omega_0\ll\omega$ (cf. \eqref{eq:normalize}), we conclude that $d\omega_0/d\omega(x)\approx_N 1$ for $\omega$-a.e.~$x\in E_N$, hence also for $\omega_0$-a.e.~$x\in E_N$. Thus,
\begin{multline*}
\int_{E_N} \mathcal{S}_{Q_0}^\vartheta u(x)^q d\w_0(x)
=
\int_{E_N} \mathcal{S}_{Q_0}^\vartheta u(x)^q \frac{d\w_0}{d\w}(x) \,d\w(x)
\approx_N
	\int_{E_N} \mathcal{S}_{Q_0}^\vartheta u(x)^q \,d\w(x)
\\
\lesssim
	\int_{Q_0} \mathcal{S}_{Q_0}^\vartheta u(x)^q \,d\w(x)
\lesssim
	\int_{Q_0} \mathcal{N}_{Q_0}^\vartheta u(x)^q \,d\w(x)
\lesssim
\|u\|_{L^\infty(\Omega)}^q \w(Q_0)
\lesssim
\|u\|_{L^\infty(\Omega)}^q
,
\end{multline*}
where in the third estimate we have used \eqref{S<N:dyadic} with $\omega_{L_0}=\omega_L$ (see also \cite[Theorem 5.3]{AHMT}) which holds since $\w_L\in A_\infty(\pom,\w_L)$. This and
\eqref{eq:ENFN} imply
\begin{align}\label{eq:square-FNj}
\mathcal{S}_{Q_0}^\vartheta u \in L^q(F_N, \w_0).
\end{align}

Now, note that for fixed $\alpha>0$, we can find $\vartheta$ sufficiently large depending on $\alpha$ such that for any $r_0 \ll 2^{-k_0}$,
\begin{equation}\label{q343ffg}
	\Gamma^{\alpha}_{r_0}(x) \subset \Gamma_{Q_0}^\vartheta(x),\qquad \forall\,x \in Q_0.
\end{equation}
Indeed, let $Y \in \Gamma^{\alpha}_{r_0}(x)$. Pick $I \in \W$ so that $I\ni Y$, hence
$\ell(I) \approx \delta(Y) \leq |Y-x|<r_0 \ll 2^{-k_0} = \ell(Q_0)$.
Pick $Q_I \in \D_{Q_0}$ such that $x \in Q_I$ and $\ell(Q_I)=\ell(I) \ll \ell(Q_0)$.
Thus,
\begin{align*}
	\dist(I, Q_I) \leq |Y-x| < (1+\alpha) \delta(Y) \leq C(1+\alpha) \ell(I) = C(1+\alpha) \ell(Q_I).
\end{align*}
Recalling \eqref{eq:WQ}, if we take $\vartheta\ge \vartheta_0$ large enough so that $	2^\vartheta \geq C(1+\alpha)$,
then $Y \in I \in \W_{Q_I}^\vartheta \subset \W^{\vartheta,*}_{Q_I}$. The latter gives that
$Y \in U_{Q_I}^\vartheta \subset \Gamma_{Q_0}^\vartheta(x)$ and consequently \eqref{q343ffg} holds.
We would like to mention that the dependence of $\vartheta$ on $\alpha$ implies that all the sawtooth
regions $\Omega_{\F_N, Q_0}^\vartheta$ above as well as all the implicit constants depend on $\alpha$.

Next, \eqref{q343ffg} readily yields that $\mathcal{S}^{\alpha}_{r_0}u(x) \leq \mathcal{S}_{Q_0}^\vartheta u(x)$ for every $x \in Q_0$. This, together with \eqref{eq:square-FNj}, implies that $\mathcal{S}^{\alpha}_{r_0} u \in L^q(F_N, \w_0)$. If we next take an arbitrary $X\in\Omega$, by Harnack's inequality (albeit with constants depending on $X$) and by \eqref{est:two-trunc}, then we have
\begin{equation}\label{eq:Sr-u-L2-FN}
	\mathcal{S}^{\alpha}_r u \in L^q(F_N, \w_{L_0}^{X}),\quad \text{for any } r>0.
\end{equation}
Note also that by \eqref{eq:WE} and Harnack's inequality
\begin{equation}\label{eq:WE:alt}
\w_{L_0}^{X}(E_0)=0.
\end{equation}

To complete the proof, we perform the preceding operation for an arbitrary $Q_0\in \D_{k_0}$. Therefore, invoking  \eqref{eq:E0N-EN}, \eqref{eq:Q-decom}, and \eqref{eq:ENFN} with $Q_k\in \D_{k_0}$,
we conclude, with the induced notation, that
\begin{multline}\label{eq:EE-FF}
	\partial \Omega = \bigcup_{Q_k \in \D_{k_0}} Q_k
=\bigg(\bigcup_{Q_k \in \D_{k_0}} E^k_0\bigg) \bigcup
	\bigg(\bigcup_{Q_k \in \D_{k_0}} \bigcup_{N>C_0} E^k_N \bigg)
\\
=\bigg(\bigcup_{Q_k \in \D_{k_0}} E^k_0\bigg) \bigcup
	\bigg(\bigcup_{Q_k \in \D_{k_0}} \bigcup_{N>C_0} F^k_N \bigg)
	=: F_0 \cup \bigg(\bigcup_{k, N} F^k_N \bigg).
\end{multline}
where $\w_{L_0}^{X}(F_0)=0$ (by \eqref{eq:WE:alt}) and $F^k_N=\partial \Omega \cap \partial \Omega_{\F^k_N, Q_k}^\vartheta$ where each $\Omega_{\F^k_N, Q_k}^\vartheta \subset \Omega$ is a bounded 1-sided NTA domain satisfying the capacity density condition. Combining \eqref{eq:EE-FF} and \eqref{eq:Sr-u-L2-FN} with $F_N^k$ in place of $F_N$, the proof of $\mathrm{(a)}\ \Longrightarrow\ \mathrm{(b)}$ is complete.  \qed

\subsection{Proof of \texorpdfstring{$\mathrm{(c)}'\ \Longrightarrow\ \mathrm{(a)}$}{(c)' implies (a)}}
Let $\alpha_0$ be so that \eqref{daydic-cone:regular-cone} holds. Suppose that $\mathrm{(c)}'$ holds where throughout it is assumed that  $\alpha\ge \alpha_0$. In order to prove that $\w_{L_0} \ll \w_L$ on $\partial \Omega$, by Lemma~\ref{lemma:dyadiccubes} and the fact that by Harnack's inequality $\w_L^X\ll \w_L^Y$ and $\w_{L_0}^X\ll\w_{L_0}^Y$ for any $X,Y\in\Omega$, it suffices to show that
for any given $Q_0 \in \D$,
\begin{align}\label{eq:absQ}
	F \subset Q_0,\quad \w_L^{X_{Q_0}}(F)=0 \quad \Longrightarrow \quad \w_{L_0}^{X_{Q_0}}(F)=0.
\end{align}
Consider then $F \subset Q_0$ with $\w_L^{X_{Q_0}}(F)=0$. Lemma~\ref{sq-function->M} applied to $F$ gives a Borel set $S\subset Q_0$ such that $u(X):=\w_L^{X}(S)$, $X\in\Omega$, satisfies
\begin{align}\label{eq:S-lower}
\mathcal{S}^{\alpha}_{r_{Q_0}^*} u(x)\ge 	\mathcal{S}^{\vartheta_0}_{Q_0,\eta} u(x) =\infty,
\qquad \forall\,x\in F,
\end{align}
where the first inequality follows from \eqref{daydic-cone:regular-cone} and the fact that $\alpha\ge \alpha_0$, and $r_{Q_0}^*=2\kappa_0 r_{Q_0}$. By assumption and \eqref{est:two-trunc} we have that $\mathcal{S}^{\alpha}_{r_{Q_0}^*} u(x)<\infty$ for $\omega_{L_0}^{X_{Q_0}}$-a.e.~$x\in \pom$. Hence, $\omega_{L_0}^{X_{Q_0}}(F)=0$ as desired and the proof of $\mathrm{(c)}'\ \Longrightarrow\ \mathrm{(a)}$ is complete. \qed


\section{Proof of Theorems~\ref{thm:wL} and \ref{thm:wLT}}\label{sec:perturbation}

We will obtain Theorems~\ref{thm:wL} and \ref{thm:wLT} as a consequence of the following qualitative version of \cite[Theorem~4.13]{CHMT}:

\begin{theorem}\label{thm:AAAD}
	Let $\Omega \subset \R^{n+1}$, $n \ge 2$, be a 1-sided NTA domain (cf. Definition \ref{def1.1nta})  satisfying the capacity density condition (cf. Definition \ref{def-CDC}). There exists $\widetilde{\alpha}_0>0$ (depending only on the 1-sided NTA and CDC constants) such that the following holds. Assume that $L_0 u = -\div(A_0 \nabla u)$ and $L_1 u = -\div(A_1 \nabla u)$ are real (not necessarily symmetric) elliptic operators such that $A_0-A_1=A+D$, where $A, D \in L^{\infty}(\Omega)$ are real matrices satisfying the following conditions:

	\begin{list}{\textup{(\theenumi)}}{\usecounter{enumi}\leftmargin=1.2cm \labelwidth=1.2cm \itemsep=0.2cm \topsep=.2cm \renewcommand{\theenumi}{\roman{enumi}}}

		\item  There exist $\alpha_1 \geq \widetilde{\alpha}_0$ and $r_1>0$ such that
		\begin{align}\label{eq:a(X)-delta}
			\iint_{\Gamma^{\alpha_1}_{r_1}(x)} a(X)^2 \delta(X)^{-n-1}  dX < \infty,
			\qquad \text{for $\w_{L_0}$-a.e.~} x \in \partial \Omega,
		\end{align}
		where $a(X):=\sup\limits_{Y \in B(X, \delta(X)/2)}|A(Y)|$, $X \in \Omega$.
		
		\item $D \in \Lip_{\loc}(\Omega)$ is antisymmetric and there exist $\alpha_2 \geq \widetilde{\alpha}_0$ and $r_2>0$ such that
		\begin{align}\label{eq:divCD}
			\iint_{\Gamma^{\alpha_2}_{r_2}(x)} |\div_{C} D(X)|^2 \delta(X)^{1-n} dX < \infty,
			\quad \text{for $\w_{L_0}$-a.e.~} x \in \partial \Omega.
		\end{align}
	\end{list}
	Then $\w_{L_0} \ll \w_{L_1}$.
\end{theorem}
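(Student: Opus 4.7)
The plan is to derive Theorem~\ref{thm:AAAD} from Theorem~\ref{thm:abs} by a stopping-time/sawtooth localization, and to obtain Theorems~\ref{thm:wL} and~\ref{thm:wLT} as specializations. Set $\widetilde{\alpha}_0:=\alpha_0$, the constant furnished by Theorem~\ref{thm:abs}, and verify condition~(b) of that theorem with $L=L_1$: decompose $\partial\Omega=F_0\cup\bigcup_{N\ge 1}F_N$ with $\omega_{L_0}(F_0)=0$, where each $F_N=\partial\Omega\cap\partial\Omega_N$ for a bounded 1-sided NTA subdomain $\Omega_N\subset\Omega$ satisfying the CDC (cf.\ Lemma~\ref{lemma:CDC-inherit}), in such a way that $\mathcal{S}^{\alpha}_r u\in L^q(F_N,\omega_{L_0})$ for every bounded weak solution $u$ of $L_1u=0$ in $\Omega$.

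Define
\[
\mathscr{G}(x):=\iint_{\Gamma^{\alpha_1}_{r_1}(x)}a(X)^2\delta(X)^{-n-1}\,dX+\iint_{\Gamma^{\alpha_2}_{r_2}(x)}|\div_C D(X)|^2\delta(X)^{1-n}\,dX,
\]
which is finite $\omega_{L_0}$-a.e.\ by (i)--(ii), and set $\mathcal{E}_N:=\{\mathscr{G}\le N\}$, so that $\omega_{L_0}(\partial\Omega\setminus\bigcup_N\mathcal{E}_N)=0$. Fix $k_0\in\mathbb{Z}$ with $2^{-k_0}\ll\diam(\partial\Omega)$ and for each $Q_0\in\dd_{k_0}$ let $\mathcal{F}_N(Q_0)$ be the family of maximal $Q\subsetneq Q_0$ satisfying $\omega_{L_0}(Q\setminus\mathcal{E}_N)/\omega_{L_0}(Q)>\beta_N$, where $\beta_N\in(0,1)$ is small and to be chosen. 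Form the sawtooth $\Omega_N^{Q_0}:=\Omega^{\vartheta}_{\mathcal{F}_N(Q_0),Q_0}$ with $\vartheta=\vartheta(\alpha)$ large enough so that $\Gamma^{\alpha}_r(x)$ is captured by the dyadic cone inside $\Omega_N^{Q_0}$ for $x\in Q_0$ and $r\ll\ell(Q_0)$. A Fubini argument (swap integration in $X\in T^{\vartheta,*}_Q\cap\Omega_N^{Q_0}$ against $x\in Q\cap\mathcal{E}_N$, using that the set of $x$ whose cone $\Gamma^{\alpha_i}_{r_i}(x)$ contains $X$ has $\omega_{L_0}$-measure $\gtrsim\delta(X)^{n-1}G_{L_0}(X_Q,X)\,\omega_{L_0}(Q)$ via Lemma~\ref{lemma:proppde}(b)), combined with the stopping bound $\omega_{L_0}(Q\cap\mathcal{E}_N)>(1-\beta_N)\omega_{L_0}(Q)$ for $Q\in\dd_{\mathcal{F}_N(Q_0),Q_0}$, yields the uniform Carleson estimates
\[
\iint_{T^{\vartheta,*}_{Q}\cap\Omega_N^{Q_0}}\!a(X)^2\,G_{L_0}(X_Q,X)\delta(X)^{-2}\,dX+\iint_{T^{\vartheta,*}_{Q}\cap\Omega_N^{Q_0}}\!|\div_C D(X)|^2\,G_{L_0}(X_Q,X)\,dX\lesssim N\,\omega_{L_0}(Q)
\]
for every $Q\in\dd_{\mathcal{F}_N(Q_0),Q_0}$, uniformly in $Q_0$, provided $\beta_N$ is sufficiently small depending on $N$ and the ambient geometric constants.

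With these uniform Carleson bounds on $\Omega_N^{Q_0}$, the quantitative perturbation statement of \cite[Theorem~4.13]{CHMT} (which extends to bounded 1-sided NTA domains with CDC using the PDE machinery in \cite{HMT:book,AHMT} and the present paper) produces $\omega_{L_0}^{\Omega_N^{Q_0}}\in A_\infty\bigl(\omega_{L_1}^{\Omega_N^{Q_0}}\bigr)$ on $\partial\Omega_N^{Q_0}$. The equivalence $\mathrm{(a)}\Longleftrightarrow\mathrm{(d)}$ in Theorem~\ref{thm:main}, applied inside $\Omega_N^{Q_0}$, then gives the $\mathcal{S}<\mathcal{N}$ inequality in $L^2(\omega_{L_0}^{\Omega_N^{Q_0}})$ for all bounded null solutions of $L_1$ on $\Omega_N^{Q_0}$; in particular for the restriction of any bounded $L_1$-solution $u$ in $\Omega$. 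Cone comparison (for $\vartheta$ large, the dyadic cone within $\Omega_N^{Q_0}$ at vertices in $F_N^{Q_0}:=\partial\Omega\cap\partial\Omega_N^{Q_0}$ contains $\Gamma^{\alpha}_r(x)\cap\Omega_N^{Q_0}$ up to an $r$ proportional to $\ell(Q_0)$), together with the tail estimate \eqref{est:two-trunc} to handle the portion of the cone outside the sawtooth, delivers $\mathcal{S}^{\alpha}_r u\in L^q(F_N^{Q_0},\omega_{L_0})$ for every $q\in(0,\infty)$. Reindexing the countable collection $\{F_N^{Q_0}\}_{N,k_0,Q_0}$ as $\{F_N\}_{N\ge 1}$ and absorbing the residual $\omega_{L_0}$-null set into $F_0$ verifies condition~(b) of Theorem~\ref{thm:abs}, whence $\omega_{L_0}\ll\omega_{L_1}$.

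Theorems~\ref{thm:wL} and~\ref{thm:wLT} follow by specialization: for Theorem~\ref{thm:wL} apply Theorem~\ref{thm:AAAD} with $L_1=L$, taking the perturbation matrix to be $A_0-A$ (the full coefficient difference) and $D\equiv 0$, so that $a(X)\le\varrho(A,A_0)(X)$ and hypothesis~(i) reduces to \eqref{eq:rhoAA} (any cone-aperture discrepancy being absorbed via Lemma~\ref{lemma-ChAO}). For Theorem~\ref{thm:wLT}, apply Theorem~\ref{thm:AAAD} with $L_0=L$ and $L_1=L^{\top}$ (respectively $L_1=L^{\mathrm{sym}}$), taking $A\equiv 0$ and $D=A-A^{\top}$ (respectively $D=\tfrac12(A-A^{\top})$), which is automatically antisymmetric and whose column-divergence matches $\div_C(A-A^{\top})$ up to a constant, reducing~(ii) to \eqref{eq:divCAA}; the ``moreover'' statement \eqref{eq:www} then yields $\omega_L\ll\omega_{L^{\top}}\ll\omega_L$ by swapping the roles of $L$ and $L^{\top}$. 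I expect the main obstacle to be the calibration of $\beta_N$: it must be small enough so that the Fubini conversion produces the displayed Carleson bound on $\Omega_N^{Q_0}$, and yet the sets $F_N^{Q_0}$ must still cover $\bigcup_N\mathcal{E}_N$ modulo an $\omega_{L_0}$-null set as $N\to\infty$; once this balance is achieved the remainder is a routine dyadic orchestration of the ingredients already in place.
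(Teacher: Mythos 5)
Your overall skeleton (reduce to Theorem~\ref{thm:abs}, run a stopping time driven by the hypotheses (i)--(ii), localize to sawtooths $\Omega^{\vartheta}_{\F_N,Q_0}$, compare dyadic and regular cones, and then specialize to Theorems~\ref{thm:wL} and~\ref{thm:wLT} exactly as the paper does) is sound, but the central step of your argument has a genuine gap. You claim that the Carleson bounds you obtain by Fubini --- which are expressed in terms of $G_{L_0}(X_Q,\cdot)$ and $\omega_{L_0}$ \emph{of the ambient domain} $\Omega$, and only for Carleson regions associated to cubes $Q\in\dd_{\F_N,Q_0}$ sitting on $\partial\Omega$ --- feed directly into a quantitative perturbation theorem on the sawtooth, yielding $\omega_{L_0}^{\Omega_N^{Q_0}}\in A_\infty(\omega_{L_1}^{\Omega_N^{Q_0}})$. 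Any such perturbation theorem requires the Carleson condition relative to the sawtooth's \emph{own} elliptic measure and Green function, and over \emph{all} surface balls of $\partial\Omega_N^{Q_0}$, including those centered on the new boundary $\partial\Omega_N^{Q_0}\setminus\partial\Omega$, where $\delta_N\ll\delta$ and the discrepancy $A+D$ is not small (a bounded discrepancy alone does not give a Carleson condition, since $G/\delta^2\,dX$ fails to be Carleson). Bridging this needs a Dahlberg--Jerison--Kenig type projection/sawtooth lemma plus a modification of the coefficients near the new boundary (this is exactly the machinery of \cite{CHM, AHMT}), none of which you supply. Worse, the quantitative result you invoke, \cite[Theorem~4.13]{CHMT}, is the statement that handles the antisymmetric $\div_C D$ part, but it is proved for 1-sided CAD; its extension to 1-sided NTA domains with the CDC is essentially the quantitative counterpart of Theorem~\ref{thm:AAAD} itself and is not available in the cited literature, so at this point you are assuming a stronger version of what you set out to prove. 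Finally, even granting the sawtooth $A_\infty$ and the resulting $\mathcal{S}<\mathcal{N}$ bound inside $\Omega_N^{Q_0}$, that estimate lives in $L^2(\partial\Omega_N^{Q_0},\omega_{L_0}^{\Omega_N^{Q_0}})$, whereas condition (b) of Theorem~\ref{thm:abs} requires $\mathcal{S}^{\alpha}_r u\in L^q(F_N,\omega_{L_0})$ with $\omega_{L_0}$ the elliptic measure of $\Omega$; passing between these two measures on $F_N$ is again a nontrivial change-of-measure step that you do not address.

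For contrast, the paper avoids all of this by never leaving the ambient domain: it verifies condition (c) of Theorem~\ref{thm:abs} directly. One sets $\gamma_Q^{\vartheta}$ to be the Whitney-region integrals of $a(X)^2\delta(X)^{-n-1}$ and $|\div_C D(X)|^2\delta(X)^{1-n}$, stops on the dyadic sums $\sum_{Q_j\subset Q}\gamma_Q^{\vartheta}>N^2$ (rather than on level sets of the cone functional with a parameter $\beta_N$), and then proves the key bound \eqref{eq:KAB*}, namely that $\iint_{\Omega^{\vartheta}_{\F_N,Q_0}}|\nabla u|^2 G_{L_0}(X_0,\cdot)\,dX\lesssim \omega_0(Q_0)+\sum_{Q\in\dd_{\F_N,Q_0}}\gamma_Q^{\vartheta}\omega_0(Q)$, by integrating by parts against the cutoff $\Psi_{N,M}$ of Lemma~\ref{lemma:approx-sawtooth}, using Caccioppoli, Lemma~\ref{lemma:proppde}, and \cite[Lemma~4.1]{CHMT} together with the antisymmetry of $D$ for the $\div_C D$ term; the stopping time then yields $\sum_Q\gamma_Q^{\vartheta}\omega_0(Q)\lesssim N^2\omega_0(Q_0)$ and hence $\mathcal{S}^{\vartheta}_{Q_0}u\in L^2(E_N,\omega_0)$. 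If you want to salvage your route, you would have to either carry out the coefficient-modification-plus-projection-lemma program on each sawtooth, or replace the perturbation black box by a direct estimate of $\iint_{\Omega_N}|\nabla u|^2 G_{L_0}\,dX$ --- at which point you have reproduced the paper's argument. Your specialization of Theorem~\ref{thm:AAAD} to Theorems~\ref{thm:wL} and~\ref{thm:wLT} is correct and matches the paper.
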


Assuming this result momentarily we deduce Theorems \ref{thm:wL} and \ref{thm:wLT}:

\begin{proof}[Proof of Theorem \ref{thm:wL}]
For $L_0$ and $L$ as in the statement set $\widetilde{A}_0=A_0$, $\widetilde{A}_1=A$, $\widetilde{A}=A_0-A$, and $D=0$ so that $\widetilde{A}_0-\widetilde{A}_1=
\widetilde{A}+D$. Note that \eqref{eq:a(X)-delta} follows at once from \eqref{eq:rhoAA},  and also that \eqref{eq:divCD} holds automatically. With all these in hand Theorem \ref{thm:AAAD} gives  $\w_{L_0}= \w_{\widetilde{L}_0}\ll \w_{\widetilde{L}_1}= \w_{L}$.
\end{proof}

\begin{proof}[Proof of Theorem \ref{thm:wLT}]
	Set $A_0=A$, $A_1=A^{\top}$, $\widetilde{A}=0$, and
	$D=A-A^{\top}$ so that $A_0-A_1=\widetilde{A}+D$. Observe that $D \in \Lip_{\loc}(\Omega)$ is antisymmetric, \eqref{eq:a(X)-delta} holds trivially, and  \eqref{eq:divCD} agrees with \eqref{eq:divCAA}. Thus, Theorem \ref{thm:AAAD} implies that $\w_L \ll \w_{L^{\top}}$.
	
	On the other hand, $\w_L \ll \w_{L^{\rm sym}}$ follows similarly if we set $A_0=A$, $A_1=(A+A^{\top})/2$, $\widetilde{A}=0$ and $D=(A-A^{\top})/2$.
	
	Finally, $\w_{L^{\top}} \ll \w_L$ follows from what has been proved by switching the roles of $L$ and $L^\top$ and the fact that $\mathscr{F}^{\alpha}_r(x; A)<\infty$ for $\omega_{L^\top}$-a.e.~$x\in\pom$.	
\end{proof}

Before proving Theorem~\ref{thm:AAAD} we need the following auxiliary which adapts \cite[Lemma~4.44]{HMT1} and \cite[Lemma~3.11]{AHMT} to our current setting:

\begin{lemma}\label{lemma:approx-sawtooth}
Let $\Omega\subset\mathbb{R}^{n+1}$ be a 1-sided NTA domain (cf. Definition \ref{def1.1nta})  satisfying the capacity density condition  (cf. Definition \ref{def-CDC}).	Given $Q_0\in\dd$, a pairwise disjoint collection $\F\subset\dd_{Q_0}$, and $N\ge 4$ let
$\F_N$ be the family of maximal cubes of the collection ${\F}$ augmented by adding all the cubes $Q \in \D_{Q_0}$ such that $\ell(Q) \leq 2^{-N} \ell(Q_0)$.
There exist $\Psi_N^\vartheta\in \mathscr{C}_c^\infty(\ree)$ and a constant $C\ge 1$ depending only on dimension $n$, the 1-sided NTA constants, the CDC constant, and $\vartheta$, but independent of $N$, $\F$, and $Q_0$ such that the following hold:
	
	\begin{list}{$(\theenumi)$}{\usecounter{enumi}\leftmargin=.8cm
			\labelwidth=.8cm\itemsep=0.2cm\topsep=.1cm
			\renewcommand{\theenumi}{\roman{enumi}}}
		
		\item $C^{-1}\,\mathbf{1}_{\Omega_{\F_N,Q_0}^\vartheta}\le \Psi_N^\vartheta \le \mathbf{1}_{\Omega_{\F_N,Q_0}^{\vartheta,*}}$.
		
		\item $\sup_{X\in \Omega} |\nabla \Psi_N^\vartheta(X)|\,\delta(X)\le C$.
		
		\item Setting
		\begin{equation}
			\label{eq:defi-WN}
			\W_N^\vartheta:=\bigcup_{Q\in\dd_{\F_{N},Q_0}} \W_Q^{\vartheta,*},
			\qquad
			\W_N^{\vartheta,\Sigma}:=
			\big\{I\in \W_{N}^\vartheta:\, \exists\,J\in \W\setminus \W_N^\vartheta\ \mbox{with}\ \partial I\cap\partial J\neq\emptyset
			\big\},
		\end{equation}
	\end{list}
	one has
	\begin{equation}
		\nabla \Psi_N^\vartheta\equiv 0
		\quad
		\mbox{in}
		\quad
		\bigcup_{I\in \W_N^\vartheta \setminus \W_N^{\vartheta,\Sigma} }I^{**},
		\label{eq:fregtgtr}
	\end{equation}
	and there exists a family $\{\widehat{Q}_I\}_{I\in 	\W_N^{\vartheta,\Sigma}}$ so that
	\begin{equation}\label{new-QI}
		C^{-1}\,\ell(I)\le \ell(\widehat{Q}_I)\le C\,\ell(I),
		\qquad
		\dist(I, \widehat{Q}_I)\le C\,\ell(I),
		\qquad
		\sum_{I\in 	\W_N^{\vartheta,\Sigma}} \mathbf{1}_{\widehat{Q}_I} \le C.
	\end{equation}
\end{lemma}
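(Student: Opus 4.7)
The plan is to construct $\Psi_N^\vartheta$ from a smooth partition of unity adapted to the Whitney decomposition, keeping only the terms coming from $\W_N^\vartheta$. Concretely, I would first fix $\widetilde{\eta}_I\in\mathscr{C}_c^\infty(I^{**})$ with $\widetilde{\eta}_I\equiv 1$ on $I^*$, $0\le\widetilde{\eta}_I\le 1$, and $|\nabla\widetilde{\eta}_I|\lesssim\ell(I)^{-1}$. The function $H:=\sum_{I\in\W}\widetilde{\eta}_I$ is locally finite and satisfies $1\le H\lesssim 1$ on $\Omega$ by the bounded overlap of $\{I^{**}\}$, so $\eta_I:=\widetilde{\eta}_I/H$ yields a genuine smooth partition of unity with $\sum_{I\in\W}\eta_I\equiv 1$ on $\Omega$, $\supp\eta_I\subset I^{**}$, and $|\nabla\eta_I|\lesssim\ell(I)^{-1}$. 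I would then set $\Psi_N^\vartheta:=\sum_{I\in\W_N^\vartheta}\eta_I$, which is a locally finite sum.

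Properties (i) and (ii) are then straightforward: $\Psi_N^\vartheta\le 1$ and $\supp\Psi_N^\vartheta\subset\bigcup_{I\in\W_N^\vartheta}I^{**}=\Omega_{\F_N,Q_0}^{\vartheta,*}$ are immediate, and for $X\in I_0^*\subset\Omega_{\F_N,Q_0}^\vartheta$ with $I_0\in\W_N^\vartheta$ one has $\widetilde{\eta}_{I_0}(X)=1$, hence $\Psi_N^\vartheta(X)\ge\eta_{I_0}(X)\ge H(X)^{-1}\gtrsim 1$. The gradient bound follows from $|\nabla\Psi_N^\vartheta(X)|\le\sum_{I:I^{**}\ni X}|\nabla\eta_I(X)|\lesssim\delta(X)^{-1}$, using the Whitney comparability $\ell(I)\approx\delta(X)$ and the bounded overlap of $\{I^{**}\}$.

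The key observation for (iii) is that, by the choice of the fattening parameter $\lambda$, $I^{**}\cap J^{**}\neq\emptyset$ forces either $J=I$ or $\partial I\cap\partial J\neq\emptyset$. Thus, if $I\in\W_N^\vartheta\setminus\W_N^{\vartheta,\Sigma}$ and $X\in I^{**}$, then any $J\in\W$ with $\eta_J(X)\neq 0$ must lie in $\W_N^\vartheta$, directly from the definition of $\W_N^{\vartheta,\Sigma}$ in \eqref{eq:defi-WN}. Therefore $\Psi_N^\vartheta(X)=\sum_{J\in\W_N^\vartheta}\eta_J(X)=\sum_{J\in\W}\eta_J(X)=1$, and so $\nabla\Psi_N^\vartheta\equiv 0$ on $I^{**}$, which is precisely \eqref{eq:fregtgtr}.

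Finally, for the family $\{\widehat{Q}_I\}$ I would pick, for each $I\in\W_N^{\vartheta,\Sigma}$, some $J_I\in\W\setminus\W_N^\vartheta$ with $\partial I\cap\partial J_I\neq\emptyset$, select $\widehat{y}_I\in\pom$ realizing $\dist(J_I,\pom)=|J_I-\widehat{y}_I|$, and define $\widehat{Q}_I\in\dd$ as the unique dyadic cube with $\ell(\widehat{Q}_I)=\ell(J_I)$ containing $\widehat{y}_I$; the comparability $\ell(\widehat{Q}_I)\approx\ell(I)$ and the distance bound $\dist(I,\widehat{Q}_I)\lesssim\ell(I)$ then follow from the Whitney properties and the triangle inequality. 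The hard part is the bounded overlap $\sum_I\mathbf{1}_{\widehat{Q}_I}\le C$: for a fixed $x\in\pom$, at each dyadic scale there are boundedly many admissible $J_I$'s by Whitney combinatorics, and each $J_I$ has boundedly many Whitney neighbors $I$, so the issue reduces to bounding the number of relevant scales at $x$. This is handled by the standard argument exploiting the structure of $\W_N^\vartheta$ along the ``interface'' $\W_N^{\vartheta,\Sigma}$ that is carried out in detail in \cite[Appendix~A.2]{HM1}, \cite[Lemma~4.44]{HMT1}, and \cite[Lemma~3.11]{AHMT}; it transfers to the present setting with no essential change since the sawtooth and Whitney constructions in Section~\ref{section:dyadic} are identical.
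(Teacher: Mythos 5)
Your construction of $\Psi_N^\vartheta$ and your verification of (i), (ii) and of \eqref{eq:fregtgtr} are essentially the paper's own argument: a normalized partition of unity subordinate to the fattened Whitney cubes, summed over $\W_N^\vartheta$, with the observation that $I^{**}\cap J^{**}\neq\emptyset$ forces $\partial I\cap\partial J\neq\emptyset$. (Two cosmetic points: the paper supports $\phi_I$ in $\widetilde{I^{*}}\subset\interior(I^{**})$ so that $\supp\Psi_N^\vartheta$ lands inside the open set $\Omega^{\vartheta,*}_{\F_N,Q_0}$, and it notes that $\#\W_N^\vartheta<\infty$ — a consequence of augmenting $\F$ by all cubes of side length at most $2^{-N}\ell(Q_0)$ — which is what gives $\Psi_N^\vartheta\in\mathscr{C}_c^\infty(\ree)$ rather than mere smoothness in $\Omega$.)

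The genuine gap is the bounded overlap in \eqref{new-QI}. You take $\widehat{Q}_I:=Q_{J_I}$, the boundary cube of side comparable to $\ell(J_I)$, and argue that at each fixed scale there are boundedly many contributions, so that ``the issue reduces to bounding the number of relevant scales at $x$,'' which you then defer to the references. But the constant in \eqref{new-QI} must be independent of $N$, and $\W_N^{\vartheta,\Sigma}$ genuinely contains cubes at roughly $N$ distinct scales clustering at a single boundary point (for instance along the lateral part of the sawtooth boundary over $\partial Q_j$ for a stopping cube $Q_j\in\F_N$, where interface Whitney cubes occur at every scale between $2^{-N}\ell(Q_0)$ and $\ell(Q_j)$). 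Hence a per-scale count cannot close the argument; one must show that at a fixed point only boundedly many of those scales actually produce a cube $\widehat{Q}_I$ containing it, and this is exactly where the paper works. Its proof splits according to whether $Q_0\subset Q_J$, $Q_J\subset Q_j\in\F_N$, or $Q_J\cap Q_0=\emptyset$, and in the last two cases it does \emph{not} take $\widehat{Q}_I=Q_J$: it takes a descendant $\widehat{Q}_I\ni x_{Q_J}$ with $\ell(\widehat{Q}_I)=2^{-M'}\ell(Q_J)$, $2^{M'}\ge 2\Xi^2$. The mechanism is that the center satisfies $2\Xi^{-1}\ell(Q_J)\le\dist(x_{Q_J},\pom\setminus Q_j)\le C\ell(J)$ (the upper bound coming from the neighbor $I\in\W_N^\vartheta$), and since $\diam(\widehat{Q}_I)\le\Xi^{-1}\ell(Q_J)$ every point of the shrunken cube inherits depth comparable to $\ell(Q_J)$; consequently $\widehat{Q}_{I'}\cap\widehat{Q}_I\neq\emptyset$ forces $\ell(I)\approx\ell(I')$ and $\dist(I,I')\lesssim\ell(I)$, which is what yields $\sum_I\mathbf{1}_{\widehat{Q}_I}\lesssim1$ uniformly in $N$. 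With your full-size choice this comparison breaks down: the lower depth bound concerns only the center, and points of $Q_J$ away from the center can lie arbitrarily close to $\pom\setminus Q_j$, so nothing in your argument rules out a nested string of cubes $Q_{J}$ at many scales all containing a common point near the dyadic boundary of $Q_j$. A proof for your choice would require an additional argument (e.g. exploiting that $J\notin\W_N^\vartheta$ keeps $Q_J$ quantitatively far from every non-stopped cube of comparable scale, together with the fact that $\dd_{\F_N,Q_0}$ is closed under passing to ancestors), and the deferral to the cited lemmas does not supply it, since those results are proved via the center-based/shrinking construction just described, not via a count of scales.
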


\begin{proof}
The proof combines ideas from \cite[Lemma~4.44]{HMT1},  \cite[Lemma~3.11]{AHMT}, and  \cite[Appendix~A.2]{HMM-UR}. 	The parameter $\vartheta\ge \vartheta_0$ will remain fixed in the proof and then constants are allowed to depend on it. To ease the notation we will omit the superscript $\vartheta$ everywhere in the proof.
Recall that given $I$, any closed dyadic cube in $\ree$, we set $I^{*}=(1+\lambda)I$ and $I^{**}=(1+2\,\lambda)I$. Let us introduce $\widetilde{I^{*}}=(1+\frac32\,\lambda)I$ so that
\begin{equation}
	I^{*}
	\subsetneq
	\interior(\widetilde{I^{*}})
	\subsetneq \widetilde{I^{*}}
	\subset
	\interior(I^{**}).
	\label{eq:56y6y6}
\end{equation}
Given $I_0:=[-\frac12,\frac12]^{n+1}\subset\ree$, fix $\phi_0\in \mathscr{C}_c^\infty(\ree)$ such that
$1_{I_0^{*}}\le \phi_0\le 1_{\widetilde{I_0^{*}}}$ and $|\nabla \phi_0|\lesssim 1$ (the implicit constant depends on the parameter $\lambda$). For every $I\in \W=\W(\Omega)$ we set $\phi_I(\cdot)=\phi_0\big(\frac{\,\cdot\,-X(I)}{\ell(I)}\big)$ so that $\phi_I\in \mathscr{C}^\infty(\ree)$, $1_{I^{*}}\le \phi_I\le 1_{\widetilde{I^{*}}}$ and $
|\nabla \phi_I|\lesssim \ell(I)^{-1}$ (with implicit constant depending only on $n$ and $\lambda$).

For every $X\in\Omega$, we let $\Phi(X):=\sum_{I\in \W} \phi_I(X)$. It then follows that $\Phi\in \mathscr{C}^\infty(\Omega)$ since for every compact subset of $\Omega$, the previous sum has finitely many non-vanishing terms. Also, $1\le \Phi(X)\le C_{\lambda}$ for every $X\in \Omega$ since the family $\{\widetilde{I^{*}}\}_{I\in \W}$ has bounded overlap by our choice of $\lambda$. Hence we
can set $\Phi_I=\phi_I/\Phi$ and one can easily see that $\Phi_I\in \mathscr{C}_c^\infty(\ree)$, $C_\lambda^{-1}1_{I^{*}}\le \Phi_I\le 1_{\widetilde{I^{*}}}$ and $
|\nabla \Phi_I|\lesssim \ell(I)^{-1}$. With this at hand, set
\[
\Psi_N(X)
:=
\sum_{I\in \W_N} \Phi_I(X)
=
\frac{\sum\limits_{I\in \W_N} \phi_I(X)}{\sum\limits_{I\in \W} \phi_I(X)},
\qquad
X\in\Omega.
\]
We first note that the number of terms in the sum defining $\Psi_N$ is bounded depending on $N$. Indeed, if $Q\in \dd_{\F_N, Q_0}$ then $Q\in \dd_{Q_0}$ and $2^{-N}\ell(Q_0)<\ell(Q)\le \ell(Q_0)$ which implies that $\dd_{\F_N, Q_0}$ has finite cardinality with bounds depending on dimension and  $N$ (here we recall that the number of dyadic children of a given cube is uniformly controlled). Also, by construction $\W_Q^*$ has cardinality depending only on the allowable parameters. Hence, $\# \W_N\lesssim C_N<\infty$. This and the fact that each $\Phi_I\in \mathscr{C}_c^\infty(\ree)$ yield that $\Psi_N\in \mathscr{C}_c^\infty(\ree)$. Note also that \eqref{eq:56y6y6} and the definition of $\W_N$  give
\begin{align*}
	\supp \Psi_N
	\subset
	\bigcup_{I\in \W_N} \widetilde{I^{*}}
	=
	\bigcup_{Q\in\dd_{\F_{N},{Q}_0}}
	\bigcup_{I\in \W_Q^*} \widetilde{I^{*}}
	\subset
	\interior\Big(
	\bigcup_{Q\in\dd_{\F_{N},{Q}_0}}
	\bigcup_{I\in \W_Q^*} I^{**}
	\Big)
	=
	\interior\Big(
	\bigcup_{Q\in\dd_{\F_{N},{Q}_0}}
	U_Q^{*}
	\Big)
	=
	\Omega_{\F_N,Q_0}^{*}.
\end{align*}
This, the fact that $\W_N\subset \W$, and the definition of $\Psi_N$ immediately give that
$\Psi_N\le \mathbf{1}_{\Omega_{\F_N,Q_0}^{*}}$. On the other hand, if $X\in \Omega_N=\Omega_{\F_{N},Q_0}$, then there exists $I\in \W_N$ such that $X\in I^{*}$, in which case $\Psi_N(X)\ge \Phi_I(X)\ge C_\lambda^{-1}$. All these imply $(i)$.
Note that $(ii)$  follows by observing that for every $X\in \Omega$ we have
$$
|\nabla \Psi_N(X)|
\le
\sum_{I\in \W_N} |\nabla\Phi_I(X)|
\lesssim
\sum_{I\in \W} \ell(I)^{-1}\,1_{\widetilde{I^{*}}}(X)
\lesssim
\delta(X)^{-1},
$$
where we have used that if $X\in \widetilde{I^{*}}$ then $\delta(X)\approx \ell(I)$ and also that the family $\{\widetilde{I^{*}}\}_{I\in \W}$ has bounded overlap.

To see $(iii)$ fix $I\in\W_N\setminus \W^{\Sigma}_N$ and  $X\in I^{**}$, and set $\W_X:=\{J\in \W: \phi_J(X)\neq 0\}$ so that $I\in \W_X$. We first note that  $\W_X\subset \W_N$. Indeed, if $\phi_J(X)\neq 0$ then $X\in \widetilde{J^{*}}$.
Hence $X\in I^{**}\cap J^{**}$ and our choice of $\lambda$ gives that $\partial I$ meets $\partial J$, this in turn implies that $J\in \W_N$ since $I\in\W_N\setminus \W^{\Sigma}_N$. All these yield
$$
\Psi_N(X)
=
\frac{\sum\limits_{J\in \W_N} \phi_J(X)}{\sum\limits_{J\in \W} \phi_J(X)}
=
\frac{\sum\limits_{J\in \W_N\cap \W_X} \phi_J(X)}{\sum\limits_{J\in \W_X} \phi_J(X)}
=
\frac{\sum\limits_{J\in \W_N\cap \W_X} \phi_J(X)}{\sum\limits_{J\in \W_N\cap \W_X} \phi_J(X)}
=
1.
$$
Hence $\Psi_N\big|_{I^{**}}\equiv 1$  for every $I\in\W_N\setminus \W^{\Sigma}_N$. This and the fact that $\Psi_N\in \mathscr{C}_c^\infty(\ree)$ immediately give that $\nabla \Psi_N\equiv 0$ in $\bigcup_{I\in \W_N \setminus \W_N^\Sigma }I^{**}$.

We are left with showing the last part of $(iii)$ and for that we borrow some ideas from \cite[Appendix A.2]{HMM-UR}. Fix $I\in  \W_N^\Sigma$ and let $J$ be so that $J\in \W\setminus \W_N$ with $\partial I\cap\partial J\neq\emptyset$, in particular $\ell(I)\approx\ell(J)$. Since $I\in  \W_N^\Sigma$ there exists $Q_I\in\dd_{\F_N,Q_0}$. Pick $Q_J\in\dd$ so that $\ell(Q_J)=\ell(J)$ and it contains any fixed $\widehat{y}\in\partial\Omega$ such that $\dist(J,\partial\Omega)=\dist(J,\widehat{y})$. Then, as observed in Section \ref{section:dyadic}, one has $J\in \W_{Q_J}^*$.
But, since $J\in \W\setminus \W_N$, we necessarily have $Q_J\notin \dd_{\F_N,Q_0}=\dd_{\F_N}\cap \dd_{Q_0}$. Hence, $\W_N^\Sigma=\W_N^{\Sigma,1}\cup \W_N^{\Sigma,2}\cup \W_N^{\Sigma,3}$ where
\begin{align*}
	\W_N^{\Sigma,1}:&=\{I\in \W_N^{\Sigma}: Q_0\subset Q_J\},
	\\
	\W_N^{\Sigma,2}:&=\{I\in \W_N^{\Sigma}: \ Q_J\subset Q\in\F_N\},
	\\
	\W_N^{\Sigma,3}:&=\{I\in \W_N^{\Sigma}: Q_J\cap Q_0=\emptyset\}.
\end{align*}
For later use it is convenient to observe that
\begin{equation}\label{dist:QJ-I}
	\dist(Q_J,I)\le \dist(Q_J,J) +\diam(J)+\diam(I)\approx\ell(J)+\ell(I)\approx\ell(I).
\end{equation}

Let us first consider $\W_N^{\Sigma,1}$. If $I\in \W_N^{\Sigma,1}$ we clearly have
\[
\ell(Q_0)\le \ell(Q_J)=\ell(J)\approx\ell(I)\approx\ell(Q_I)\le \ell(Q_0)
\]
and since $Q_I\in\dd_{Q_0}$
\[
\dist(I,x_{Q_0})\le
\dist(I, Q_I)+\diam(Q_0)
\approx \ell(I).
\]
In particular, $\# \W_N^{\Sigma,1}\lesssim 1$. Thus if we set $\widehat{Q}_I:=Q_J$ it follows from \eqref{dist:QJ-I} that the two first conditions in \eqref{new-QI} hold and also $\sum_{I\in 	\W_N^{\Sigma,1}} \mathbf{1}_{\widehat{Q}_I} \le
\# \W_N^{\Sigma,1}\lesssim 1$.

To see that \eqref{new-QI} holds for $\W_N^{\Sigma,2}$ and $\W_N^{\Sigma,3}$, we proceed as follows. For any $I\in \W_N^{\Sigma,2}\cup \W_N^{\Sigma,3}$  we pick $\widehat{Q}_I\in\dd$ so that $ \widehat{Q}_I\ni x_{Q_J}$ and $\ell(\widehat{Q}_I)=2^{-M'}\,\ell(Q_J)$ with $M'\ge 3$ large enough so that $2^{M'}\ge 2 \Xi^2$ (cf. \eqref{deltaQ}). Note that $\widehat{Q}_I\subset \Delta_{Q_J}\subset Q_J$ which, together with \eqref{dist:QJ-I}, implies
\begin{equation}\label{3434t53qt5}
	\dist(I,\widehat{Q}_I)
	\le
	\dist(I,Q_J)
	+\diam(Q_J)
	\lesssim
	\ell(I)
\end{equation}
and
\begin{equation}\label{4q43tfg3f:1}
	\diam(\widehat{Q}_I)
	\le
	2\,\Xi\,r_{\widehat{Q}_I}
	\le
	2\,\Xi\,\ell(\widehat{Q}_I)
	=
	2^{-M'+1}\,\Xi\,\ell(Q_J)
	\le
	\Xi^{-1}\,\ell(Q_J).
\end{equation}
Hence, the first two conditions in \eqref{new-QI} hold for $I\in \W_N^{\Sigma,2}\cup \W_N^{\Sigma,3}$.

To see that the last condition in \eqref{new-QI} holds, we start with the family $\W_N^{\Sigma,2}$. For any $I\in \W_N^{\Sigma,2}$  there is a unique $Q_j\in \F_N$ such that $Q_J\subset Q_j$. But,  since $Q_I\in\dd_{\F_N,Q_0}$ then necessarily $Q_I\not\subset Q_j$ and $Q_I\setminus Q_j\neq\emptyset$.
This and the fact that $2\Delta_{Q_J}\subset Q_J\subset Q_j$ imply
\begin{multline*}
	2\Xi^{-1}\,\ell(Q_J)
	\le
	 \dist(x_{Q_J}, \pom\setminus  Q_j)
	\le
	\dist(x_{Q_J}, Q_I\setminus Q_j)
	\\
	\le
	\diam(Q_J)+\dist(Q_J, J)+\diam(J)
	+\diam(I)+\dist(I,Q_I)+\diam(Q_I)
	\approx
	\ell(J)\approx\ell(I).
\end{multline*}
Thus, $2\,\Xi^{-1}\,\ell(Q_J)\le \dist(x_{Q_J}, \pom \setminus Q_j) \le C\,\ell(J)$.  Suppose next that $I, I'\in \W_N^{\Sigma,2}$ are so that $\widehat{Q}_I\cap\widehat{Q}_{I'}\neq\emptyset$ (it could even happen that they are indeed the same cube) and assume without loss of generality that $\widehat{Q}_{I'}\subset\widehat{Q}_{I}$, hence $\ell(I')\le \ell(I)$. Let $Q_j, Q_{j'}\in\F_N$ be so that $Q_J\subset Q_j$ and $Q_{J'}\subset Q_{j'}$. Then, $x_{Q_J}\in \widehat{Q}_I$ and $x_{Q_{J'}}\in \widehat{Q}_{I'} \subset \widehat{Q}_I\subset Q_J$. As a consequence, $x_{Q_{J'}}\in Q_{J'}\cap Q_J\subset Q_{j}\cap Q_j'$ and this forces $Q_j=Q_{j'}$ (since $\F_N$ is a pairwise disjoint family). This and
\eqref{4q43tfg3f:1} readily imply
\begin{multline*}
	2\,\Xi^{-1}\,\ell(Q_{J})
	\le
	 \dist(x_{Q_J}, \pom\setminus  Q_j)
	\le
	|x_{Q_{J}}- x_{Q_{J'}}|+	\dist(x_{Q_{J'}}, \pom\setminus  Q_j)
	\\
	\le
	\diam(\widehat{Q}_{I})+\dist(x_{Q_{J'}}, \pom\setminus  Q_{j'})
	\le
	\diam(\widehat{Q}_{I})+C\ell(J')
	\le
	\Xi^{-1}\,\ell(Q_J)+C\ell(J')
\end{multline*}
and therefore $\Xi^{-1}\,\ell(Q_{J})\le C\,\ell(J')$. This in turn gives $\ell(I)\approx \ell(J)\approx \ell(J')\approx\ell(I')$. Note also that since $I$ touches $J$, $I'$ touches $J'$, and $\widehat{Q}_I\cap\widehat{Q}_{I'}\neq\emptyset$ we obtain
\begin{multline*}
	\dist(I,I')
	\le
	\diam(J)+\dist(J, Q_J)+\diam(Q_J)+\diam(Q_{J'})
	\\+\dist(Q_{J'}, J')+\diam(J')
	\approx
	\ell(J)+\ell(J')\approx\ell(I).
\end{multline*}
As a result, for fixed  $I\in \W_N^{\Sigma,2}$ there is a uniformly bounded number of $I'\in \W_N^{\Sigma,2}$ with $\widehat{Q}_I\cap \widehat{Q}_{I'}\neq\emptyset$, thus $\sum_{I\in 	\W_N^{\Sigma,2}} \mathbf{1}_{\widehat{Q}_I} \lesssim 1$.

We finally take into consideration  $\W_N^{\Sigma,3}$. Let $I\in\W_N^{\Sigma,3}$. Then,  $Q_0\cap2\Delta_{Q_J}\subset Q_0\cap Q_J=\emptyset$ and therefore $2\Xi^{-1}\,\ell(Q_J)\le \dist(x_{Q_J}, Q_0)$. Besides, since $Q_I\subset Q_0$, we have
\begin{align*}
	\dist(x_{Q_J}, Q_0)
	\le
	\diam(Q_J)+\dist(Q_J, J)+\diam(J)
		+\diam(I)+\dist(I,Q_I)+\diam(Q_I)
	\approx
	\ell(J)\approx\ell(I).
\end{align*}
Thus, $2\,\Xi^{-1}\,\ell(Q_J)\le \dist(x_{Q_J}, Q_0) \le C\,\ell(J)$. Suppose next that $I, I'\in \W_N^{\Sigma,3}$ are so that $\widehat{Q}_I\cap\widehat{Q}_{I'}\neq\emptyset$ (it could even happen that they are indeed the same cube) and assume without loss of generality that $\widehat{Q}_{I'}\subset\widehat{Q}_{I}$, hence $\ell(J')\le \ell(J)$. Then, since
$x_{Q_J}\in \widehat{Q}_I$ and $x_{Q_{J'}}\in \widehat{Q}_{I'} \subset \widehat{Q}_I$ we get from \eqref{4q43tfg3f:1} that
\begin{multline*}
	2\,\Xi^{-1}\,\ell(Q_{J})
	\le
	\dist(x_{Q_{J}}, Q_0)
	\le
	|x_{Q_{J}}- x_{Q_{J'}}|+
	\dist(x_{Q_{J'}}, Q_0)
	\\
	\le
	\diam(\widehat{Q}_{I})+C\ell(J')
	\le
	\Xi^{-1}\,\ell(Q_J)+C\ell(J'),
\end{multline*}
and therefore $\Xi^{-1}\,\ell(Q_{J})\le C\,\ell(J')$. This yields $\ell(I)\approx \ell(J)\approx \ell(J')\approx\ell(I')$. Note also that since $I$ touches $J$, $I'$ touches $J'$, and $\widehat{Q}_I\cap\widehat{Q}_{I'}\neq\emptyset$ we obtain
\begin{multline*}
	\dist(I,I')
	\le
	\diam(J)+\dist(J, Q_J)+\diam(Q_J)+\diam(Q_{J'})
	\\+\dist(Q_{J'}, J')+\diam(J')
	\approx
	\ell(J)+\ell(J')\approx\ell(I).
\end{multline*}
Consequently, for fixed  $I\in \W_N^{\Sigma,3}$ there is a uniformly bounded number of $I'\in \W_N^{\Sigma,3}$ with $\widehat{Q}_I\cap \widehat{Q}_{I'}\neq\emptyset$. As a result, $\sum_{I\in 	\W_N^{\Sigma,3}} \mathbf{1}_{\widehat{Q}_I} \lesssim 1$.  This completes the proof of $(iii)$ and hence that of Lemma \ref{lemma:approx-sawtooth}.
\end{proof}

We are now ready to prove Theorem~\ref{thm:AAAD}.

\begin{proof}[Proof of Theorem~\ref{thm:AAAD}]
	We use some ideas from \cite[Section 4]{CHMT} and \cite[Section~4]{CMO}. 	Let $u \in W_{\loc}^{1,2}(\Omega) \cap L^{\infty}(\Omega)$ be a weak solution of $L_1u=0$ in $\Omega$ and assume that $\|u\|_{L^{\infty}(\Omega)}=1$. Applying Theorem \ref{thm:abs} $\mathrm{(c)} \Longrightarrow \mathrm{(a)}$ to $u$,  we are reduced to showing that for some $r>0$,
	\begin{align*}
		\mathcal{S}^{\alpha_0}_r u(x) < \infty, \qquad \text{for $\w_{L_0}$-a.e.~} x \in \partial \Omega,
	\end{align*}
	where $\alpha_0$ is given in Theorem \ref{thm:abs}. By \eqref{q343ffg} and Lemma~\ref{lemma:dyadiccubes}, it suffices to see that for every fixed $Q_0 \in \D_{k_0}$ and for some fixed large $\vartheta$ (which depends on $\alpha_0$ and hence solely on the 1-sided NTA and CDC constants) one has
	\begin{align}\label{eq:Q0-SQ0u-EN}
		Q_0 = \bigcup_{N \geq 0} \widehat{E}_N,\quad \w_{L_0}^{X_0}(\widehat{E}_0)=0
		\quad\text{and}\quad \mathcal{S}_{Q_0}^\vartheta u \in L^2(\widehat{E}_N,\w_{L_0}), \ \forall\,N \geq 1,
	\end{align}
	where $X_0$ is given at the beginning of Section \ref{sec:local}.
	Fix then $Q_0 \in \D_{k_0}$ and write
	\begin{align}\label{eq:normalize:alt}
		\w_0:=\w_{L_0}^{X_0},\qquad \w:= \w_{L_1}^{X_0}, \qquad  \G_0:= G_{L_0}(X_0, \cdot), \qquad\text{ and }\qquad
		\G:= G_{L_1}(X_0, \cdot).
	\end{align}
	Much as in \eqref{daydic-cone:regular-cone} (with $\eta=2^{-1/3}$ so that $\Gamma^{\vartheta,*}_{Q_0}=\Gamma^{\vartheta,*}_{Q_0,\eta }$) there exist $\widetilde{\alpha}_0>0$ and $C$ (depending on the 1-sided NTA and CDC constants) such that if we set $\widetilde{r}:=C\,r_{Q_0}>0$, then
	\begin{equation}\label{eq:TQ-Tar}
		\Gamma^{\vartheta,*}_{Q_0}(x)
		:=\bigcup_{x \in Q \in \D_{Q_0}} U^{\vartheta, *}_{Q}
		\subset \Gamma^{\widetilde{\alpha}_0}_{\widetilde{r}}(x),\qquad x \in Q_0.
	\end{equation}
	As a result,
	\begin{multline}\label{eq:SQ0-a.e.}
		\mathcal{S}_{Q_0}\gamma^\vartheta(x)^2 :=
		\sum_{x \in Q \in \D_{Q_0}} \gamma_{Q} ^\vartheta
		:=
		\sum_{x \in Q \in \D_{Q_0}}  \iint_{U^{\vartheta, *}_Q} a(X)^2 \delta(X)^{-n-1}  dX
		+ \iint_{U^{\vartheta, *}_Q} |\div_{C} D(X)|^2 \delta(X)^{1-n} dX
		\\
		\lesssim
		\iint_{\Gamma^{\vartheta,*}_{Q_0}(x) } a(X)^2 \delta(X)^{-n-1}  dX
		+ \iint_{\Gamma^{\vartheta,*}_{Q_0}(x) } |\div_{C} D(X)|^2 \delta(X)^{1-n} dX
		\\\le  \iint_{\Gamma^{{\alpha}_1}_{\max\{\widetilde{r}, r_1\}}(x)} a(X)^2 \delta^{-n-1}  dX
		+ \iint_{\Gamma^{{\alpha}_2}_{\max\{\widetilde{r},r_2\}}(x)} |\div_{C} D(X)|^2 \delta^{1-n} dX
		< \infty,
	\end{multline}
	for $\w_{L_0} $-a.e.~ $x \in  Q_0$, 	where we have used the fact that the family $\{U^{\vartheta,*}_Q\}_{Q \in \D}$ has bounded overlap, that  $\alpha_1, \alpha_2\ge \widetilde{\alpha}_0$ and the last estimate follows from \eqref{eq:a(X)-delta}, \eqref{eq:divCD}, and \eqref{est:two-trunc}.
	
	Given $N>C_0$ ($C_0$ is the constant that appeared in Section \ref{sec:local}), let $\F_N\subset \D_{Q_0}$ be the collection of maximal cubes (with respect to the inclusion) $Q_j \in \D_{Q_0}$ such that
	\begin{align}\label{eq:FN-stopping}
		\sum_{Q_j \subset Q \in \D_{Q_0}} \gamma_{Q}^\vartheta > N^2.
	\end{align}
	Write
	\begin{align}\label{eq:q4we3g5}
		{E}_0 :=\bigcap_{N>C_0} (Q_0\setminus E_N),
		\quad
		E_N:=  Q_0 \backslash \bigcup_{Q_j \in {\F}_N}Q_j, \quad Q_0={E}_0\cup (Q_0\setminus E_0)
		=
		{E}_0\cup \Big(\bigcup_{N>C_0} {E}_N \Big).
	\end{align}
	Let us observe that
	\begin{equation}\label{eq:Sa<N}
		\mathcal{S}_{Q_0}\gamma^\vartheta(x) \leq N, \qquad \forall\,x \in E_N.
	\end{equation}
	Otherwise, there exists a cube $Q_x\ni x$ such that
	$\sum_{Q_x \subset Q \in  \D_{Q_0}} \gamma_Q^\vartheta > N^2$, hence $x \in Q_x \subset Q_j$ for some
	$Q_j \in {\F}_N$, which is a contradiction.

Note that if $x \in {E}_0$, then for every $N>C_0$ there exists $Q_x^{N} \in {\F}_N$ such that
$Q_x^N \ni x$. By the definition of ${\F}_N$, we then have
\begin{align*}
	\mathcal{S}_{Q_0}\gamma^\vartheta(x)^2 = \sum_{x \in Q \in \D_{Q_0}} \gamma_Q^\vartheta
	\geq  \sum_{Q_x^N \subset Q \in \D_{Q_0}} \gamma_Q^\vartheta > N^2.
\end{align*}
On the other hand, if $x \in Q_0\setminus {E}_{N+1}$ there exists $Q_x \in {\F}_{N+1}$ such that $x \in Q_x$. By \eqref{eq:FN-stopping} one has
	\begin{align*}
		\sum_{Q_x \subset Q \in \D_{Q_0}} \gamma_{Q}^\vartheta > (N+1)^2>N^2,
	\end{align*}
	and the maximality of the cubes in ${\F}_N$ gives that $Q_x \subset Q'_x$ for some $Q'_x \in {\F}_N$ with $x \in Q'_x \subset Q_0\setminus {E}_{N}$. This shows that $\{Q_0\setminus {E}_{N}\}_N$ is a decreasing sequence of sets, and since $Q_0\setminus {E}_{N}\subset Q_0$ for every $N$ we conclude that
		\begin{multline}\label{eaf4a3f4}
		\w_0({E}_0 )
		=\lim_{N\to\infty} \w_0(Q_0\setminus {E}_{N})
		\le \lim_{N\to\infty}  \w_0(\{x \in Q_0: \mathcal{S}_{Q_0}\gamma^\vartheta(x)>N\})
		\\
		=\w_0(\{x \in Q_0: \mathcal{S}_{Q_0}\gamma^\vartheta(x)=\infty\})=0,
	\end{multline}
	where the last equality uses \eqref{eq:SQ0-a.e.}.  This and \eqref{eq:q4we3g5} imply that to get \eqref{eq:Q0-SQ0u-EN} we are left with proving
	\begin{equation}\label{eq:SQ0u-EN-tilde}
		\mathcal{S}_{Q_0}^\vartheta u \in L^2({E}_N, \w_0), \qquad \forall\,N>C_0.
	\end{equation}
	With this goal in mind,  note that if $Q \in \D_{Q_0}$ is so that $Q \cap {E}_N \neq \emptyset$, then necessarily $Q \in \D_{{\F}_N, Q_0}$, 	otherwise, $Q \subset Q' \in {\F}_N$, hence $Q \subset Q_0 \backslash {E}_N$. Recalling \eqref{eq:normalize:alt} and the fact $X_0 \not\in 4B_{Q_0}^*$, we use Lemma~\ref{lemma:proppde} and Harnack's inequality to conclude that
	\begin{align}\label{eq:SQk-ENk}
		\int_{E_N}\mathcal{S}_{Q_0}^\vartheta u(x)^2 d\w_0(x)
		&=\int_{E_N} \iint_{\bigcup\limits_{x \in Q \in \D_{Q_0}} U_Q^\vartheta}
		|\nabla u(Y)|^2 \delta(Y)^{1-n} dY d\w_0(x)
		\\
		&\lesssim \sum_{Q \in \D_{Q_0}} \ell(Q)^{1-n} \w_0(Q \cap E_N)
		\iint_{U_Q^\vartheta}  |\nabla u(Y)|^2 dY
		\nonumber \\
		&
		\le
		\sum_{Q \in \D_{\F_N, Q_0}}  \ell(Q)^{1-n} \w_0(Q)
		\iint_{U_Q^\vartheta}  |\nabla u(Y)|^2 dY
		\nonumber \\
		&\lesssim \sum_{Q \in \D_{\F_N, Q_0}}
		\iint_{U_Q^\vartheta}  |\nabla u(Y)|^2 \G_0(Y) dY
		\nonumber \\
			&\lesssim \iint_{\Omega_{\F_N, Q_0}^\vartheta}  |\nabla u(Y)|^2 \G_0(Y) dY, \nonumber
	\end{align}
	where we have used that the family $\{U_Q^\vartheta\}_{Q\in\D}$ has bounded overlap. To estimate the last term we make the following claim
		\begin{align}\label{eq:KAB*}
			\iint_{\Omega_{\F_N, Q_0}^\vartheta}  |\nabla u(Y)|^2 \G_0(Y) \, dY
			\lesssim \w_0(Q_0) + \sum_{Q \in \D_{\F_N, Q_0}} \gamma_Q^{\vartheta} \, \w_0(Q),
		\end{align}
		where the implicit constant is independent of $N$.

Assuming this momentarily we note that
	\begin{multline}\label{eq:dis}
		\sum_{Q \in \D_{{\F}_N, Q_0}} \gamma_Q^{\vartheta} \, \w_0(Q)
		=\int_{Q_0} \sum_{x \in Q \in \D_{{\F}_N, Q_0}} \gamma_Q^\vartheta \, d\w_0(x)
		\\
		\leq \int_{{E}_N} \mathcal{S}_{Q_0} \gamma^\vartheta(x)^2 \, d\w_0(x)
		+ \sum_{Q_j \in {\F}_N} \sum_{Q \in \D_{{\F}_N, Q_0}} \gamma_Q^\vartheta\,\w_0(Q\cap Q_j)
		\\
		\le
		N^2 \w_0(Q_0)+ \sum_{Q_j \in {\F}_N} \sum_{Q \in \D_{{\F}_N, Q_0}} \gamma_Q^\vartheta\,\w_0(Q\cap Q_j),
	\end{multline}
where the last estimate follows from \eqref{eq:Sa<N}. In order to control the last term  we fix $Q_j \in {\F}_N$. Note that if
	$Q \in \D_{{\F}_N, Q_0}$ is so that
	$Q \cap Q_j \neq \emptyset$ then necessarily $Q_j \subsetneq Q\subset Q_0$.
	Write $\widehat{Q}_j$ for the dyadic parent of $Q_j$, that is,  $\widehat{Q}_j$ is the unique dyadic cube containing $Q_j$
	with $\ell(\widehat{Q}_j)=2\ell(Q_j)$. By the fact that $Q_j$ is the maximal cube so that \eqref{eq:FN-stopping} holds one obtains
	\begin{equation*}
		\sum_{\widehat{Q}_j \subset Q \in \D_{Q_0}} \gamma_Q^\vartheta  =\sum_{Q_j \subsetneq Q \in \D_{Q_0}} \gamma_Q^\vartheta \leq N^2.
	\end{equation*}
	As a result,
	\begin{multline}\label{43r43r4}
		\sum_{Q_j \in {\F}_N} \sum_{Q \in \D_{{\F}_N, Q_0}} \gamma_Q^{\vartheta} \, \w_0(Q \cap Q_j)
		=\sum_{Q_j \in {\F}_N} \w_0(Q_j)  \sum_{Q_j \subsetneq Q \in \D_{Q_0}} \gamma_Q^\vartheta
		\\
		\leq N^2 \sum_{Q_j \in {\F}_N} \w_0(Q_j)
		\leq N^2 \w_0 \bigg(\bigcup_{Q_j \in {\F}_N} Q_j \bigg)
		\leq N^2 \w_0(Q_0).
	\end{multline}
	Collecting \eqref{eq:SQk-ENk}, \eqref{eq:KAB*}, \eqref{eq:dis},  and \eqref{43r43r4}, we deduce that
	\begin{align*}
		\int_{E_N} (\mathcal{S}_{Q_0}^\vartheta u(x))^2 \, d\w_0(x) \le C_N \, \w_0(Q_0)  \le C_N.
	\end{align*}
	This shows \eqref{eq:SQ0u-EN-tilde} and completes the proof of Theorem \ref{thm:AAAD} modulo proving \eqref{eq:KAB*}.

Let us then establish \eqref{eq:KAB*}.  For every $M \geq 4$, we consider the pairwise disjoint collection ${\F}_{N,M}$ given by the family of maximal cubes of the collection ${\F}_N$ augmented by adding all the cubes $Q \in \D_{Q_0}$ such that $\ell(Q) \leq 2^{-M} \ell(Q_0)$. In particular, $Q \in \D_{{\F}_{N,M}, Q_0}$
if and only if $Q \in \D_{{\F}_N, Q_0}$ and $\ell(Q)>2^{-M}\ell(Q_0)$.  Moreover,
$\D_{{\F}_{N,M}, Q_0} \subset \D_{{\F}_{N,M'}, Q_0}$ for all $M \leq M'$, and hence
$\Omega_{{\F}_{N,M}, Q_0}^\vartheta \subset \Omega_{{\F}_{N,M'}, Q_0}^\vartheta \subset \Omega_{{\F}_N, Q_0}^\vartheta$.  Then the monotone convergence theorem implies
\begin{align}\label{eq:JM-lim}
	\iint_{\Omega_{{\F}_{N}, Q_0}^\vartheta}  |\nabla u|^2 \G_0 \ dX
	=\lim_{M \to \infty}\iint_{\Omega_{{\F}_{N,M}, Q_0}^\vartheta}  |\nabla u|^2 \G_0 \, dX
	=:
	\lim_{M \to \infty}	\mathcal{K}_{N, M} .
\end{align}

Write $\mathcal{E}(X):=A_1(X)-A_0(X)$ and pick $\Psi_{N,M}$ from Lemma~\ref{lemma:approx-sawtooth}. By Leibniz's rule,
\begin{multline}\label{eq:LeibAA}
	A_1 \nabla u \cdot \nabla u \  \G_0 \Psi_{N,M}^2
	=A_1 \nabla u \cdot \nabla (u \G_0 \Psi_{N,M}^2)
	- \frac12 A_0 \nabla (u^2 \Psi_{N,M}^2) \cdot \nabla \G_0
	\\ 	
	+ \frac12 A_0 \nabla(\Psi_{N,M}^2) \cdot \nabla \G_0 \ u^2
	-\frac12 A_0 \nabla (u^2) \cdot \nabla(\Psi_{N,M}^2) \G_0
	- \frac12 \mathcal{E} \nabla(u^2) \cdot \nabla(\G_0 \Psi_{N,M}^2).
\end{multline}
Note that $u \in W^{1,2}_{\loc}(\Omega)\cap L^\infty(\Omega)$, $\G_0 \in W^{1,2}_{\loc}(\Omega \setminus \{X_0\})$, and that $\overline{\Omega_{\F_{N,M}, Q_0}^{\vartheta,**}}$ is a compact subset of $\Omega$ away from $X_0$ since  $X_0 \notin 4B_{Q_0}^*$ and \eqref{definicionkappa12}. Hence, $u \in W^{1,2}(\Omega_{\F_{N,M}, Q_0}^{\vartheta,**})$ and $u \G_0 \Psi_{N,M}^2 \in W_0^{1,2}(\Omega_{\F_{N,M}, Q_0}^{\vartheta,**})$. These together with the fact that $L_1u=0$ in the weak sense in $\Omega$ give
\begin{align}\label{eq:AA-1}
	\iint_{\Omega} A_1 \nabla u \cdot \nabla(u \G_0 \Psi_{N,M}^2) dX
	=\iint_{\Omega_{\F_{N,M}, Q_0}^{\vartheta,**}} A_1 \nabla u \cdot \nabla(u \G_0 \Psi_{N,M}^2) dX=0.
\end{align}
On the other hand, Lemma~\ref{lemma:Greensf} (see in particular \eqref{eq:G-delta})  implies that $\G_0 \in W^{1,2}(\Omega_{\F_{N,M}, Q_0}^{\vartheta,**})$ and $L_0^{\top}\G_0=0$ in the weak sense in $\Omega \setminus\{X_0\}$. Thanks to the fact that $u^2 \Psi_{N,M}^2 \in W^{1,2}_0(\Omega_{\F_{N,M}, Q_0}^{\vartheta,**})$, we then obtain
\begin{align}\label{eq:AA-2}
	\iint_{\Omega} A_0 \nabla (u^2 \Psi_{N,M}^2) \cdot \nabla \G_0 \, dX
	=\iint_{\Omega_{\F_{N,M}, Q_0}^{\vartheta,*}} A_0^{\top} \nabla \G_0 \cdot \nabla(u^2 \Psi_{N,M}^2)\, dX=0.
\end{align}
By Lemma~\ref{lemma:approx-sawtooth}, the ellipticity of $A_1$ and $A_0$, \eqref{eq:LeibAA}--\eqref{eq:AA-2}, the fact that $\|u\|_{L^{\infty}(\Omega)}=1$, and our assumption $\mathcal{E}=A_1-A_0=-(A+D)$  we then arrive at
\begin{align}\label{eq:KN}
	\widetilde{\mathcal{K}}_{N,M}
	&:= \iint_{\Omega} |\nabla u|^2 \G_0 \Psi_{N,M}^2 \, dX \lesssim \iint_{\Omega} A_1 \nabla u \cdot \nabla u \ \G_0 \Psi_{N,M}^2\, dX
	\\ \nonumber
	&\lesssim
	\iint_{\Omega} |\nabla\Psi_{N,M}|\,|\nabla \G_0|\, dX
	+
	\iint_{\Omega} |\nabla u|\, |\nabla \Psi_{N,M}|\,\G_0 \, dX
	\\  \nonumber
	&\qquad\qquad+ \bigg|\iint_{\Omega} A \nabla(u^2) \cdot \nabla(\G_0 \Psi_{N,M}^2)\,dX \bigg|
	+ \bigg|\iint_{\Omega} D \nabla(u^2) \cdot \nabla(\G_0 \Psi_{N,M}^2)\,dX \bigg|
	\\ \nonumber
	&=: \mathcal{I}_1+\mathcal{I}_2+\mathcal{I}_3+\mathcal{I}_4.
\end{align}

We estimate each term in turn. Regarding $\mathcal{I}_1$ we use Lemma~\ref{lemma:approx-sawtooth}, Caccioppoli's and Harnack's inequalities, and Lemma~\ref{lemma:proppde}:
\begin{multline}\label{eq:part-1}
\mathcal{I}_1
\lesssim \sum_{I \in \W_{N,M}^{\vartheta, \Sigma}} \iint_{I^*} |\nabla \Psi_{N,M}|\, |\nabla \G_0|\, dX
\lesssim \sum_{I \in \W_{N,M}^{\vartheta,\Sigma}}
	\ell(I)^{-1} |I|^{\frac12} \bigg(\iint_{I^*} |\nabla \G_0|^2 dX  \bigg)^{\frac12}
\\
		\lesssim \sum_{I \in \W_{N,M}^{\vartheta,\Sigma}} \ell(I)^{n-1} \G_0(X(I))
\lesssim \sum_{I \in \W_{N,M}^{\vartheta,\Sigma}} \w_0(\widehat{Q}_I)
	\lesssim \w_0\bigg(\bigcup_{I \in \W_{N,M}^{\vartheta,\Sigma}} \widehat{Q}_I \bigg)
	\leq \w_0(C \Delta_{Q_0}) \lesssim \w_0(Q_0),
\end{multline}
where the implicit constants do not depend on $N$ nor $M$. We  estimate $\mathcal{I}_2$ similarly:
\begin{multline}\label{eq:part-2}
	\mathcal{I}_2
	\lesssim \sum_{I \in \W_{N,M}^{\vartheta,\Sigma}} \iint_{I^*} |\nabla \Psi_{N,M}|\, |\nabla u|\, \G_0\, dX
	\lesssim \sum_{I \in \W_{N,M}^{\vartheta,\Sigma}}
	\ell(I)^{-1} |I|^{\frac12} \G_0(X(I)) \bigg(\iint_{I^*} |\nabla u|^2 dX  \bigg)^{\frac12}
	\\
	\lesssim \sum_{I \in \W_{N,M}^{\vartheta,\Sigma}} \ell(I)^{n-1} \G_0(X(I)) \lesssim \w_0(Q_0).
\end{multline}

Concerning  $\mathcal{I}_3$ we use that $A \in L^{\infty}(\Omega)$ and $\|u\|_{L^{\infty}(\Omega)}=1$:
\begin{align}\label{eq:KN-3}
\mathcal{I}_3 \lesssim
	\iint_{\Omega} |A|\, |\nabla u|\, |\nabla \G_0|\, \Psi_{N,M}^2 \, dX
	+ \iint_{\Omega} |\nabla u|\, |\nabla\Psi_{N,M}|\,\Psi_{N,M}\, \G_0\, \, dX
	=: \mathcal{I}_3'+\mathcal{I}_3''.
\end{align}
Observe that $I^{**} \subset \{Y\in \Omega: |Y-X|<\delta(X)/2\}$ for every $X \in I^*$, and hence $\sup_{I^{**}} |A| \le \inf_{I^*} a$. By Cauchy-Schwarz inequality,
Caccioppoli's and Harnack's inequalities, and Lemma~\ref{lemma:proppde} we have
\begin{align}\label{eq:KN-31}
	\mathcal{I}_3'
	&
	\lesssim
	\sum_{Q\in\dd_{\F_{N}, Q_0}}\sum_{I \in \W_{Q}^{\vartheta,*}} \sup_{I^{**}} |A|
	\bigg(\iint_{I^{**}} |\nabla u|^2 \Psi_{N,M}^2\, dX \bigg)^{\frac12}
	\bigg(\iint_{I^{**}} |\nabla \G_0|^2 dX\bigg)^{\frac12}
	\\ \nonumber
	&\lesssim
	\sum_{Q\in\dd_{\F_{N}, Q_0}}\sum_{I \in \W_{Q}^{\vartheta,*}}
	\bigg(\iint_{I^{**}} |\nabla u|^2 \Psi_{N,M}^2\, dX \bigg)^{\frac12}
	\Big(\sup_{I^{**}} |A|^2  \G_0(X(I))^2 \ell(I)^{n-1}\Big)^{\frac12}
	\\ \nonumber
	&\lesssim \sum_{Q\in\dd_{\F_{N}, Q_0}}\sum_{I \in \W_{Q}^{\vartheta,*}}
	\bigg(\iint_{I^{**}} |\nabla u|^2 \G_0 \Psi_{N,M}^2\, dX \bigg)^{\frac12}
	\bigg(\w_0(Q) \iint_{I^*} a(X)^2 \delta(X)^{-n-1} dX \bigg)^{\frac12}
	\\ \nonumber
	&\lesssim
	\bigg(\iint_{\Omega} |\nabla u|^2 \G_0 \Psi_{N,M}^2\, dX \bigg)^{\frac12}
	\bigg(\sum_{Q \in \D_{\F_N, Q_0}} \w_0(Q) \iint_{U_Q^{\vartheta,*}} a(X)^2 \delta(X)^{-n-1}\, dX \bigg)^{\frac12}
	\\ \nonumber
	&
	\le \widetilde{\mathcal{K}}_{N,M}^{\frac12} \, \Big(\sum_{Q \in \D_{\F_N, Q_0}} \gamma_Q^{\vartheta} \, \w_0(Q)\Big)^{\frac12},
\end{align}
where we used the fact that the family $\{I^{**}\}_{I \in \W}$ has bounded overlap. Additionally, as in \eqref{eq:part-1}
\begin{multline}\label{eq:KN-32}
	\mathcal{I}_3'' \lesssim \bigg(\iint_{\Omega} |\nabla u|^2 \G_0\, \Psi_{N,M}^2 \, dX\bigg)^{\frac12}
	\bigg(\iint_{\Omega} |\nabla \Psi_{N,M}|^2 \G_0 \, dX\bigg)^{\frac12}
	\\
	\lesssim \widetilde{\mathcal{K}}_{N,M}^{\frac12} \bigg(\sum_{I \in \W_{N,M}^{\vartheta,\Sigma}} \ell(I)^{n-1}\, \G_0(X(I)) \bigg)^{\frac12}
	\lesssim \widetilde{\mathcal{K}}_{N,M}^{\frac12} \, \w_0(Q_0)^{\frac12}.
\end{multline}

Finally, to bound $\mathcal{I}_4$, we note that $u^2 \in W_{\loc}^{1,2}(\Omega)$, $\G_0 \Psi_{N,M}^2 \in W^{1,2}(\Omega)$ and $\supp(\G_0 \Psi_{N,M}^2) \subset \overline{\Omega_{\F_{N,M}, Q_0}^{\vartheta,*}}$ is compactly contained in $\Omega$.  Then \cite[Lemma~4.1]{CHMT} and Lemma~\ref{lemma:proppde} imply that
\begin{align}\label{eq:KN-4}
	\mathcal{I}_4 & = \bigg|\iint_{\Omega} \div_C D \cdot \nabla(u^2)\, \G_0\, \Psi_{N,M}^2\, dX \bigg|
	\\ \nonumber
	&\lesssim \bigg(\iint_{\Omega} |\nabla u|^2\, \G_0\, \Psi_{N,M}^2 \, dX\bigg)^{\frac12}
	\bigg(\iint_{\Omega} |\div_C D|^2\, \G_0\, \Psi_{N,M}^2 \, dX\bigg)^{\frac12}
	\\ \nonumber
	&\lesssim \widetilde{\mathcal{K}}_{N,M}^{\frac12} \bigg( \sum_{Q\in\dd_{\F_{N}, Q_0}}\sum_{I \in \W_{Q}^{\vartheta,*}} \G_0(X(I)) \iint_{I^{**}} |\div_C D|^2\, dX \bigg)^{\frac12}
	\\ \nonumber
	&\lesssim \widetilde{\mathcal{K}}_{N,M}^{\frac12} \bigg(\sum_{Q\in\dd_{\F_{N}, Q_0}}\sum_{I \in \W_{Q}^{\vartheta,*}} \w_0(Q)
	\iint_{I^{**}} |\div_C D(X)|^2\, \delta(X)^{1-n}\, dX \bigg)^{\frac12}
	\\ \nonumber
	&\lesssim \widetilde{\mathcal{K}}_{N,M}^{\frac12} \bigg(\sum_{Q \in \D_{\F_N, Q_0}} \w_0(Q)
	\iint_{U_Q^{\vartheta,*}} |\div_C D(X)|^2\, \delta(X)^{1-n}\, dX \bigg)^{\frac12}
	\\ \nonumber
	&
		\le \widetilde{\mathcal{K}}_{N,M}^{\frac12} \, \Big(\sum_{Q \in \D_{\F_N, Q_0}} \gamma_Q^{\vartheta} \, \w_0(Q)\Big)^{\frac12}.
\end{align}

Gathering \eqref{eq:KN}--\eqref{eq:KN-4} and using Young's inequality we obtain
\begin{multline*}
	\widetilde{\mathcal{K}}_{N,M}
\lesssim
\w_0(Q_0)+\widetilde{\mathcal{K}}_{N,M}^{\frac12}\,\w_0(Q_0)^{\frac12}
+
\widetilde{\mathcal{K}}_{N,M}^{\frac12}\, \Big(\sum_{Q \in \D_{\F_N, Q_0}} \gamma_Q^{\vartheta} \, \w_0(Q)\Big)^{\frac12}
\\
\le
C\,\w_0(Q_0)+C\,\sum_{Q \in \D_{\F_N, Q_0}} \gamma_Q^{\vartheta} \, \w_0(Q)+\frac12\,\widetilde{\mathcal{K}}_{N,M},
\end{multline*}
where the implicit constants are independent of $N$ and $M$.
Note that $\widetilde{\mathcal{K}}_{N,M}<\infty$ because $\supp \Psi_{N,M} \subset \overline{\Omega_{\F_{N,M}, Q_0}^{\vartheta,*}}$, which is a compact subset of $\Omega$ and $u \in W^{1,2}_{\loc}(\Omega)$. Thus, the last term can be hidden and  we eventually obtain
\begin{align*}
	\mathcal{K}_{N, M} \le\widetilde{\mathcal{K}}_{N,M} \lesssim \w_0(Q_0) +\sum_{Q \in \D_{\F_N, Q_0}} \gamma_Q^{\vartheta} \, \w_0(Q).
\end{align*}
This estimate (whose implicit constant is independent of $N$ and $M$) and \eqref{eq:JM-lim} readily yield \eqref{eq:KAB*} and the proof is complete.
\end{proof}

\end{document}